\tikzset{->-/.style={decoration={  markings,  mark=at position #1 with
    {\arrow{>}}},postaction={decorate}}}
\tikzset{-<-/.style={decoration={  markings,  mark=at position #1 with
    {\arrow{<}}},postaction={decorate}}}
\newcolumntype{L}{>{$}l<{$}} % math-mode version of "l" column type
\let\Sun\relax
\let\Taurus\relax
\def\nsf{\on{NSF}}
\theoremstyle{plain}
\newtheorem{theorem}{Theorem}[section]
\newtheorem{lemma}[theorem]{Lemma}
\newtheorem{corollary}[theorem]{Corollary}
\newtheorem{proposition}[theorem]{Proposition}
\theoremstyle{definition}
\newtheorem{definition}[theorem]{Definition}
\newtheorem{example}[theorem]{Example}
\newtheorem{remark}[theorem]{Remark}
\newtheorem{construction}[theorem]{Construction}
\numberwithin{equation}{section}
\numberwithin{figure}{section}
\newtheorem*{vent}{Convention}
\newtheorem*{thmB}{Theorem~B}
\newtheorem*{thmC}{Theorem~C}
\newtheorem*{thmD}{Theorem~D}
\newtheorem*{consA}{Construction~A}
\newtheorem*{CorE}{Corollary~E}
\newcommand{\AKedit}[1]{\textcolor{teal}{#1}}
\newcommand\hua{\mathcal}
\newcommand\ZZ{\mathbb{Z}}
\newcommand\RR{\mathbb{R}}
\newcommand\CC{\mathbb{C}}
\newcommand{\mai}{\mathbf{i}} % imaginary number i
\newcommand\bfc{\mathbf{C}}
\newcommand\<{\langle}
\renewcommand\>{\rangle}
\renewcommand{\setminus}{\smallsetminus}
\renewcommand{\emptyset}{\varnothing}
\newcommand{\isom}{\cong}
\newcommand{\id}{\on{id}}
\newcommand{\Imgy}{\on{Im}} % imaginary part
\newcommand\Hom{\on{Hom}}
\newcommand\End{\on{End}}
\newcommand{\Int}{\on{Int}}
\newcommand\Br{\on{Br}} % braid group
\newcommand\Crel{\on{Co}} % commutation relation
\newcommand\CBr{\on{CT}} % cluster braid group
\newcommand\Grot{\on{K}} % Grothendieck group
\newcommand{\h}{\on{\hua{H}}} %heart
\newcommand{\C}{\on{\hua{C}}} % cluster category
\newcommand{\D}{\on{\hua{D}}} % triangulated category
\newcommand{\per}{\on{per}} % perfect derived category
\newcommand\Aut{\on{Aut}} % automorphism group
\newcommand\Autp{\Aut^\circ} % effectively acting automorphism group in principal comp
\newcommand\Stab{\on{Stab}} % space of stability conditions
\newcommand\Stap{\Stab^\circ} % principal component of Stab
\newcommand{\EG}{\on{EG}} % oriented exchange graph of hearts
\newcommand{\EGp}{\EG^\circ}       % principal component of EG
\newcommand{\SEG}{\on{SEG}} % oriented exchange graph of siling
\newcommand{\SEGp}{\SEG^\circ}       % principal component of SEG
\newcommand{\ST}{\on{ST}}  % spherical twist group
\newcommand{\EGT}{\EG^{\Te}}
\newcommand{\EGx}{\EG^{\times}}
\newcommand{\uEG}{\underline{\EG}} % unoriented exchange graph of hearts
\newcommand{\uEGx}{\uEG^\times} % unoriented exchange graph of hearts
\newcommand{\CEG}{\on{CEG}} % oriented cluster exchange graph
\newcommand{\uCEG}{\underline{\CEG}} % unoriented cluster exchange graph
\newcommand\egt{\on{\hua{EG}}^{\Tx}}
\newcommand\eg{\on{\hua{EG}}}
\newcommand\ceg{\on{\hua{CEG}}}
\renewcommand{\k}{\mathbf{k}}
\newcommand{\Ho}[1]{\on{\bf H}_{#1}}
\newcommand{\tilt}[3]{{#1}^{#2}_{#3}}
\newcommand\Sph{\on{Sph}}
\newcommand{\numarc}{n}
\newcommand{\numtri}{\aleph}
\newcommand{\Tri}{\bigtriangleup}
\newcommand\Homeo{\on{Hoemo}} % homeomorphism group
\newcommand{\MCG}{\on{MCG}} % mapping class group
\newcommand{\BT}{\on{BT}}  %braid twist group
\newcommand\Bt[1]{\on{B}_{#1}}
\newcommand\bt[1]{\on{B}_{#1}^{-1}}
\newcommand\M{\mathbf{M}} % marked points
\renewcommand\P{\vortex} % punctures
\newcommand{\Quad}{\on{Quad}}
\newcommand{\FQuad}[2]{\on{FQuad}^{#1}(#2)}
\newcommand{\Zer}{\on{Zero}}
\newcommand{\Pol}{\on{Pol}}
\newcommand{\CA}{\on{CA}}
\newcommand{\SBr}{\on{SBr}}
\newcommand{\UHP}{\mathbf{H}} % upper half plane
\newcommand{\skel}{\wp} % skeleton embedding
\newcommand{\cub}{\on{U}} % cell
\newcommand\rs{\mathbf{X}} %compact version of S
\newcommand\surp{\rs^\circ}
\newcommand\dd{d} % exterior derivative
\newcommand{\pha}{\varphi} % phase of stability condition
\newcommand{\on}[1]{\operatorname{#1}}
\newcommand\class{\mathfrak{Q}}
\def\sun{\text{\Sun}}
\def\dpole
\def\spole
\def\vot{node[white,yshift=-.09ex]{\Large{$\bullet$}}node[rotate=-45]{\scriptsize{\Yinyang}}}
\def\tov{node[white,yshift=-.09ex]{\Large{$\bullet$}}node[rotate=135]{\scriptsize{\Yinyang}}}
\def\snn{node{\tiny{$\bullet$}}}
\def\nn{node{$\bullet$}}
\def\ww{node[white]{$\bullet$}node[red]{$\circ$}}
\def\hh{PineGreen}   % open loop color-
\def\hhd{blue!49}  % closed loop color-hh dual
\def\ql{cyan!80}  %qian lan
\def\qh{orange!75} %qian hong
\def\sepa{black}
\def\Lrm{\mathrm{L}}
\def\LL{{\mathrm{L}^2(\sun)}}%\def\LL{{\mathrm{L}_2^\sun}}
\def\DV{\mathrm{DV}}    %decoration to vortex
\def\DVs{\DV(\dsp)}     %set of collision paths
\def\cp{collision path }
\def\uk{\mathbf{k}}
\def\us{\mathbf{s}}
\def\ZZP{\ZZ_2^{\oplus \vortex}}
\def\vortex{{\text{\Yinyang}}}
\newcommand\surf{\mathbf{S}}  % marked surface
\newcommand\surfp{\mathbf{S}^{\sun}}  % MSp
\newcommand\surfv{\surf^{\vortex}}  % marked surface new
\newcommand\sop{\surfp_\Tri}  % decorated marked surface
\newcommand\surfo{\surf_\Tri}  % decorated marked surface
\newcommand\sov{\surfv_\Tri}  % decorated marked surface
\newcommand\sovs{ \surfv_{\Tri-\us} }   % decorated marked surface
\def\FM{F^{\text{\Yinyang}}}
\newcommand\smp{\mathbf{S}_{}^{\sun}}
\newcommand\smv{\mathbf{S}_{}^{\vortex}}
\newcommand\RT{\mathrm{T}} % ordinary triangulation
\newcommand\T{\mathbb{T}} % decorated triangulation
\def\sign{\varepsilon}
\newcommand\RTe{{\RT_\sign}} % ordinary triangulation
\newcommand\RTx{\RT_\times} %
\newcommand\Te{\T_\sign} % decorated triangulation
\newcommand\Tx{\T_\times} % decorated triangulation
\def\sx{\text{\Cancer}}
\def\cont{\widetilde{\psi}_\us}     %contraction
\def\be{\begin{equation}}
\def\ee{\end{equation}}
\newcommand{\Qy}[1]{\textcolor{blue!69}{Qy:#1}}
\def\Phib{\Phi_{\sun}}
\def\Phiv{\Phi_{\vortex}}
\def\phib{\phi_{\sun}}
\def\phiv{\phi_{\vortex}}
\def\pibt{\pi_{B}}
\def\pisbr{\pi_{S}}
\def\rhob{\rho_{\sun}}
\def\rhov{\rho_{\vortex}}
\def\dgen{2\mathbb{N}_{\leq g}-1}
\def\dgene{2\mathbb{N}_{\leq g}}
\newcommand\iv[1]{\underline{#1}}
\newcommand\Co{\on{Co}}
\newcommand\Cop{\on{CO}(\sun)}
\newcommand\CON{\on{CO}^{\mathrm{N}}(\sun)}
\newcommand\COL{\on{CO}^{\mathrm{L}}(\sun)}
\def\tx{\widetilde{x}}
\def\AJ{\on{AJ}}
\def\kg{\kern 0.1em}
\newcommand\sbt{\on{w}}  %element in SBr
\newcommand\lb{\on{b}}
\newcommand{\paiwei}[1]{c(#1)}
\newcommand{\dianshu}[1]{d(#1)}
\newcommand{\newtau}[1]{\tau'_{#1}}
\def\z2{\ZZ_{2}}
\newcommand\qm{\on{S}}  % no marked
\newcommand\qmp{\qm^{\sun}}  % no marked
\newcommand\qmv{\qm^{\vortex}}  % no marked
\newcommand\dsp{\qmp_\Tri}  % no marked
\newcommand\dsv{\qmv_\Tri}  % no marked
\def\TAU{\Lambda_r}
\def\COR{\text{\Taurus}_r}
\def\SQuad{ \on{FQuad}^\times(\surfv) }
\title[Cluster braid group versus braid twist group]
{Decorated Marked Surfaces with vortices: \\Cluster braid group vs. braid twist group}
\author{Yu Qiu}
\address{Qy: Yau Mathematical Sciences Center and Department of Mathematical Sciences, Tsinghua University, 100084 Beijing, China.
    \&
Beijing Institute of Mathematical Sciences and Applications, Yanqi Lake, Beijing, China}
\email{yu.qiu@bath.edu}
\author{Yu Zhou}
\address{Zy:
School of Mathematical Sciences,
Beijing Normal University,
100875 Beijing, China}
\email{yuzhoumath@gmail.com}
\dedicatory{Dedicated to the memory of Idun Reiten}
\begin{document}
%=========================================================
\begin{abstract}
Let $\surfv$ be a marked surface with vortices (=punctures with extra $\ZZ_2$ symmetry).
We study the decorated version $\sov$, where the $\ZZ_2$ symmetry lifts to the relation that the fourth power of the braid twist of any \cp (connecting a decoration in $\Tri$ and a vortex in $\vortex$) is identity.

We prove the following three groups are isomorphic:
King-Qiu's cluster braid group associated to $\surfv$, the braid twist group of $\sov$ and
the fundamental group of Bridgeland-Smith's moduli space %$\FQuad{}{\surfv}$
of $\surfv$-framed GMN differentials.
%and the kernel of the Abel-Jacobi map from the surface braid group of $\sov$ to the first homology group $\Ho{1}(\surfv)=\Ho{1}(\surf)\oplus\ZZP$.
Moreover, we give finite presentations of such groups.

\bigskip\noindent
\emph{Key words:} cluster braid groups, braid twist groups, quadratic differentials, decorated marked surfaces
\end{abstract}
\maketitle

\tableofcontents  %  \addtocontents{toc}{\setcounter{tocdepth}{1}}
\setlength\parindent{0pt}
\setlength{\parskip}{5pt}

%=========================================================
%=========================================================
%\part*{Introduction}

%=========================================================
%=========================================================
\section{Introduction}
%=========================================================
%=========================================================

%=========================================================
\subsection{Motivations}
%=========================================================
%\paragraph{\textbf{Mirror symmetry}}

\paragraph{\textbf{Fundamental groups and $K(\pi,1)$-conjecture}}\

The topology of spaces of abelian or quadratic differentials has attracted a lot of attention. In particular, Kontsevich-Zorich conjecture that (most of) the strata of moduli spaces of quadratic differentials are $K(\pi,1)$, cf. comments in \cite[\S~1.2]{Q24} and references therein. Except for the genus zero case, there are little progress on such a conjecture.

%The classification of connected components of spaces of abelian or quadratic differentials has attracted a lot of attention, and similarly, the question whether spaces of stability conditions are connected is an important question in the topic. We give a short overview of the literature. For differentials, there are two classification questions. For (plain, unframed) differentials, the first result is by Kontsevich-Zorich ([KZ03]) for holomorphic abelian differentials, followed by Lanneau ([Lan08]) for holomorphic quadratic differentials. Boissy first classified components for meromorphic abelian differentials ([Boi15]). See the work of Chen-Gendron [CG22] for the latest results. Equally interesting and challenging is the classification of (Teichmüller-)framed differentials, see [KQ20] for simple zero and higher order pole case and work of Walker ([Wal09]) and Calderon-Salter ([CS21]) for the latest results in the holomorphic case. In almost all known cases components are classified by spin invariants, hyperellipticity and torsion conditions in genus one, some low genus strata of quadratic differentials providing exceptions (see [Lan08] and also [CM14]).

In the previous series of works \cite{QQ,QQ2,QZ2,QZ3,KQ2},
by adding the set $\Tri$ of decorations on an (unpunctured) marked surface $\surf$, we show that the braid twist group of $\surfo$ is the fundamental group of the moduli space of $\surf$-framed quadratic differentials.

However, compared to the seminal works \cite{FST,BS}, the model above fails with the presence of punctures, which correspond to simple/double poles. In this paper, we fix this flaw by composing certain conditions for punctures (which we then call vortices \footnote{\footnotesize{We will use the notation yinyang \Yinyang$ $ to denote a vortex.}}).
%(and non-empty boundaries).

\paragraph{\textbf{Cluster braid groups}}\

One of the key techniques used in \cite{KQ2} is the introduction of cluster braid groups, which are interesting in their own right due to their cluster origin. These groups are naturally related to spherical twist groups in the associated 3-Calabi-Yau categories. One of the byproducts of achieving our goal above is to obtain a presentation of the corresponding cluster braid groups.

%=========================================================
\subsection{Topological realizations of \texorpdfstring{$\z2$}{Z2}-symmetry}\
%=========================================================

First, let us summarize previous related work on the realization of $\z2$-symmetry for punctures.
\begin{itemize}
  \item In \cite{FST}, the $\z2$-symmetry, via tagging endpoints of arcs, was introduced so that the geometry of the marked surfaces matches the corresponding (combinatorics of) the cluster algebras.
  \item In \cite{BQ}, the $\z2$-symmetry was added manually when considering the mapping class group of marked surfaces.
  \item In \cite{QZ1}, we gave geometric models for cluster categories from marked surfaces, where we essentially categorify tagged arcs in \cite{FST}.
  \item In \cite{QZZ}, we gave a geometric model for perfect derived categories of graded skew-gentle algebras via marked surfaces with binary, where a binary is a boundary component with one open marked point and one closed marked point, subject to an equivalent relation $D^2=1$, where $D$ is the Dehn twist along the binary.
\end{itemize}

\paragraph{\textbf{New $\z2$-symmetry realization}}\

Let $\sop$ be a decorated marked surface (DMS) with a set $\sun$ of punctures (cf. \Cref{sec:DMSp}).

Our new insight in this paper comes from the following idea:
\begin{itemize}
  \item In the $q$-deformation series \cite{IQ1,IQ2,IQZ}, a relation between the geometric model of graded gentle algebras and the geometric model of its Calabi-Yau completion was shown by dragging closed marked points (behaved like zeros) into the interior of the surfaces as decorations.
\end{itemize}
What we would like to do is to understand the $\z2$-symmetry realization in \cite{QZZ} by pulling the closed marked point in a binary into the interior as a decoration, and regarding the open marked point as a puncture. More precisely, this leads to the following construction.

\begin{consA}
We will turn punctures into vortices by imposing the condition that
\[
    \Bt{s}^4=1, \quad \forall s\in\DVs.
\]
The resulting surface will be denoted by $\sov$ and called , and referred to as a DMSv (i.e., a decorated marked surface with vortices).
\end{consA}

\begin{vent}
We will use the following convention in all the figures:
\begin{itemize}
    \item Punctures will be drawn as $\sun$ and vortices will be drawn as yinyang \Yinyang$ $ to indicate extra $\z2$-symmetry structure.
    \item For signed vortices, we will draw the following:
    \begin{gather}\label{eq:Yinyang sign}
    \begin{tikzpicture}
    \draw(0,0)node[rotate=-45]{\large{\Yinyang}} node[right]{$\;:=$ \sun$ $ with sign $+$};
    \draw(0,-1)node[rotate=135]{\large{\Yinyang}} node[right]{$\;:=$ \sun$ $ with sign $-$};
    \end{tikzpicture}
    \end{gather}
    %\item The sign switching operation at a vortex will be denote by $\sx$\footnote{ \footnotesize{We call the notation $\sx$ binary, for binary star.} }.
    \item Marked points will be drawn as black bullets {\Large $\bullet$}. When a point could be either a vortex (where the sign is not involved) or a marked point, we will treat it as a marked point.
    \item Decorations will be drawn as red circles {\color{red} $\circ$}.
    \item An open arc is a curve connecting marked points or vortices. Open arcs will be drawn in blue/cyan/green, where a green arc is a loop that is supposed to be the sum of two open arcs.
    \item A closed arc is a curve connecting decorations. Closed arcs will be drawn in red/orange/violet, where a violet arc is a loop that is supposed to be the difference between two open arcs.
    \item A \cp is a simple arc that connects a vortex and a decoration; it is drawn in gray.
\end{itemize}
\end{vent}

\begin{figure}[ht]\centering
\begin{tikzpicture}[scale=.5]
\draw[fill=green!19,green!19](0,0)ellipse (.9 and .9);
\draw[green!80,very thick,-=stealth](0,.9)arc(90:-85:.9);
\draw[green!80,very thick,-stealth](0,-.9)arc(-90:-85-180:.9);
\draw[gray,ultra thick](0,-1)to node[left]{$s$}(0,1);
\begin{scope}[shift={(0,0)}]
\draw[red,ultra thick]
    (0,-1)arc(-90:-270:1.5)arc(90:-90:2.5)arc(-90:-270:3.5);
\draw[\ql,ultra thick]
    (0,1)arc(90:-90:1.5)arc(-90:-270:2.5)arc(90:-90:3.5);
\draw(0,4)\ww(0,-4)\nn(0,-1)\ww(0,1)\vot;

\draw[line width=3pt,gray,-stealth](2,-4)
    to[bend left=-45]
        node[above]{\Large{$\cong$}}node[below]{{$\Bt{s}^4$}}(-2+14,-4);
\draw[line width=3pt,gray!70,-stealth](2,4)
    to[bend left=45]node[above]{{$\Bt{s}^2$}}node[below]{{$\sx$}}(-2+7,4);
\end{scope}

\begin{scope}[shift={(14,0)}]
\draw[fill=green!19,green!19](0,0)ellipse (.9 and .9);
\draw[green!80,very thick,-stealth](0,.9)arc(90:-85:.9);
\draw[green!80,very thick,-stealth](0,-.9)arc(-90:-85-180:.9);
\draw[gray,ultra thick](0,-1)to node[left]{$s$}(0,1);
\draw[red,ultra thick]
    (0,-1)arc(-90:90:2.5);
\draw[\ql,ultra thick]
    (0,1)arc(90:270:2.5);
\draw(0,4)\ww(0,-4)\nn(0,-1)\ww(0,1)\vot;
\end{scope}

\begin{scope}[shift={(21,0)},xscale=-1]
\draw[fill=green!19,green!19](0,0)ellipse (.9 and .9);
\draw[green!80,very thick,-stealth](0,.9)arc(90:-85:.9);
\draw[green!80,very thick,-stealth](0,-.9)arc(-90:-85-180:.9);
\draw[gray,ultra thick](0,-1)to node[left]{$s$}(0,1);
\draw[\qh,ultra thick]
    (0,-1)arc(-90:-270:1.5)arc(90:-90:2.5)arc(-90:-270:3.5);
\draw[blue,ultra thick]
    (0,1)arc(90:-90:1.5)arc(-90:-270:2.5)arc(90:-90:3.5);
\draw(0,4)\ww(0,-4)\nn(0,-1)\ww(0,1)\tov;
\end{scope}

\begin{scope}[shift={(7,0)}]
\draw[fill=green!19,green!19](0,0)ellipse (.9 and .9);
\draw[green!80,very thick,-stealth](0,.9)arc(90:-85:.9);
\draw[green!80,very thick,-stealth](0,-.9)arc(-90:-85-180:.9);
\draw[gray,ultra thick](0,-1)to node[left]{$s$}(0,1);
\draw[\qh,ultra thick]
    (0,-1)arc(-90:-270:2.5);
\draw[blue,ultra thick]
    (0,1)arc(90:-90:2.5);
\draw(0,4)\ww(0,-4)\nn(0,-1)\ww(0,1)\tov;

\draw[line width=3pt,gray,-stealth](2,4)
    to[bend left=45]
        node[below]{\Large{$\cong$}}node[above]{{$\Bt{s}^4$}}(-2+14,4);
\draw[line width=3pt,gray!70,-stealth](2+7,-4)
    to[bend left=-45]node[below]{{$\Bt{s}^2$}}node[above]{{$\sx$}}(-2+14,-4);
\end{scope}
\end{tikzpicture}
\caption{Applying $\Bt{s}^2$: from left to right}
\label{fig:4}
\end{figure}
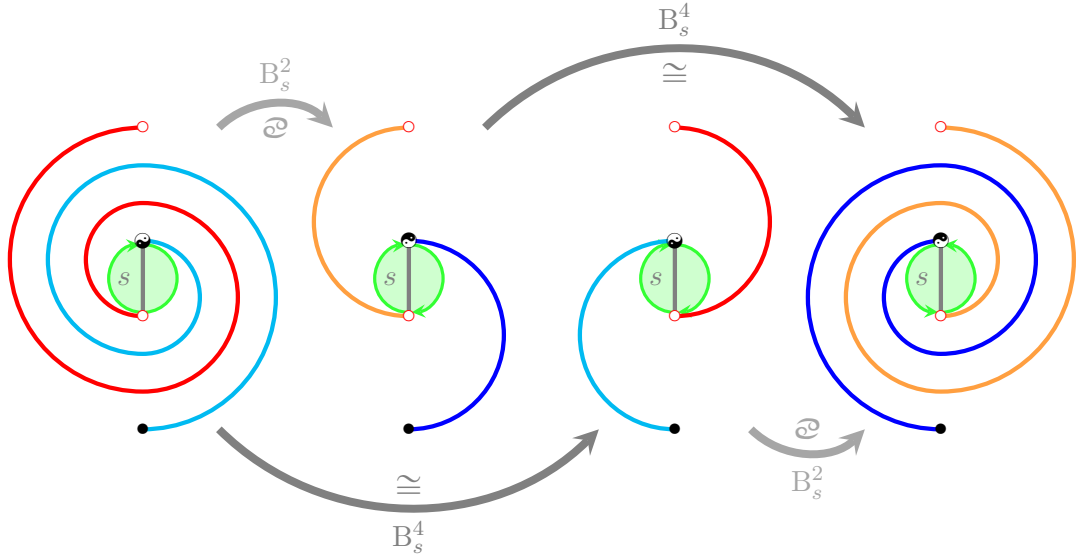

In Figure~\ref{fig:4}:
\begin{itemize}
  \item The green boundary component $\partial$ plays the role of a binary, in the sense of \cite{QZZ}, and corresponds to the neighbourhood of the collision path $s$.
  \item The decoration and the vortex, as endpoints of $s$, can be regarded as closed and open marked points on the binary $\partial$.
  \item The thick gray arrows indicate the identifications of $\Bt{s}^4=1$,
  which corresponds to the relation $D^2=1$ in the binary setting, where $s^2=D$ is the Dehn twist along $\partial$.
\end{itemize}

Denote by $\LL$ the subgroup of the mapping class group of $\sop$ generated by $\Bt{s}^4$ for all $s\in\DVs$.
Then the braid twist group for $\sov$ is
\[
    \BT(\sov)= \BT(\sop)/ \Cop,
\]
where $\Cop=\LL\cap\BT(\sop)$.

It is worth noting that this relation was previously used in \cite{All} to produce a type $D$ braid group. Our result can be seen as a generalization of his work to a more general setting.
%On the level of mapping class groups, the relation in Figure~\ref{fig:4}

%\begin{figure}[ht]\centering
%\begin{tikzpicture}[scale=.5]
%\begin{scope}[shift={(0,0)}]
%\draw[red,very thick]
%    (0,-1)arc(-90:-270:2)arc(90:-90:2.5)arc(-90:-270:3);
%\draw[\hhd,very thick,fill=green!19](0,-1).. controls +(60:4) and +(120:4) ..(0,-1);
%\draw(0,4)\ww(0,-1)\ww(0,1)\vot;
%\end{scope}
%\draw(4,0)node{\Large{$\cong$}};
%\begin{scope}[shift={(6,0)}]
%\draw[red,very thick]
%    (0,-1)arc(-90:90:2.5);
%\draw[\hhd,very thick,fill=green!19](0,-1).. controls +(60:4) and +(120:4) ..(0,-1);
%\draw(0,4)\ww(0,-1)\ww(0,1)\vot;
%\end{scope}
%\end{tikzpicture}
%\caption{Square of twist of an L-arc
%\label{fig:2}
%\end{figure}

%=========================================================
\subsection{Context and results}\
%=========================================================

%=========================================================

In \Cref{sec:pre}, we recall the basics about decorated marked surfaces with punctures (DMSp). In \Cref{part:B}, we study the vortex version (DMSv) of DMSp and calculate a presentation of the braid twist group of DMSv. The key calculation is to find normal generators of the group $\Cop$ mentioned above. The main result of this section is as follows.

\begin{thmB}[{\Cref{thm:pre for sov}}]
Let $\aleph$ be the number of decorations in $\Tri$. Suppose that either $\aleph\geq 5$, or $\aleph=4$ and $2g+b+p-1\leq 2$.
Then the braid twist group $\BT(\dsv)$ admits the following finite presentation:
\begin{itemize}
	\item Generators: $\sigma_1,\cdots,\sigma_{\aleph-1},\tau_1,\cdots,\tau_{2g+b+p-1}$ (see Figure~\ref{fig:BT}).
    \item Relations: for $1\leq i,j\leq \aleph-1$ and $1\leq r,s\leq 2g+b+p-1$,
		\begin{align*}
		&\Co(\sigma_i,\sigma_j)&&  \text{if $|i-j|>1$}\\
		&\Br(\sigma_i,\sigma_{j})&&\text{if $|i-j|=1$}\\
		&\Co(\tau_r,\sigma_i)&&\text{if $i>2$}\\
		&\Br(\tau_r,x)&&\\
		&\Br(\tau_r,y)&&\\
		&\Co({\tau_r}^y,{\tau_s}^x)&&\text{if $s<r$ and $s\notin\dgen$}\\
		&\Co({\tau_r}^{\iv{y}},{\tau_s}^x)&&\text{if $s<r$ and $s\in\dgen$}\\
        &\Co(\tau_r,\tau_{r-1})&&\text{if $2g+b\leq r\leq 2g+b+p-1$.}
		\end{align*}
\end{itemize}
\end{thmB}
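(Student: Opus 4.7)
The plan is to obtain the presentation by combining a presentation of $\BT(\sop)$ (the DMSp case, established in the preliminary sections of \Cref{sec:pre}) with the quotient description $\BT(\sov)=\BT(\sop)/\Cop$, where $\Cop$ is the normal subgroup measuring the collapse $\Bt{s}^4=1$. Thus the proof decomposes into three stages: (i) fix a finite presentation of $\BT(\sop)$ in terms of braid twists along a chosen system of arcs whose dual graph has the $A_{\aleph-1}$ chain of $\sigma_i$'s together with the handle/boundary/puncture arcs $\tau_r$; (ii) identify an explicit finite set of normal generators for $\Cop$ inside $\BT(\sop)$; (iii) perform Tietze transformations to show that after adding these relations the surviving presentation is the one stated.

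For stage (i), I would use a standard triangulation-based generating set giving arcs dual to the $\sigma_i,\tau_r$ of Figure~\ref{fig:BT}; the Artin relations $\Br$ and $\Co$ between the $\sigma$'s and most of the $\tau$'s follow directly from the intersection pattern of these arcs, exactly as in the unpunctured theory of \cite{QQ,QQ2,QZ2,KQ2}. The only relations that genuinely depend on the vortex structure are the last line $\Co(\tau_r,\tau_{r-1})$ for $2g+b\le r\le 2g+b+p-1$, which are the puncture-to-vortex specializations. The role of the numerical hypothesis ``$\aleph\ge 5$, or $\aleph=4$ and $2g+b+p-1\le 2$'' is precisely to guarantee that the dual graph embeds in $\sov$ in the standard way without forced coincidences of arcs, so that no extra small-case relations appear; I would handle the borderline $\aleph=4$ cases by a direct check against the $A_3$ baby model.

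Stage (ii) is the main obstacle. An element of the form $\Bt{s}^4$ lies in the mapping class group $\LL$ rather than in $\BT(\sop)$ itself, and the task is to express $\Cop=\LL\cap\BT(\sop)$ concretely. My approach is to use Figure~\ref{fig:4}: the identification $\Bt{s}^4\cong 1$ comes from a $\sx$-move that swaps the two ends of the collision path $s$, so conjugation by $\Bt{s}^2$ on any braid twist $\Bt{\eta}$ produces a braid twist along the image $\sx(\eta)$; the commutator $[\Bt{s}^4,\Bt{\eta}]=\Bt{\eta}\cdot\Bt{\sx(\eta)}^{-1}$ lies in $\Cop$ and, after varying $\eta$ over the generating arcs meeting $s$, yields a full set of normal generators. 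I would show that, under the dictionary between collision paths and the puncture-adjacent arcs carrying the $\tau_r$ for $r\ge 2g+b$, these commutator relations simplify (modulo the braid relations of stage (i)) to exactly the commutations $\Co(\tau_r,\tau_{r-1})$ listed in the theorem. This step parallels Allcock's argument in \cite{All} producing the type $D$ braid group, and I expect it to reduce, after careful bookkeeping, to a finite check on a neighbourhood of each vortex.

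Finally, stage (iii) is mostly formal: having added the new relations coming from $\Cop$, I would verify by Tietze that no other generators of $\BT(\sop)$ are needed and no previously listed relation becomes redundant (except possibly in the small cases excluded by the hypothesis). The hardest bookkeeping will be confirming that the mixed relations $\Co({\tau_r}^y,{\tau_s}^x)$ and $\Co({\tau_r}^{\iv y},{\tau_s}^x)$ are preserved in the quotient; here the parity condition $s\in\dgen$ versus $s\notin\dgen$ reflects precisely the orientation flip induced by $\sx$, so it should fall out of the same computation used to derive the vortex commutations. The net outcome is the advertised finite presentation of $\BT(\dsv)$.
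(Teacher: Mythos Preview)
Your high-level strategy---take the known finite presentation of $\BT(\sop)$, then quotient by a set of normal generators for $\Cop=\LL\cap\BT(\sop)$---is exactly the paper's approach. Stage~(i) is already \Cref{prop:pre for sob} (quoted from \cite{QZ3}); the numerical hypothesis on $\aleph$ is inherited from there, not from anything about vortices. Stage~(iii) is vacuous: relations of $\BT(\sop)$ pass automatically to any quotient, so there is nothing to ``confirm'' about the mixed relations.

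The genuine gap is in stage~(ii). Your commutators $[\Bt{s}^4,\Bt{\eta}]$ certainly lie in $\Cop$, and showing that each $[\tau_r,\tau_{r-1}]$ for $r\ge 2g+b$ lies in $\Cop$ is the easy direction (this is \Cref{pp:non-trivial}). What is missing is the converse: why should these elements \emph{normally generate all of} $\Cop$? An arbitrary element of $\Cop$ is a product of conjugates $(\Bt{s_i}^{4})^{\sbt_i}$, with $\sbt_i\in\SBr(\sop)$, that happens to land in $\BT(\sop)$; there is no a~priori reason such a product factors through your local commutators, and a ``finite check on a neighbourhood of each vortex'' cannot detect this global intersection. (Incidentally, your formula $[\Bt{s}^4,\Bt{\eta}]=\Bt{\eta}\cdot\Bt{\sx(\eta)}^{-1}$ conflates $\Bt{s}^2$, which realises the $\sx$-swap, with $\Bt{s}^4$.)

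The paper's actual argument for the hard inclusion $\Cop\subseteq\CON$ (\Cref{prop:==}) is global and uses ingredients your proposal does not mention: the Abel--Jacobi short exact sequence $1\to\BT(\sop)\to\SBr(\sop)\to\Ho{1}(\qmp)\to1$ of \Cref{thm:QZ+}, the decomposition $\SBr(\sop)=N\cdot H$ with $H=\pi_1(\qmp,Z_1)$ free, and the identity $H\cap N=[H,H]$. Given $\lb\in\Cop$ written as $\prod_i(\theta_i^2)^{\sbt_i}$, one factors $\sbt_i=n_ih_i$, pushes the $n_i$'s out, and deduces that $\prod_i(\theta_i^2)^{h_i}\in[H,H]$. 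Since $H$ is free and abelianises to a free abelian group on the $\delta_r$'s, the $\theta_i$'s must cancel in pairs, which drives an induction on the number of factors. This cancellation-in-a-free-group step is the heart of the proof and has no counterpart in your proposal.
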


%=========================================================
In \Cref{part:C}, we recall the basics about the cluster exchange groupoids and cluster braid groups. In \Cref{sec:EG}, we construct the exchange graph of signed/tagged triangulations for DMS with vortices. Based on this construction, in \Cref{sec:BT=CT} we prove that the cluster braid group $\CBr(\Tx)$ is naturally isomorphic to the braid twist group:

\begin{thmC}[\Cref{thm:BT=CT}]\label{thmC}
There is a natural isomorphism
$\BT(\Tx)\cong\CBr(\Tx)$.
\end{thmC}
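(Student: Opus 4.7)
The plan is to compare the two groups through their natural interaction with the exchange graph $\EGx$ of signed/tagged triangulations constructed in Section~\ref{sec:EG}. The cluster braid group $\CBr(\Tx)$ is defined via the cluster exchange groupoid built from $\EGx$ (with flips as morphisms), while $\BT(\Tx)$ acts on $\sov$ by mapping classes that permute the vertices of $\EGx$. Making these two structures match on a common family of arcs/flips will produce the isomorphism.

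First I would define a homomorphism $\Phi\colon \CBr(\Tx)\to\BT(\Tx)$ on generators. Each generator of $\CBr(\Tx)$ corresponds to a loop in $\EGx$ based at $\Tx$ coming from either a square (a $\Co$-relation between two commuting flips) or a pentagon/hexagon (a $\Br$-type relation between adjacent flips), together with the new loops forced by the collision-path condition $\Bt{s}^4=1$ of Construction~A. To each such flip at an arc $\gamma$ in the current triangulation I assign the braid twist $\Bt{\gamma}$ along the associated closed arc in $\sov$. One then checks that the defining relations of the cluster exchange groupoid are sent to identity: the square relations follow because braid twists along disjoint arcs commute; the pentagon/hexagon relations are the classical braid relations for braid twists of adjacent arcs, already established in \cite{QQ,QQ2,QZ2} in the unpunctured decorated setting and carried over to $\sov$ via localization near the vortex; and the vortex relation $\Bt{s}^4=1$ is satisfied by construction in $\BT(\sov)=\BT(\sop)/\Cop$.

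For the inverse direction I would invoke the finite presentation of $\BT(\Tx)$ given by Theorem~B. Its generators $\sigma_1,\dots,\sigma_{\aleph-1},\tau_1,\dots,\tau_{2g+b+p-1}$ are all braid twists along specific arcs in a fixed triangulation, and all relations are of $\Co$- or $\Br$-type among these arcs, or the collision-path relation near a vortex. Each of these matches a specific loop or composition of loops in $\EGx$, so one obtains a well-defined homomorphism $\Psi\colon \BT(\Tx)\to\CBr(\Tx)$ by sending each $\sigma_i$ and $\tau_r$ to the corresponding cluster braid generator. The compositions $\Phi\circ\Psi$ and $\Psi\circ\Phi$ act as the identity on generators, giving the isomorphism. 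Finally, naturality means that the isomorphism is equivariant with respect to changing the base triangulation by a sequence of flips, which follows because both groups transform in the same way under conjugation by a flip-path in $\EGx$.

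The main obstacle I expect is verifying that the vortex relation $\Bt{s}^4=1$ is naturally present on the cluster braid side, i.e.\ identifying the precise loop in $\EGx$ whose image under $\Phi$ must equal this relation. This requires a careful analysis of the local model near a vortex, using the signed/tagged triangulation combinatorics of Section~\ref{sec:EG}: one must locate a distinguished cycle in $\EGx$ whose four consecutive flips correspond to the four applications of $\Bt{s}$ depicted in Figure~\ref{fig:4}. Once this local calculation is in place, the rest of the argument proceeds by assembling local flips into global ones, exactly as in \cite{KQ2} in the unpunctured case, with the extra $\z2$-symmetry accommodated by the quotient $\BT(\sop)/\Cop$.
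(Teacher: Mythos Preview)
Your construction of $\Psi\colon \BT(\Tx)\to\CBr(\Tx)$ is on the right track and is essentially what the paper does in Corollary~\ref{cor:surj}, though the paper uses the alternative presentation of Proposition~\ref{prop:alt pre} (with triangle and rectangle relations) rather than Theorem~B directly, because its generators correspond one-to-one to arcs of a fixed dual triangulation $\T_0^*$; the $\Co$/$\Br$/Tr/Rec relations are then verified in $\CBr$ via Lemma~\ref{lem:cobr} and a conjugation argument (Lemma~\ref{prop:cbg}).

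The direction $\Phi\colon\CBr(\Tx)\to\BT(\Tx)$, however, has a genuine gap. You describe the generators of $\CBr(\Tx)$ as ``loops coming from a square or a pentagon/hexagon'', but the SPH cycles are the \emph{relations} imposed on the path groupoid to form $\eg(\smv)$, not generators of its fundamental group. The generators of $\CBr(\RTx)=\pi_1(\eg(\smv),\RTx)$ are the local twists $t_\gamma$ (the 2-cycles at $\RTx$), and there is no a~priori finite presentation of $\CBr$ against which you could check relations; assigning a braid twist to a single flip does not define a group homomorphism out of $\pi_1$, since a flip is an edge, not a loop.

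The paper circumvents this by never presenting $\CBr$. Instead it constructs the exchange groupoid $\eg(\sov)$ upstairs and proves (Theorem~\ref{thm:BT}) that the forgetful functor $F_*^{\P}\colon\egt(\sov)\to\eg(\smv)$ is a Galois $\BT(\sov)$-covering. Covering theory then yields
\[
1\to\pi_1(\egt(\sov),\Tx)\to\pi_1(\eg(\smv),\RTx)\xrightarrow{\,i_{\Tx}\,}\BT(\sov)\to1,
\]
and by Lemma~\ref{lem:neg bt} the map $i_{\Tx}$ sends $t_{F(\gamma_i)}$ to $B_{\eta_i}^{-1}$. Since $i_{\Tx}$ and the surjection $c_{\Tx}$ of Corollary~\ref{cor:surj} are mutual inverses on generators, both are isomorphisms and $\pi_1(\egt(\sov),\Tx)=1$. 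Regarding the obstacle you anticipate: the vortex relation on the cluster side is not a separate cycle encoding $\Bt{s}^4=1$; it is already built into the square relations Sq-2 and Sq-3 of Definition~\ref{def:gpds} (two arcs bounding a once-vortexed digon, or forming a self-folded triangle), which give $\Co(t_i,t_j)$ via Lemma~\ref{lem:cobr}. On the braid-twist side this commutation is precisely Proposition~\ref{pp:non-trivial}, which holds in $\BT(\sov)$ because of $\Bt{s}^4=1$.
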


%=========================================================
In \Cref{part:D}, we apply the results above to obtain the following.

\begin{thmD}[\Cref{thm:pi1}]
For a DMSv $\sov$, we have
\begin{equation}\label{eq:thmC}
  \begin{cases}
     \pi_0\FQuad{}{\sov}\cong\Ho{1}(\surfv),\\
     \pi_1\FQuad{\emph{}\Te}{\sov}=1.
  \end{cases}
\end{equation}
The second one is equivalent to $\pi_1\FQuad{\emph{}}{\surfv}=\BT(\sov)$.
\end{thmD}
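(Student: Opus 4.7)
The strategy is to compute the homotopy invariants of the framed moduli spaces combinatorially via the exchange graph/groupoid $\EGT(\sov)$ of signed triangulations constructed in \Cref{sec:EG}. Extending the Bridgeland--Smith cell decomposition to the DMSv setting, each signed triangulation determines an open cell in $\FQuad{\Te}{\sov}$, and flips between adjacent triangulations produce the codimension-one walls, so both $\pi_0$ and $\pi_1$ of $\FQuad{\Te}{\sov}$ are read off from the 2-truncation of $\EGT(\sov)$. Forgetting signs governs $\FQuad{}{\sov}$, and further forgetting decorations governs $\FQuad{}{\surfv}$.

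For the isomorphism $\pi_0\FQuad{}{\sov}\cong\Ho{1}(\surfv)$, I would identify the connected components of $\FQuad{}{\sov}$ with orbits of the sign-flipping action induced by flips on signed triangulations. Each flip at an edge incident to a vortex alters the sign pattern at that vortex (as illustrated in Figure~\ref{fig:4}), and the resulting equivalence relation on $\ZZ_2^{\vortex}$ matches the map to $\Ho{1}(\surfv)$ under which each vortex represents the homology class of a small loop around it; a dimension count then pins down the quotient.

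For the vanishing $\pi_1\FQuad{\Te}{\sov}=1$, I would use Theorems B and C to obtain the presentation $\BT(\sov)\cong\CBr(\Tx)$ whose generators and relations correspond to the edges and the square/pentagon 2-cells of $\EGT(\sov)$. Since the signed cover $\FQuad{\Te}{\sov}\to\FQuad{}{\surfv}$ has deck group $\BT(\sov)$ (modulo the $\pi_0$-ambiguity above), the total space is simply connected if and only if $\pi_1\FQuad{}{\surfv}=\BT(\sov)$, giving the stated equivalence. This reduces the $\pi_1$-claim to showing that the combinatorial 2-cells exhaust all relations.

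The main obstacle will be verifying the completeness of the 2-cell presentation for $\pi_1$, and in particular that the vortex relation $\Bt{s}^4=1$ from Construction~A is manifestly realised by 2-cells in $\EGT(\sov)$. This requires a local analysis near each vortex, tracking signed flips around a \cp in the spirit of Figure~\ref{fig:4} and matching the monodromy of the local cover with the fourth power of a braid twist. Once this identification is secured, the rest of the argument is a formal application of the groupoid-presentation machinery set up in \Cref{sec:BT=CT} and \Cref{sec:EG}.
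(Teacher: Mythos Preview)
Your overall architecture---embed the exchange graph as a skeleton of $\FQuad{\Te}{\sov}$, invoke Theorem~C to see that $\egt(\sov)$ is simply connected, and use covering theory for the rest---is exactly the route the paper takes (following \cite{KQ2,Q24}). But two of your steps are genuinely off.

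First, your $\pi_0$ argument is wrong. Forward flips of signed triangulations do \emph{not} alter the sign at a vortex: by Definition~\ref{def:EGv} one has $\mu_\gamma(\T,\sign)=(\mu_\gamma(\T),\sign)$; signs only change through the $\sim$ equivalence at self-folded triangles. More seriously, $\Ho{1}(\surfv)=\Ho{1}(\surf)\oplus\ZZ_2^{\oplus\vortex}$ carries a genuine surface-homology summand that no ``sign-flipping'' description can detect. The correct argument is pure covering theory: by diagram~\eqref{eq:quad0} the map $\FQuad{}{\sov}\to\SQuad$ is an $\SBr(\sov)$-cover, and once the second claim gives $\pi_1\SQuad\cong\BT(\sov)$, the short exact sequence~\eqref{eq:SES-v} identifies the component set with $\SBr(\sov)/\BT(\sov)\cong\Ho{1}(\surfv)$.

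Second, you have misidentified the main obstacle. The relation $\Bt{s}^4=1$ is not something to be \emph{verified} in the moduli space: it is built into the very definition of $\FQuad{}{\sov}$ (Definition~\ref{def:FQuad O}), where two framings are declared equivalent whenever they differ by an element $g\in\LL$. The actual analytic input you need is (i)~that the complement of $B_2$ has codimension two, so the skeleton inclusion~\eqref{eq:Sembed} induces a surjection $\pi_1\EGT(\sov)\twoheadrightarrow\pi_1\FQuad{\Te}{\sov}$ (Lemma~\ref{lem:iota_surf}), and (ii)~that the square, pentagon \emph{and hexagon} relations of Definition~\ref{def:gpd.b} hold as actual homotopies in the moduli space (\cite[Prop.~4.14]{KQ2}). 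You omit the hexagon relations and the codimension argument entirely. Once (i) and (ii) are in place, the surjection factors through $\pi_1\egt(\sov)$, which vanishes by \Cref{thm:BT=CT}, and the proof is complete.
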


This theorem is the vortex version of \cite[Thm.~1.1]{Q24}, which upgrades the unpunctured version in \cite{KQ2}. The new idea/trick here is the construction of the $\sov$-framed version $\FQuad{\T}{\sov}$ introduced in \Cref{def:FQuad O}.

%=========================================================
\subsection{Comparison with other works}
%=========================================================

\paragraph{\textbf{Braid groups associated to quivers with potential}}\

By presentations of cluster exchange groupoids provided in the recent work \cite[\S~3]{BHR},
we obtain the following corollary of \Cref{thmC} directly.
Note that one can also deduce this from Theorem~\textbf{B} (but with tedious calculations).

\begin{CorE}
If any arcs in $\Tx$ is simple (with different endpoints) and the associated quiver $Q_{\Tx}$ has no double arrows,
then $\BT(\sov) = \BT(\Tx^*)$ is canonically isomorphic to the \emph{algebraic twist group} associated to the quiver with potential $(Q_{\Tx},W_{\Tx})$, in the sense of \cite[Def.~10.1]{QQ}.

In other words, it admits a presentation with
\begin{itemize}
  \item Generators: $b_\eta$ for each $\eta\in\Tx^*$;
  \item Relations:
    \begin{itemize}
      \item For any two generators, there is a commutation or a braid relation between them depending on if there is zero or one arrow between the corresponding vertices in $Q_{\Tx}$.
      \item Each term/cycles in the potential $W_{\Tx}$ contributes a cyclic relation \eqref{eq:cyclic-R}, cf. \cite[\S~10]{QQ} for more details.
    \end{itemize}
\end{itemize}
\end{CorE}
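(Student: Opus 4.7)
The plan is to chain together two inputs: Theorem~C and the presentation of cluster exchange groupoids established in \cite[\S~3]{BHR}. Theorem~C supplies a canonical isomorphism $\BT(\sov)\cong\CBr(\Tx)$, so it suffices to extract a presentation of the cluster braid group $\CBr(\Tx)$ and match it against the definition of the algebraic twist group $\BT(\Tx^*)$ from \cite[Def.~10.1]{QQ}.

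First, I would invoke the BHR presentation of the cluster exchange groupoid $\CEG$ in the regime where its hypotheses hold. By assumption every arc in $\Tx$ is simple with distinct endpoints and $Q_{\Tx}$ has no double arrows, so every flip corresponds to a mutation at a vertex whose incident arrows are simple; this is precisely the setting in which \cite[\S~3]{BHR} produces a finite presentation of $\CEG$. In that presentation, the $1$-cells are generated by flips (labelled by $\eta\in\Tx^*$), and the $2$-cells/relations are exactly the \emph{square} relations (for pairs of vertices of $Q_{\Tx}$ with no arrow between them), the \emph{pentagon} relations (for pairs with a single arrow), and the \emph{cyclic} relations coming from the $3$-cycles of the potential $W_{\Tx}$.

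Next, I would specialise this groupoid presentation to the basepoint $\Tx\in\CEG$ and read off a group presentation of the vertex group $\CBr(\Tx)$ by the standard procedure: choose a spanning tree of the $1$-skeleton, introduce one loop generator per non-tree edge, and pull each $2$-cell back to $\Tx$ to obtain a relation. Under the hypotheses, the generators are in bijection with the vertices of $Q_{\Tx}$, equivalently with the arcs $\eta\in\Tx^*$, yielding one $b_\eta$ per arc; pairs of arcs produce a commutation or a braid relation depending on whether the corresponding vertices share $0$ or $1$ arrow in $Q_{\Tx}$; and each cubic term of $W_{\Tx}$ yields a cyclic relation \eqref{eq:cyclic-R}. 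This is verbatim the presentation of $\BT(\Tx^*)$ in \cite[Def.~10.1]{QQ}, which gives the asserted canonical isomorphism.

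The main obstacle is the dictionary between the BHR conventions (generators indexed by flips, relations indexed by $2$-cells of $\CEG$) and the algebraic twist conventions of \cite{QQ} (generators indexed by vertices of $Q_{\Tx}$, relations indexed by edges and $3$-cycles): one must verify that the orientations of $3$-cycles and the basepoint choices line up so that the cyclic relations coincide on the nose, and that the local combinatorics around each flip correctly encodes the adjacency in $Q_{\Tx}$. Once this bookkeeping is carried out, the identification is formal and requires no calculation; alternatively, the same presentation could be reached by massaging the generators and relations of Theorem~B, but that route is substantially more tedious.
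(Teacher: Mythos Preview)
Your proposal is correct and matches the paper's own argument: the paper states that Corollary~E follows directly from Theorem~C together with the presentation of cluster exchange groupoids in \cite[\S~3]{BHR}, which is exactly the chain you describe. One small imprecision: you speak only of ``3-cycles'' and ``cubic terms'' of $W_{\Tx}$, but the potential in the punctured/vortexed surface case also carries longer cycles (around vortices), and the corresponding cyclic relations \eqref{eq:cyclic-R} are of general rank~$m$; this does not affect the argument, since BHR's presentation and \cite[Def.~10.1]{QQ} both handle arbitrary cycle lengths.
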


\paragraph{\textbf{DMS with mixed punctures and vortices}}\

Theorem~\textbf{B}, Theorem~\textbf{D} and Corollary~\textbf{E}  can be easily generalized to the case where the DMS has both punctures and vortices, introduced in \cite{Q24}.
We omit this generalization as it involves heavy notation and yields few new insights.

\paragraph{\textbf{Cluster Weyl groups}}\
Define the \emph{Weyl group} $W(\Tx^*)$ associated to $\CBr(\Tx^*)$ to be its quotient group by adding square-to-one relations for each generator.
Then by comparing the presentations, one sees that this is precisely the one studied in \cite{FeSTu}, cf. \cite{FeLSTu}.

Note that here, as in \cite{FST, FeSTu}, only vortices (=type D punctures) are considered.
In \cite{BHR}, other type of punctures (B/C) are considered.
Using folding/covering technique, one can easily generalize the statement above to the generality considered in \cite{BHR}, where we omit the details again.

Another related work is \cite{FeMa}, who studies braid group of the complex reflection group $G(d,d,n)$ via disk model with orbifold points.
%%%=========================================================
%\subsection*{Generalization to DMS with mixed punctures \& vortices}
%%%=========================================================
%\Cref{thm:B} and \Cref{thm:D} can be easily generalized to the case when only part of punctures are turned into vortices, which is the setting in \cite{Q24}.
%We presentation this easy generalization in \Cref{app:DMSx}.

%=========================================================
\subsection*{Acknowledgments}
%=========================================================
Qy would like to thank Alastair King, Zhe Han, and Akishi Ikeda for interesting discussions on the moduli space of quadratic differentials with punctures, which inspired the idea of vortices. This work was supported by National Natural Science Foundation of China (Grant Nos.~12425104, 12271279 and 12031007) and National Key R\&D Program of China (No. 2020YFA0713000).

%=========================================================
%\part{Braid Twist Groups}\label{part:B}
%=========================================================

%=========================================================
%=========================================================
\section{Preliminaries}\label{sec:pre}
%=========================================================
%=========================================================

%=========================================================
\subsection*{Convention}
%=========================================================

\begin{itemize}
\item \textbf{Composition:} We write $a\circ b$ as $a\kg b$.
\item \textbf{Inverse} We write $s^{-1}$ as $\iv{s}$.
\item \textbf{Conjugation:} We write $\iv{b}\kg a\kg b$ as $a^b$. Then, $\iv{(a^b)}=\iv{a}^b$.
\item \textbf{Relation:} We use the following notation for relations throughout this paper.
\[\begin{array}{llll}
\text{Commutation relation}&\Co(a,b)&\colon& a\kg b=b\kg a.\\
\text{Braid relation}&\Br(a,b)&\colon& a\kg b\kg a=b\kg a\kg b.\\
\text{Triangle relation}&\on{Tr}(a,b,c)&\colon& a\kg b\kg c\kg a=b\kg c\kg a\kg b=c\kg a\kg b\kg c.\\
\text{Rectangle relation} & \on{Rec}(a,b,c,d)&\colon&a\kg b\kg c\kg d\kg a\kg b=b\kg c\kg d\kg a\kg b\kg c=c\kg d\kg a\kg b\kg c\kg d=d\kg a\kg b\kg c\kg d\kg a.
\end{array}\]
\end{itemize}

Note that the triangle and rectangle relations are special cases of cyclic relations in \cite[Def.~10.1]{QQ}. More precisely, a cyclic relation $\on{Cyc}(a_1,\cdots,a_m)$ (of rank $m$) is one of the equivalent relations of the form (provided commutative/braid relations between $a_s$'s, cf. \cite[Lem.~10.2]{QQ})
\begin{gather}\label{eq:cyclic-R}
    \prod_{s=i}^{i+2m-3} a_s = \prod_{s=j}^{j+2m-3} a_s
\end{gather}
for any chosen $1\le i\ne j\le m$ and the subscript is taken in $\ZZ_m$.

The following two subsets of the set $\mathbb{N}$ of natural numbers will be used throughout the paper:
\[\dgene=\{2h\mid h\in\mathbb{N}\text{ with }1\leq h\leq g \};\]
\[\dgen=\{2h-1\mid h\in\mathbb{N}\text{ with }1\leq h\leq g \}.\]

%=========================================================
\subsection{Decorated surfaces with puncture}\label{sec:DMSp}
%=========================================================

A \emph{punctured surface} $\qmp=(\qm,\sun)$ is an oriented topological surface $\qm$ with nonempty boundary $\partial\qm$ and a set $\sun=\{V_1,\cdots,V_p\}$ of punctures in the interior $\qm\setminus\partial\qm$. We use the superscript $\sun$ on $\qm$ to emphasize the role of the set of punctures.

Up to homeomorphism, $\qmp$ is determined by the following data:
\begin{itemize}
\item the genus $g$ of the surface $\qm$;
\item the number $b$ of connected components of $\partial\qm$;
\item the number $p=\#\sun$ of punctures.
\end{itemize}

The \emph{mapping class group} $\MCG(\qmp)$ of $\qmp$ is defined to be the group of (isotopy classes of) homeomorphisms of $\qm$ that preserve $\sun$ setwise.

A \emph{decorated surface} $\dsp=(\qm,\sun,\Tri)$ is obtained from $\qmp$ by decorating a set $\Tri=\{Z_1,\cdots,Z_{\aleph}\}$ of points, called \emph{decorations}. The \emph{mapping class group} $\MCG(\dsp)$ of $\dsp$ is defined to be the group of (isotopy classes of) homeomorphisms of $\qm$ that preserve $\sun$ and $\Tri$, respectively, setwise.

%=========================================================
\subsection{Braid twists and L-twists}
%=========================================================

Let $\qm^\circ=\dsp-\partial\qm-\sun-\Tri$.

\begin{definition}\label{def:arcs}
The following types of curves on $\dsp$ are considered.
\begin{itemize}
\item A \emph{closed arc} in $\dsp$ is a simple curve whose interior is in $\qm^\circ$ and that connects two different decorations in $\Tri$. Denote by $\CA(\dsp)$ the set of closed arcs in $\dsp$.
\item An \emph{L-arc} $\eta$ in $\dsp$ is a simple curve with interior lying in $\qm^\circ$ such that its endpoints coincide at a decoration in $\Tri$, and it is not isotopic to a point.
\item A \emph{general closed arc} in $\dsp$ is either a closed arc or an L-arc. Denote by $\overline{\CA}(\dsp)$ the set of general closed arcs in $\dsp$.
\item A \emph{\cp}in $\dsp$ is a simple curve whose interior is in $\qm^\circ$ and which connects a decoration and a puncture. Denote by $\DVs$ the set of collision paths.
\end{itemize}
\end{definition}

\begin{definition}
The \emph{braid twist} $B_{\eta}$ along either a closed arc or a \cp $\eta$ is defined by the upper picture of Figure~\ref{fig:T}. The \emph{L-twist} $\mathrm{L}_{\delta}$ along an L-arc $\delta$ is defined by the lower picture of Figure~\ref{fig:T}.
\end{definition}

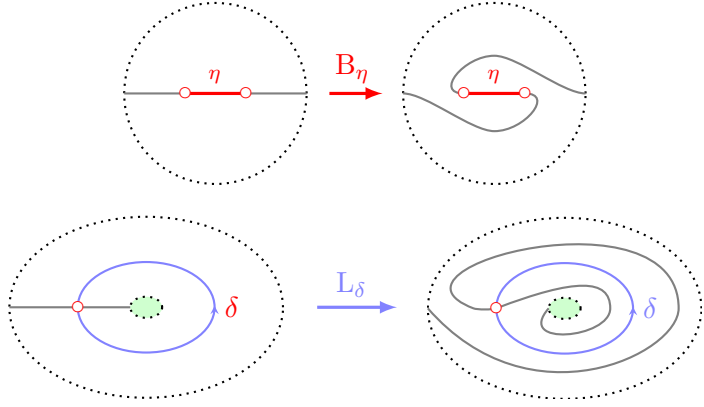
\begin{figure}[ht]\centering
\begin{tikzpicture}[scale=.2]
  \draw[dotted,thick](0,0)circle(6)node[above,red]{$_\eta$};
  \draw[red, very thick](-2,0)to(2,0);
  \draw[gray,thick](2,0)to(6,0)(-2,0)to(-6,0);
  \draw(-2,0)\ww(2,0)\ww;
  \draw[red,thick](0:7.5)edge[very thick,->,>=latex](0:11)
    (0:9)node[above]{$\Bt{\eta}$};
\end{tikzpicture}\;
%=======================================================
\begin{tikzpicture}[scale=.2]
  \draw[dotted,thick](0,0)circle(6)node[above,red]{$_\eta$};
  \draw[red, very thick](-2,0)to(2,0);
  \draw[gray,thick](2,0).. controls +(0:2) and +(0:2) ..(0,-2.5)
    .. controls +(180:1.5) and +(0:1.5) ..(-6,0)
    (-2,0).. controls +(180:2) and +(180:2) ..(0,2.5)
    .. controls +(0:1.5) and +(180:1.5) ..(6,0);
  \draw(-2,0)\ww(2,0)\ww;
\end{tikzpicture}

	\begin{tikzpicture}[xscale=.3,yscale=.2]
	\draw[dotted,thick](0,0)circle(6);
    \draw[dotted,thick,fill=green!19](0,0)circle (.7);
	\draw[gray,thick](-6,0)to(-.7,0);
	\draw[\hhd,thick] (0,0) circle (3);
	\draw[\hhd,<-,>=stealth](3,0.1)to(3,-0.01);
	\draw(-3,0)\ww(3,0)node[right,red]{$\delta$};
	\draw[\hhd](0:7.5)edge[very thick,->,>=latex](0:11);
    \draw[\hhd,thick](0:9)node[above]{$\mathrm{L}_{\delta}$};
	\end{tikzpicture}\quad
	%=======================================================
	\begin{tikzpicture}[xscale=.3,yscale=.2]\clip(-6,8)rectangle(6,-6);
	\draw[dotted,thick](0,0)circle(6);
    \draw[dotted,thick,fill=green!19](0,0)circle (.7);
	\draw[\hhd,thick] (0,0) circle (3);
	\draw[\hhd,<-,>=stealth](3,0.1)to(3,-0.01);
	\draw[gray,thick](-6,0)
    .. controls +(-60:9) and +(-90:3) ..(5,0)
	.. controls +(90:8) and +(150:5.5) ..(-4,0)
	.. controls +(-30:.2) and +(-150:.2) ..(-3,0)
	.. controls +(30:4) and +(90:2) ..(2,0)
	.. controls +(-90:2) and +(-120:3) ..(-.7,0);
	\draw(-3,0)\ww(3,0)node[right,\hhd]{$\delta$};
	\end{tikzpicture}
	\caption{The braid twist and L-twist}
	\label{fig:T}
\end{figure}

We have the following well-known formula
\begin{equation}\label{eq:Psi}
    B_{\Psi(\eta)}=\Psi\circ B_{\eta}\circ \Psi^{-1},
\end{equation}
for any $\Psi\in\MCG(\dsp)$ and any $\eta$ belonging to either $\CA(\dsp)$ or $\DVs$.

For any \cp $s\in\DVs$, we associate an L-arc $\delta=s^\circlearrowleft$ which encloses it, as shown in Figure~\ref{fig:comp}, which is called the \emph{completion} of $s$.

\begin{figure}[htpb]
    \centering
    \begin{tikzpicture}[scale=.5]\clip(-4,2)rectangle(2,-2.5);
  \draw[\hhd,very thick](-3,0).. controls +(45:7) and +(-45:7) ..(-3,0)
        (-1,-1.3)node[below]{$\delta=s^\circlearrowleft$};
  \draw[gray,very thick](-3,0)tonode[above]{$s$}(0,0);
  \draw(-3,0)\ww(0,0)\dpole;
\end{tikzpicture}
    \caption{The completion of a \cp}
    \label{fig:comp}
\end{figure}

Although the braid twist along a \cp does not lie in $\MCG(\dsp)$, its square does.

\begin{lemma}\label{lem:non-trivial}
    Let $s\in\DVs$ and $\delta=s^\circlearrowleft$. Then $\Bt{s}^2=\mathrm{L}_{\delta}$ in $\MCG(\dsp)$.
\end{lemma}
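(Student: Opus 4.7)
The plan is to reduce the identity to a local calculation on a disk neighborhood of the collision path $s$. Let $Z \in \Tri$ and $V \in \sun$ denote the endpoints of $s$, and fix a closed disk $D \subset \qm$ which is a regular neighborhood of $s$ meeting $\Tri \cup \sun$ exactly in $\{Z, V\}$. Up to isotopy within $\dsp$, the completion $\delta = s^\circlearrowleft$ is a simple loop based at $Z$ lying in $D$, bounding a subdisk $D_0 \subset D$ whose interior contains $V$, with $s$ realized as a radial arc in $D_0$ from $Z \in \partial D_0$ to $V$.

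The first step is to observe that both $\Bt{s}^2$ and $\mathrm{L}_\delta$ admit representatives that are the identity outside $D$. By the top of Figure~\ref{fig:T}, the braid twist $\Bt{s}$ is supported in an arbitrarily small neighborhood of $s$, so $\Bt{s}^2$ is supported in $D_0 \subset D$; by the bottom of the same figure, $\mathrm{L}_\delta$ is supported near $\delta$, which we may arrange to lie in $D$. Thus it suffices to compare the two classes in the mapping class group of $D$ (with interior marked points $Z, V$, rel $\partial D$).

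The key computation happens inside $D_0$. The half-twist picture realizes $\Bt{s}$ as a rotation of $D_0$ by $\pi$, swapping $Z$ with $V$ and fixing $\partial D_0$ pointwise; although $\Bt{s}$ is not itself in $\MCG(\dsp)$, its square is a full $2\pi$ rotation of $D_0$ rel $\partial D_0$, which by the standard identification is isotopic to the Dehn twist along a simple closed curve $\gamma \subset D_0$ parallel to $\partial D_0$ and enclosing both $Z$ and $V$. On the other hand, the L-twist picture realizes $\mathrm{L}_\delta$ by sending any transverse arc to the arc wrapped once around the interior region bounded by $\delta$, and this is precisely the action of the Dehn twist along the same curve $\gamma$. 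Hence $\Bt{s}^2 = \mathrm{L}_\delta$ in $\MCG(\dsp)$.

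The main obstacle is bookkeeping the orientations: one must verify that the sense of the half-rotation drawn in the top of Figure~\ref{fig:T} matches the twisting direction drawn in the bottom, so that the equality is $\Bt{s}^2 = \mathrm{L}_\delta$ rather than $\Bt{s}^2 = \mathrm{L}_\delta^{-1}$. Beyond this matter of conventions, the argument reduces entirely to the classical identification of a full disk-rotation rel boundary with the boundary-parallel Dehn twist.
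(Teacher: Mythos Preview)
Your argument is the explicit version of the paper's ``straightforward verification'': identify both sides with the Dehn twist along a simple closed curve encircling $Z$ and $V$, and this is exactly the intended approach. One small slip to fix: you set $D_0$ to be the disk bounded by $\delta$, so $Z \in \partial D_0$, but then write that $\Bt{s}$ is a rotation of $D_0$ swapping $Z$ with $V$ while fixing $\partial D_0$ pointwise, which is contradictory; simply take the supporting disk for the half-twist to be a slightly larger disk with both $Z$ and $V$ in its interior (e.g.\ your $D$), and the rest goes through unchanged.
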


\begin{proof}
    This follows from straightforward verification.
\end{proof}

\paragraph{\textbf{The surface braid group}}\

A classical generalization of the braid group of a decorated marked surface is given as follows.

\begin{definition}[cf. \cite{BG}]
    The \emph{surface braid group} $\SBr(\dsp)$ is defined to be the kernel of the forgetful map
    \begin{equation}\label{eq:forget1}
        F^\sun\colon \MCG(\dsp)\to \MCG(\qmp),
    \end{equation}
    that is, the fundamental group of the configuration space of $\aleph$ points in (the interior of) $\dsp$, based at the set $\Tri$.
\end{definition}

\paragraph{\textbf{The braid twist group}}\
\begin{definition}\cite[Def.~4.1]{QQ}\label{def:BT}
    The \emph{braid twist group} $\BT(\dsp)$ of the decorated surface $\dsp$ is the subgroup of $\MCG(\dsp)$ generated by the braid twists $\Bt{\eta}$ for $\eta\in\CA(\dsp)$.
\end{definition}

By formula \eqref{eq:Psi}, the group $\BT(\dsp)$ is a normal subgroup of $\MCG(\dsp)$.

\paragraph{\textbf{The kernel of the Abel-Jacobi map}}\

Let
\begin{gather}\label{eq:H}
    H:=\pi_1(\qmp,Z_1) = \SBr(\qmp_{Z_1}) \le \SBr(\dsp).
\end{gather}
Then, $\Ho{1}(\qmp)=H/[H,H]$.

Recall from \cite[Def.~2.3]{Q24} that the Abel-Jacobi map is a group homomorphism
\begin{gather}\label{eq:Xi}
  \AJ^\sun\colon\SBr(\dsp)\to\Ho{1}(\qmp),
\end{gather}
which sends an element to the product of loops formed by the traces of decorations.

The following theorem was first stated in \cite[Lem.~5.7]{BMQS}, and is slightly generalized in \cite[Thm.~2.6]{Q24}.

\begin{theorem}\label{thm:QZ+}
Suppose that $\aleph\geq 3$.
Then $\BT(\dsp)=\ker\AJ^\sun$. Equivalently, there is a short exact sequence of groups
\begin{gather}\label{eq:SES-b}
    1 \to \BT(\dsp) \to \SBr(\dsp) \xrightarrow{\AJ^{\sun}} \Ho{1}(\qmp) \to 1.
\end{gather}
Moreover, we have
\begin{gather}\label{eq:HH}
    H\cap \BT(\dsp)=[H,H].
\end{gather}
\end{theorem}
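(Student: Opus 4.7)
The plan is to establish the short exact sequence \eqref{eq:SES-b} in three steps and then derive \eqref{eq:HH} by restriction. Step (i) is to check $\BT(\dsp) \subseteq \ker \AJ^\sun$. For $\eta \in \CA(\dsp)$ with endpoints $Z_i, Z_j$, the braid twist $\Bt{\eta}$ is a half-twist supported in a tubular neighborhood $N_\eta$ of $\eta$; viewed as a loop in the unordered configuration space, the union of the two moving strands is a $1$-cycle bounding a small disk inside $N_\eta$, hence null-homologous in $\qmp$. Step (ii) is surjectivity: for any loop $\gamma$ in $\qmp$ based at $Z_1$, the point-pushing braid dragging $Z_1$ along $\gamma$ while fixing $Z_2,\ldots,Z_\aleph$ maps to $[\gamma]\in\Ho{1}(\qmp)$ under $\AJ^\sun$, and the Hurewicz map $\pi_1(\qmp,Z_1) \twoheadrightarrow \Ho{1}(\qmp)$ is surjective.

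Step (iii), the substantive step, is to show $\ker \AJ^\sun \subseteq \BT(\dsp)$. I would work with a Bellingeri-type presentation of $\SBr(\dsp)$ whose generators are half-twist braid generators $\sigma_1,\ldots,\sigma_{\aleph-1}$ together with point-push generators $a_j, b_j$ (genus loops), $c_k$ (boundary components) and $d_l$ (punctures), arising as pushes of $Z_1$ around the respective loops. Each $\sigma_i$ is a braid twist along a short arc, so becomes trivial in the quotient $\SBr(\dsp)/\BT(\dsp)$. The goal is then to prove that modulo $\BT(\dsp)$ the remaining generators pairwise commute, so that $\SBr(\dsp)/\BT(\dsp)$ is abelian and, via the single surface relation, canonically identified with $\Ho{1}(\qmp)$. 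The crucial input is that $\BT(\dsp)$ contains braid twists along \emph{every} closed arc, so that any commutator of two point-push generators can be realized geometrically as a product of braid twists along arcs obtained by conjugating short arcs by point-push loops, using \eqref{eq:Psi}. An alternative is an induction on $\aleph$ via the Fadell--Neuwirth fibration
\[ \qmp \setminus \{Z_1,\ldots,Z_{\aleph-1}\} \hookrightarrow \Conf_\aleph(\qmp) \to \Conf_{\aleph-1}(\qmp), \]
reducing to the base case $\aleph = 3$ of \cite{BMQS}; the hypothesis $\aleph \geq 3$ enters to provide enough room to realize the conjugated arcs.

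The main obstacle is precisely this commutator analysis: one must explicitly exhibit, for each pair of point-push generators and for each mixed braid relation in the Bellingeri presentation (e.g.\ of the form $a_r\sigma_1 a_r\sigma_1 = \sigma_1 a_r\sigma_1 a_r$), closed arcs whose braid twists realize the relevant commutator as an element of $\BT(\dsp)$. The geometric bookkeeping is what forces the hypothesis $\aleph \geq 3$.

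Finally, for \eqref{eq:HH}, the restriction of $\AJ^\sun$ to $H = \pi_1(\qmp, Z_1)$ coincides with the Hurewicz map $H \twoheadrightarrow H^{\mathrm{ab}} = \Ho{1}(\qmp)$, whose kernel is $[H,H]$; combined with \eqref{eq:SES-b}, this yields $H \cap \BT(\dsp) = H \cap \ker\AJ^\sun = [H,H]$, as required.
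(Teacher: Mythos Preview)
The paper does not actually prove this theorem; it merely cites \cite[Lem.~5.7]{BMQS} and \cite[Thm.~2.6]{Q24}. Your sketch is nonetheless in line with how those references proceed: the essential content is precisely the commutator calculation you identify as the obstacle, and the paper records the outcome of that calculation as \Cref{lem:tech} (the explicit formulas showing $[\varepsilon_s,\varepsilon_r]\in\BT(\dsp)$). Your Steps~(i), (ii), and the derivation of \eqref{eq:HH} from \eqref{eq:SES-b} via the Hurewicz map are all correct and standard.

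One point worth tightening in Step~(iii): showing that the quotient $\SBr(\dsp)/\BT(\dsp)$ is abelian is not by itself enough to identify it with $\Ho{1}(\qmp)$; you should make explicit the rank argument. Namely, once the $\sigma_i$ die and the point-push generators commute, the quotient is abelian and generated by $2g+b+p-1$ elements, and it surjects onto $\Ho{1}(\qmp)\cong\ZZ^{2g+b+p-1}$ via $\AJ^\sun$; a surjection from a $(2g{+}b{+}p{-}1)$-generated abelian group onto a free abelian group of that rank is automatically an isomorphism. With that remark added, your outline is complete and matches the literature.
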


For use in \Cref{app:A}, we record the following lemma, which can be checked by direct computation.

\begin{lemma}[{\cite[Lem.~2.5]{Q24}}]\label{lem:tech}
For $1\le s<r\le 2g+b+p-1 $, the commutator
\begin{equation}\label{eq:in-BT}
    [\varepsilon_s,\varepsilon_r]=
    \begin{cases}
        \iv{\tau_{s}b}a\iv{\tau_{r}a\tau_{s}}ab\tau_{r}b  & \text{if $s+1\notin \dgene$},\\
        \iv{bb\tau_{s}b}a\tau_{r}\iv{a}\tau_{s}ab\tau_{r}b  & \text{if $s+1\in \dgene$},
    \end{cases}
\end{equation}
belongs to $\BT(\dsp)$. Denote by $n_{s,r}$ the inverse of the right-hand side of \eqref{eq:in-BT}.
\end{lemma}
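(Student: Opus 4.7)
The plan is to split the argument into two phases: first, establishing the abstract membership $[\varepsilon_s,\varepsilon_r]\in\BT(\dsp)$, and then verifying the explicit normal form by direct computation in $\MCG(\dsp)$.

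For the membership claim, the key observation is that each $\varepsilon_k$ is built from a loop in $\qmp$ based at the decoration $Z_1$, and hence belongs to the subgroup $H=\pi_1(\qmp,Z_1)\le \SBr(\dsp)$ of \eqref{eq:H}. Since $\Ho{1}(\qmp)=H/[H,H]$ is abelian, any commutator of elements of $H$ lies in $[H,H]$. Applying \eqref{eq:HH} from \Cref{thm:QZ+}, we get $[H,H]=H\cap\BT(\dsp)\subseteq\BT(\dsp)$, so $[\varepsilon_s,\varepsilon_r]\in\BT(\dsp)$ automatically. Note that this piece of the argument does not require any information about the specific word $n_{s,r}$ at all; it only uses that each $\varepsilon_k$ represents a class in $H$.

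For the explicit formula, I would first write each $\varepsilon_k$ as a specific word in $\tau_k, a, b$ reflecting how the loop representing it traverses the chosen standard generators of $\pi_1(\qmp,Z_1)$ (one pair of generators per handle, one loop per extra boundary component, one loop per puncture). Then one expands $\varepsilon_s\kg \varepsilon_r\kg \iv{\varepsilon_s}\kg \iv{\varepsilon_r}$ and successively applies the braid and commutation relations satisfied by $\tau_k, a, b$ of the form $\Br(\tau_r,x)$, $\Br(\tau_r,y)$, $\Co(\tau_r,\tau_{r-1})$, etc., to reduce to the claimed normal form on the right of \eqref{eq:in-BT}. The bifurcation into the two cases $s+1\in\dgene$ versus $s+1\notin\dgene$ reflects a topological distinction: indices in $\dgene$ label the ``$a$-curve'' of a handle, whose intersection pattern with $\varepsilon_s$ is different from that of a boundary/puncture loop, forcing the substitution $\iv{b}b\,\tau_s$ in place of $\tau_s$ in the first factor.

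The main obstacle is the bookkeeping in the second phase. While each individual step is mechanical, the expression is long, conjugations nest, and the asymmetry between the two cases makes it easy to misapply a relation. A geometric picture of the loops for $\varepsilon_s$ and $\varepsilon_r$ near the $s$-th and $r$-th factors of $H$ is essential, both to motivate the word choice for each $\varepsilon_k$ and to confirm topologically that the stated $\iv{n_{s,r}}$ genuinely represents the commutator. Once these pictures are in hand, the algebraic reduction becomes a linear sequence of rewrites using only the braid/commutation relations, and so can legitimately be flagged as ``direct computation.''
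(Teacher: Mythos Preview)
The paper's own account of this lemma is nothing more than the remark that it ``can be checked by direct computation,'' so there is little to compare your phase~2 against beyond that. Your phase~1 observation (membership via $[H,H]\subseteq\BT(\dsp)$) is legitimate in this paper's logical order, since \Cref{thm:QZ+} is imported from \cite{BMQS,Q24} as a black box; be aware, though, that in \cite{Q24} the analogue of this lemma (Lem.~2.5 there) \emph{precedes} Thm.~2.6, so if you were reconstructing the argument from scratch rather than quoting it, this step would be circular. In any case, once the explicit formula is verified, membership is immediate---the right-hand side is visibly a word in braid-twist generators---so phase~1 is redundant.

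The real gap is in your description of phase~2. You propose to ``write each $\varepsilon_k$ as a specific word in $\tau_k, a, b$'' and then simplify the commutator using relations among those generators. This cannot work: the $\varepsilon_k$ are point-pushing maps, products of the L-twists $\delta_j$ via \eqref{eq:deltas}, and live in $H=\pi_1(\qmp,Z_1)$, which meets $\BT(\dsp)$ only in $[H,H]$. An individual $\varepsilon_k$ is \emph{not} expressible as a word in the braid-twist generators $\sigma_i,\tau_r$, so there is no purely algebraic rewriting inside $\BT(\dsp)$ that collapses $\varepsilon_s\kg\varepsilon_r\kg\iv{\varepsilon_s}\kg\iv{\varepsilon_r}$ to the claimed word. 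In particular, the relation $\Co(\tau_r,\tau_{r-1})$ you list is \emph{not} available here: it does not hold in $\MCG(\dsp)$ and is precisely the extra relation \eqref{nr:sov} imposed only after passing to the vortex quotient $\dsv$.

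What the ``direct computation'' actually consists of is a picture-chasing argument of the same flavour as the conjugation formulas \eqref{eq:ss02}--\eqref{eq:ss06}: one tracks how the composite point-push $\varepsilon_s\kg\varepsilon_r\kg\iv{\varepsilon_s}\kg\iv{\varepsilon_r}$ acts on a suitable collection of arcs and recognises the result as the specific product of braid twists on the right of \eqref{eq:in-BT}. Your topological reading of the case split ($s+1\in\dgene$ iff $s\in\dgen$, i.e.\ $s$ indexes one member of a handle pair, matching the dichotomy in \eqref{eq:deltas}) is correct, but the mechanism you describe for converting that intuition into the displayed word is not.
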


%=========================================================
\subsection{Finite presentations of \texorpdfstring{$\BT(\dsp)$}{BT}}
%=========================================================

We consider the closed arcs
\begin{equation}\label{eq:gen}
  \sigma_1,\cdots,\sigma_{\aleph-1},\delta_1,\cdots,\delta_{2g+b+p-1},
\end{equation}
as shown in \Cref{fig:B's}. The corresponding braid twists of these arcs form a set of generators of $\SBr(\dsp)$ (see \cite{BG}).

\begin{figure}[hbt]\centering
	\begin{tikzpicture}[scale=.35]
	\draw[red](10,-7)to(14,-9)(24,-9)to(20,-9);\draw[dashed,red](14,-9)to(24,-9);

	\draw[red](10,-7)to(8+2+4-.5,0);
	\draw[red,dashed](8+2+4-.5,0).. controls +(60:1) and +(180:1) ..(17,5);
	\draw[red,->-=.5,>=stealth](10,-7).. controls +(45:5) and +(0:3) ..(17,5);
	\draw[red,-<-=.5,>=stealth](10,-7).. controls +(35:5) and +(-100:2) ..(20+2+4-.3,0.3);
	\draw[red,->-=.5,>=stealth]
              (10,-7).. controls +(18:8) and +(-25:4) ..($(24,0)!-.2!(10,-7)$);
	\draw[red](10,-7).. controls +(40:8) and +(155:4) ..($(24,0)!-.2!(10,-7)$);
	\draw[red,->-=.5,>=stealth]
              (10,-7).. controls +(55:7) and +(-5:4) ..($(12,0)!-.2!(10,-7)$);
	\draw[red](10,-7).. controls +(95:7) and +(175:4) ..($(12,0)!-.2!(10,-7)$);
	\draw[red,->-=.5,>=stealth]
              (10,-7).. controls +(113:7) and +(30:2) ..($(6,0)!-.15!(10,-7)$);
	\draw[red](10,-7).. controls +(126:7) and +(-150:2) ..($(6,0)!-.15!(10,-7)$);
	\draw[red,->-=.5,>=stealth](10,-7).. controls +(140:10) and +(45:2) ..($(1,0)!-.1!(10,-7)$);
	\draw[red](10,-7).. controls +(145:10) and +(-135:2) ..($(1,0)!-.1!(10,-7)$);
	\draw[red,->-=.5,>=stealth](10,-7).. controls +(160:10) and +(79:1) ..($(-1,-4)!-.07!(10,-7)$);
	\draw[red](10,-7).. controls +(165:10) and +(-101:1) ..($(-1,-4)!-.07!(10,-7)$);
	\draw[red,->-=.5,>=stealth](10,-7).. controls +(183:10) and +(95:1) ..($(-1,-8)!-.07!(10,-7)$);
	\draw[red](10,-7).. controls +(188:10) and +(-85:1) ..($(-1,-8)!-.07!(10,-7)$);
	\draw[red,dashed](20+2+4-.3,0.3).. controls +(80:1) and +(180:1) ..(28,5);
	\draw[red](10,-7).. controls +(8:27) and +(0:2) ..(28,5);
	
	\foreach \j in {1,2.5} {
		\draw[thick](8*\j-2+4+.5,0).. controls +(45:1) and +(135:1) ..(8*\j+2+4-.5,0);
		\draw[very thick](8*\j-2+4+.3,0.3).. controls +(-60:1) and +(-120:1) ..(8*\j+2+4-.3,0.3);
	}
	\draw[very thick]
	   (-3,5)to(32,5)(32,-11)  (-3,-11)to(32,-11)
	   (-3,5)to[bend right=90](-3,-11);
	\draw[very thick,fill=gray!14] (32,-3) ellipse (1 and 8) 	node{\tiny{$\partial_{1}$}};
	\draw[very thick,fill=gray!14](1,0)circle(.6) node{\tiny{$\partial_{b}$}};
	\draw[very thick,fill=gray!14](6,0)circle(.6) node{\tiny{$\partial_{2}$}};
	\node at(3.5,0) {$\cdots$};
	\node at(18,0) {$\cdots$};
	\foreach \x/\y in {14/-9,10/-7,24/-9,20/-9}
	   {\draw(\x,\y)\ww;}
    \draw (10,-7)node[below]{\small{$Z_{1}$}}
    	(14,-9)node[above]{\small{$Z_2$}}
    	(20,-9)node[above]{\small{$Z_{\aleph-1}$}}
    	(24,-9)node[above]{\small{$Z_{\aleph}$}};
	\draw[red](-.5,2.2)node{$\delta_{2g+b-1}$} (6,2.2)node{$\delta_{2g+1}$}
    	(12,2.3)node{$\delta_{2g}$} (19,2.8)node{$\delta_{2g-1}$}
    	(23.5,2.4)node{$\delta_2$} (28,-3)node{$\delta_{1}$}
    	(11.5,-8.5)node{$\sigma_1$} (22,-10)node{$\sigma_{\aleph-1}$};
	%\draw[very thick,fill=gray!14] (38,-3) ellipse (1 and 8)
	%	node{\tiny{$\partial_{1}$}};
    \draw[]
        (-1,-4)\dpole (-2.5,-4)node{$P_1$} (-1,-2.5)node[red]{$\delta_{2g+b}$}
        (-1,-6)node[rotate=90]{$\cdots$}
        (-1,-8)\dpole (-2.5,-8)node{$P_p$} (-1,-9.5)node[red]{$\delta_{2g+b+p-1}$};
	\end{tikzpicture}
	\caption{Generators for $\SBr(\dsp)$}
	\label{fig:B's}
\end{figure}

In the rest of this section, to simplify the notation, we will denote by $\eta$ the braid twist $B_\eta$ and denote by $\delta$ the $L$-twist $t_\delta$.

Similarly to \cite{QZ3}, we define $\varepsilon_r$, $1\leq r\leq 2g+b+p-1$ (see \Cref{fig:QZ's}), recursively by
\begin{gather}\label{eq:deltas}
\varepsilon_r=\begin{cases}
    \delta_r\kg\varepsilon_{r-1}&\text{if $r\notin \dgene$,}\\
    \delta_r\kg\varepsilon_{r-2}&\text{if $r\in \dgene$,}
\end{cases}
\end{gather}
where for convenience, $\varepsilon_0$ is taken to be the identity. Conversely, we have
\begin{equation}\label{eq:deltas2}
\delta_r=\begin{cases}
\varepsilon_r\kg\iv{\varepsilon_{r-1}}&\text{if $r\notin \dgene$;}\\
\varepsilon_r\kg\iv{\varepsilon_{r-2}}&\text{if $r\in \dgene$.}
\end{cases}
\end{equation}

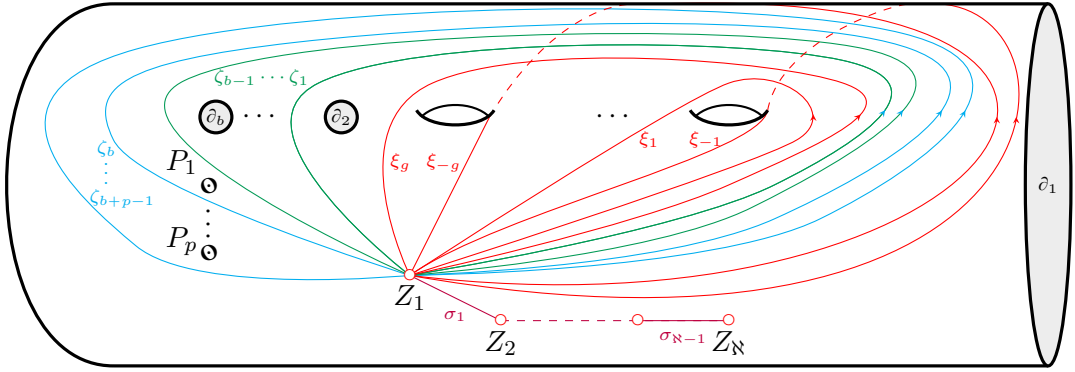
\begin{figure}[htpb]\centering
\begin{tikzpicture}[xscale=.3,yscale=.3]
	% sigma
	\draw[purple](10,-7)to(14,-9)(24,-9)to(20,-9);\draw[dashed,purple](14,-9)to(24,-9);
	\draw[purple](12,-8.8)node{\tiny{$\sigma_1$}} (22,-9.8)node{\tiny{$\sigma_{\aleph-1}$}};
	%\zeta
\draw[Cyan]plot [smooth,tension=.8] coordinates {(10,-7)(-3,-1)(5,3.3)(32,3)(26,-4.5)(10,-7)};
\draw[Cyan]plot [smooth,tension=.8] coordinates {(10,-7)(-2,-5.7)(-2,3)(31.6,3.7)(28,-4.5)(10,-7)};
 \draw[Cyan](-2.6,-3.6)node{\tiny{$\zeta_{b+p-1}$}};
 \draw[Cyan](-3.3,-2.4)node[rotate=90]{\tiny{$\cdots$}};
 \draw[Cyan](-3.3,-1.4)node{\tiny{$\zeta_{b}$}};

\draw[ForestGreen]plot [smooth,tension=.8] coordinates {(10,-7)(6,1.5)(29,2.5)(26,-3)(10,-7)};
\draw[ForestGreen]plot [smooth,tension=.8] coordinates {(10,-7)(0.1,1.6)(30,3)(25,-4)(10,-7)};
\draw[ForestGreen]plot [smooth,tension=.8] coordinates {(10,-7)(6,1.5)(29,2.5)(26,-3)(10,-7)};
\draw[ForestGreen](3.5,1.7)node{\tiny{$\zeta_{b-1}\cdots\zeta_1$}};
	%xi plus
	\draw[red]plot [smooth,tension=.8] coordinates {(10,-7)(24,-3)(29,1.5)(11,1.5)(10,-7)};
	\draw[red]plot [smooth,tension=.5] coordinates {(10,-7)(27,-1)(24,1.5)(10,-7)};
	\draw[red](9.6,-2)node{\tiny{$\xi_g$}} (20.5,-1)node{\tiny{$\xi_1$}};
	%xi minus
	\draw[red](10,-7)to(8+2+4-.5,0);
	\draw[red,dashed](8+2+4-.5,0).. controls +(60:2) and +(180:3) ..(20,5);
	\draw[red](10,-7).. controls +(-7:26) and +(2:26) ..(21,5);
	\draw[red](10,-7).. controls +(35:5) and +(-100:2) ..(20+2+4-.3,0.3);
	\draw[red,dashed](20+2+4-.3,0.3).. controls +(80:3) and +(180:1) ..(32,5);
	\draw[red](10,-7).. controls +(-10:30) and +(0:7) ..(33,5);
	\draw[red](11.5,-2)node{\tiny{$\xi_{-g}$}} (23,-1)node{\tiny{$\xi_{-1}$}};
	% surface
	\foreach \j in {1,2.5} {\draw[thick](8*\j-2+4+.5,0).. controls +(45:1) and +(135:1) ..(8*\j+2+4-.5,0);\draw[very thick](8*\j-2+4+.3,0.3).. controls +(-60:1) and +(-120:1) ..(8*\j+2+4-.3,0.3);}
	\draw[very thick] (-3,5)to(38,5)(38,-11)   (-3,-11)to(38,-11)(-3,5)to[bend right=90](-3,-11);
	\draw[very thick,fill=gray!14] (38,-3) ellipse (1 and 8)node{\tiny{$\partial_{1}$}};
	\draw[very thick,fill=gray!14](7,0)circle(.7)node{\tiny{$\partial_2$}};
	\draw[very thick,fill=gray!14](1.5,0)circle(.7)node{\tiny{$\partial_{b}$}};
	\node at(3.5,0) {$\cdots$};
	\node at(19,0) {$\cdots$};
	\foreach \x/\y in {14/-9,10/-7,24/-9,20/-9}
	{\draw(\x,\y)\ww;}
	\draw (10,-7)node[below]{$Z_{1}$}
    	(14,-9)node[below]{$Z_2$}
    	(24,-9)node[below]{$Z_{\aleph}$};

\foreach \j in {35.75,36.6}{\draw[ultra thin,red,->-=1,>=stealth](\j,0)to(\j,0.001);}
    \draw[ultra thin,red,->-=1,>=stealth](27.7,0.1)to(27.7,0.11);
    \draw[ultra thin,red,->-=1,>=stealth](29.77,0.1)to(29.77+.01,0.11);
    \draw[ultra thin,Cyan,->-=1,>=stealth](30.8+.02,0.26)to(30.72+.13,0.3);
    \draw[ultra thin,Cyan,->-=1,>=stealth](31.5+.10,0.26)to(31.5+.13,0.3);
    \draw[ultra thin,Cyan,->-=1,>=stealth](33.24+.10,0.266)to(33.22+.13,0.279);
     \draw[ultra thin,Cyan,->-=1,>=stealth](34.263+.10,0.266)to(34.24+.13,0.279);

   \draw[]
   (1.2,-3)\dpole (-0,-2)node{$P_1$}  (1.2,-4.6)node[rotate=90]{$\cdots$}  (1.2,-6)\dpole (-0.,-5.5)node{$P_p$};

    \end{tikzpicture}
	\caption{Alternative generators for $\SBr(\dsp)$,
        where $\xi_{-j}=\varepsilon_{2j-1}$, $\,\xi_j=\varepsilon_{2j}$, $\zeta_k=\varepsilon_{2g+k}$
        for $1\leq j\leq g$ and $1\leq k\leq b-1$}.
	\label{fig:QZ's}
\end{figure}

In the rest of this section, we assume that
\begin{itemize}
  \item $\aleph\geq 5$ or
  \item $\aleph=4$ and $2g+b+p-1\leq 2$ (i.e., either $g=1,b=1,p=0$, or $g=0,b+p\leq 3$).
  %or
  %\item $\aleph=3$ and $g=1, b=1,p=0$?
\end{itemize}

As in \cite{QZ3}, we define
\begin{equation}\label{eq:x and y}
    x:=\sigma_1^{\iv{\sigma_2}}=\sigma_2^{\sigma_1}\text{ and }y:=\sigma_3^{\iv{\sigma_2}}=\sigma_2^{\sigma_3}.
\end{equation}

\begin{proposition}[{\cite[Theorem~4.1]{QZ3}}]\label{prop:pre for sob}
    Suppose that either $\aleph\geq 5$, or $\aleph=4$ and $2g+b+p-1\leq 2$. The group $\BT(\dsp)$ admits the following presentation.
    \begin{itemize}
		\item Generators: $\sigma_i$, $1\leq i\leq \aleph-1$, $\tau_r$, $1\leq r\leq 2g+b+p-1$.
		\item Relations: for $1\leq i,j\leq \aleph-1$ and $1\leq r,s\leq 2g+b+p-1$,
		\begin{align}
		&\Co(\sigma_i,\sigma_j)\label{nr:01}&&  \text{if $|i-j|>1$}\\
		&\Br(\sigma_i,\sigma_{j})\label{nr:02}&&\text{if $|i-j|=1$}\\
		&\Co(\tau_r,\sigma_i)\label{nr:03}&&\text{if $i>2$}\\
		&\Br(\tau_r,x)\label{nr:04}&&\\
		&\Br(\tau_r,y)\label{nr:05}&&\\
		&\Co({\tau_r}^y,{\tau_s}^x)\label{nr:06}&&\text{if $s<r$ and $s\notin\dgen$}\\
		&\Co({\tau_r}^{\iv{y}},{\tau_s}^x)\label{nr:07}&&\text{if $s<r$ and $s\in\dgen$}
		\end{align}
	\end{itemize}
\end{proposition}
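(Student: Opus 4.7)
The plan is to derive the finite presentation from the short exact sequence
\[1\to\BT(\dsp)\to\SBr(\dsp)\xrightarrow{\AJ^\sun}\Ho{1}(\qmp)\to 1\]
of \Cref{thm:QZ+}, starting from a classical finite presentation of the ambient surface braid group. I would take as generators of $\SBr(\dsp)$ the braid twists along the arcs of \Cref{fig:B's}: the $\sigma_1,\ldots,\sigma_{\aleph-1}$ between consecutive decorations, together with the L-twists $\delta_1,\ldots,\delta_{2g+b+p-1}$ encircling handles, boundary components and punctures. After the change of variables \eqref{eq:deltas} to the elements $\varepsilon_r$, the map $\AJ^\sun$ sends $\sigma_i\mapsto 0$ and $\varepsilon_r$ to a free basis of $\Ho{1}(\qmp)\cong\ZZ^{2g+b+p-1}$.

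First I would introduce the candidate generators $\tau_r\in\BT(\dsp)$ as braid twists along specific closed arcs chosen so that $\AJ^\sun(\tau_r)=0$ and so that conjugation by $x$ and $y$ (as defined in \eqref{eq:x and y}) acts on $\tau_r$ in a controlled way compatible with the recursion \eqref{eq:deltas}. Direct topological inspection on the surface verifies that $\sigma_i,\tau_r\in\BT(\dsp)$ and that the listed relations \eqref{nr:01}--\eqref{nr:07} hold: the $\sigma$-part is the usual type $A$ braid relations, while the interaction of $\tau_r$ with $x,y,\sigma_i$ is read off from the explicit intersection patterns near the $r$-th handle, boundary component or puncture.

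To prove that the listed relations are complete, I would apply the Reidemeister--Schreier method to the chosen presentation of $\SBr(\dsp)$, using the section $\alpha\mapsto\varepsilon^\alpha$ for $\alpha\in\ZZ^{2g+b+p-1}$ as Schreier transversal (well defined up to elements of $\BT(\dsp)$). This produces an a priori infinite presentation of $\BT(\dsp)$ whose generators are of the form $g^{\varepsilon^\alpha}$ for each ambient generator $g$. Using \eqref{nr:04}--\eqref{nr:05} (braid relations with $x,y$) together with \eqref{nr:06}--\eqref{nr:07} (commutations between conjugates of the $\tau_r$'s), one can inductively rewrite every conjugate $\tau_r^{\varepsilon^\alpha}$ as a word in $\{\sigma_i,\tau_r\}$; a parallel rewriting collapses the infinite family of lifted relations down to the finite list. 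Since for $r>2g$ the recursion $\varepsilon_r=\delta_r\kg\varepsilon_{r-1}$ treats the L-twists around boundary components and around punctures uniformly, both cases are absorbed into the same relation schema.

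The main obstacle is this last combinatorial reduction: one must verify that every Reidemeister--Schreier relation, in particular those arising from the mixed identities between $\sigma_i$'s and $\delta_r$'s in the Bellingeri--Gervais presentation, reduces to a consequence of \eqref{nr:01}--\eqref{nr:07} without spawning any new ones. The specific choice of the conjugates $\tau_r^x,\tau_r^y,\tau_r^{\iv{y}}$ appearing in \eqref{nr:06}--\eqref{nr:07}, and in particular the case split based on $s\in\dgen$ versus $s\notin\dgen$, is forced by this closure requirement and mirrors the two branches of the recursion \eqref{eq:deltas}. The hypothesis on $\aleph$ ensures enough room in the type $A$ block for the commutator identities underlying these conjugations to be available, and separately rules out the small-$\aleph$ degenerate cases where extra relations would be required.
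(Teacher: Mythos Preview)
The paper does not give its own proof of this proposition: it is quoted verbatim from \cite[Theorem~4.1]{QZ3}, so there is no in-paper argument to compare against. Your sketch is in fact a faithful outline of the strategy carried out in \cite{QZ3}: one starts from the Bellingeri--Godelle presentation of $\SBr(\dsp)$ (cf.~\cite{BG}), passes to the alternative generating set $\{\sigma_i,\varepsilon_r\}$ via \eqref{eq:deltas}, and then runs a Reidemeister--Schreier procedure through the projection to $\Ho{1}(\qmp)$, reducing the resulting infinite presentation to \eqref{nr:01}--\eqref{nr:07} by exploiting the conjugation formulae (the precursors of \eqref{eq:ss02}--\eqref{eq:ss06}). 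You have also correctly located the heart of the matter: the bookkeeping that shows the infinite family of lifted relations collapses, and the role of the hypothesis on $\aleph$ in making the elements $x,y$ of \eqref{eq:x and y} available.

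One caveat on logical order: you invoke the short exact sequence of \Cref{thm:QZ+} as the starting point, but in \cite{QZ3} the presentation is obtained first, and the identification $\BT(\dsp)=\ker\AJ^\sun$ is a consequence rather than an input (indeed, in the present paper \Cref{thm:QZ+} is attributed to the later works \cite{BMQS,Q24}). This is not a genuine gap---one can equally well take the kernel description as known and run Reidemeister--Schreier---but it slightly inverts the historical dependency.
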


\begin{figure}[htpb]\centering
\begin{tikzpicture}[xscale=.3,yscale=.3]
	% sigma
	\draw[purple](10,-7)to(14,-9)(24,-9)to(20,-9);\draw[dashed,purple](14,-9)to(24,-9);
	\draw[purple](12,-8.8)node{\tiny{$\sigma_1$}} (22,-9.8)node{\tiny{$\sigma_{\aleph-1}$}};
	%\zeta
	\draw[Cyan]plot [smooth,tension=.8] coordinates {(10,-7)(1.5,-7.6)(-5,-2.5)(1,3)(13,4.5)(30,3.5)(32,-2.5)(14,-9)};
	\draw[Cyan]plot [smooth,tension=.8] coordinates
        {(10,-7)(2.1,-4.4)(-2,1)(13,4)(30,3)(29.5,-3)(14,-9)};
    \draw[Cyan](-2.8,-3.9)node{\tiny{$\nu_{b+p-1}$}};
    \draw[Cyan](-3,-2.7)node[rotate=75]{\tiny{$\cdots$}};
    \draw[Cyan](-2.8,-1.6)node{\tiny{$\nu_{b}$}};

	\draw[ForestGreen]plot [smooth,tension=.8] coordinates {(10,-7)(0,1.4)(28,3)(28,-3)(14,-9)};
	\draw[ForestGreen]plot [smooth,tension=.8] coordinates {(10,-7)(6,1)(28,2.5)(26,-3)(14,-9)};
	\draw[ForestGreen](4.5,1.7)node{\tiny{$\nu_{b-1}\cdots\nu_1$}};

	%xi plus
	\draw[red]plot [smooth,tension=.8] coordinates {(14,-9)(21.5,-5)(28,1.3)(15,2.2)(9,-.8)(10,-7)};
	\draw[red]plot [smooth,tension=.5] coordinates {(14,-9)(26,-1)(24,1.5)(10,-7)};
	\draw[red](9.6,-2)node{\tiny{$\omega_g$}} (20.5,-1)node{\tiny{$\omega_1$}};
	%xi minus
	\draw[red](10,-7)to(8+2+4-.5,0);
	\draw[red,dashed](8+2+4-.5,0).. controls +(60:2) and +(180:3) ..(20,5);
	\draw[red](14,-9).. controls +(7:23) and +(2:23) ..(21,5);
	\draw[red](10,-7).. controls +(35:5) and +(-100:2) ..(20+2+4-.3,0.3);

	\draw[red,dashed](20+2+4-.3,0.3).. controls +(80:3) and +(180:1) ..(32,5);
	\draw[red](14,-9).. controls +(3:30) and +(0:4) ..(33,5);	

    \draw[red](11.5,-2)node{\tiny{$\omega_{-g}$}} (23,-1)node{\tiny{$\omega_{-1}$}};
	% surface
	\foreach \j in {1,2.5} {\draw[thick](8*\j-2+4+.5,0).. controls +(45:1) and +(135:1) ..(8*\j+2+4-.5,0);\draw[very thick](8*\j-2+4+.3,0.3).. controls +(-60:1) and +(-120:1) ..(8*\j+2+4-.3,0.3);}
	\draw[very thick] (-3,5)to(38,5)(38,-11)   (-3,-11)to(38,-11)(-3,5)to[bend right=90](-3,-11);
	\draw[very thick,fill=gray!14] (38,-3) ellipse (1 and 8)node{\tiny{$\partial_{1}$}};
	\draw[very thick,fill=gray!14](7,0)circle(.7)node{\tiny{$\partial_2$}};
	\draw[very thick,fill=gray!14](1.5,0)circle(.7)node{\tiny{$\partial_{b}$}};
	\node at(3.5,0) {$\cdots$};
	\node at(18,0) {$\cdots$};
	\foreach \x/\y in {14/-9,10/-7,24/-9,20/-9}{\draw(\x,\y)\ww;}
	\draw (10,-7)node[below]{$Z_{1}$}
    	(14,-9)node[below]{$Z_2$}
    	(24,-9)node[below]{$Z_{\aleph}$};

   \draw[]
   (1.2,-3)\dpole (-0,-2)node{$P_1$}  (1.2,-4.6)node[rotate=90]{$\cdots$}  (1.2,-6)\dpole (-0.,-5.5)node{$P_p$};

	\end{tikzpicture}
    \caption{Generators for $\BT(\dsp)$,
        where $\omega_{-j}=\tau_{2j-1}$, $\omega_j=\tau_{2j}$, $\nu_k=\tau_{2g+k}$
        for $1\leq j\leq g$ and $1\leq k\leq b-1$}.
	\label{fig:BT}
\end{figure}
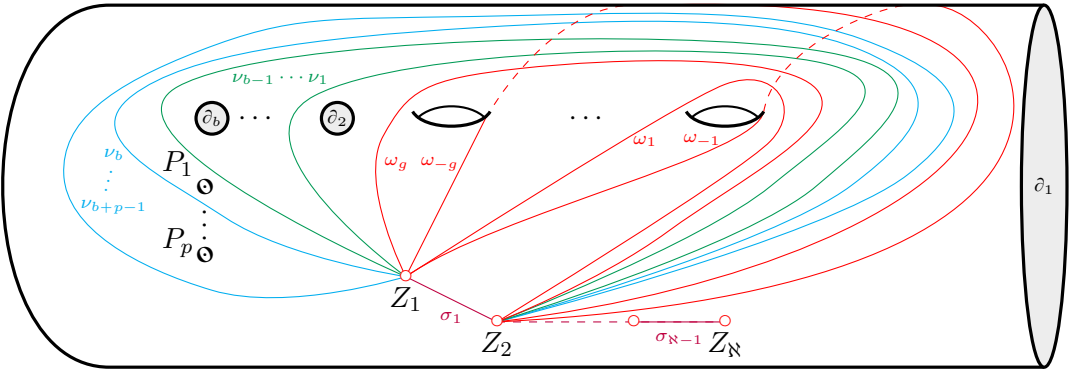

By a direct picture-chasing argument (cf. also \cite[Lemma~4.5]{QZ3}), the conjugation of $\BT(\dsp)$ by $H$ .is given by the following formulas:
\begin{align}
{\sigma_i}^{\iv{\varepsilon_t}}\label{eq:ss02}&=\begin{cases}
\tau_t&\text{\qquad\ if $i=1$,}\\
\sigma_i&\text{\qquad\ if $i\neq 1$,}
\end{cases}\\
{\tau_r}^{\iv{\varepsilon_t}}\label{eq:ss03}&=\begin{cases}
\tx^{\sigma_2\kg\tau_t}&\text{if $r=t$,}\\
{\tau_r}^{x\kg\tau_t\kg\sigma_2\kg\tau_t}&\text{if $r\leq t$ and $r\notin\dgen$,}\\
{\tau_r}^{x\kg\iv{\tau_t}\kg\sigma_2\kg\tau_t}&\text{if $r\leq t$ and $r\in\dgen$,}\\
{\tau_r}^{\iv{x}\kg\iv{\tau_t}\kg\sigma_2\kg\tau_t}&\text{if $r\geq t$ and $t\notin\dgen$,}\\
{\tau_r}^{\iv{x}\kg\tau_t\kg\sigma_2\kg\tau_t}&\text{if $r\geq t$ and $t\in\dgen$,}
\end{cases}
\end{align}
and
\begin{align}
{\sigma_i}^{{\varepsilon_t}}\label{eq:ss05}&=\begin{cases}
{\tau_t}^{\iv{\sigma_1}}&\qquad\ \text{if $i=1$,}\\
\sigma_i&\qquad\ \text{if $i\neq 1$,}
\end{cases}\\
{\tau_r}^{{\varepsilon_t}}&=\label{eq:ss06}\begin{cases}
\sigma_1&\text{if $r=t$,}\\
{\tau_r}^{\iv{\sigma_2}\kg\iv{\tau_t}\kg\iv{\sigma_1}\kg\iv{\sigma_2}}&\text{if $r\leq t$ and $r\notin\dgen$,}\\
{\tau_r}^{\sigma_2\kg\iv{\tau_t}\kg\iv{\sigma_1}\kg\iv{\sigma_2}}&\text{if $r\leq t$ and $r\in\dgen$,}\\
{\tau_r}^{\sigma_2\kg\tau_t\kg\iv{\sigma_1}\kg\iv{\sigma_2}}&\text{if $r\geq t$ and $t\notin\dgen$,}\\
{\tau_r}^{\iv{\sigma_2}\kg\tau_t\kg\iv{\sigma_1}\kg\iv{\sigma_2}}&\text{if $r\geq t$ and $t\in\dgen$,}
\end{cases}
\end{align}

%=========================================================
%=========================================================
\section{Decorated surface with vortices}\label{part:B}
%=========================================================
%=========================================================
Recall that $\dsp=(\qm,\P,\Tri)$ is a decorated surface with punctures such that $\partial\qm\neq\emptyset$.

%=========================================================
\subsection{Vortices}\label{subsec:vortex}
%=========================================================

By adding an extra $\z2$-symmetry to the punctures, we refer to them as vortices. We will then use the notation $\qmv$ and refer to it as a \emph{marked surface with vortices}.

Let
\begin{gather}
    \LL=\< \Bt{s}^4 \mid s\in\DVs \>
\end{gather}
be the subgroup of $\MCG(\dsp)$ generated by all the fourth powers of braid twists along collision paths in $\DVs$. It follows from formula~\eqref{eq:Psi} that $\LL$ is a normal subgroup of $\MCG(\dsp)$. Note that $\LL\subseteq\SBr(\dsp)$.

\begin{definition}\label{def:BTv}
The \emph{mapping class group} $\MCG(\dsv)$ is defined as $$\MCG(\dsv)=\MCG(\dsp)/\LL.$$
Since $\LL\subseteq\SBr(\dsp)$, the forgetful map \eqref{eq:forget1} induces a group homomorphism
\begin{gather}\label{eq:forget2}
    \FM\colon\MCG(\dsv)\to\MCG(\qmp),
\end{gather}
whose kernel $\ker \FM$ is defined as the \emph{surface braid group} $\SBr(\dsv)$ of $\dsv$. The \emph{braid twist group} $\BT(\dsv)$ of $\dsv$ is defined to be
\begin{gather}
    \BT(\dsv)=\BT(\dsp)/(\LL\cap\BT(\dsp)).
\end{gather}
\end{definition}

\begin{proposition}\label{pp:non-trivial}
    Let $\eta_1,\eta_2\in\CA(\dsp)$ and let $\delta_1,\delta_2$ be the completions of $s_1,s_2\in\DVs$, respectively, as shown in Figure~\ref{eq:Co}. Then the following relation holds in $\MCG(\dsp)$:
\begin{equation}\label{eq:relation}
    \Bt{\eta_1}\cdot\Bt{\eta_2}\cdot \mathrm{L}^2_{\delta_2}
    =\Bt{\eta_2}\cdot\Bt{\eta_1}\cdot \mathrm{L}^2_{\delta_1}.
\end{equation}

    \begin{figure}[htpb]
        \centering
        \begin{tikzpicture}[xscale=.3,yscale=.2]
    \begin{scope}[shift={(0,0)},scale=1.8]
    \draw[red] (-3,0)arc(180:0:3) (0,3)node[above]{$\eta_1$};
    \draw[red] (-3,0)arc(180:360:3) (0,-3)node[below]{$\eta_2$};;
    \draw[\hhd,very thick](-3,0).. controls +(45:7) and +(-45:7) ..(-3,0)
    (3,0).. controls +(135:7) and +(-135:7) ..(3,0)
    (-1,-1.3)node[below]{$\delta_1$}
    (1,-1.3)node[below]{$\delta_2$};
    \draw(-3,0)\ww(3,0)\ww(0,0)\dpole;
    \end{scope}
    \begin{scope}[shift={(14,0)},scale=1.8]
    \draw[red] (-3,0)arc(180:0:3) (0,3)node[above]{$\eta_1$};
    \draw[red] (-3,0)arc(180:360:3) (0,-3)node[below]{$\eta_2$};;
    \draw[gray,ultra thick](-3,0)tonode[above]{$s_1$}(0,0)
    (3,0)to node[above]{$s_2$}(0,0);
    \draw(-3,0)\ww(3,0)\ww(0,0)\dpole;
    \end{scope}
    \end{tikzpicture}
        \caption{}
        \label{eq:Co}
    \end{figure}

\end{proposition}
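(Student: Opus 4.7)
The identity \eqref{eq:relation} is local: the arcs $\eta_1,\eta_2$ and the completions $\delta_1,\delta_2$ of $s_1,s_2$ all lie in a disk neighborhood $D$ of $\{Z_1,Z_2,P\}$, and each mapping class appearing on either side is supported in $D$. Hence it suffices to verify \eqref{eq:relation} in $\MCG(D^{\sun}_\Tri)$, the mapping class group of the decorated disk with two decorations $\{Z_1,Z_2\}$ and one puncture $\{P\}$.

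My plan is to pass to the larger three-strand braid group $B_3 = \langle \sigma_1,\sigma_2 \mid \sigma_1\sigma_2\sigma_1 = \sigma_2\sigma_1\sigma_2 \rangle$, obtained from the mapping class group of $D$ by treating the three points $Z_1,P,Z_2$ on equal footing. Under this identification $\sigma_i = \Bt{s_i}$, the half-twists along the collision paths, and the subgroup $\MCG(D^{\sun}_\Tri) \subset B_3$ is precisely the preimage of $\{e,(13)\} \subset S_3$ under the permutation homomorphism. Both sides of \eqref{eq:relation} lie in this subgroup, so it is enough to check the identity inside $B_3$.

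Using the naturality formula \eqref{eq:Psi}, I express the upper/lower arc twists as conjugates of the form $\Bt{\eta_i} = \sigma_2^{\varepsilon_i}\sigma_1\sigma_2^{-\varepsilon_i}$, with signs $\varepsilon_i \in \{\pm 1\}$ determined by the orientations (upper arc for $\eta_1$, lower for $\eta_2$). Combined with Lemma~\ref{lem:non-trivial}, which gives $\mathrm{L}^2_{\delta_i} = \Bt{s_i}^4 = \sigma_i^4$, the proposed identity \eqref{eq:relation} reduces to a word identity in $B_3$ involving only $\sigma_1,\sigma_2$, which I then verify using only the braid relation $\sigma_1\sigma_2\sigma_1=\sigma_2\sigma_1\sigma_2$ (and consequences like the centrality of $(\sigma_1\sigma_2)^3$).

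The principal obstacle is the bookkeeping: correctly fixing the orientation conventions so that the signs $\varepsilon_i$ in $\Bt{\eta_i} = \sigma_2^{\varepsilon_i}\sigma_1\sigma_2^{-\varepsilon_i}$ are the correct ones for the resulting word in $B_3$ to actually be an identity. Once the conventions are settled, the algebraic verification is a short manipulation. A purely topological alternative is to verify \eqref{eq:relation} by picture-chasing: compare how both sides act on the test family $\{s_1,s_2,\eta_1,\eta_2\}$ and check that the resulting curves agree up to isotopy, in the spirit of Figure~\ref{fig:4}.
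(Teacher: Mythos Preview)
Your proposal is correct and follows essentially the same route as the paper's proof: localize to the disk, work in the three-strand braid group with generators $\sigma_i=\Bt{s_i}$, use Lemma~\ref{lem:non-trivial} to identify $\mathrm{L}^2_{\delta_i}=\sigma_i^4$, and reduce \eqref{eq:relation} to a word identity verified via the braid relation. The paper resolves your ``bookkeeping obstacle'' concretely by observing $\eta_1=\Bt{s_1}(s_2)$ (hence $\Bt{\eta_1}=\sigma_1\sigma_2\sigma_1^{-1}=\sigma_2^{-1}\sigma_1\sigma_2$) and symmetrically $\Bt{\eta_2}=\sigma_2\sigma_1\sigma_2^{-1}$, then computes both sides of \eqref{eq:relation} directly as $(\sigma_1\sigma_2)^3$.
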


\begin{proof}
Write $s_i=\Bt{s_i}$ for short. The braid relations imply
\[
    s_1^{-m}s_2s_1=s_2s_1s_2^{-m},\quad s_2^{-m}s_1s_2=s_1s_2s_1^{-m},\quad \forall m\in\ZZ.
\]
Moreover, we have
\begin{gather*}
  \eta_1=\Bt{s_1}(s_2)\quad\Longrightarrow\quad
    \Bt{\eta_1}=s_1s_2s_1^{-1} =s_2^{-1}s_1s_2
\end{gather*}
and by Lemma~\ref{lem:non-trivial}, $\mathrm{L}_{\delta_2}=s_2^2$.
Thus, we have
\[\begin{array}{rcl}
    \Bt{\eta_1}\cdot\Bt{\eta_2}\cdot \mathrm{L}^2_{\delta_2}&=&
        (s_2s_1s_2^{-1})\cdot(s_1^{-1}s_2s_1)\cdot s_2^4\\
    &=& s_1s_2s_1 \cdot( s_1^{-3}s_2s_1)\cdot s_2^4\\
    &=& s_1s_2s_1 \cdot( s_2s_1s_2^{-3})\cdot s_2^4\\
    &=& s_1s_2s_1s_2s_1s_2.
\end{array}\]
Similarly, we have $\Bt{\eta_2}=s_2s_1s_2^{-1}=s_1^{-1}s_2s_1$, $\mathrm{L}_{\delta_1}=s_1^2$
and so
\[
    \Bt{\eta_2}\cdot\Bt{\eta_1}\cdot \mathrm{L}^2_{\delta_2}=s_1s_2s_1s_2s_1s_2.
\]
Noticing $(s_2s_1s_2)^2=(s_1s_2s_1)^2$, \eqref{eq:relation} follows.
\end{proof}

A direct corollary is the following.

\begin{corollary}
Let $\eta_1,\eta_2$ be two closed arcs that share both endpoints
and enclose a vortex as shown in the left picture of Figure~\ref{eq:Co}.
Then, in $\BT(\dsv)$, we have the relation $\Crel(\Bt{\eta_1},\Bt{\eta_2})$.
\end{corollary}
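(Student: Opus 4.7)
The plan is to deduce the commutation relation $\Crel(\Bt{\eta_1}, \Bt{\eta_2})$ in $\BT(\dsv)$ directly from Proposition~\ref{pp:non-trivial}. The key point is that the skew identity
\[
    \Bt{\eta_1} \cdot \Bt{\eta_2} \cdot \mathrm{L}^2_{\delta_2} = \Bt{\eta_2} \cdot \Bt{\eta_1} \cdot \mathrm{L}^2_{\delta_1}
\]
already holds in $\MCG(\dsp)$, and the vortex quotient is designed precisely to kill the error terms $\mathrm{L}^2_{\delta_i}$.

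First, I would apply Lemma~\ref{lem:non-trivial} to rewrite $\mathrm{L}^2_{\delta_i} = \Bt{s_i}^4$, which lies in $\LL$ by definition. Rearranging the displayed identity gives
\[
    [\Bt{\eta_1}, \Bt{\eta_2}] = (\Bt{\eta_2}\Bt{\eta_1}) \cdot (\mathrm{L}^2_{\delta_1} \mathrm{L}^{-2}_{\delta_2}) \cdot (\Bt{\eta_2}\Bt{\eta_1})^{-1}.
\]
Since $\LL$ is a normal subgroup of $\MCG(\dsp)$ by formula~\eqref{eq:Psi}, the right-hand side is a conjugate of an element of $\LL$, and hence lies in $\LL$.

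On the other hand, because $\eta_1, \eta_2$ are closed arcs, both $\Bt{\eta_i}$ belong to $\BT(\dsp)$, so $[\Bt{\eta_1}, \Bt{\eta_2}] \in \BT(\dsp)$ as well. Combining the two containments gives $[\Bt{\eta_1}, \Bt{\eta_2}] \in \LL \cap \BT(\dsp)$, which is by definition the kernel of the quotient $\BT(\dsp) \twoheadrightarrow \BT(\dsv)$. Therefore $\Bt{\eta_1}$ and $\Bt{\eta_2}$ commute in $\BT(\dsv)$.

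There is no real obstacle here: Proposition~\ref{pp:non-trivial} does the geometric heavy lifting, and the corollary is merely the observation that the defining relations $\Bt{s}^4 = 1$ symmetrize the asymmetric identity~\eqref{eq:relation} into a clean commutation relation.
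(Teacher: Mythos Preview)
Your proof is correct and matches the paper's intended argument: the paper simply states ``A direct corollary is the following'' without further proof, and your explicit rearrangement of \eqref{eq:relation} to exhibit $[\Bt{\eta_1},\Bt{\eta_2}]$ as a conjugate of $\mathrm{L}^2_{\delta_1}\mathrm{L}^{-2}_{\delta_2}\in\LL$, hence in $\LL\cap\BT(\dsp)$, is exactly the computation the paper later spells out in the proof of Proposition~\ref{prop:==}.
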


\begin{remark}
Another way to understand \eqref{eq:relation} is that both sides are equivalent to
the same mapping class, uniquely determined (by Alexander’s trick)
by the image of the gray arcs in Figure~\ref{fig:Alex}.

\begin{figure}[htpb]\centering
\begin{tikzpicture}[xscale=.3,yscale=.2]
\begin{scope}[shift={(0,0)},scale=1.8]
\draw[dashed,gray, thick](0,0)ellipse(5 and 7.5);
\draw[gray,ultra thick](5,0)to(3,0)(-5,0)to(-3,0);
  \draw[red!50] (-3,0)arc(180:0:3);
  \draw[red!50] (-3,0)arc(180:360:3);
  \draw[blue!25,very thick](-3,0).. controls +(45:7) and +(-45:7) ..(-3,0)
        (3,0).. controls +(135:7) and +(-135:7) ..(3,0);
  \draw(-3,0)\ww(3,0)\ww(0,0)\vot;
\end{scope}
\draw[->,line width=3pt, gray,>=stealth](10,0)to
    node[above]{$\Bt{\eta_1}\cdot\Bt{\eta_2}\cdot \mathrm{L}^2_{\delta_2}$}
    node[below]{$\Bt{\eta_2}\cdot\Bt{\eta_1}\cdot \mathrm{L}^2_{\delta_1}$}(16,0);

\begin{scope}[shift={(26,0)},scale=1.8,yscale=1.5]
\draw[dashed,gray, thick](0,0)circle(5);
\draw[gray,ultra thick](5,0)arc(0:180:4.5)arc(180:360:3.5)
    (-5,0)arc(-180:0:4.5)arc(0:180:3.5);
\end{scope}

\begin{scope}[shift={(26,0)},scale=1.8]
  \draw[red!50] (-3,0)arc(180:0:3);
  \draw[red!50] (-3,0)arc(180:360:3);
  \draw[blue!25,very thick](-3,0).. controls +(45:7) and +(-45:7) ..(-3,0)
        (3,0).. controls +(135:7) and +(-135:7) ..(3,0);
  \draw(-3,0)\ww(3,0)\ww(0,0)\vot;
\end{scope}
\end{tikzpicture}
\caption{}
\label{fig:Alex}
\end{figure}
Therefore, Proposition~\ref{pp:non-trivial} remains valid when the vortex is replaced by an arbitrary object.
\end{remark}

\begin{definition}
    The set of closed arcs on $\dsv$ is defined to be $$\CA(\dsv)=\CA(\dsp)/\LL.$$
\end{definition}

%=========================================================
\subsection{Finite presentations: vortex version}\label{sec:presentation}
%=========================================================

Recall that $\LL=\< \Bt{s}^4 \mid s\in\DVs \>$ is a normal subgroup of $\SBr(\dsp)$.
Let $H$ be the group defined in \eqref{eq:H}, and let $N=\BT(\dsp)$ for convenience  in the proofs.

\begin{lemma}\label{lem:gen}
    The squares of $L$-twists $\delta_{2g+b}^2,\cdots,\delta_{2g+b+p-1}^2$ normally generate $\LL$.
\end{lemma}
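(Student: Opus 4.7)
The plan is to show that every generator $\Bt{s}^4$ of $\LL$ (for $s\in\DVs$) lies in the normal closure of $\{\delta_{2g+b+k-1}^2\mid 1\le k\le p\}$ inside $\MCG(\sop)$. The first step is Lemma~\ref{lem:non-trivial}, which identifies $\Bt{s}^4 = \mathrm{L}_{s^\circlearrowleft}^2$. Inspecting Figure~\ref{fig:B's}, the listed $L$-arcs $\delta_{2g+b},\ldots,\delta_{2g+b+p-1}$ are precisely the completions $s_k^\circlearrowleft$ of the standard collision paths $s_k$ joining $Z_1$ to the puncture $P_k$, so $\delta_{2g+b+k-1}^2 = \Bt{s_k}^4$ belongs to $\LL$, confirming one containment.

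For the reverse, take any $s\in\DVs$ joining $Z\in\Tri$ to $P\in\sun$. By the change-of-coordinates principle for surfaces, there exist $k\in\{1,\ldots,p\}$ and $\Psi\in\MCG(\sop)$ with $\Psi(P_k)=P$, $\Psi(Z_1)=Z$, and $\Psi(s_k)$ isotopic to $s$. Formula~\eqref{eq:Psi}, which holds for collision paths just as for closed arcs, then gives
\[
    \Bt{s}^4 \;=\; \bigl(\Psi\,\Bt{s_k}\,\Psi^{-1}\bigr)^4 \;=\; \Psi\cdot\delta_{2g+b+k-1}^2\cdot\Psi^{-1},
\]
so $\Bt{s}^4$ is a conjugate of one of the specified generators; ranging over all $s$ completes the argument.

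The sole nontrivial ingredient is the change-of-coordinates step, which I expect to be the main (but routine) technical point. It is a standard consequence of the classification of surfaces: any two properly embedded simple arcs of the same topological type --- here, each joining a decoration in $\Tri$ to a puncture in $\sun$ --- lie in a single $\MCG(\sop)$-orbit, because the complements of their regular neighbourhoods in $\qm$ are homeomorphic via a map preserving the discrete data. One may, if desired, even arrange $\Psi$ in $\SBr(\sop)$ by first permuting decorations/punctures using the above and then applying a surface-braid correction. Given this, the remainder of the proof is a formal application of \eqref{eq:Psi}.
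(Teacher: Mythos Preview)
Your proof is correct and follows essentially the same route as the paper's: both show that every $\Bt{s}^4$ is a conjugate of one of the $\delta_{2g+b+k-1}^2$ via formula~\eqref{eq:Psi}. The only difference is that the paper constructs the conjugating element explicitly in $\SBr(\dsp)$ (by point-pushing $Z_1$ along the concatenation of $s_k$ and $s$, smoothed around the common puncture), which is precisely the form used later in the proof of Proposition~\ref{prop:==}; your change-of-coordinates argument places $\Psi$ in $\MCG(\sop)$, with the refinement to $\SBr(\sop)$ correctly flagged but not fleshed out.
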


\begin{proof}
For any $1\leq i\leq p$, let $\delta_i':=\delta_{2g+b+i-1}$, and let $s_i$ be the \cp in $\DVs$ such that $\delta_i'=s_i^\circlearrowleft$.

Consider any \cp $s\in\DVs$ with vortex $V_j$ as an endpoint. Let $Z$ and $Z_i$ denote the decoration endpoints of $s$ and $s_j$, respectively (they may coincide).
Then there exists an element $f$ in $\SBr(\dsp)$ that moves $Z_i$ to $Z$ the path composed of $s_i$ followed by $s$, bypassing $V$ on either side, and moves $Z$ to $Z_i$ if they are different, so that $s=f(s_{i})$. Thus, by \eqref{eq:Psi},
we have $B_s=B_{s_i}^{\iv{f}}$, and hence $B_s^4={\left( \mathrm{L}_{\delta'_i}^2 \right)}^{\iv{f}}$.
\end{proof}

\begin{definition}
Let $\Cop:=\LL\cap \BT(\dsp)$, and define two more groups.
\begin{itemize}
  \item Define $\CON$ to be the normal subgroup of $\BT(\dsp)$ generated by
    \begin{gather}\label{eq:gen-COR}
        \COR:=[\tau_r,\tau_{r-1}]=\tau_r\tau_{r-1} \underline{\tau_r\tau_{r-1}}
    \end{gather}
    for $2g+b\leq r\leq 2g+b+p-1$. It is clear that all of the following elements belong to $\CON$:
    $$[\tau_{r-1},\tau_{r}]=\iv{\COR},\ [\tau_{r-1},\iv{\tau_r}]=\COR^{\tau_r},\ [\tau_{r},\iv{\tau_{r-1}}]=\iv{\COR}^{\tau_{r-1}}.$$
  \item Define $\COL$ to be the normal subgroup of $\LL$ generated by
  \begin{gather}\label{eq:gen-COL}
    [\sbt,\delta_r^{2}],
  \end{gather}
  for any $\sbt\in \SBr(\dsp)$ and any $2g+b\leq r\leq 2g+b+p-1$.
\end{itemize}
\end{definition}

We will show that $\CON=\Cop=\COL$; that is, the definitions above provide normal generators of $\Cop$ inside both $\BT(\sop)$ and $\LL$, respectively.

\begin{lemma}\label{lem:coh}
The group $\CON$ is a normal subgroup of $\SBr(\dsp)$.
\end{lemma}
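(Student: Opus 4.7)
The plan is to verify that each generator of $\SBr(\dsp)$ conjugates every normal generator $\COR=[\tau_r,\tau_{r-1}]$ of $\CON$ back into $\CON$. Since $\BT(\dsp)$ is normal in $\SBr(\dsp)$ by \eqref{eq:Psi}, each conjugate $\COR^\sbt$ automatically lands in $\BT(\dsp)$, so the only real question is whether it is captured by the designated normal closure $\CON\subseteq\BT(\dsp)$. I would work with the generating set $\{\sigma_i\}\cup\{\varepsilon_t\}$ of $\SBr(\dsp)$. The $\sigma_i$'s themselves lie in $\BT(\dsp)$, so they preserve $\CON$ by the definition of $\CON$ as a normal subgroup of $\BT(\dsp)$; the real content is therefore conjugation by the $\varepsilon_t$'s.

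For these I would apply the explicit formulas \eqref{eq:ss03} for $\tau_r^{\iv{\varepsilon_t}}$ and $\tau_{r-1}^{\iv{\varepsilon_t}}$. The decisive combinatorial fact is that for each index $r$ appearing in the generators of $\CON$ (namely $2g+b\leq r\leq 2g+b+p-1$), both $r$ and $r-1$ are $\geq 2g$ and hence lie outside $\dgen=\{1,3,\dots,2g-1\}$. Consequently, in the \emph{generic range} $t\notin\{r-1,r\}$, a single sub-branch of \eqref{eq:ss03} applies uniformly to both factors, producing a common conjugator $\omega_t\in\BT(\dsp)$ (built from $x$, $\sigma_2$ and $\tau_t$) such that $\tau_r^{\iv{\varepsilon_t}}=\tau_r^{\omega_t}$ and $\tau_{r-1}^{\iv{\varepsilon_t}}=\tau_{r-1}^{\omega_t}$. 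Hence $\COR^{\iv{\varepsilon_t}}=\COR^{\omega_t}\in\CON$, by the normality of $\CON$ inside $\BT(\dsp)$.

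The \emph{exceptional cases} $t\in\{r-1,r\}$ require more care, since the two factors are then transformed by different sub-branches of \eqref{eq:ss03}. For instance, when $t=r$ one has $\tau_r^{\iv{\varepsilon_r}}=\tx^{\sigma_2\tau_r}$ while $\tau_{r-1}^{\iv{\varepsilon_r}}=\tau_{r-1}^{x\tau_r\sigma_2\tau_r}$; stripping the common right-factor $\sigma_2\tau_r\in\BT(\dsp)$ reduces the task to showing that a residual commutator of the form $[\tx,\tau_{r-1}^x]$ lies in $\CON$. The case $t=r-1$ is symmetric. This residual identity is the main obstacle of the proof. I would attack it using the braid relations $\Br(\tau_r,x)$ and $\Br(\tau_r,y)$ together with the commutation $\Co(\tau_r^y,\tau_{r-1}^x)$ from Proposition \ref{prop:pre for sob} (available precisely because $r-1\notin\dgen$), combined with the generating commutators $\COR$ themselves viewed as relations inside $\BT(\dsp)/\CON$, to absorb the asymmetry and collapse each residual commutator into a $\BT(\dsp)$-conjugate of some $\COR$. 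The generic range closes immediately via the common-conjugator trick; all the algebraic labor is concentrated in these two exceptional values of $t$.
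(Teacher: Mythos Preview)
Your overall strategy matches the paper's: reduce to the generators $\sigma_i,\varepsilon_t$ of $\SBr(\dsp)$, observe that the $\sigma_i$'s act trivially since $\CON\trianglelefteq\BT(\dsp)$, and then use the conjugation formulas \eqref{eq:ss03}, \eqref{eq:ss06} for the $\varepsilon_t$'s. The paper writes down uniform identities
\[
\COR^{\iv{\varepsilon_t}}=\COR^{\omega_t},\qquad \COR^{\varepsilon_t}=\COR^{\omega'_t}
\]
with $\omega_t,\omega'_t\in\BT(\dsp)$ depending only on whether $r\le t$ or $t<r$ (and, when $t<r$, on whether $t\in\dgen$).

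Where you diverge is in your treatment of the cases $t\in\{r-1,r\}$, which you flag as ``exceptional'' and plan to resolve with a combination of $\Br(\tau_r,x)$, $\Br(\tau_r,y)$ and $\Co(\tau_r^y,\tau_{r-1}^x)$. In fact these cases are not exceptional at all: the single braid relation $\Br(\tau_t,x)$ gives
\[
\tau_t^{x\tau_t}=x=\tau_t^{\iv{x}\,\iv{\tau_t}},
\]
so the top line of \eqref{eq:ss03}, $\tau_t^{\iv{\varepsilon_t}}=\tx^{\sigma_2\tau_t}$, can be rewritten as $\tau_t^{x\tau_t\sigma_2\tau_t}$ (fitting the $r\le t$ branch) or as $\tau_t^{\iv{x}\,\iv{\tau_t}\sigma_2\tau_t}$ (fitting the $t<r$ branch). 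Thus for $t=r$ both $\tau_r^{\iv{\varepsilon_r}}$ and $\tau_{r-1}^{\iv{\varepsilon_r}}$ share the conjugator $x\tau_r\sigma_2\tau_r$, and for $t=r-1$ both share $\iv{x}\,\iv{\tau_{r-1}}\sigma_2\tau_{r-1}$. Your ``common conjugator'' trick therefore works everywhere, and no appeal to $y$ or to \eqref{nr:06} is needed. (Incidentally, after stripping $\sigma_2\tau_r$ you should get $\tau_{r-1}^{x\tau_r}$ rather than $\tau_{r-1}^{x}$; with the observation above this residual commutator is literally $\COR^{x\tau_r}$.)
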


\begin{proof}
    By \eqref{eq:ss03} and \eqref{eq:ss06}, for any $1\leq t\leq 2g+b+p-1$ and $2g+b\leq r\leq 2g+b+p-1$, we have \begin{align}\label{eq:coh1}
    \COR^{\iv{\varepsilon_t}}=\begin{cases}
       \COR^{x\kg\tau_t\kg\sigma_2\kg\tau_t} & \text{if $r\leq t$,}\\
       \COR^{\iv{x}\kg\iv{\tau_t}\kg\sigma_2\kg\tau_t} & \text{if $t<r$ and $t\notin\dgen$,}\\
       \COR^{\iv{x}\kg{\tau_t}\kg\sigma_2\kg\tau_t} & \text{if $t<r$ and $t\in\dgen$,}\\
    \end{cases}
\end{align}
and
\begin{align}\label{eq:coh2}
    \COR^{\varepsilon_t}=\begin{cases}
       \COR^{\iv{\sigma_2}\kg\iv{\tau_t}\kg\iv{\sigma_1}\kg\iv{\sigma_2}} & \text{if $r\leq t$,}\\
       \COR^{{\sigma_2}\kg{\tau_t}\kg\iv{\sigma_1}\kg\iv{\sigma_2}} & \text{if $t<r$ and $t\notin\dgen$,}\\
       \COR^{\iv{\sigma_2}\kg{\tau_t}\kg\iv{\sigma_1}\kg\iv{\sigma_2}} & \text{if $t<r$ and $t\in\dgen$.}\\
    \end{cases}
\end{align}
As $\CON$ is a normal subgroup of $\BT(\dsp)$, both $\COR^{\varepsilon_t}$ and $\COR^{\iv{\varepsilon_t}}$ are in $\CON$.
Noting that $\SBr(\dsp)=NH$, and that $\varepsilon_t,1\leq t\leq 2g+b+p-1$ generate $H$,
we deduce that $\CON$ is a normal subgroup of $\SBr(\dsp)$.
\end{proof}

\begin{lemma}\label{lem:quo}
For any $\sbt\in \SBr(\dsp)$ and any $2g+b\leq r\leq 2g+b+p-1$, we have
    \begin{equation}\label{eq:co}
        [\sbt,\delta_r^{2}]\in\CON
    \end{equation}
and hence $\COL\leq\CON$.
\end{lemma}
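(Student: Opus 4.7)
The plan is to establish \eqref{eq:co} by first reducing to a finite generating set of $\SBr(\dsp)$, and then analysing the commutator case-by-case using the conjugation formulas assembled in the previous subsection. Fix $r$ with $2g+b\le r\le 2g+b+p-1$; since $r>2g$, we have $r\notin\dgene$ and formula~\eqref{eq:deltas2} gives $\delta_r=\varepsilon_r\iv{\varepsilon_{r-1}}$, so $\delta_r^2$ is an explicit word in the generators of $H$.

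First I would use the normality of $\CON$ in $\SBr(\dsp)$ (Lemma~\ref{lem:coh}) together with the commutator identities
\[
[ab,c]=a[b,c]\iv{a}\cdot[a,c],\qquad [\iv{a},c]=\iv{a}\,\iv{[a,c]}\,a,
\]
to conclude that $\{\sbt\in\SBr(\dsp)\mid [\sbt,\delta_r^2]\in\CON\}$ is a subgroup of $\SBr(\dsp)$. Thus it suffices to verify \eqref{eq:co} on the generating set $\{\sigma_i\}_{1\le i\le\aleph-1}\cup\{\varepsilon_t\}_{1\le t\le 2g+b+p-1}$. For $\sigma_i$ with $i\ge 2$, the closed arc $\sigma_i$ (between $Z_i$ and $Z_{i+1}$) can be made disjoint from the $L$-arc $\delta_r$ (based at $Z_1$, enclosing the puncture $P_{r-2g-b+1}$) after isotopy, so $[\sigma_i,\delta_r^2]=1\in\CON$. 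The same geometric disjointness eliminates those $\varepsilon_t$'s whose underlying loops can be drawn away from $\delta_r$.

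For the remaining ``interacting'' generators --- namely $\sigma_1$ together with the $\varepsilon_t$'s genuinely involving the puncture indexed by $r$ --- the strategy is to substitute $\delta_r=\varepsilon_r\iv{\varepsilon_{r-1}}$ and apply the conjugation rules \eqref{eq:ss02}--\eqref{eq:ss06}, together with the identity $\tau_r=\varepsilon_r\sigma_1\iv{\varepsilon_r}$ coming from \eqref{eq:ss02}, to rewrite $\sbt\cdot\delta_r^2\cdot\iv{\sbt}$ as an explicit word in $\sigma_1,\sigma_2,\tau_{r-1},\tau_r$. Using the braid and commutation relations \eqref{nr:01}--\eqref{nr:07} of Proposition~\ref{prop:pre for sob}, each such word should reduce to $\delta_r^2$ multiplied by a product of conjugates of $\COR=[\tau_r,\tau_{r-1}]$, thereby witnessing membership in $\CON$. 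The second assertion $\COL\le\CON$ is then immediate, since the generators \eqref{eq:gen-COL} of $\COL$ are precisely elements of the form $[\sbt,\delta_r^2]$.

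The hardest part will be the explicit verification in the cases $\sbt=\varepsilon_t$ with $t\in\{r-1,r\}$, where $\sbt$ and $\delta_r^2$ share several building blocks in $H$; here the computation will require repeated use of the braid relations $\Br(\tau_s,x)$ and $\Br(\tau_s,y)$ from \eqref{nr:04}--\eqref{nr:05}, the identity $\Bt{s}^2=\mathrm{L}_\delta$ of Lemma~\ref{lem:non-trivial}, and careful bookkeeping of conjugation by elements of $H$ via \eqref{eq:ss02}--\eqref{eq:ss06}. I expect the proof to proceed as a tedious-but-finite case analysis rather than via a single slick argument.
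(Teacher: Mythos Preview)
Your overall strategy is correct and is essentially the paper's: reduce to a generating set via the commutator identities and the normality of $\CON$ in $\SBr(\dsp)$ (Lemma~\ref{lem:coh}), then treat each generator using the conjugation formulae \eqref{eq:ss02}--\eqref{eq:ss06}. The case $\sbt=\sigma_i$ with $i\ge2$ is indeed trivial by disjointness, as in the paper.

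There is, however, a genuine gap. Your claim that ``the same geometric disjointness eliminates those $\varepsilon_t$'s whose underlying loops can be drawn away from $\delta_r$'' is false. Both $\varepsilon_t$ and $\delta_r^2=(\varepsilon_r\,\iv{\varepsilon_{r-1}})^2$ lie in the free group $H=\pi_1(\qmp,Z_1)$; they share the basepoint~$Z_1$, and the commutator $[\varepsilon_t,\delta_r^2]$ is a non-trivial element of $[H,H]$ for \emph{every} $t$. No $\varepsilon_t$ drops out for free, and the computation is needed for all $s$, not only for $t\in\{r-1,r\}$ as you anticipate. In fact the cases with $s\notin\{r-1,r\}$ are at least as involved, because there one must bring in the explicit description of $[\varepsilon_s,\varepsilon_r]$ as an element of $N$ from Lemma~\ref{lem:tech}.

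The paper also organises the induction differently from your plan. It does \emph{not} attack the $\varepsilon_s$-cases directly; instead it first establishes \eqref{eq:co} for all generators $\sigma_i,\tau_s$ of $N=\BT(\dsp)$ (the $\tau_s$-cases split into $s\ge r$, $s<r$ with $s\notin\dgen$, and $s<r$ with $s\in\dgen$). Only then is $\sbt=\varepsilon_s$ treated, by expanding $[\varepsilon_s,\delta_r^2]$ into products of conjugates of $[\varepsilon_s,\varepsilon_r]$ and $[\varepsilon_{r-1},\varepsilon_s]$, using Lemma~\ref{lem:tech} to recognise these as explicit elements of $N$, and thereby bootstrapping off the $N$-case just proved. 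The key technical identity that makes every case close up is the auxiliary lemma
\[
\iv{\tau_{r-1}}^{\varepsilon_r}\cdot\tau_r^{\varepsilon_{r-1}}\in\CON,
\]
proved separately at the start of the appendix; your plan is missing both this ingredient and the use of Lemma~\ref{lem:tech}.
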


\begin{proof}
\eqref{eq:co} follows from App.~\ref{app:A}.
In combination with \Cref{lem:coh}, this completes the proof.
\end{proof}

\begin{proposition}\label{prop:==}
$\CON=\Cop=\COL$.
\end{proposition}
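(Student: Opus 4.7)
The plan is to close the chain by establishing the two missing inclusions $\CON \subseteq \Cop$ and $\Cop \subseteq \COL$; combined with Lemma~\ref{lem:quo} (which gives $\COL \subseteq \CON$), this yields the triple equality.

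For $\CON \subseteq \Cop$, since $\CON \subseteq \BT(\dsp)$ by definition, it suffices to verify that each generator $\COR = [\tau_r, \tau_{r-1}]$, for $2g+b \le r \le 2g+b+p-1$, lies in $\LL$. Extending Figure~\ref{fig:BT} so that the cyan arc $\nu_k$ (for $b \le k \le b+p-1$) encloses the puncture $P_{k-b+1}$, the two arcs underlying $\tau_{r-1}$ and $\tau_r$ share both endpoints $Z_1, Z_2$ and cobound a bigon containing exactly the puncture $P_{r-2g-b+1}$ (and no other puncture, vortex, or boundary component). The identity
\[
[\Bt{\eta_1}, \Bt{\eta_2}] = \mathrm{L}^2_{\delta_1}\, \mathrm{L}^{-2}_{\delta_2} \in \LL,
\]
extracted from the proof of Proposition~\ref{pp:non-trivial}, then shows $\COR \in \LL$, hence $\CON \subseteq \Cop$.

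For $\Cop \subseteq \COL$, by Lemma~\ref{lem:gen} every $w \in \LL$ can be written as $w = \prod_i g_i\, \delta_{r_i}^{2\epsilon_i}\, g_i^{-1}$ with $g_i \in \SBr(\dsp)$ and $\epsilon_i \in \{\pm 1\}$. By Theorem~\ref{thm:QZ+}, $w$ lies in $\BT(\dsp) = \ker \AJ^\sun$. Since $\AJ^\sun$ takes values in the abelian group $\Ho{1}(\qmp)$, conjugation acts trivially on the image, giving $\AJ^\sun(g\, \delta_r^{2}\, g^{-1}) = 2[\gamma_r]$, where $[\gamma_r]$ is the class of the loop around the puncture $P_{r-2g-b+1}$. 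As the classes $[\gamma_r]$ form part of a basis of the free abelian group $\Ho{1}(\qmp)$, the condition $w \in \ker \AJ^\sun$ forces $\sum_{i:\, r_i = r} \epsilon_i = 0$ for every $r$. Pairing off opposite-sign occurrences via the elementary identity
\[
g\, \delta_r^{2}\, g^{-1} \cdot g'\, \delta_r^{-2}\, {g'}^{-1} \;=\; g\, \bigl[\delta_r^{2},\, g^{-1} g'\bigr]\, g^{-1},
\]
rewrites $w$ as a product of conjugates (inside $\LL$) of generators of $\COL$, so $w \in \COL$. Combining the three inclusions yields $\COL = \CON = \Cop$.

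The main obstacle lies in Step 1, specifically in the precise geometric identification of the arcs $\tau_r, \tau_{r-1}$ for $r$ in the allowed range. The most delicate case is the boundary value $r = 2g+b$, where $\tau_{r-1} = \nu_{b-1}$ encircles the boundary component $\partial_b$ rather than a puncture; a careful picture argument is needed to confirm that the bigon bounded by $\nu_{b-1}$ and $\nu_b$ still contains only the puncture $P_1$ and no other feature, so that Proposition~\ref{pp:non-trivial} applies cleanly. The pairing argument in Step 2 is essentially routine once the balance condition from the Abel--Jacobi short exact sequence is in place.
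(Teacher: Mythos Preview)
Your approach matches the paper's exactly: show $\CON\le\Cop$ via Proposition~\ref{pp:non-trivial}, then $\Cop\le\COL$ by a balance-and-pair argument (the paper phrases the balance condition through $H\cap N=[H,H]$ rather than the Abel--Jacobi map, but this is the same content).

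One inaccuracy in Step~2 needs fixing. After applying your identity, each paired contribution has the form $g\,[\delta_r^{2},h]\,g^{-1}$ with $g\in\SBr(\dsp)$, \emph{not} with $g\in\LL$; so ``conjugates (inside $\LL$) of generators of $\COL$'' is not what you get. What saves you is that $\COL$ is actually normal in all of $\SBr(\dsp)$: from the identity
\[
g\,[\delta_r^{2},h]\,g^{-1} \;=\; [g,\delta_r^{2}]\cdot[g h,\delta_r^{2}]^{-1},
\]
the right-hand side is a product of two $\COL$-generators (and an inverse), hence lies in $\COL$. The paper's proof also leans on this point (its phrase ``by inductive assumption'' at $s=2$ really amounts to the same computation), so you should state it explicitly. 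You also need a word on why non-adjacent paired factors can be made adjacent: commuting one past intermediate factors only conjugates those factors by elements of $\LL\subset\SBr(\dsp)$, so the resulting expression is of the same shape and induction on the length closes the argument.

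Your worry at $r=2g+b$ is unfounded. From Figure~\ref{fig:BT}, the arcs $\nu_1,\dots,\nu_{b+p-1}$ are nested, with $\nu_{b-1}$ the outermost one enclosing $\partial_2,\dots,\partial_b$ and $\nu_b$ enclosing those boundaries together with $P_1$; the bigon between $\tau_{r-1}=\nu_{b-1}$ and $\tau_r=\nu_b$ therefore contains only $P_1$, and Proposition~\ref{pp:non-trivial} applies directly.
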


\begin{proof}
    For any $2g+b\leq r\leq 2g+b+p-1$, the arcs $\eta_1=\tau_{r-1}$ and $\eta_2=\tau_r$ are in the configuration illustrated in \Cref{eq:Co}; see also \Cref{fig:BT}.  Hence, by \Cref{pp:non-trivial}, we have
    \[
    \COR=B_{\eta_2}B_{\eta_1}\iv{B_{\eta_2}}\iv{B_{\eta_1}}
    =(\iv{B_{\eta_2}}\iv{B_{\eta_1}}B_{\eta_2}B_{\eta_1})^{\iv{B_{\eta_2}}\iv{B_{\eta_1}}}
    =(\mathrm{L}^2_{\delta_2}\iv{\mathrm{L}^2_{\delta_1}})^{\iv{B_{\eta_2}}\iv{B_{\eta_1}}}
    \;\in\LL
\]
    So $\COR\in\Cop$ and thus $\CON\le\Cop$. In combination with \Cref{lem:quo}, we only need to show $\Cop\le\COL$.

Take an arbitrary element $1\neq\lb\in\Cop$. By \Cref{lem:gen}, $\lb$ can be written as a product
    $\lb=\displaystyle\prod_{i=1}^s (\theta_i^2)^{\sbt_i},$
    for some $s\ge0$, $\theta_i\in\bigcup_{2g+b\leq r\leq 2g+b+p-1}\{\delta_{r},\iv{\delta_{r}}\}$, and $\sbt_i\in\SBr(\dsp)$, $1\leq i\leq s$. We prove that $\lb\in\COL$ by induction on $s$, starting with the trivial case $s=0$, where $\lb=1$.

Since $\SBr(\dsp)=NH$, there are $n_i\in N$ and $h_i\in H$ such that $\sbt_i=n_ih_i$. Since $N$ is a normal subgroup of $\SBr(\dsp)$, we have $\lb=\displaystyle n \cdot \prod_{i=1}^s (\theta_i^2)^{h_i} $ for some $n\in N$. This implies that $\displaystyle\prod_{i=1}^s (\theta_i^2)^{h_i}\in N$ as $\lb\in N$. Then we have
\[\displaystyle
    \prod_{i=1}^s (\theta_i^2)^{h_i}\in N\cap H\xlongequal{\eqref{eq:HH}}[H,H].
\]
Thus, $\displaystyle\prod_{i=1}^s {(\theta_i^2)^{h_i}}$ becomes the identity
in abelianization $\Ho{1}(\qmp)=H/[H,H]$ of $H$, which is freely generated by $\delta_{?}$'s. This implies that there exists $1<j\leq s$ such that $\theta_1=\iv{\theta_j}$. Consider the following expression of $\lb$
\[
    \lb=\left(\theta_1^2\right)^{\sbt_1}\left(\theta_j^2\right)^{\sbt_j}
        \left(\prod_{i=2}^{j-1}(\theta_i^2)^{\sbt_i\left(\theta_j^2\right)^{\sbt_j}}\right)\left(\prod_{i=j+1}^s(\theta_i^2)^{\sbt_i}\right),
\]
    where $\left(\theta_1^2\right)^{\sbt_1} (\theta_j^2)^{\sbt_j}=[\sbt_j\iv{\sbt_1},\theta_1^2]^{\sbt_j}\in\COL$ by inductive assumption. Let $\sbt'_i=\sbt_i(\theta_j^2)^{\sbt_j}$ for $2\leq i\leq j-1$ and $\sbt'_i=\sbt_i$ for $j+1\leq i\leq s$. Then the rest term of $\lb$ equals
    $$\lb'=\prod_{1\leq i\leq s,\ i\neq 1,j}\left(\theta_i^2\right)^{\sbt_i'}.$$
    By the induction hypothesis, $\lb'$ belongs to $\COL$, and hence so does $\lb$. This completes the proof.
\end{proof}

As a corollary, we obtain a finite presentation of $\BT(\sov)$.

\begin{theorem}\label{thm:pre for sov}
Suppose that either $\aleph\geq 5$, or $\aleph=4$ and $2g+b+p-1\leq 2$.
The braid twist group $\BT(\dsv)$ admits the following finite presentation (see Figure~\ref{fig:B's}):
    \begin{itemize}
	   \item Generators: $\sigma_1,\ldots,\sigma_{\aleph-1}$ and $\tau_1,\ldots,\tau_{2g+b+p-1}$.
        \item Relations: \eqref{nr:01}--\eqref{nr:07}, together with
        \begin{align}
        &\Co(\tau_r,\tau_{r-1})&&\text{if $2g+b\leq r\leq 2g+b+p-1$.}\label{nr:sov}
        \end{align}
    \end{itemize}
\end{theorem}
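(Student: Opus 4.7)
The plan is to deduce this finite presentation directly from the presentation of $\BT(\dsp)$ given in \Cref{prop:pre for sob} together with the identification of normal generators of $\Cop$ worked out in \Cref{prop:==}. Indeed, by \Cref{def:BTv},
\[
    \BT(\dsv) \;=\; \BT(\dsp)/\Cop,
\]
so a presentation of $\BT(\dsv)$ is obtained from any presentation of $\BT(\dsp)$ by adjoining a set of normal generators of $\Cop$ (viewed inside $\BT(\dsp)$) as additional relators.

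First, I would invoke \Cref{prop:pre for sob}, valid under the hypothesis $\aleph\geq 5$ or $\aleph=4$ with $2g+b+p-1\leq 2$, to present $\BT(\dsp)$ with generators $\sigma_1,\ldots,\sigma_{\aleph-1},\tau_1,\ldots,\tau_{2g+b+p-1}$ and relations \eqref{nr:01}--\eqref{nr:07}. Next, I would apply \Cref{prop:==}, which asserts $\Cop=\CON$, where $\CON$ is by definition the \emph{normal} subgroup of $\BT(\dsp)$ generated by the commutators $\COR=[\tau_r,\tau_{r-1}]$ for $2g+b\leq r\leq 2g+b+p-1$. Thus passing to the quotient $\BT(\dsp)/\Cop$ is the same as appending the relations $\COR=1$, i.e.\ $\Co(\tau_r,\tau_{r-1})$, for $r$ in this range. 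This is exactly relation \eqref{nr:sov}.

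The combination of these two ingredients yields the presentation in the statement verbatim, since the set of generators is unchanged and the added relators are precisely those normally generating $\Cop$. The only subtlety to flag is that one need not worry about relations of the form $[\sbt,\delta_r^2]$ coming from the $\COL$-description of $\Cop$; \Cref{prop:==} guarantees that these are already consequences of the commutator relations $\COR=1$ inside $\BT(\dsp)$, so no additional relators are required.

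The main work of the theorem is therefore not in this deduction but in the already-established \Cref{prop:==} (which shows $\CON=\Cop=\COL$, in particular that the seemingly stronger family $[\sbt,\delta_r^2]$ lies in the normal closure of the commutators $[\tau_r,\tau_{r-1}]$). Once that is in place, the argument above is essentially a one-line application of the universal property of group quotients, and the restriction on $\aleph$, $g$, $b$, $p$ is inherited from the hypothesis of \Cref{prop:pre for sob}.
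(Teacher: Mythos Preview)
Your proposal is correct and is exactly the paper's approach: the paper's proof is the single sentence that the presentation follows from \Cref{prop:pre for sob} together with the equality $\CON=\Cop$ in \Cref{prop:==}, and your write-up simply unpacks that sentence.
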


\begin{proof}
Compared to the presentation of $N=\BT(\dsp)$ given in \Cref{prop:pre for sob}, this follows directly from the equality $\CON=\Cop$ in \Cref{prop:==}.
\end{proof}

The first homology group $\Ho{1}(\qmv)$ of $\qmv$ is defined as the quotient group of $\Ho{1}(\qmp)$
by the image $\AJ^\sun(\LL)\subseteq\Ho{1}(\qmp)$.
Since $$\Ho{1}(\qmp)=\Ho{1}(\qm)\oplus\Ho{1}(\sun),$$
and the elements $[\delta_r]$, $2g+b\le r\le 2g+b+p-1$, form a set of generators of $\Ho{1}(\sun)\cong\ZZ^{\oplus\sun}$,
we obtain
$$\Ho{1}(\qmv)=\Ho{1}(\qm)\oplus\Ho{1}(\vortex),$$
where $\Ho{1}(\vortex)\cong\ZZP$, generated by $\AJ^\sun([\delta_r])$ for $2g+b\le r\le 2g+b+p-1$.

We now establish the vortex version of the short exact sequence \eqref{eq:SES-b} in \Cref{thm:QZ+}.

\begin{theorem}\label{thm:BT+}
Suppose that $\aleph\geq 3$.
Then $\BT(\dsv)=\ker\AJ^\vortex$; equivalently, there is a short exact sequence
\begin{gather}\label{eq:SES-v}
    1 \to \BT(\dsv) \to \SBr(\dsv) \xrightarrow{\AJ^{\vortex}} \Ho{1}(\qmv) \to 1.
\end{gather}
\end{theorem}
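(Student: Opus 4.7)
The plan is to deduce the short exact sequence \eqref{eq:SES-v} from its punctured counterpart \eqref{eq:SES-b} in \Cref{thm:QZ+} by quotienting every term by the normal subgroup $\LL$ of $\SBr(\dsp)$.

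First, I would pin down three identifications. Since $\LL\subseteq\SBr(\dsp)$ is normal in $\MCG(\dsp)$ and $F^\sun\colon\MCG(\dsp)\to\MCG(\qmp)$ descends through $\MCG(\dsv)=\MCG(\dsp)/\LL$ to the map $\FM$ of \eqref{eq:forget2}, taking kernels yields
\[
\SBr(\dsv)=\ker\FM=\SBr(\dsp)/\LL.
\]
By \Cref{def:BTv} combined with \Cref{prop:==} (which identifies $\Cop=\BT(\dsp)\cap\LL$), we have $\BT(\dsv)=\BT(\dsp)/(\BT(\dsp)\cap\LL)$. By definition, $\Ho{1}(\qmv)=\Ho{1}(\qmp)/\AJ^\sun(\LL)$. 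Since $\AJ^\sun$ sends $\LL$ into $\AJ^\sun(\LL)$, it factors through $\SBr(\dsv)$ to give a surjective homomorphism
\[
\AJ^\vortex\colon\SBr(\dsv)\twoheadrightarrow\Ho{1}(\qmv).
\]

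Next, I would invoke the following elementary diagram chase: given a short exact sequence $1\to A\to B\xrightarrow{f} C\to 1$ of groups and a normal subgroup $K\lhd B$ (with $f(K)$ automatically normal in $C$, which is abelian in our application), there is an induced short exact sequence
\[
1\to A/(A\cap K)\to B/K\xrightarrow{\bar f} C/f(K)\to 1,
\]
where the first map is $A/(A\cap K)\cong AK/K\hookrightarrow B/K$ via the Second Isomorphism Theorem, and the second map is the factorization of $f$; exactness at the middle term is immediate from $\ker\bar f=\{bK:f(b)\in f(K)\}=AK/K$. Applying this with $A=\BT(\dsp)$, $B=\SBr(\dsp)$, $C=\Ho{1}(\qmp)$, $f=\AJ^\sun$, and $K=\LL$, the three identifications from the previous paragraph translate the resulting SES into exactly \eqref{eq:SES-v}. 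Equivalently, $\BT(\dsv)=\ker\AJ^\vortex$.

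I do not anticipate a substantive obstacle at this stage: the normality of $\LL$ inside $\SBr(\dsp)$ is a consequence of formula \eqref{eq:Psi}, the crucial identification $\BT(\dsp)\cap\LL=\Cop$ is the content of \Cref{prop:==} (already proved), and the hypothesis $\aleph\geq 3$ is inherited from \Cref{thm:QZ+} and used only to guarantee exactness of the top row. Everything else is a formal quotient argument, so the only thing I would verify carefully is commutativity of the resulting $3\times 3$ diagram with the horizontal inclusions and Abel--Jacobi maps, which is immediate from the constructions.
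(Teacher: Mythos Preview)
Your proposal is correct and follows essentially the same approach as the paper: quotient the punctured short exact sequence \eqref{eq:SES-b} by the normal subgroup $\LL$ and invoke the isomorphism theorems. The paper compresses this into a one-line chain of isomorphisms $\Ho{1}(\qmv)=(G/N)\big/(L^2N/N)=G/(L^2N)=(G/L^2)\big/(N/(L^2\cap N))=\SBr(\dsv)/\BT(\dsv)$, whereas you phrase it as a general ``quotient of an SES'' lemma; the content is identical. One minor remark: your invocation of \Cref{prop:==} is unnecessary here, since $\Cop=\LL\cap\BT(\dsp)$ is the \emph{definition} of $\Cop$, and $\BT(\dsv)=\BT(\dsp)/(\LL\cap\BT(\dsp))$ is already \Cref{def:BTv} verbatim---Proposition~\ref{prop:==} identifies $\Cop$ with $\CON$ and $\COL$, which is not needed for this theorem.
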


\begin{proof}
Let $L^2=\LL$ and $G=\SBr(\dsp)$. By the isomorphism theorems of groups, we have
\[
   \Ho{1}(\qmv)=\frac{\Ho{1}(\qmp)}{\AJ^\sun(L^2)}=\frac{G/N}{L^2N/N}=\frac{G}{L^2N}=\frac{G/L^2}{L^2N/L^2}
   =\frac{G/L^2}{N/(L^2\cap N)}=\frac{\SBr(\dsv)}{\BT(\dsv)}.\qedhere
\]
\end{proof}

%=========================================================
%=========================================================
\subsection{A variation of the presentation}
%=========================================================
%=========================================================

In this section, we assume $\aleph\geq 2(2g+b+p-1)+b$. We fix $b$ positive integers $m_1,\dots,m_b$ (not necessarily unique) such that
$$\sum_{i=1}^b m_i=\aleph-2(2g+b+p-1).$$
We consider a new set of generators $\sigma_1$, $x_i$ for $1\leq i\leq 2g+b+p-2+\sum_{i=2}^b m_i$, $y_i$ for $1\leq i\leq 2g+b+p-2+m_1$, and $\newtau{r}$ for $1\leq r\leq 2g+b+p-1$, shown in \Cref{fig:new-generator}. Here, $x_1=x$, $y_1=y$, and $\newtau{r}$ is defined by conjugating $\tau_r$ as follows.
$$\newtau{r}={\tau_r}^{\iv{x_1}\kg\cdots\kg\iv{x_{\dianshu{r}}}\kg y_1\kg\cdots\kg y_{\paiwei{r}}},$$
where
$$\dianshu{r}=\begin{cases}
    r-1 & \text{if $1\leq r\leq 2g$,}\\
    r-1+\sum_{i=2}^{r-2g+1}m_i & \text{if $2g+1\leq r\leq 2g+b-1$,}\\
    r-1+\sum_{i=2}^{b}m_i & \text{if $2g+b\leq r\leq 2g+b+p-1$.}
\end{cases}$$
and
$$\paiwei{r}=\begin{cases}
    2g+b+p-2+m_1 & \text{if $r=1$ and $g\neq 0$,}\\
    2g+b+p-1-\frac{r+1}{2} & \text{if $r\in\dgen$ and $r\neq 1$,}\\
    \frac{r}{2}-1 & \text{if $r\in\dgene$,}\\
    r-1-g & \text{if $2g+1\leq r\leq 2g+b+p-1$.}
\end{cases}$$

\begin{figure}[htpb]\centering
    \begin{tikzpicture}[scale=.7]
    \draw[red,thick,dashed]  (-7,0) to (-6,0)  (-6,0) to (-5,0) to (-4,0) to (-3,0)  (10,0) to (9,0)    (9,0) to (8,0)     (7,0) to (6,0)     (5,0) to (4,0)  to  (3,0) to (2,0);
    \draw[red,thick]      (-8,0) to (-7,0)         %(-6,0) to (-5,0)      %(-4,0) to (-3,0)
    (-3,0) to (-2,0)
    (-2,0) to (-1,0)
    		(-1,0) to (0,0)
    		(0,0) to (1,0)
    		(1,0) to (2,0)
    		(5,0) to (6,0) to
    		(7,0) to (8,0);
    \begin{knot}
    \strand[red,thick]plot[smooth,tension=.9]
    coordinates{
    	(-8,0) (-1,13) (7,0)};
    \strand[red,thick]plot [smooth,tension=.9]
    coordinates{
    	(-8,0) (-1,11.5) (6,0)};
    \strand[red,thick]plot [smooth,tension=.9]
    coordinates{
       (-7,0) (-1,10) (5,0)};
    \strand[red,thick]plot [smooth,tension=.9]
    coordinates{
       (-6,0) (-1,7) (4,0)};
    %\strand[red,thick]plot [smooth,tension=.9]coordinates{(-5,0) (-1,5) (3,0)};
    \strand[red,thick]plot [smooth,tension=.8]
    coordinates{
    (-3,0) (-0.5,2.5) (2,0)};
    \strand[red,thick]plot [smooth,tension=.8]
    coordinates{
        (-1,0) (0,1) (1,0)};
    \strand[violet,thick]plot [smooth,tension=.8]
    coordinates{
    	(-4,0) (2,6) (8,0)};
    \strand[violet,thick]plot[smooth,tension=.8]
    coordinates{
    (-2,0) (3.5,4.5) (9,0)};
    \strand[violet,thick]plot[smooth,tension=.8]
    coordinates{
    	(0,0) (5,2.9) (10,0)};
    \end{knot}
    %\draw[red]    (-4.5,12)node{$\tau'_{2g+b+p-1}$} (-3.5,10.3)node{$\tau'_{2g+b+p-2}$}(-2.2,9)node{$\tau'_{2g+b-1}$}(-2,7.3)node{$\tau'_{2g+1}$}(-2,5.1)node{$\tau'_{2g}$}       (-1.7,2.3)node{$\tau'_{4}$} (0,1.3)node{$\tau'_{2}$};
    \draw
    (-1,12.3)node{\Yinyang}
    (-0.4,12.3)node{$V_p$};
    \draw
    (-1,10.7)node{\Yinyang}
    (-0.15,10.7)node{$V_{p-1}$};
    \draw
    (-1,9.3)node{\Yinyang}
    (-0.15,9.2)node{$V_{p-2}$};
    \draw
    (-1,7.5)node{\Yinyang}
    (-0.15,7.5)node{$V_{1}$};
    \draw(-1,8.5)node{$\vdots$};
    \filldraw[fill=gray!13](-1,6) circle (0.25)
    (-0.8,5.9)node[right]{$\partial_p$};
    \draw(-1,4.8)node{$\vdots$};
    \filldraw[fill=gray!13](-1,3.2) circle (0.25)(-0.8,3.1)node[right]{$\partial_2$};
    \draw[violet]  (9.5,1)node{$\tau'_1$};
    \draw[violet](6.5,3.3)node{$\tau'_3$};
    		\draw[violet]
    		(1.8,6.4)node{$\tau'_{2g-1}$};
    \draw[red]
    (-2.5,-0.4)node{$x_3$}
    (-1.5,-0.4)node{$x_2$}
    (-0.5,-0.4)node{$x_1$}
    (0.5,-0.4)node{$\sigma_1$}
    (1.5,-0.4)node{$y_1$}
    %(3.5,-0.4)node{$y_g$}
    %(5.5,-0.4)node{$y_{g+b-1}$}
    %(7.5,-0.4)node{$y_{g+b+p-1}$}
    ;
    \foreach \x/\y in {-8/0,-7/0,-6/0,-4/0,-3/0,-2/0,-1/0,0/0,10/0,9/0,8/0,7/0,6/0,5/0,4/0,2/0,1/0}{\draw(\x,\y)\ww;}
    \end{tikzpicture}
\caption{Alternative generators of $\BT(\dsv)$}
\label{fig:new-generator}
\end{figure}

There is an alternative presentation of $\BT(\dsv)$ as follows.

\begin{proposition}\label{prop:alt pre}
    Suppose that $\aleph\geq 2(2g+b+p-1)$. The braid twist group $\BT(\dsv)$ has the following finite presentation
    \begin{itemize}
        \item Generators: $\sigma_1$, $x_i$ for $1\leq i\leq 2g+b+p-2+\sum_{i=2}^b (m_i-1)$, $y_i$ for $1\leq i\leq 2g+b+p-2+m_1-1$, and $\newtau{r}$ for $1\leq r\leq 2g+b+p-1$.
        \item Relations:
        \[\begin{array}{cl}
        \Co(a,b)&\text{if $a,b$ are disjoint,}\\
        \Br(a,b)&\text{if $a,b$ are disjoint except sharing a common endpoint,}\\
        \on{Tr}(a,b,c)&\text{if $a,b,c$ are disjoint except sharing a common endpoint, and they}\\
        &\text{are in clockwise order at that point, see the left picture of \Cref{fig:tri rect},}\\
        \on{Rec}(a,b,c,d)&\text{if $a,b,c,d$ form a rectangle in the anti-clockwise order with}\\
        &\text{exactly one puncture in the interior, see the right picture of \Cref{fig:tri rect}.}
    \end{array}\]
    \end{itemize}
\end{proposition}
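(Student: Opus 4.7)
The strategy is to deduce the new presentation from the presentation of \Cref{thm:pre for sov} by Tietze transformations. The plan has three stages: exhibit the new generators as braid twists and recover the old generators from them; verify that the four listed types of relations hold; and show that they suffice to imply \eqref{nr:01}--\eqref{nr:07} together with \eqref{nr:sov}.

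For the first stage, each of $\sigma_1$, $x_i$, $y_i$, and $\newtau{r}$ is by construction the braid twist of an explicit closed arc on $\dsv$ (as depicted in \Cref{fig:new-generator}), hence belongs to $\BT(\dsv)$. Conversely, the defining conjugation formula $\newtau{r} = \tau_r^{\iv{x_1}\cdots\iv{x_{\dianshu{r}}} y_1 \cdots y_{\paiwei{r}}}$, together with an inductive reconstruction of $\sigma_{i+1}$ from $\sigma_1$ and the $x_j$'s (and, symmetrically, of the remaining $\sigma_i$'s from the $y_j$'s), expresses the generators of \Cref{thm:pre for sov} in terms of the new ones. For the second stage, commutation of disjoint arcs and the braid relation for arcs sharing one endpoint are standard identities among braid twists, verifiable by picture-chasing from \Cref{fig:T}; the triangle relation $\on{Tr}(a,b,c)$ follows formally from the pairwise braid relations of three arcs sharing a common endpoint, as explained after \eqref{eq:cyclic-R} and in \cite[Lem.~10.2]{QQ}; and the rectangle relation $\on{Rec}(a,b,c,d)$ around a single vortex is a direct consequence of \Cref{pp:non-trivial} combined with $\Bt{s}^4 = 1$, which gives $\Co(\tau_r,\tau_{r-1})$ for the two diagonals and then the rank-four cyclic identity among the four sides.

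The main obstacle is the third stage: showing that the new relations imply \eqref{nr:01}--\eqref{nr:07} and \eqref{nr:sov}. The guiding principle is geometric: each old relation corresponds to a specific configuration of the underlying arcs (disjoint, sharing an endpoint, triangular, or rectangular around a vortex), so after rewriting both sides in the new generating set it matches one of the four listed types. The conjugation relations \eqref{nr:06}--\eqref{nr:07} unravel through repeated applications of the triangle relation at the common vertex $Z_1$, whereas \eqref{nr:sov} is precisely a rectangle relation around the vortex. The technical difficulty is the bookkeeping of conjugations by the long products appearing in the definition of $\newtau{r}$: I would proceed by induction on the indices of $x_i$ and $y_i$, with the base case $i=1$ reducing to the existing relations of \Cref{thm:pre for sov} via \eqref{eq:x and y} and the inductive step powered by the triangle relation, which allows one to slide an additional $x_i$ or $y_i$ past the configuration without leaving the group generated by the listed relations.
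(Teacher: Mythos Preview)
Your approach is essentially the same as the paper's: reduce to the presentation of \Cref{thm:pre for sov} via Tietze transformations, verify the new relations hold geometrically, and then derive the old relations from the new ones. The paper shortcuts your third stage by observing that the derivation of \eqref{nr:01}--\eqref{nr:07} from commutation, braid and triangle relations among the $\sigma_1,x_i,y_i,\newtau{r}$ is already carried out in \cite{QZ3} (Lemmas~5.7 and~5.8 there), so only the new relation \eqref{nr:sov} needs checking. For \eqref{nr:sov} the paper does exactly what you sketch: it conjugates $\Co(\tau_r,\tau_{r-1})$ by $\iv{x_1}\cdots\iv{x_{r-2+d}}\,y_1\cdots y_{r-g-2}$ to the equivalent form $\Co\bigl({\newtau{r}}^{x_{r-1+d}\,\iv{y_{r-g-1}}},\newtau{r-1}\bigr)$, and then notes that $x_{r-1+d},\newtau{r-1},y_{r-g-1},\newtau{r}$ form a rectangle enclosing exactly the vortex $V_{r-(2g+b-1)}$, so the rectangle relation gives this commutation directly.

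One small point you overlook: \Cref{thm:pre for sov} requires $\aleph\ge5$ or $\aleph=4$ with $2g+b+p-1\le2$, and under the hypothesis $\aleph\ge 2(2g+b+p-1)$ this covers $\aleph\ge4$ but not $\aleph\in\{2,3\}$ (which forces $g=0$, $b=1$, $p=1$). The paper handles these low-rank cases by a direct computation; your write-up should acknowledge the same case split rather than invoking \Cref{thm:pre for sov} uniformly.
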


\begin{figure}[htpb]\centering
\begin{tikzpicture}[scale=1]
\begin{scope}[shift={(0,0)},rotate=60]
\foreach \j in {1,2,3} {\draw[red,thick](0,0)to(90+120*\j:2) (90+120*\j:2)\ww;}
\draw[red](0,0)\ww (70:1)node{$a$}(190:1)node{$b$}(-20:1)node{$c$};
\end{scope}
\begin{scope}[shift={(6,-.3)},xscale=2,scale=.6]
\draw[red,thick](2,2) \ww to (2,-2)\ww to (-2,-2) \ww to (-2,2) \ww to (2,2)\ww;
\draw[red] (180:2.2)node{$a$}(-90:2.4)node{$b$}(90:2.4)node{$d$}(0:2.2)node{$c$};
\draw(0,0)node{\sun};
\end{scope}
\end{tikzpicture}
\caption{Triangle/Rectangle relations}
    \label{fig:tri rect}
\end{figure}
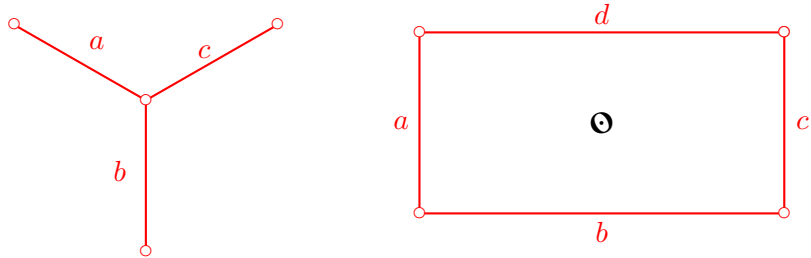

\begin{proof}
    In the case $\aleph\leq 3$, this presentation can be obtained by an easy direct calculation, similarly as in \cite{QZ3}. Indeed, the only subcase different from \cite{QZ3} here is that $g=0$, $b=1$, $p=1$, and $\aleph=2$ or $3$.

    For the case $\aleph\geq 4$, since $2g+b+p-1\leq \frac{\aleph}{2}$, the assumption in \Cref{thm:pre for sov} holds. Hence the presentation of $\BT(\dsv)$ in \Cref{thm:pre for sov} applies.

    It is easy to see that the generators $\sigma_1$, $x_i$, $y_i$, and $\newtau{r}$ form a generating set of $\BT(\dsv)$, and the relations listed hold in $\BT(\dsv)$. So we only need to show these relations imply the relations in \Cref{thm:pre for sov}. The relations \eqref{nr:01}–\eqref{nr:07} are already deduced in \cite{QZ3} (by using Lemmas~5.7 and 5.8 therein). So we only need to check the new relation \eqref{nr:sov}. For any $2g+b\leq r\leq 2g+b+p-1$, conjugating by the element $\iv{x_1}\kg\cdots\kg\iv{x_{r-2+d}}\kg y_1\kg\cdots\kg y_{r-g-2}$, where $d=\sum_{i=2}^b(m_i-1)$, this relation is equivalent to
    \begin{equation}\label{eq:newzhuan}
        \Co({\newtau{r}}^{x_{r-1+d}\kg \iv{y_{r-g-1}}},\newtau{r-1}).
    \end{equation}
    Because $x_{r-1+d}$, $\newtau{r-1}$, $y_{r-g-1}$ and $\newtau{r}$ form a rectangle in the anti-clockwise order with exactly one vortex $V_{r-(2g+b-1)}$ in its interior, see \Cref{fig:new-generator} (for the case $r=2g+b$), we have the relation $\on{Rec}(x_{r-1+d},\newtau{r-1},y_{r-g-1},\newtau{r})$, which implies \eqref{eq:newzhuan}.
\end{proof}

%=========================================================
%=========================================================
\section{Cluster braid groups: surface case}\label{part:C}

In this section, we recall the notions of cluster braid groups and cluster exchange groupoids as introduced in \cite[\S~2]{KQ2}. However, we avoid categories and only use surface combinatorics.
%=========================================================

%=========================================================
\subsection{Marked surfaces with vortices}
%=========================================================

A marked surface with punctures (MSp) $\surfp$ is a punctured surface $\qmp=(\qm,\sun)$ together with a set $\M$ of marked points on $\partial\qm$, such that each connected component of $\partial\surf$ contains at least one marked point. We denote by $\surf=(\qm,\M)$ and $\surfp=(\surf,\sun)$.

Up to homeomorphism, $\surf$ is determined by the genus $g$, the number $b$ of boundary components, the number $p=\#\sun$ of punctures, and the integer partition of $m=\#\M$ into $b$ parts describing the numbers of marked points on the boundary components of $\surf$. The rank of $\smp$ is defined to be

\begin{gather}\label{eq:n}
n=6g+3p+3b+m-6.
\end{gather}

The \emph{mapping class group} $\MCG(\smp)$ is defined to be the group of (isotopy classes of) homeomorphisms of $\surf$ that preserve the sets $\M$ and $\sun$, respectively, setwise, respectively.

A vortex is a puncture endowed with an extra $\ZZ_2$-symmetry (i.e., a choice of sign). A marked surface with vortex (MSv) $\smv=(\surf,\M,\P)$ is a MSp with all punctures turned to vortices.

\begin{definition}[{\cite{BQ}}]
    The \emph{mapping class group} of $\smv$ is defined to as the semi-direct product $$\MCG(\smv)=\MCG(\smp)\rtimes\ZZP.$$
\end{definition}

We will identify $\ZZ_2$ with $\{\pm1\}$.

%=========================================================
\subsection{Signed triangulations}
%=========================================================

\begin{definition}
An \emph{open arc} $\gamma$ is (the isotopy class of) a curve on $\smp$ with interior lying in $\surf^\circ$, and endpoints in $\M\cup\P$, which is \emph{simple} (does not intersect itself in $\surf^\circ$) and \emph{essential} (not homotopic to a constant arc or a boundary arc, i.e., an arc lying entirely in $\partial\surf$).

An \emph{(ideal) triangulation} $\RT$ of $\smp$ is a maximal collection of open arcs on $\smp$ that are compatible, that is, they do not intersect in $\surf^\circ$. See \Cref{fig:type IV} for possible puzzle pieces in a triangulation.
\end{definition}

\begin{figure}[ht]\centering
    \includegraphics[width=\textwidth]{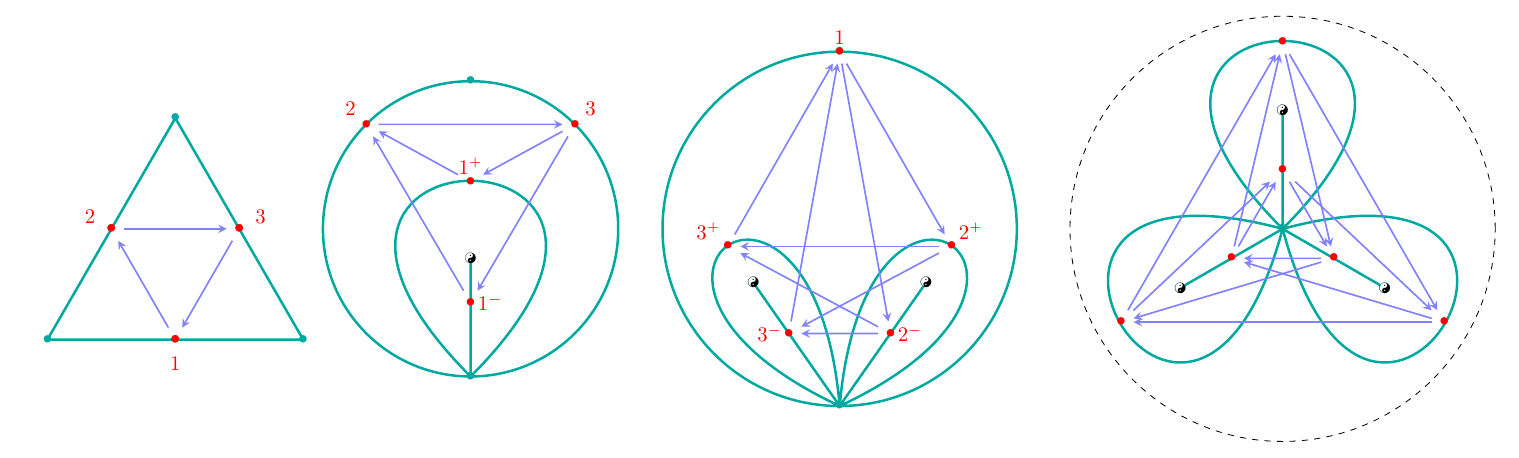}
\caption{The types I, II, III, IV (decorated) puzzle pieces}
\label{fig:type IV}
\end{figure}

It is known that any triangulation $\RT$ contains exactly $n$ arcs. Equivalently, a triangulation $\RT$ is a collection of open arcs that divides $\surfp$ into triangles. There are two types of triangles: ordinary triangles with three distinct edges, and self-folded triangles where two edges coincide. In the self-folded case, there is a loop edge and a self-folded edge enclosing a puncture, see Figure~\ref{fig:pm}.

\begin{definition}
    The \emph{(forward=backward) flip} of a triangulation $\RT$ of $\smp$ at a non-self-folded arc $\gamma$ is the unique triangulation $\mu_{\gamma}(\RT)$ such that $\RT\setminus \mu_{\gamma}(\RT)=\{\gamma\}$. Indeed, $\mu_{\gamma}(\RT)=(\T\setminus\{\gamma\})\cup\{\gamma^\sharp\}$, where $\gamma^\sharp$ is obtained from $\gamma$
    by moving its endpoints anticlockwise along the quadrilateral containing $\gamma$, as shown in the second row of Figure~\ref{fig:flip}.
\end{definition}

The \emph{unoriented exchange graph $\uEG(\smp)$ of triangulations} of $\smp$ is a graph whose vertices are the triangulations of $\smp$ and whose edges correspond to flips.

\begin{definition}\label{def:EGv}
A \emph{signed triangulation} of $\smp$ is a pair $\RTe=(\RT,\sign)$, where $\RT$ is a triangulation of $\smp$ and $\sign \in \ZZP$ is a sign assignment on vortices. The \emph{flip} of a signed triangulation $(\RT,\sign)$ at a non-self-folded arc $\gamma$ in $\RT$ is the signed triangulation $(\mu_{\gamma}(\RT),\sign)$.
\end{definition}

The \emph{unoriented exchange graph $\uEG^\pm(\smp)$ of signed triangulations} has vertices given by the signed triangulations of $\smp$, with edges corresponding to flips.

When punctures are replaced by vortices, we introduce the following equivalence relation on signed triangulations of $\smp$.

\begin{definition}\label{def:equivsmb}
    Two signed triangulations $(\RT_1,\sign_1)$ and $(\RT_2,\sign_2)$ are said to be \emph{$\sim$ equivalent} if and only if
    \begin{itemize}
    \item $\RT_1=\RT_2$;
    \item For any vortex $V \in \P$, if $\sign_1(V) \neq \sign_2(V)$, then $V$ lies in a self-folded triangle as depicted in Figure~\ref{fig:pm}.
    \end{itemize}
    In such a case, i.e., when $\sign_1(V) \neq \sign_2(V)$, we switch the labeling (or coloring) of the loop edge and the self-folded edge locally. This ensures that the edges of equivalent signed triangulations admit a canonical correspondence.

    Moreover, two flips $\mu_{\gamma_i}\colon \RT_i \to \RT_i'$ are equivalent if $\RT_1\sim \RT_2$, $\RT_1'\sim \RT_2'$, and $\gamma=\gamma'$.
\end{definition}

\begin{figure}[htpb]\centering
\begin{tikzpicture}[scale=1.2]\clip(0-1.1,0-1.1)rectangle(2.5+1.1,0+1.1);
\begin{scope}[shift={(0,0)}]
\draw[blue,very thick](0,0)to(0,-1)
    (1.25,0)node[black]{\Large{$\sim$}};
\draw[\ql,very thick](0,-1)
    .. controls +(45:2.5) and +(135:2.5) .. (0,-1);
\draw(0,0)circle(1)(0,1)\nn(0,-1)\nn;
\draw(0,0)\vot;\end{scope}
\begin{scope}[shift={(2.5,0)}]
\draw[\ql,very thick](0,0)to(0,-1);
\draw[blue,very thick](0,-1)
    .. controls +(45:2.5) and +(135:2.5) .. (0,-1);
\draw(0,0)circle(1)(0,1)\nn(0,-1)\nn;
\draw(0,0)\tov;\end{scope}
\end{tikzpicture}
\caption{Vortexes in self-folded triangles}\label{fig:pm}
\end{figure}
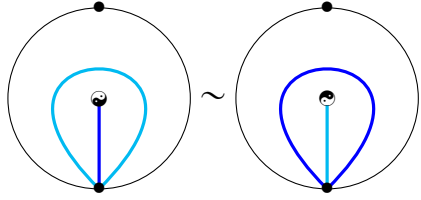

We define the \emph{unoriented exchange graph $\uEG(\smv)$ of signed triangulations} of $\smv$ to be the quotient graph $\uEG^\pm(\smp)$ by the equivalence relation $\sim$:
\begin{gather}
    \uEG(\smv)=\uEG^\pm(\smp)/\sim.
\end{gather}

Moreover, we identify $\uEG(\smp)$ with the subgraph $\uEG^{+}(\smp)=\{(\RT,+)\mid\RT\in\uEG(\smp)\}$.

\begin{lemma}\label{lem:nreg}
$\uEG(\smv)$ is $n$-regular and any each edge starting from $(\RT,\sign)$ is labeled by exactly one open arc in $\RT$.
\end{lemma}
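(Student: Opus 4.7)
The plan is to construct, for each vertex $[(\RT,\sign)]$ of $\uEG(\smv)$, a bijection between the $n$ arcs of $\RT$ and the edges of $\uEG(\smv)$ emanating from this vertex; this will simultaneously yield $n$-regularity and the labeling. I would first partition the arcs of $\RT$ into two groups: those which are the self-folded edge of some self-folded triangle, and the rest (which are non-self-folded in the sense of \Cref{def:EGv}).

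For an arc $\gamma$ in the second group, \Cref{def:EGv} directly produces the flip $(\RT,\sign)\mapsto(\mu_\gamma(\RT),\sign)$, which descends to a well-defined edge in $\uEG(\smv)$ labeled by $\gamma$. For an arc $\gamma$ in the first group, say the self-folded edge enclosing a vortex $V$, the key move is to invoke the equivalence relation $\sim$ of \Cref{def:equivsmb}: switching the sign at $V$ yields an equivalent representative $(\RT,\sign')$, and under the canonical correspondence of edges, $\gamma$ is relabeled as the loop edge of the self-folded triangle in $(\RT,\sign')$. This loop is non-self-folded, so \Cref{def:EGv} applies and produces a flip; the resulting edge of $\uEG(\smv)$ is labeled by $\gamma$ via the correspondence. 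This covers all $n$ arcs of $\RT$.

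The main obstacle is verifying well-definedness on the quotient graph $\uEG(\smv)=\uEG^\pm(\smp)/\sim$. Specifically, I need to check that (i) the flip constructed through the equivalence at a self-folded edge yields the same equivalence class of target triangulation regardless of which representative of $[(\RT,\sign)]$ one starts from, and (ii) distinct arcs of $\RT$ yield distinct edges. Both should follow from the locality of $\sim$: switching the sign at a vortex affects only the surrounding self-folded triangle, and the canonical correspondence preserves the arc label $\gamma$ on all other arcs. Granting these, there are exactly $n$ edges from each vertex, canonically labeled by the $n$ arcs of $\RT$.
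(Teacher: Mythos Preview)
Your proposal is correct and follows essentially the same approach as the paper: partition the arcs into non-self-folded (flip directly) and self-folded (switch the sign at the enclosed vortex via $\sim$ to relabel as the loop edge, then flip), and check that this yields $n$ well-defined, distinct edges independent of the chosen representative. The paper's proof is slightly terser on the well-definedness verification, but the strategy is identical.
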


\begin{proof}
Let $(\RT,\sign)$ be a representative in $\uEG(\smv)$.
For each non-self-folded arc in $\RT$, there is clearly a unique flip labeled by that arc.  Moreover, for any self-folded arc $\gamma$, by switching the sign at the corresponding vortex, $\gamma$ becomes (the labeling of) the corresponding loop edge, which is flippable; this corresponds to a flip $\mu'_\gamma$. Hence there are exactly $n$ flips at $(\RT,\sign)$ labeled by arcs of $\RT$.

For any other representative $(\RT',\sign')\sim(\RT,\sign)$, the same argument applies, yielding $n$ flips at $(\RT',\sign')$. Furthermore, these flips correspond exactly (i.e., are equivalent) to those with the same labels at $(\RT,\sign)$.
Therefore, the lemma follows.
\end{proof}

%\begin{proof}
%    Let $(\RT_1,\sign_1)$ and $(\RT_2,\sign_2)$ be two signed triangulations of $\smp$ with $(\RT_1,\sign_1)\sim(\RT_2,\sign_2)$. Let $\gamma_1$ be an non-self-folded arc in $\RT_1$ such that its corresponding $\gamma_2$ in $\RT_2$
%    is also non-self-folded. Then we have $\gamma_1=\gamma_2$. If $\gamma_1$ (and hence $\gamma_2$) is not a loop edge, then $\mu_{\gamma_1}(\RT_1)=\mu_{\gamma_1}(\RT_2)$, and all self-folded triangles in $\RT_1=\RT_2$ are kept in the new triangulation. Hence we have $\mu_{\gamma_1}(\RT_1,\sign_1)\sim \mu_{\gamma_2}(\RT_2,\sign_2)$. If $\gamma_1$ (and hence $\gamma_2$) is a loop edge, then $\sign_1$ and $\sign_2$ take the same sign at the vortex enclosed by $\gamma_1=\gamma_2$. Hence we also have $\mu_{\gamma_1}(\RT_1,\sign_1)\sim \mu_{\gamma_2}(\RT_2,\sign_2)$.
%
%    Therefore, different oriented edges in $\uEG(\smv)$ starting from $(\RT_1,\sign_1)$ are labeled by different arcs in $\RT_1$. Moreover, for any arc $\gamma$ in $\RT_1$, there is always a signed triangulation equivalent to $(\RT_1,\sign_1)$ such that the arc corresponding to $\gamma$ is non-self-folded. Hence, there is an oriented edge starting from $(\RT_1,\sign_1)$ labeled by $\gamma$. Thus, any oriented edge of $\uEG(\smv)$ starting from a signed triangulation $(\RT,\sign)$ is labeled exactly by one open arc in $\RT$. Therefore, $\uEG(\smv)$ is $(n,n)$-regular.
%\end{proof}

For any arc $\gamma$ in a triangulation $\RT$, denote by $\pi(\gamma)$ the arc in $\RT$ defined as follows: if $\gamma$ is a self-folded arc, then $\pi(\gamma)$ is the loop edge enclosing $\gamma$; otherwise, set $\pi(\gamma)=\gamma$.

\begin{definition}[\cite{FST,CL}]\label{def:puzzleQP}
Let $\smv$ be a MSv and $\RT$ a triangulation of $\smv$. The associated quiver $Q_\RT$ is defined as follows:
\begin{itemize}
\item The vertices $1,\cdots,n$ are indexed by the open arcs $\gamma_1,\cdots,\gamma_n$ in $\RT$.
\item For the unreduced quiver $\overline{Q_\RT}$, there is an arrow from $i$ to $j$ whenever there is a triangle of $\RT$ with $\pi(\gamma_i)$ and $\pi(\gamma_j)$ as two edges such that $\pi(\gamma_j)$ follows $\pi(\gamma_i)$ in the clockwise order.
\item Remove a maximal collection of disjoint 2-cycles from $\overline{Q_\RT}$ to get $Q_\RT$.
\end{itemize}
Moreover, there is an unreduced potential $\overline{W_\RT}$
associated to the quiver $\overline{Q_\RT}$ (for details, cf. \cite{CL}).
Furthermore, denote by $(Q_\RT,W_\RT)$ the reduced quiver with potential of $(\overline{Q_\RT}, \overline{W_\RT})$, in the sense of \cite{DWZ}.
\end{definition}

An easy observation is the following, where the cases correspond to those in \Cref{def:gpds}, respectively.

\begin{lemma}\label{lem:no ar}
    Let $\RT$ be a triangulation of $\smp$. Let $i$ and $j$ be two distinct vertices in the quiver $Q_\RT$. Then there is no arrow between $i$ and $j$ if and only if one of the following holds.
    \begin{itemize}
        \item $\gamma_i$ and $\gamma_j$ are not adjacent in any triangle of $\RT$.
        \item $\gamma_i$ and $\gamma_j$ are adjacent in two triangles of $\RT$ that form a digon.
        \item $\gamma_i$ and $\gamma_j$ form a self-folded triangle of $\RT$.
    \end{itemize}
    There is exactly one arrow between $i$ and $j$ if and only if:
    \begin{itemize}
        \item $\gamma_i$ and $\gamma_j$ are adjacent in only one non-self-folded triangle of $\RT$.
    \end{itemize}
    There are exactly (or equivalently, at most) two arrows between $i$ and $j$ if and only if:
    \begin{itemize}
        \item $\gamma_i$ and $\gamma_j$ are adjacent in two triangles of $\RT$ that form an annulus.
    \end{itemize}
\end{lemma}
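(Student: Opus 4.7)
The plan is a direct case analysis on the configuration of triangles of $\RT$ that contain both $\pi(\gamma_i)$ and $\pi(\gamma_j)$ as two distinct edges. Since each arc is an edge of at most two triangles, any two vertices of $\overline{Q_\RT}$ are joined by at most two arrows, and each shared triangle contributes exactly one arrow (directed by the clockwise order in that triangle). This already explains the ``exactly (or equivalently, at most) two arrows'' assertion in the last bullet.

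The first subtlety to address is the self-folded triangle case. If $\gamma_i,\gamma_j$ form a self-folded triangle, with $\gamma_i$ the self-folded arc and $\gamma_j$ the enclosing loop, then $\pi(\gamma_i)=\gamma_j=\pi(\gamma_j)$, so this triangle does not present $\pi(\gamma_i)$ and $\pi(\gamma_j)$ as two distinct edges and contributes no arrow. Since a self-folded arc lies in no other triangle of $\RT$, no further contribution is possible, placing this in the ``no arrow'' list. For the remaining (non-self-folded) configurations, $\pi$ is the identity on the arcs in question, and the count reduces to counting common triangles. Zero common triangles is the ``not adjacent'' case and trivially gives no arrow; one common non-self-folded triangle produces exactly one arrow in $\overline{Q_\RT}$, which survives reduction since there is no oppositely directed arrow to cancel it.

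The substantive case is when $\gamma_i, \gamma_j$ belong to two common triangles $T_1, T_2$. Topologically, $T_1\cup T_2$, obtained by identifying two pairs of edges of two triangles, must be either a disk with the two remaining edges forming the boundary of a digon, or an annulus whose two boundary circles are the two remaining edges; no other configuration is compatible with the ambient orientation on $\smp$. In the digon case, the common orientation forces $\gamma_j$ to follow $\gamma_i$ clockwise in one of $T_1,T_2$ and $\gamma_i$ to follow $\gamma_j$ clockwise in the other, producing a $2$-cycle in $\overline{Q_\RT}$ that is killed by the maximal $2$-cycle reduction, leaving no arrow in $Q_\RT$. In the annulus case the two clockwise orders agree (a fact one can confirm by lifting to the universal cover of the annular region), yielding two parallel arrows in $\overline{Q_\RT}$ that both survive. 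This accounts for the digon sub-case of ``no arrow'' and for the unique configuration giving exactly two arrows.

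The main obstacle I foresee is the rigorous orientation analysis behind the digon-versus-annulus dichotomy. I would handle it by drawing $T_1\cup T_2$ explicitly as a hexagon with two pairs of boundary edges identified, enumerating the identification patterns compatible with the ambient orientation, and then reading off the induced clockwise orders directly. This is essentially the analysis underlying the FST correspondence between triangulations and quivers (\cite{FST,CL}), which I would invoke as a cross-check to avoid sign and orientation errors.
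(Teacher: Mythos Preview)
The paper records this only as an ``easy observation'' and gives no argument, so your case analysis is essentially the intended proof. There is, however, a genuine gap in the step ``For the remaining (non-self-folded) configurations, $\pi$ is the identity on the arcs in question.'' Whether $\pi(\gamma_i)=\gamma_i$ depends only on $\gamma_i$, not on the pair: if $\gamma_i$ is a self-folded arc with enclosing loop $\ell$ and $\gamma_j$ is any arc other than $\ell$, then $\pi(\gamma_i)=\ell\ne\gamma_i$ even though $(\gamma_i,\gamma_j)$ does not form a self-folded triangle. In this situation $\gamma_i$ lies only in its self-folded triangle, so $\gamma_i$ and $\gamma_j$ share no common triangle and your ``zero common triangles $\Rightarrow$ no arrow'' step fires; but by definition $\overline{Q_\RT}$ acquires an arrow between $i$ and $j$ from every non-self-folded triangle having $\pi(\gamma_i)=\ell$ and $\pi(\gamma_j)$ as two of its edges. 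Concretely, take the self-folded arc $r$ enclosed by a loop $\ell$, and let $a$ be a non-loop edge of the outer triangle on $\ell$: then $r$ and $a$ are not edges of any common triangle, yet $Q_\RT$ has exactly one arrow between them. The same phenomenon occurs when both arcs are self-folded with distinct loops that bound a common outer triangle.

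The remedy is to carry out the whole count on $\pi(\gamma_i),\pi(\gamma_j)$ rather than on $\gamma_i,\gamma_j$: once you do this, your single-triangle case and your digon/annulus dichotomy for two common triangles go through unchanged, and the self-folded pair becomes the degenerate case $\pi(\gamma_i)=\pi(\gamma_j)$. This also shows that the phrase ``adjacent in a triangle'' in the lemma has to be read through $\pi$; under the literal reading the example above already falsifies the statement.
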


Next, we recall the notion of tagged arcs, which provide an alternative description of signed triangulations.

%=========================================================
\subsection{Signed triangulations via tagged arcs}
%=========================================================
\begin{definition}[\cite{FST}]\label{def:tagged}
On $\smv$, we have the following notions.
\begin{itemize}
\item A \emph{tagged arc} is a pair $(\gamma,\kappa)$ where
\begin{itemize}
    \item $\gamma$ is an open arc that does not cut out a once-punctured monogon by a self-intersection at its endpoints (if any), and
    \item $\kappa$ is a map assigning a sign $\pm$ to each endpoint of $\gamma$ that is a vortex, such that if both endpoints of $\gamma$ coincide at the same vortex, then their assigned signs agree.
\end{itemize}
\item Two tagged arcs are called \emph{compatible} if
    \begin{itemize}
      \item they do not intersect in the interior $\surf^\circ$ of the surface, and
      \item at any common endpoint, their taggings agree unless the arcs are homotopic and differ in tagging at exactly one common endpoint.
    \end{itemize}
\item A \emph{tagged triangulation} $\RTx$ of $\surfp$ is a maximal collection of pairwise compatible tagged arcs.
\end{itemize}
Denote by $\EGx(\surfp)$ the set of all tagged triangulations on $\surfp$.
Note that in a tagged triangulation, a vortex is drawn with signs if all tagged arcs incident to it have the same tagging; in this case, the tagging signs on the arcs can be omitted. Otherwise, the vortex is drawn without signs.
\end{definition}

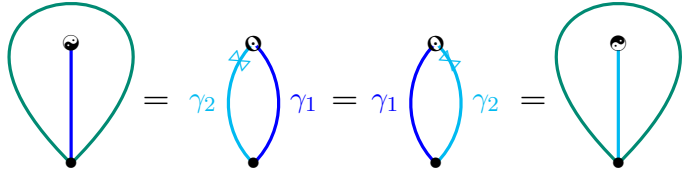
\begin{figure}[ht]\centering
\begin{tikzpicture}[scale=1.6]\clip(0-1.1,0-1.1)rectangle(6+1.1,0.4);
\draw(1.2,-.5)node[black]{\Large{$=$}}
    (2.75,-.5)node[black]{\Large{$=$}}
    (4.3,-.5)node[black]{\Large{$=$}};
\begin{scope}[shift={(.5,0)}]
\draw[blue,very thick](0,0)to(0,-1);
\draw[\hh,very thick](0,-1)
    .. controls +(45:2.5) and +(135:2.5) .. (0,-1);
\draw(0,0)\vot (0,-1)\nn;
\end{scope}
\begin{scope}[shift={(5,0)}]
\draw[\ql,very thick](0,0)to(0,-1);
\draw[\hh,very thick](0,-1)
    .. controls +(45:2.5) and +(135:2.5) .. (0,-1);
\draw(0,0)\tov (0,-1)\nn;
\end{scope}
\begin{scope}[shift={(2,0)}]
\draw[\ql,very thick](0,0)to[bend left=-45]node[left]{$\gamma_2$}(0,-1)
    (-.115,-.15)node[rotate=-40]{$\bowtie$};
\draw[blue,very thick](0,0)to[bend left=45]node[right]{$\gamma_1$}(0,-1) ;
%\draw[\hh,very thick](0,-1)
%    .. controls +(45:2.5) and +(135:2.5) .. (0,-1);
\draw(0,0)\dpole (0,-1)\nn;
\end{scope}
\begin{scope}[shift={(3.5,0)},xscale=-1]
\draw[\ql,very thick](0,0)to[bend left=-45]node[right]{$\gamma_2$}(0,-1)
    (-.115,-.15)node[rotate=-40]{$\bowtie$};
\draw[blue,very thick](0,0)to[bend left=45]node[left]{$\gamma_1$}(0,-1) ;
%\draw[\hh,very thick](0,-1)
%    .. controls +(45:2.5) and +(135:2.5) .. (0,-1);
\draw(0,0)\dpole (0,-1)\nn;
\end{scope}
\end{tikzpicture}
\caption{Self-folded triangles via tagged arcs}\label{fig:pm2}
\end{figure}

For any boundary component $\partial\in\partial\surfp$,
the rotation $\rho_\partial$ is the operation of moving a marked point to the next consecutive marked point,
as shown in Figure~\ref{fig:rotate}, where the orientation is anticlockwise if the surface lies inside the boundary curve, and clockwise if the surface lies outside.

\begin{figure}[htpb]\centering
\begin{tikzpicture}[scale=.3]
\draw[thick](0,0) circle (5) %(-135:4)node{$+$}
;
\draw[thick,fill=gray!20](0,0) circle (1.5) (126:5.9)node{$\rho$}(126:.5)node{$\rho$};
  \foreach \j in {1,...,5}{\draw(90-72*\j:5) node{$\bullet$};}
  \foreach \j in {1,...,3}{\draw(-90-120*\j:1.5) node{$\bullet$};}

\draw[blue,->,>=latex](90+36-15:5.4)arc(90+36-15:90+36+15:5.4);
\draw[blue,->,>=latex](90+36+25:1.1)arc(90+36+25:90+36-45:1.1);
\end{tikzpicture}
\caption{The rotation on $\surfp$}
\label{fig:rotate}
\end{figure}

\begin{definition}\label{def:T-rotate}\cite[Def.~3.4]{BQ}
The \emph{universal tagged rotation} $\widetilde{\rho}$ is the element
\begin{gather}\label{eq:T-rotation}
    \widetilde{\rho}=\prod_{\partial\subset\partial\surfp} \rho_\partial \cdot \prod_{V\in\P} \sx_V
\end{gather}
in $\MCG(\surfv)$,
where the first product runs over all connected components $\partial$ of $\partial\surfp$
and $\sx_V$ is the tagged switching at $V$ for the $\z2$ associated to $V$.
Note that the order in \eqref{eq:T-rotation} does not matter since the $\rho_\partial$'s and $\sx_V$'s commute.
\end{definition}

One can restrict the universal tagged rotation to a local one. When cutting along arcs in a tagged triangulation $\RTx$, $\surfv$ will be divided into polygons (triangles as type I puzzle pieces, digons together with square or pentagon or hexagon in type II/III/IV puzzle pieces respectively.) Cf. Figure~\ref{fig:type IV}.

\begin{definition}\label{def:l.a.}
The \emph{local anticlockwise tagged (=l.a.t.) rotation} for a tagged arc $\gamma$ in a tagged triangulation $\RTx$ is the universal tagged rotation operation on $\gamma$ which takes place in the neighbourhood $N(\gamma)$,
which is the union of the two polygons that contain $\gamma$ (as boundaries when cutting along arcs in $\RTx$).
Note that in $N(\gamma)$:
\begin{itemize}
  \item A vortex is still considered as a vortex in $N(\gamma)$ if
  there are exactly two tagged arcs in $\RTx$ that are incident at it with different taggings. So the tagging of $\gamma$ at it will be changed. Other vortices of $\surfv$ will be regarded as marked points in $N(\gamma)$.
  \item When moving the endpoints of $\gamma$ on the boundary of $N(\gamma)$ to the next consecutive one, one needs to skip vortices in $N(\gamma)$.
\end{itemize}
\end{definition}

\begin{figure}[ht]\centering
\begin{tikzpicture}[scale=1.2];
\begin{scope}[shift={(1.8,.4)},scale=1.2]
\draw[blue,thin](0,0)circle(.4);
\draw[blue,-stealth](.4,0)to(.4,.01);
%\draw[blue,-stealth](-.4,0)to(-.4,-.01);
%\draw[blue,->-=.5,>=stealth]
\draw[blue](0,0)node{\tiny{\sx}} (0,.4)\snn(0,-.4)\snn;
\end{scope}
\begin{scope}[shift={(-1.8,.4)},scale=1.2]
\draw[\ql,thin](0,0)circle(.4);
\draw[\ql,-stealth](.4,0)to(.4,.01);
%\draw[\ql,-stealth](-.4,0)to(-.4,-.01);
%\draw[\ql,->-=.5,>=stealth]
\draw[\ql](0,0)node{\tiny{\sx}} (0,.4)\snn(0,-.4)\snn;
\end{scope}

\begin{scope}[shift={(0,0)}]
\draw[\ql,very thick](0,0)to[bend left=-45]node[left]{}(0,-1)
    (-.115,-.15)node[rotate=-40]{$\bowtie$};
\draw[blue,very thick](0,0)to[bend left=45]node[right]{}(0,-1) ;
%\draw[\hh,very thick](0,-1)
%    .. controls +(45:2.5) and +(135:2.5) .. (0,-1);
\draw[thick](0,0)circle(1)\dpole (0,1)\nn(0,-1)\nn;
\end{scope}

\draw[blue,->,thick,>=stealth](1.2,-.3)to node[below,black]{\footnotesize{in $\surfp$}}(2.4,-.3);
\draw[\ql,->,thick,>=stealth](-1.2,-.3)to node[below,black]{\footnotesize{in $\surfp$}}(-2.4,-.3);

\begin{scope}[shift={(3.6,0)}]
\draw[thick](0,0)circle(1);
\draw[blue,very thick](0,0)to(0,1);
\draw[\ql,very thick](0,0)to(0,-1);
\draw(0,0)(0,1)\nn(0,-1)\nn;
\draw(0,0)\dpole;\end{scope}
\begin{scope}[shift={(-3.6,0)},yscale=-1]
\draw[thick](0,0)circle(1);
\draw[blue,very thick](0,0)to(0,1);
\draw[\ql,very thick](0,0)to(0,-1);
\draw(0,0)(0,1)\nn(0,-1)\nn;
\draw(0,0)\dpole;\end{scope}
\end{tikzpicture}
\caption{l.a.t. rotation as forward flip with type II puzzle piece}
\label{fig:flip II}
\end{figure}
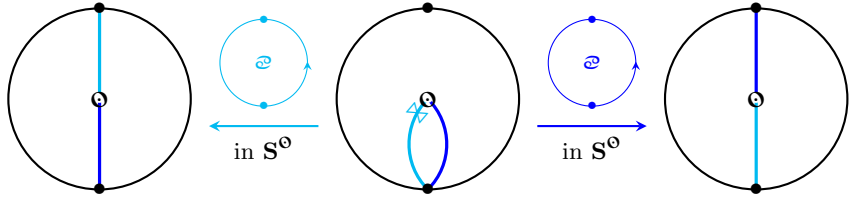

\begin{lemma}\label{lem:1}
There is a canonical bijection between $\uEG(\smv)$ and $\uEGx(\smv)$.
Under this bijection, the forward flip in $\uEG(\smv)$ corresponds to l.a.t. rotation in $\uEGx(\smv)$ and thus defines an isomorphism between oriented graphs.
\end{lemma}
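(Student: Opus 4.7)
The plan is to construct an explicit map $\Phi\colon \uEG(\smv)\to\uEGx(\smv)$ on vertices and verify that it extends to an isomorphism of oriented labeled graphs. On a representative $(\RT,\sign)$, $\Phi$ acts arc by arc: a non-self-folded arc $\gamma\in\RT$ becomes a tagged arc whose tagging at any vortex endpoint $V$ is $\sign(V)$; for each self-folded triangle enclosing a vortex $V$, bounded by a loop $\ell$ and a self-folded arc $\gamma$, replace both $\ell$ and $\gamma$ by two copies of the same underlying open arc running from the outer vertex to $V$, one tagged $\sign(V)$ at $V$ and the other tagged with the opposite sign (the $\bowtie$-configuration of Figure~\ref{fig:pm2}). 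The construction is clearly local and compatibility of the output is immediate from the puzzle-piece classification.

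The next step is to check that $\Phi$ descends to $\sim$. By Definition~\ref{def:equivsmb}, two equivalent signed triangulations differ only by simultaneously switching $\sign(V)$ at a vortex $V$ lying in a self-folded triangle and relabeling the loop/self-folded pair. Under $\Phi$ this switch merely interchanges which of the two $\bowtie$-paired tagged arcs carries which tag; since a tagged triangulation is an unordered set of tagged arcs, the image is unchanged. The inverse $\Psi$ is built locally at each vortex $V$: if every tagged arc at $V$ bears a common sign $\sigma$, keep the underlying open arcs and set $\sign(V)=\sigma$; otherwise (the only remaining case allowed by compatibility) two tagged arcs share the same underlying open arc with opposite taggings, in which case they are replaced by a self-folded triangle enclosing $V$ and $\sign(V)$ may be chosen freely---the choice is precisely the $\sim$-ambiguity. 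Mutual inverseness is then a direct verification on each local configuration.

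Finally, the identification of edges requires checking that forward flips correspond to l.a.t. rotations. By Lemma~\ref{lem:nreg} the source graph is $n$-regular with edges labeled by arcs of a representative, and the target is $n$-regular by the same local analysis; thus it suffices to verify one equivalence of flips per arc, which can be done puzzle-piece by puzzle-piece using Figure~\ref{fig:type IV}. For Type I the forward flip is literally the endpoint anticlockwise rotation inside the surrounding quadrilateral and matches the l.a.t. rotation on the nose. For Type II (a self-folded triangle inside an ordinary one), flipping the loop edge after a sign switch at the interior vortex $V$---which is exactly the flip provided by Lemma~\ref{lem:nreg} when the chosen arc is self-folded---matches, under $\Phi$, the l.a.t. rotation of either of the two $\bowtie$-paired tagged arcs, since Definition~\ref{def:l.a.} instructs the endpoints to skip $V$ (Figure~\ref{fig:flip II}). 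Types III and IV are compound puzzle pieces and reduce to repeated application of Types I and II.

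The main obstacle is the bookkeeping in Type II and its compound variants: one must check simultaneously that (i) the target of the l.a.t.\ rotation really is the $\Phi$-image of the signed flip, after accounting for the sign switch and the loop/self-folded relabeling built into $\sim$; and (ii) the ``skip the vortex'' convention of Definition~\ref{def:l.a.} is the unique rule making this work, so that no spurious tagged arcs are produced and equivalent representatives of the source are sent to equivalent representatives of the target. Orientation preservation then follows because both forward flip and l.a.t. rotation are defined anticlockwise on the same local neighborhood.
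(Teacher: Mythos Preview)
Your proof is correct and follows essentially the same approach as the paper. The paper's argument is much terser---it gives the same map (tag non-loop arcs by the vortex sign; replace each loop by a copy of its enclosed self-folded arc with the opposite tag) and then declares the rest ``straightforward to check''---whereas you have spelled out the inverse $\Psi$, the compatibility with $\sim$, and the edge-by-edge correspondence of flips with l.a.t.\ rotations. One small point: your puzzle-piece case split for the edge check is more elaborate than necessary, since any single flip is already local to a quadrilateral or once-punctured digon regardless of the ambient puzzle type; the Type~III/IV reduction you allude to is therefore automatic rather than something requiring separate argument.
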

\begin{proof}
One can turn a signed triangulation into a tagged triangulation as follows:
\begin{itemize}
  \item For an arc that is not a loop edge,
  its endpoints inherit the sign of the vortices at which it is incident.
  \item For an arc that is a loop edge, replace it with the self-folded edge, but with different signs at the vortex inside the self-folded triangle.
\end{itemize}
It is straightforward to check that such a construction gives rise to the required bijection and isomorphism.
\end{proof}
As a direct consequence, this provides an alternative proof of \Cref{lem:nreg}.

One of the key results in \cite{FST} is the following,
which is a generalization of a classical topological result concerning the exchange graphs of ideal triangulations.

\begin{theorem}[{\cite[Thm.~9.17]{FST}}]\label{thm:FST917}
$\pi_1(\uEGx(\smv))$ is generated by squares and pentagons.
\end{theorem}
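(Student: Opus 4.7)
The plan is to adapt the argument of \cite{FST}, carried out in the signed-triangulation model that we have introduced. By Lemma~\ref{lem:1}, $\uEG(\smv)\cong\uEGx(\smv)$ as oriented graphs, so it suffices to show that $\pi_1\bigl(\uEG(\smv)\bigr)$ is generated by squares (from commuting flips) and pentagons (from the $A_2$ Ptolemy relation). Signed triangulations are preferable for a local analysis because a flip admits a purely local description in terms of the puzzle pieces of Figure~\ref{fig:type IV}.

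First, I would fix a basepoint $v_0\in\uEG(\smv)$ and note that this graph is connected: this follows from the classical fact for $\uEG(\smp)$ together with the equivalence relation of Definition~\ref{def:equivsmb}, which allows free sign-switching at any vortex enclosed in a self-folded triangle. A closed loop $\gamma$ at $v_0$ then corresponds to a word of flips whose composition is the identity, and the goal is to prove, by induction on the length of this word, that $\gamma$ is null-homotopic in the $2$-complex whose $2$-cells are attached along squares and pentagons.

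The core reduction is a local analysis of two consecutive flips $\mu_\alpha$ and $\mu_\beta$ appearing in $\gamma$. If $\alpha$ and $\beta$ lie in disjoint puzzle pieces of the current triangulation, they commute and contribute a square. If they share a type~I piece (an ordinary triangle), one is in the setting of the classical pentagon relation governing flips inside a pentagon, as in the $A_n$ associahedron. If they share a type~II, III, or IV piece, namely a piece containing a self-folded triangle, a once-punctured digon, or a twice-punctured digon, one verifies by direct computation that every elementary closed loop in the restricted exchange graph of that piece decomposes into squares and pentagons. The signed-triangulation framework together with the equivalence $\sim$ of Definition~\ref{def:equivsmb} turns flips at self-folded arcs into honest edges, which reduces each such verification to an explicit finite combinatorial check.

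The main obstacle is precisely this last case. When the flip sequence interacts with vortices and self-folded triangles, extra elementary loops appear that involve sign-switches, for instance cycles of the form ``switch--flip--switch--flip'' at a once-punctured digon. The hard part is thus to enumerate all local loops in each type~II--IV puzzle piece and to express each of them as an explicit composition of squares and pentagons. Once this finite list is established, the global statement follows by a standard gluing argument: any closed loop in $\uEG(\smv)$ can be subdivided so that each consecutive pair of flips lies in a common puzzle piece, and its global homotopy class becomes the product of these local homotopy classes, which are by construction generated by squares and pentagons.
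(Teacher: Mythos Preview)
The paper does not give its own proof of this theorem: it is stated as a direct citation of \cite[Thm.~9.17]{FST} and used as a black box input to Corollary~\ref{cor:FST}. So your sketch is attempting strictly more than the paper does, and there is nothing in the paper to compare it against.

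That said, your global reduction step is not sound as written. The claim that ``any closed loop in $\uEG(\smv)$ can be subdivided so that each consecutive pair of flips lies in a common puzzle piece, and its global homotopy class becomes the product of these local homotopy classes'' does not give a valid argument: cutting a loop into consecutive pairs of edges does not produce closed subloops, and verifying that each finite local exchange graph (for a type~II--IV piece) has $\pi_1$ generated by squares and pentagons does not, by itself, assemble into the global statement. One needs a genuinely global $2$-dimensional object. The argument in \cite{FST} proceeds instead by proving that the tagged arc complex (equivalently, the cluster complex) is simply connected; the exchange graph is the dual $1$-skeleton of this complex, and the links of codimension-$2$ faces are precisely the square and pentagon cycles, so simple connectivity of the complex yields the desired generation of $\pi_1$ immediately. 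Your local case analysis is essentially the verification that the links are $4$- and $5$-cycles, which is the easy part; the hard part is the simple connectivity of the ambient complex, which your outline does not address.
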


%=========================================================
\subsection{The cluster exchange groupoid for MSv}\label{sec:ceg MSv}
%=========================================================

We adopt the following canonical isomorphism from \cite[Thm.~7.11]{FST} to define
the unoriented cluster exchange graph $\uCEG(\surfp)$ of $\surfp$:
\begin{gather}\label{eq:egceg}
    \uEG(\surfv)\cong\uCEG(\surfv).
\end{gather}
Denote by $\EG(\smv)$ the oriented version of $\uEG(\smv)$, obtained by replacing each unoriented edge with a directed 2-cycle.

\begin{definition}\label{def:gpds}
The exchange groupoid $\eg(\smv)$ is the quotient of the path groupoid of $\EG(\smv)$ by the following Sq-1/2/3./Pen./Hex. relations (collectively denoted SPH), starting at any triangulation $\T$ in $\EG(\smv)$:
\begin{itemize}
\item[Sq-1.] If two open arcs are not adjacent in any triangle of $\RT$, then the forward flips with respect to them form a square in $\EG(\smv)$, as in the left picture of \cite[Fig. 9]{KQ2}, and we impose the commuting square relation.
\item[Sq-2.] If two open arcs are adjacent in two triangles of $\RT$ that form a digon, then the forward flips with respect to them form a square in $\EG(\smv)$, as in the upper picture of Figure~\ref{fig:S2.lift}, and we impose the commuting square relation. Note that in the figure, the green/red disks are the source/sink of the square.
\item[Sq-3.] If two open arcs form a self-folded triangle of $\RT$, then the forward flips with respect to them form a square in $\EG(\smv)$, as in the upper picture of Figure~\ref{fig:S2.lift2}, and we impose the commuting square relation. Note that in the figure, the green/red disks are the source/sink of the square.

\item[Pen.] If two open arcs are adjacent in only one non-self-folded triangle of $\RT$, then they induce an oriented pentagon in $\EG(\smv)$, as in the right picture of \cite[Fig.~9]{KQ2}, and we impose the corresponding commuting pentagon relation.
\item[Hex.] If two open arcs are adjacent in two triangles of $\RT$ that form an annulus, then they induce an oriented hexagon in $\EG(\smv)$, as in \cite[Fig.~10]{KQ2}, and we impose the corresponding commuting hexagon relation.
\end{itemize}
\end{definition}

\begin{remark}
    For a pair of open arcs in a triangulation of $\smv$, the five cases in Definition~\ref{def:gpds} are all the possible cases of the pair. Moreover,
    \begin{itemize}
    \item in the Sq-1/2/3 cases, by \Cref{lem:no ar}, there is no arrow between the corresponding vertices of $Q_{\RT}$, and the square relation required here corresponds to relation $1^\circ$ in \cite[Definition~2.3]{KQ2};
    \item in the Pen. case, there is only one arrow between the corresponding vertices of $Q_{\RT}$, and the pentagon relation corresponds to relation $2^\circ$ in \cite[Definition~2.3]{KQ2};
    \item in the Hex. case, there are two arrows between the corresponding vertices of $Q_{\RT}$, and the hexagon relation corresponds to relation $3^\circ$ in \cite[Definition~2.3]{KQ2}.
\end{itemize}
\end{remark}

By definition, we can upgrade the isomorphism \eqref{eq:egceg} to an isomorphism of groupoids
\begin{equation}\label{eq:iso}
    \eg(\smv)\cong\ceg(\smv),
\end{equation}
which may also be regarded as the definition of $\ceg(\smv)$ in the sense of \cite{KQ2}.

At each vertex $\RTe$ of $\eg(\surfv)$, the 2-cycle/loop $t_\gamma$ at $\RTe$, corresponding to an edge $\gamma$ of $\RTe$, is called a local twist.

\begin{definition}[{\cite[Def.~2.6]{KQ2}}]
The cluster twist group $\CBr(\RTe)$ of $\surfv$ at $\RTe$ is the subgroup of $\pi_1(\eg(\surfv), \RTe)$ generated by the local twists $t_\gamma$, for all $\gamma \in \RT$.
\end{definition}

By \cite[Prop.~2.9]{KQ2}, \Cref{thm:FST917} implies the following.
\begin{corollary}\label{cor:FST}
$\CBr(\RTe)=\pi_1(\eg(\surfv), \RTe)$.
\end{corollary}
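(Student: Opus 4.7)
The plan is to apply \cite[Prop.~2.9]{KQ2}, whose content can be summarized as the following general principle: for an oriented exchange groupoid obtained from an unoriented exchange graph by doubling each edge into a directed 2-cycle and imposing local SPH-type commuting relations, the fundamental group of the groupoid at a basepoint is generated by the local twists $t_\gamma$, provided that the imposed relations cover a set of generators of the fundamental group of the underlying unoriented exchange graph.

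First, I would translate the hypothesis about $\uEGx(\smv)$ into one about $\uEG(\smv)$: by \Cref{lem:1} we have an isomorphism of oriented graphs $\uEG(\smv)\cong\uEGx(\smv)$, so that
\[
\pi_1(\uEG(\smv),\RTe)\cong\pi_1(\uEGx(\smv),\RTe),
\]
which by \Cref{thm:FST917} is generated by squares and pentagons. Next, I would analyze the projection $p\colon\EG(\smv)\to\uEG(\smv)$ that collapses each directed 2-cycle to an unoriented edge: the induced map $p_*$ on $\pi_1$ is surjective, and its kernel is normally generated by the local 2-cycles $t_\gamma$, so that every lift to $\EG(\smv)$ of a closed loop in $\uEG(\smv)$ is well-defined modulo local twists. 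Finally, the Sq-1/2/3 and Pen. clauses of \Cref{def:gpds} trivialize the canonical all-forward lifts of squares and pentagons in $\EG(\smv)$; combined with the previous step, this would show that any closed loop in $\eg(\smv)$ based at $\RTe$ can be rewritten modulo SPH relations as a product of local twists, yielding $\CBr(\RTe)=\pi_1(\eg(\surfv),\RTe)$.

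The main subtlety I anticipate is the bookkeeping of orientations when lifting a closed loop from $\uEG(\smv)$ to $\EG(\smv)$: different orientation choices for the edges of such a lift differ by products of local twists, and one must verify that the Sq and Pen relations, although stated for the all-forward lifts, nonetheless kill every lift modulo local twists. This is exactly what \cite[Prop.~2.9]{KQ2} abstracts, so the plan reduces to checking the hypotheses of that proposition from \Cref{thm:FST917} and \Cref{lem:1}. Note that the Hex. clause in \Cref{def:gpds}, while genuinely used to define $\eg(\smv)$, plays no essential role in this particular corollary, because hexagonal loops in $\uEG(\smv)$ are already expressible in terms of squares and pentagons by \Cref{thm:FST917}.
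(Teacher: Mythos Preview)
Your proposal is correct and follows essentially the same approach as the paper: the paper's entire proof is the single sentence ``By \cite[Prop.~2.9]{KQ2}, \Cref{thm:FST917} implies the following,'' and you have simply unpacked what that citation entails, including the (implicit) use of \Cref{lem:1} to transfer the square--pentagon generation statement from $\uEGx(\smv)$ to $\uEG(\smv)$.
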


For later use in the proof of \Cref{prop:cbg}, we record the following lemma.

\begin{lemma}[{\cite[Lem.~2.7]{KQ2}}]\label{lem:cobr}
Let $\T=\{\gamma_i\}$ be a triangulation of $\smv$.
\begin{itemize}
\item If there is no arrow between $i$ and $j$ in $Q_{\T}$, then $\Co(t_i,t_j)$ holds;
\item If there is exactly one arrow between $i$ and $j$ in $Q_{\T}$, then $\Br(t_i,t_j)$ holds.
\end{itemize}
\end{lemma}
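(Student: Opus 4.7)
The plan is to derive both statements directly from the SPH relations in Definition~\ref{def:gpds}, using Lemma~\ref{lem:no ar} to translate the hypothesis about arrows in $Q_{\T}$ into the corresponding local configuration of triangles around $\gamma_i$ and $\gamma_j$. A local twist $t_\gamma$ at $\T$ is by construction the 2-cycle $f_\gamma \cdot b_\gamma$, where $f_\gamma\colon \T \to \mu_\gamma(\T)$ is the forward flip and $b_\gamma$ its backward counterpart (which is its groupoid inverse). So each desired relation is, in the end, an identity of loops at the fixed basepoint $\T$ that must be extracted from an identity of morphisms \emph{between different} triangulations in $\eg(\smv)$.

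For the commutation statement, Lemma~\ref{lem:no ar} puts us in one of Sq-1, Sq-2, Sq-3. In each case the two forward flips at $\gamma_i$ and $\gamma_j$ fit into a commuting square with vertices $\T, \T_i=\mu_i(\T), \T_j=\mu_j(\T), \T_{ij}=\mu_j\mu_i(\T)$, and the SPH relation identifies the two length-$2$ composites $\T \to \T_{ij}$. Taking groupoid inverses yields the analogous relation for the backward flips at $\T_{ij}$, and similarly for the mixed forward/backward composites on the two ``side'' edges. I would then expand $t_i\cdot t_j$ as the four-flip concatenation $f_i\cdot b_i\cdot f_j\cdot b_j$ at $\T$, apply the mixed-square identity $b_i \cdot f_j = f_j \cdot b_i$ to the middle, and then use the square at $\T$ to rewrite $f_i \cdot f_j$ as $f_j \cdot f_i$ and the backward square at $\T_{ij}$ to rewrite $b_i \cdot b_j$ as $b_j \cdot b_i$. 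This rearranges $t_i\cdot t_j$ into $t_j \cdot t_i$.

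For the braid statement, the one-arrow case is Pen., and the five forward flips labeled alternately by $i$ and $j$ form the oriented pentagon of Definition~\ref{def:gpds}, with SPH relation identifying a length-$2$ and a length-$3$ path inside it. The strategy is to write out $t_i\cdot t_j\cdot t_i$ as a concatenation of six flips at $\T$, insert cancelling pairs $f\cdot b$ at the intermediate corners of the pentagon to convert it into a loop traversing the whole pentagon boundary, apply the pentagon relation to re-associate the path, and finally collapse the remaining pairs to obtain $t_j\cdot t_i\cdot t_j$. The main obstacle is exactly this bookkeeping: tracking which flips have which source and target, and placing the identity insertions so that the pentagon relation applies where it is needed. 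Once this is set up cleanly, both claims reduce to straightforward groupoid manipulations, and the whole argument is local, so it is insensitive to the presence or absence of self-folded triangles, digons, and vortices, which is why a uniform statement across the Sq-1/2/3 cases is possible.
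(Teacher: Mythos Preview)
Your approach is the right one and is essentially what the cited reference \cite[Lem.~2.7]{KQ2} does; the present paper gives no proof beyond that citation, so there is nothing further to compare against here.

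One point needs correcting. In the oriented graph each unoriented edge is replaced by a 2-cycle, so the ``backward'' edge $b_i\colon \T_i\to\T$ is a genuine edge of $\EG(\smv)$, distinct from the formal groupoid inverse $f_i^{-1}$; indeed $t_i$ is nontrivial precisely because $b_i\ne f_i^{-1}$. Hence ``taking groupoid inverses'' of the square relation at $\T$ only produces an identity among the $f_?^{-1}$, not among the $b_?$. The fix is already built into Definition~\ref{def:gpds}: the SPH relations are imposed \emph{starting at every} triangulation, so in the no-arrow case you have four square relations (one based at each of $\T,\T_i,\T_j,\T_{ij}$), and in the one-arrow case five pentagon relations. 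The squares at $\T_i$ and $\T_j$ supply the mixed identities you need (your ``$b_i\cdot f_j=f_j\cdot b_i$''), and the square at $\T_{ij}$ supplies the fully backward one; with these in hand your bookkeeping plan for $\Co(t_i,t_j)$ goes through verbatim. The same remark applies to the braid case: use all five pentagon relations around the cycle rather than inverting one of them.
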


%=========================================================
%=========================================================
\section{Exchange graphs for DMSv}\label{sec:EG}
%=========================================================
%=========================================================

%=========================================================
\subsection{Ideal decorated triangulations with flips}\
%=========================================================

Following \cite{QQ}, we decorate $\smp$ as follows.
\begin{definition}[{\cite[Def.~3.1]{QQ}}]\label{def:DMS}
The \emph{decorated marked surface with punctures} (DMSp) $\sop$ is a marked surface $\smp$ together with a fixed set $\Tri$ of $\aleph$ `decorating' points in the interior $\surf^\circ$, where the number $\aleph$ of decorations is given by
\begin{gather}\label{eq:Tri}
    \aleph=\frac{2n+|\M|}{3}=4g+2p+2b+m-4.
\end{gather}
\end{definition}

The \emph{mapping class group} $\MCG(\sop)$ is defined to be the group of (isotopy classes of) homeomorphisms of $\surf$ that preserve $\M$, $\sun$ and $\Tri$, respectively, setwise.

\begin{definition}[\cite{QQ}]\label{def:arcsv}
Let $\sop$ be a DMSp.
\begin{itemize}
\item An \emph{open arc} on $\sop$ is (the isotopy class of) a curve on $\surf$ whose interior is in $\surf^\circ-\Tri$, whose endpoints are in $\M$, which is simple and essential.
\item Two open arcs are called \emph{compatible} if they do not intersect in $\surf^\circ$.
\item A \emph{(decorated) triangulation} $\T$ of $\sop$ is a collection of compatible open arcs on $\sop$,
dividing $\sop$ into $\numtri$ triangles, each of which contains exactly one point in $\Tri$.
\item The \emph{forward flip} of a triangulation $\T$ of $\sop$ at a non-self-folded arc $\gamma$ is the triangulation $\mu_{\gamma}(\T)=(\T\setminus\{\gamma\})\cup\{\gamma^\sharp\}$, where $\gamma^\sharp$ is obtained from $\gamma$
by moving its endpoints anticlockwise along the quadrilateral containing $\gamma$, as shown in the first row of Figure~\ref{fig:flip}.
\item The \emph{exchange graph} $\EG(\sop)$ is the oriented graph whose vertices
are triangulations of $\sop$ and whose edges correspond to forward flips between them.
\end{itemize}
\end{definition}

The forgetting map \eqref{eq:forget1} maps open arcs in $\sop$ to open arcs in $\smp$.
Hence there is an induced map of oriented graphs
\begin{equation}\label{eq:forget*}
    F^\sun_*\colon\EG(\sop)\xrightarrow{/\SBr(\sop)}\EG(\smp),
\end{equation}
sending a triangulation $\T$ of $\sop$ to a triangulation $\{F(\gamma)\mid \gamma\in\T\}$ of $\smp$, which is clearly surjective and is in fact a $\SBr(\sop)$-covering (cf. \cite[Lem.~3.8]{KQ2}).
See Figure~\ref{fig:flip} for the map on local flips.

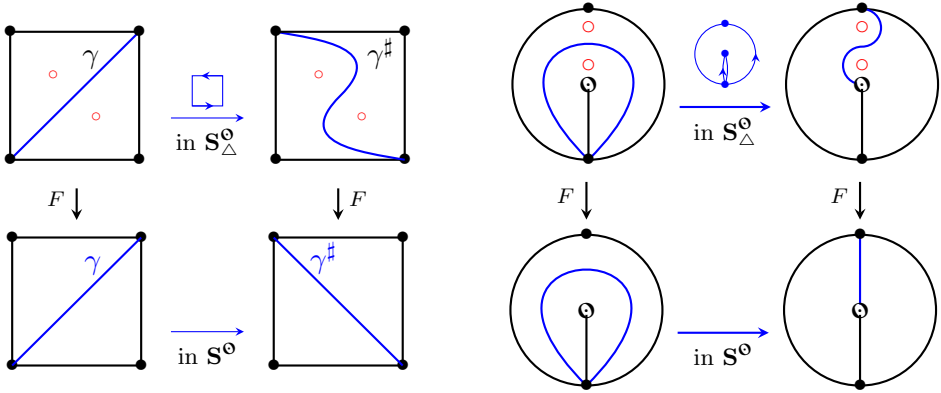
\begin{figure}[htpb]\centering
\begin{tikzpicture}[scale=.3]
    \path (-135:4) coordinate (v1)
          (-45:4) coordinate (v2)
          (45:4) coordinate (v3);
\draw[thick](v1)to(v2)node{$\bullet$}to(v3);
    \path (-135:4) coordinate (v1)
          (45:4) coordinate (v2)
          (135:4) coordinate (v3);
\draw[blue, thick](v1)to(v2);
\draw[thick](v2)node{$\bullet$}to(v3)node{$\bullet$}to
    (v1)node{$\bullet$}(45:1)node[above]{$\gamma$};
\draw[red,thick](135:1.333)node{\tiny{$\circ$}}(-45:1.333)node{\tiny{$\circ$}};
\end{tikzpicture}
\begin{tikzpicture}[scale=.8, rotate=180]
\draw[blue,<-,>=stealth](3-.6,.7)to(3+.6,.7);
\draw(3,.7)node[below,black]{\footnotesize{in $\sop$}};
\draw[blue](3-.25,.5-.5)rectangle(3+.25,.5);\draw(3,1.5)node{};
\draw[blue,-stealth](3-.25,.5-.5)to(3+.1,.5-.5);
\draw[blue,-stealth](3+.25,.5)to(3-.1,.5);
\end{tikzpicture}
\begin{tikzpicture}[scale=.3];
    \path (-135:4) coordinate (v1)
          (-45:4) coordinate (v2)
          (45:4) coordinate (v3);
\draw[,thick](v1)to(v2)node{$\bullet$}to(v3)
(45:1)node[above right]{$\gamma^\sharp$};
    \path (-135:4) coordinate (v1)
          (45:4) coordinate (v2)
          (135:4) coordinate (v3);
\draw[blue,,thick](135:4).. controls +(-10:2) and +(45:3) ..(0,0)
                             .. controls +(-135:3) and +(170:2) ..(-45:4);
\draw[,thick](v2)node{$\bullet$}to(v3)node{$\bullet$}to(v1)node{$\bullet$};
\draw[red,thick](135:1.333)node{\tiny{$\circ$}}(-45:1.333)node{\tiny{$\circ$}};
\end{tikzpicture}
\begin{tikzpicture}[xscale=1,yscale=1];
\begin{scope}[shift={(0,0)}]
\draw[thick](0,0)to(0,-1);
\draw[blue,thick](0,-1)
    .. controls +(45:2.9) and +(135:2.9) .. (0,-1);
\draw(0,.25)\ww(0,0.75)\ww;
\draw[thick](0,0)circle(1)(0,1)\nn(0,-1)\nn;
\draw(0,0)\dpole;\end{scope}
\draw[blue,->,thick,>=stealth](1.2,-.3)to node[below,black]{\footnotesize{in $\sop$}}(2.4,-.3);
\begin{scope}[shift={(3.6,0)}]
\draw[thick](0,0)circle(1);
\draw[thick](0,-1)to(0,0);
\draw[blue,thick](0,0)arc(-90:-270:.25)arc(-90:90:.25);
\draw(0,.25)\ww(0,0.75)\ww;
\draw(0,0)(0,1)\nn(0,-1)\nn;
\draw(0,0)\dpole;\end{scope}
%=========================================================
%=========================================================
\begin{scope}[shift={(1.8,.4)}]
\draw[blue,thin](0,0)circle(.4);
\draw[blue,-stealth](.4,0)to(.4,.01);
%\draw[blue,-stealth](-.4,0)to(-.4,-.01);
%\draw[blue,->-=.5,>=stealth]
\draw[blue](0,-.4)to[bend left=-15](0,0);
\draw[blue,->-=.55,>=stealth](0,-.4)to[bend left=15](0,0);
\draw[blue](0,0)\snn(0,.4)\snn(0,-.4)\snn;
\end{scope}
%=========================================================
%=========================================================
\end{tikzpicture}

\begin{tikzpicture}[scale=.3]
\draw[thick,>=stealth,->](0,5)to node[left]{$^F$}(0,3.6);\draw(0,-4)node{ };
    \path (-135:4) coordinate (v1)
          (-45:4) coordinate (v2)
          (45:4) coordinate (v3);
\draw[,thick](v1)to(v2)node{$\bullet$}to(v3);
    \path (-135:4) coordinate (v1)
          (45:4) coordinate (v2)
          (135:4) coordinate (v3);
\draw[,thick](v2)node{$\bullet$}to(v3)node{$\bullet$}to(v1)node{$\bullet$};
\draw[blue,,thick](-135:4)to(45:4) (45:1)node[above]{$\gamma$};
%\draw[red,thick](135:1.333)node{\tiny{$\circ$}}(-45:1.333)node{\tiny{$\circ$}};
\end{tikzpicture}
\begin{tikzpicture}[scale=.8, rotate=180]
\draw(3,1.5)node{}(3,.5)node[below]{\footnotesize{in $\surfp$}};
\draw[blue,<-,>=stealth](3-.6,.5)to(3+.6,.5);;
\end{tikzpicture}
\begin{tikzpicture}[scale=.3]
\draw[thick,>=stealth,->](0,5)to node[right]{$^F$}(0,3.6);\draw(0,-4)node{ };
    \path (-135:4) coordinate (v1)
          (-45:4) coordinate (v2)
          (45:4) coordinate (v3);
\draw[,thick](v1)to(v2)node{$\bullet$}to(v3);
    \path (-135:4) coordinate (v1)
          (45:4) coordinate (v2)
          (135:4) coordinate (v3);
\draw[,thick](v2)node{$\bullet$}to(v3)node{$\bullet$}to(v1)node{$\bullet$};
\draw[blue,,thick](135:4)to(-45:4) (130:1)node[above]{$\gamma^\sharp$};;
%\draw[red,thick](135:1.333)node{\tiny{$\circ$}}(-45:1.333)node{\tiny{$\circ$}};
\end{tikzpicture}
\begin{tikzpicture}[xscale=1,yscale=1];
\draw[thick,>=stealth,->](0,1.7)to node[left]{$^F$}(0,1.2);
\begin{scope}[shift={(0,0)}]
\draw[thick](0,0)to(0,-1);
\draw[blue,thick](0,-1)
    .. controls +(45:2.9) and +(135:2.9) .. (0,-1);
\draw[thick](0,0)circle(1)(0,1)\nn(0,-1)\nn;
\draw(0,0)\dpole;\end{scope}
\draw[blue,->,thick,>=stealth](1.2,-.3)to node[below,black]{\footnotesize{in $\surfp$}}(2.4,-.3);
\begin{scope}[shift={(3.6,0)}]
\draw[thick,>=stealth,->](0,1.7)to node[left]{$^F$}(0,1.2);
\draw[thick](0,0)circle(1);
\draw[thick](0,-1)to(0,0);
\draw[blue,thick](0,0)to(0,1);
\draw(0,0)(0,1)\nn(0,-1)\nn;
\draw(0,0)\dpole;\end{scope}
\end{tikzpicture}
\caption{The forward flip in $\EG(\sop)$ and the forgetful map}
\label{fig:flip}
\end{figure}

%=========================================================
\subsection{Signed triangulations in the vortex setting}\
%=========================================================

We now construct the `universal cover' of $\EG(\surfv)$.

Similar to \Cref{def:EGv}, we begin with the native version of signed decorated triangulations.
\begin{definition}
A signed triangulation of $\sop$ is a pair $\Te=(\T,\sign)$ with $\T$ a triangulation of $\sop$ and $\sign\in\ZZP$. The \emph{forward flip} of $\Te$ at a non-self-folded arc $\gamma$ in $\T$ is defined to be $\mu_{\gamma}(\Te)=(\mu_{\gamma}(\T),\sign)$.

The \emph{oriented graph $\EG^\pm(\sop)$ of signed triangulations of $\sop$} is the oriented graph whose vertices are the signed triangulations of $\sop$ and whose oriented edges correspond to forward flips between them.
\end{definition}

The map \eqref{eq:forget*} naturally extends to
\begin{equation}\label{eq:forget*1}
    F_*^{\pm}\colon\EG^\pm(\sop)\xrightarrow{/\SBr(\sop)}\EG^\pm(\smp).
\end{equation}

Denote by $\T^{\nsf}$ the partial triangulation consisting of non-self-folded arcs in $\T$.

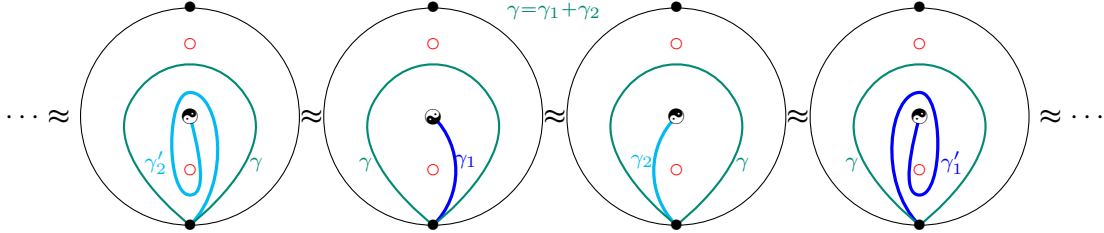
\begin{figure}[ht]\centering
\begin{tikzpicture}[scale=.8]
\begin{scope}[shift={(0,0)}]
\draw[blue,very thick](0,.8)to[bend left=45](0,-1)
    (0.55,0)node{$^{\gamma_1}$};
\draw[\hh,thick] plot [smooth,tension=1.6] coordinates
    {(0,-1) (-1,1) (1,1) (0,-1)};
\draw(0,.8)\vot(0,-1)\nn(0,2.6)\nn(0,0.8)circle(1.8)(0,-.1)\ww(0,2)\ww;
\draw[\hh](2,2.5)node{$^{\gamma=\gamma_1+\gamma_2}$}(-1.1,0)node{$_\gamma$};
\end{scope}
\draw(2,.8)node{$\thickapprox$};
\begin{scope}[shift={(4,0)}]
\draw[\hh,thick](1.1,0)node{$_\gamma$};
\draw[\ql,very thick](0,.8)to[bend left=-45](0,-1)
    (-0.55,0)node{$^{\gamma_2}$};
\draw[\hh,thick] plot [smooth,tension=1.6] coordinates
    {(0,-1) (-1,1) (1,1) (0,-1)};
\draw(0,.8)\tov(0,-1)\nn(0,2.6)\nn (0,0.8)circle(1.8)(0,-.1)\ww(0,2)\ww;
\draw(2,.8)node{$\thickapprox$};
\end{scope}
\begin{scope}[shift={(8,0)},xscale=-1]
\draw[\hh,thick](1.1,0)node{$_\gamma$};
\draw[blue,very thick](0,.8)
    .. controls +(-75:1) and +(0:.2) ..(0,-.5)
    .. controls +(180:.4) and +(180:.4) ..(0,1.2)
    .. controls +(0:.5) and +(45:1) ..(0,-1)
    (-0.55,0)node{$^{\gamma_1'}$};
\draw[\hh,thick] plot [smooth,tension=1.6] coordinates
    {(0,-1) (-1,1) (1,1) (0,-1)};
\draw(0,.8)\tov(0,-1)\nn(0,2.6)\nn (0,0.8)circle(1.8)(0,-.1)\ww(0,2)\ww;
\draw(-1.8,.8)node[right]{$\thickapprox\cdots$};
\end{scope}
\begin{scope}[shift={(-4,0)}]
\draw[\hh,thick](1.1,0)node{$_\gamma$};
\draw[\ql,very thick](0,.8)
    .. controls +(-75:1) and +(0:.2) ..(0,-.5)
    .. controls +(180:.4) and +(180:.4) ..(0,1.2)
    .. controls +(0:.5) and +(45:1) ..(0,-1)
    (-0.55,0)node{$^{\gamma_2'}$};
\draw[\hh,thick] plot [smooth,tension=1.6] coordinates
    {(0,-1) (-1,1) (1,1) (0,-1)};
\draw(0,.8)\tov(0,-1)\nn(0,2.6)\nn (0,0.8)circle(1.8)(0,-.1)\ww(0,2)\ww;
\draw(2,.8)node{$\thickapprox$};
\draw(-1.8,.8)node[left]{$\cdots\thickapprox$};
\end{scope}
\end{tikzpicture}
\caption{Equivalent triangulations when switching sign at a vortex}\label{fig:switch}
\end{figure}
\begin{definition}\label{def:equivsov}
Two signed triangulations $(\T_1,\sign_1)$ and $(\T_2,\sign_2)$
are said to be \emph{$\sim$ equivalent} as triangulations of $\sop$, if the following hold.
\begin{itemize}
  \item $\T_1^{\nsf}=\T_2^{\nsf}$.
  \item If a puncture $P$ is not inside a self-folded triangle in $\T_2$, then $\sign_1(P)=\sign_2(P)$.
  \item If a puncture $P$ is inside a self-folded triangle $T$,
  then $\sign_1(P)=\sign_2(P)$ if and only if the self-folded edges in $T_1$ and $T_2$ differ by $\LL$.
  Equivalently, they are related by a sequence of even number of flips, cf. \Cref{fig:switch}.
\end{itemize}
Two signed triangulations $(\T_1,\sign_1)$ and $(\T_2,\sign_2)$ are said to be \emph{$\approx$ equivalent} as triangulations of $\sov$, if there is a mapping class $b\in\LL$ such that $(b(\T_1),\sign_1)\sim(\T_2,\sign_2)$.
Moreover, two forward flips $\mu_{\gamma_i}\colon(\T_i,\sign_i)\to(\T_i',\sign_i')$ are $\approx$ equivalent if there exists a common $b\in\LL$ such that $(b(\T_1),\sign_1)\sim(\T_2,\sign_2)$
and $(b(\T_1'),\sign_1')\sim(\T_2',\sign_2')$.
\\
The \emph{exchange graph of signed triangulations} on $\sov$ is defined to be
\begin{gather}\label{eq:Lrm}
    \EG(\sov)=\EG^\pm(\sop)/\approx.
\end{gather}
\end{definition}
Similar to \Cref{lem:nreg}, we have the following, which can be alternatively deduced using tagged arcs in the next subsection.
\begin{lemma}
$\EG(\sov)$ is well-defined and in fact an $(n,n)$-regular graph.
\end{lemma}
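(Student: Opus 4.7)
The plan is to prove the two assertions in turn: first, that $\approx$ descends to a well-defined oriented-graph quotient $\EG(\sov) = \EG^\pm(\sop)/\approx$, and second, that the resulting graph has constant out-degree and in-degree equal to $n$.

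For well-definedness, I need to check that if $(\T_1,\sign_1) \approx (\T_2,\sign_2)$ and $\mu_{\gamma_1}\colon (\T_1,\sign_1) \to (\T_1',\sign_1')$ is a forward flip, there is a matching forward flip $\mu_{\gamma_2}\colon (\T_2,\sign_2) \to (\T_2',\sign_2')$ with $(\T_1',\sign_1') \approx (\T_2',\sign_2')$, and that the two are $\approx$-equivalent flips in the sense of Definition~\ref{def:equivsov}. Since $\approx$ is generated by the native $\sim$-moves (switching sign at a vortex inside a self-folded triangle, with attendant swap of loop and self-folded edge designations) and by the $\LL$-action, I handle the two cases separately. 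For $\sim$-moves, the equality $\T_1^{\nsf} = \T_2^{\nsf}$ means any non-self-folded $\gamma$ at which a flip is performed coincides in both representatives; performing one flip on each side preserves the parity condition about even numbers of flips at punctures inside self-folded triangles, so the targets remain $\sim$-equivalent. For the $\LL$-action, an element $b \in \LL$ is a self-homeomorphism preserving $\M$, $\sun$ and $\Tri$ setwise, carries triangulations to triangulations, and intertwines flips via $b \circ \mu_\gamma = \mu_{b(\gamma)} \circ b$ (a consequence of the topological/Alexander-type naturality of the flip operation).

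For the degree count, fix a representative $(\T,\sign)$ of a class $[\T,\sign]$. The triangulation $\T$ has exactly $n$ arcs; I will exhibit one outgoing and one incoming edge labeled by each. If $\gamma \in \T$ is non-self-folded, then $\mu_\gamma(\T,\sign)$ and the unique forward flip landing at $(\T,\sign)$ via $\gamma$ (obtained by moving endpoints of $\gamma$ clockwise) are directly defined in $\EG^\pm(\sop)$. If $\gamma$ is self-folded, it lies in a self-folded triangle enclosing a puncture $P$ with loop edge $\ell$; switching $\sign(P)$ yields a $\sim$-equivalent representative in which $\ell$ becomes the self-folded edge and $\gamma$ becomes the loop, so $\gamma$ is now flippable and produces the desired outgoing and incoming edges. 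The $n$ edges so obtained are $\approx$-pairwise-distinct because the arc $\gamma$ is recoverable from the flip data, and exhaustiveness follows because every edge in the quotient lifts to a forward flip at some representative whose arcs are the same $n$ arcs up to the loop/self-folded interchange already accounted for.

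The main obstacle I expect is aligning the $\LL$-action with the parity clause in the $\sim$-equivalence (\Cref{def:equivsov}): one must verify that $B_s^{4}$ acting on a representative whose vortex $s$ lies inside a self-folded triangle produces an $\approx$-equivalent configuration in which the sign convention at the enclosed puncture is correctly adjusted, which requires an explicit local picture analysis near the collision path $s$. As indicated in the remark following the statement, this obstacle is bypassed by the tagged-arc bijection of the next subsection: signed decorated triangulations correspond to tagged decorated triangulations, forward flips correspond to local anticlockwise tagged rotations (cf.\ the DMS analog of \Cref{lem:1}), and $n$-regularity is then immediate from the tagged-side analog of \Cref{lem:nreg}.
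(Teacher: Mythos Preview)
Your proposal is correct and follows essentially the same approach the paper indicates: the paper gives no detailed proof but simply remarks that the argument is ``similar to \Cref{lem:nreg}'' and ``can be alternatively deduced using tagged arcs in the next subsection,'' and your sketch fleshes out precisely these two routes (the direct sign-switching argument for self-folded arcs, plus the tagged-arc bijection as a cleaner alternative). Your acknowledgment that the parity/$\LL$-alignment is the delicate point in the direct route, and that the tagged model sidesteps it, matches the paper's own deferral to \Cref{lem:s to t} and the surrounding discussion.
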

We proceed to prove the main result in this subsection.
\begin{theorem}\label{thm:cover}
The covering \eqref{eq:forget*1} upgrades to
\begin{equation}\label{eq:forgetv}
    F_*^\P:\EG(\sov) \xrightarrow{/\SBr(\sov)} \EG(\smv)
\end{equation}
sending a signed triangulation $(\T,\sign)$ to $(F_*^\sun(\T),\sign)$.
\end{theorem}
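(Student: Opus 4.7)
The plan is to verify three things in order: (a) the assignment $(\T,\sign)\mapsto(F_*^\sun(\T),\sign)$ descends to a well-defined map $\EG(\sov)\to\EG(\smv)$; (b) it preserves forward flips, hence yields a morphism of oriented graphs; and (c) the resulting morphism is a $\SBr(\sov)$-covering with the prescribed deck action.

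First I would establish (a). Suppose $(\T_1,\sign_1)\approx(\T_2,\sign_2)$, so by \Cref{def:equivsov} there exists $b\in\LL$ with $(b(\T_1),\sign_1)\sim(\T_2,\sign_2)$. Since $\LL\subseteq\SBr(\sop)=\ker F^\sun$, the element $b$ acts trivially on $\EG^\pm(\smp)$, giving $F_*^\sun(b(\T_1))=F_*^\sun(\T_1)$. It then suffices to check that the $\sop$-$\sim$ equivalence descends to the $\smv$-$\sim$ equivalence of \Cref{def:equivsmb}. The condition $\T_1^{\nsf}=\T_2^{\nsf}$ identifies the non-self-folded arcs under the forgetful map, and any two $\sop$-loops encircling the same puncture which differ by an element of $\LL$ map to the same loop edge of the corresponding self-folded triangle in $\smp$. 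The even-flip sign condition at such a puncture upstairs was designed exactly to match the sign flip at the vortex inside a self-folded triangle downstairs, so the two sides of the equivalence line up.

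Next, for (b), the correspondence of forward flips follows by inspection of \Cref{fig:flip}: a flip at a non-self-folded arc $\gamma\in\T$ maps to the flip at $F^\sun(\gamma)$, while flips at a $\sop$-loop around a puncture correspond, via \Cref{lem:1} after possibly passing to a sign-switched representative, to flips of the loop edge of the associated self-folded triangle in $\smv$. One then checks that $\approx$-equivalent flips upstairs map to $\sim$-equivalent flips downstairs, so we get a well-defined graph morphism $F_*^\P$.

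Finally, for (c), I would factor through the commutative square whose top row is the $\SBr(\sop)$-covering \eqref{eq:forget*1} and whose vertical arrows are the $\approx$- and $\sim$-quotients. By \Cref{def:BTv} we have $\SBr(\sov)=\SBr(\sop)/\LL$, and since $\LL$ acts trivially on $\EG^\pm(\smp)$, the quotient on the left descends the $\SBr(\sop)$-action to a free $\SBr(\sov)$-action on $\EG(\sov)$. Surjectivity of $F_*^\P$ on both vertices and edges is inherited from the top row. The delicate point—and the main obstacle—is proving that fibers of $F_*^\P$ are single $\SBr(\sov)$-orbits: one must show that the compound equivalence $\approx$ upstairs (combining the $\LL$-action, the $\LL$-change of self-folded edge, and the attendant sign switch) neither collapses two $\SBr(\sov)$-orbits into one fiber nor splits one orbit. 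This is precisely the bookkeeping at self-folded triangles where all three identifications interact, and once it is resolved, the covering property follows.
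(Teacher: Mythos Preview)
Your outline tracks the paper's strategy closely for parts (a) and (b), and you correctly isolate the heart of part (c). But the proposal explicitly stops short of the main content: you write that the fiber-equals-orbit statement ``is precisely the bookkeeping \ldots\ and once it is resolved, the covering property follows,'' without resolving it. As written this is a plan, not a proof; and your assertion that the $\SBr(\sop)$-action descends to a \emph{free} $\SBr(\sov)$-action already presupposes half of what is at stake.

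The paper fills exactly this gap with two concrete steps, neither of which is automatic. First, for a \emph{fixed} sign $\sign$ one has $(\T',\sign)\approx(\T,\sign)$ if and only if $\T'=b(\T)$ for some $b\in\LL$: when the sign is held constant, the $\sim$-condition at each self-folded triangle forces the two self-folded edges to differ by an element of $\LL$ locally, so the global mapping class relating $\T'$ and $\T$ lies in $\LL$. Together with the $\SBr(\sop)$-covering upstairs (Ob.~1 in the paper), this shows that the preimages of a given $(\RT,\sign)$ carrying that \emph{same} sign $\sign$ form a single $\SBr(\sov)$-orbit. Second, one must absorb the sign ambiguity in the target: if $F_*^\P(\T',\sign')\sim(\RT,\sign)$ with $\sign'\ne\sign$, then $\sign'$ can differ from $\sign$ only at punctures lying inside self-folded triangles, and by locally forward-flipping the corresponding self-folded edges of $\T'$ one produces a representative $(\T'',\sign)\approx(\T',\sign')$, reducing to the fixed-sign case. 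This sign-normalization trick is the mechanism you allude to but do not supply; without it the claim that $\approx$ ``neither collapses two orbits nor splits one'' remains an assertion.
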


\begin{proof}
Firstly, we need to show that two equivalent signed triangulations  $(\T_1,\sign_1)\approx(\T_1,\sign_2)$ map to the equivalent ones in $\EG(\smv)$.
By definition, there exists $b\in\LL$ such that $b(\T_1^{\nsf})=\T_2^{\nsf}$.
Hence either $F_*^\sun( \T_1^{\nsf} ) = F_*^\sun( \T_2^{\nsf} )$
or $F_*^\sun( \T_1 )^{\nsf} = F_*^\sun( \T_2 )^{\nsf}$.
But for triangulations of $\surfp$, this implies $F_*^\sun( \T_1 ) = F_*^\sun( \T_2 )$.
For any puncture $P$ not inside a self-folded triangle in $\T_2$,
we have $\sign_1(P)=\sign_2(P)$, hence
$F_*^\P(\T_1,\sign_1)\sim F_*^\P(\T_1,\sign_2)$.

Secondly, we show that $F_*^\P$ is a $\SBr(\sov)$-covering of sets.
Take any $(\RT,\sign)\in\EG(\smv)$ and let $\T$ be any triangulation of $\sop$ such that
$F_*^\sun(\T)=\RT$.
We observe the following:
\begin{itemize}
  \item[Ob.1] $F_*^\P(\T',\sign)=(\RT,\sign)$ if and only if $\T'=b(\T)$ for some $b\in\SBr(\sop)$.
  \item[Ob.2] $(\T',\sign)\approx(\T,\sign)$ if and only if $\T'=b(\T)$ for some $b\in\LL$.
  This is because when the sign is fixed, the mapping class $b'$ satisfying $(\T')^{\nsf}=b'(\T^{\nsf})$ forces the two triangulations $\T'$ and $b'(\T)$ to differ by $\LL$ at each self-folded triangle.
\end{itemize}
Therefore, we deduce that the preimages of $(\RT,\sign)$ of the form $(\T,\sign)$ are parameterized by $\SBr(\sov)$.
It remains to show that if $F_*^\P(\T',\sign')\sim(\RT,\sign)$,
then $(\T',\sign')\approx(b(\T),\sign)$ for some $b\in\SBr(\sop)$.
As $F_*^\P(\T',\sign')=(\RT,\sign)$, we know that $\sign'$ and $\sign$ can only differ at punctures that are inside self-folded triangles.
Locally by forward flipping self-folded edges of $\T'$, we can change these signs
in the representatives in the $\approx$ equivalent class of $(\T',\sign')$.
Namely, there exists $(\T'',\sign)\approx(\T',\sign')$.
By the (first) observation above, we see that $F_*^\P$ is a $\SBr(\sov)$-covering of sets.

Finally, we need to show that $F_*^\P$ also preserves edges, i.e. it is a covering of graphs locally. This follows from the lemma above.
\end{proof}

%=========================================================
\subsection{Signed triangulations via tagged arcs}
%=========================================================

The definition of tagged arcs is the same as in the undecorated case \Cref{def:tagged}.
\begin{definition}

A \emph{tagged triangulation} $\Tx$ of $\sop$ is a collection $\Tx$ of tagged arcs in $\sop$ such that the following hold.
\begin{itemize}
  \item $\Tx$ divides $\sop$ into once-decorated digons, triangles or squares.
  \item The forgetful map $F^\sun_*$ turns $\Tx$ into a tagged triangulation of $\surfp$.
\end{itemize}
Denote by $\EGx(\sop)$ the set of tagged triangulations on $\sop$.
\end{definition}

\begin{construction}\label{con:t to s}
In a tagged triangulation $\Tx$ of $\sop$, let $\gamma_1$ and $\gamma_2$ be the tagged arcs of a digon that $\Tx$ cuts out. Then $\gamma_1$ and $\gamma_2$ are not loops. Moreover, their taggings agree at exactly one shared endpoint. Denote this endpoint by $R$ and the other endpoint, which must be a vortex, by $V$. In this case, there is a loop $\gamma$ (based at $R$) obtained from $\gamma_1\cup\gamma_2$ by smoothing out their endpoint at $V$. If we replace one of $\gamma_i$ by $\gamma$, we obtain a self-folded triangle locally.
If we apply this replacement to all the digons (say there are $K$ many of them in $\Tx$),
we can obtain $2^K$ associated signed triangulations, which are $\sim$ equivalent to each other.

\begin{figure}[htpb]\centering
\begin{tikzpicture}[scale=1]
\begin{scope}[shift={(0,0)}]
\draw[\hh,thick](-1.1,0)node{$_\gamma$};
\draw[blue,very thick](0,.8)to[bend left=45](0,-1)
    (0.55,0)node{$^{\gamma_1}$};
\draw[\hh,thick] plot [smooth,tension=1.6] coordinates
    {(0,-1) (-1,1) (1,1) (0,-1)};
\draw(0,.8)\vot node[above]{$p$}(0,-1)\nn node[below]{$R$}(0,-.1)\ww;
\end{scope}
\draw(1.8,0)node{\Large{$\Longleftarrow$}};
\begin{scope}[shift={(3,0)}]
\draw[\ql,very thick](0,.8)to[bend left=-45](0,-1)
    (-0.55,0)node{$^{\gamma_2}$}
        (-.23,.8-.3)node[rotate=-40]{$\bowtie$};;
\draw[blue,very thick](0,.8)to[bend left=45](0,-1)
    (0.55,0)node{$^{\gamma_1}$};
\draw(0,.8)\dpole node[above]{$p$}(0,-1)\nn node[below]{$R$}(0,-.1)\ww;
\draw(1.2,0)node{\Large{$\Longrightarrow$}};
\end{scope}
\begin{scope}[shift={(6,0)},xscale=1]
\draw[\hh,thick](1.1,0)node{$_\gamma$};
\draw[\ql,very thick](0,.8)to[bend left=-45](0,-1)
    (-0.55,0)node{$^{\gamma_2}$};
\draw[\hh,thick] plot [smooth,tension=1.6] coordinates
    {(0,-1) (-1,1) (1,1) (0,-1)};
\draw(0,.8)\tov node[above]{$p$}(0,-1)\nn node[below]{$R$}(0,-.1)\ww;
\end{scope}
\end{tikzpicture}
\caption{A tagged triangulation corresponds to two signed triangulations, locally}
\label{fig:switch3}
\end{figure}
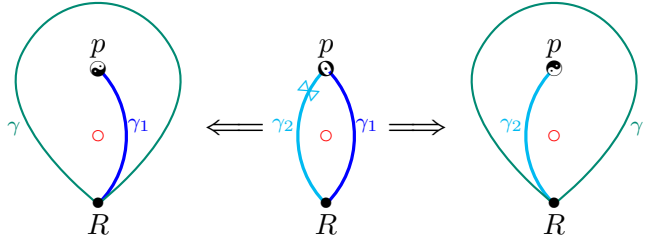
\end{construction}

Clearly, Construction~\ref{con:t to s} above induces a well-defined map from $\EGx(\sop)$ to $\EG(\sop)/\sim$ (considered as sets).

Passing to the vortex version: a tagged arc on $\sov$ is an equivalence class of tagged arcs on $\sop$ under the action of $\LL$.

\begin{definition}
A tagged triangulation on $\sov$ is a collection of tagged arcs on $\sov$ such that there are representatives, one from each tagged arc in the collection, which form a tagged triangulation $\Tx$ of $\sop$. We may use $\Tx$ to represent such a collection.
Denote by $\EGx(\sov)$ the set of all tagged triangulations on $\sov$.
\end{definition}

\begin{lemma}\label{lem:differ}
Any two representatives $\Tx$ and $\Tx'$ of a tagged triangulation of $\sov$ differ by some $b\in\LL$.
\end{lemma}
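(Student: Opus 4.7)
The plan is to reduce the statement to a local problem at each vortex and combine the local solutions into a single element of $\LL$. The guiding observation is \Cref{lem:non-trivial}: each generator $\Bt{s}^4=\mathrm{L}_{s^\circlearrowleft}^2$ of $\LL$ is supported in a small regular neighborhood of the loop $s^\circlearrowleft$, which encloses only the vortex endpoint $V$ of $s$ and the decoration endpoint of $s$. Consequently, generators associated with different vortices have disjoint supports and commute, giving a factorization $\LL=\prod_{V\in\P}\LL_V$ where $\LL_V$ is generated by $\{\Bt{s}^4\mid s\text{ is a collision path with vortex endpoint }V\}$.

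After matching arcs, write $\Tx=\{\gamma_1,\dots,\gamma_n\}$ and $\Tx'=\{\gamma'_1,\dots,\gamma'_n\}$ with $\gamma'_i=b_i(\gamma_i)$ for some $b_i\in\LL$. Choose pairwise disjoint small disks $U_V\subset\surf$ so that $\LL_V$ is supported in $U_V$ for each $V\in\P$. It then suffices to exhibit, for each $V$, an element $b_V\in\LL_V$ that realizes the transition $\Tx\cap U_V\to\Tx'\cap U_V$ while fixing arcs outside $U_V$; the sought element is then $b=\prod_V b_V\in\LL$.

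The nontrivial content is the local claim at a single vortex $V$. Inside $U_V$, the portions $\Tx\cap U_V$ and $\Tx'\cap U_V$ are tagged triangulations of a decorated disk with one vortex whose tagged arcs agree as $\LL_V$-orbits. Using \Cref{con:t to s} to pass to the signed picture, the possible local pictures split into a \emph{fan} case (all incident arcs at $V$ carry the same tag) and a \emph{digon} case (one pair of homotopic arcs with opposite tags at $V$, corresponding to a self-folded triangle on the signed side). The generator $\Bt{s}^4$ acts on the signed picture by combining a sign switch at $V$ with swapping a loop edge for its self-folded counterpart, while on fan configurations it produces a controlled winding of the incident arcs around $V$.

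The main obstacle is carrying out a careful picture-chasing enumeration, case by case (fan, digon, and their interaction with arcs that traverse $\partial U_V$), to verify that $\LL_V$ acts transitively on the finitely many local tagged configurations sharing the same collection of $\LL_V$-orbits of tagged arcs. Once this local transitivity is established, the assembly of $b=\prod_V b_V$ is immediate from the commuting decomposition of Step~2, and the proof is complete.
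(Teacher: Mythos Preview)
Your approach has a genuine gap at the very first step. The claim that generators of $\LL$ associated with different vortices have disjoint supports, and hence that $\LL=\prod_{V\in\P}\LL_V$ as commuting factors, is false. The support of $\Bt{s}^4=\mathrm{L}_{s^\circlearrowleft}^2$ is a regular neighborhood of the \emph{loop} $s^\circlearrowleft$, not of the vortex $V$; this loop follows the collision path $s$, which can wander arbitrarily through the surface. For two collision paths $s_1,s_2$ ending at distinct vortices $V_1,V_2$, the loops $s_1^\circlearrowleft$ and $s_2^\circlearrowleft$ may intersect many times, so the corresponding $\mathrm{L}$-twists need not commute. In particular, $\LL_V$ is \emph{not} supported in a small disk $U_V$ around $V$: it is generated by $\Bt{s}^4$ for \emph{all} collision paths $s$ to $V$, and such $s$ have no locality whatsoever.

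This also undermines the local analysis. The portions $\Tx\cap U_V$ and $\Tx'\cap U_V$ are not tagged triangulations of any decorated subsurface; they are merely germs of arcs at $V$, and the discrepancy between $\Tx$ and $\Tx'$ is a global phenomenon (an arc ending at $V$ may differ from its counterpart by winding around other vortices, decorations, handles, or boundary components far from $U_V$). There is no element of $\LL_V$ that ``realizes the transition $\Tx\cap U_V\to\Tx'\cap U_V$ while fixing arcs outside $U_V$'', because elements of $\LL_V$ do not fix arcs outside $U_V$.

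The paper's argument avoids all of this by a short induction on the rank~$n$: pick a single arc $\gamma\in\Tx$ with counterpart $\gamma'\in\Tx'$, choose $b_0\in\LL$ with $b_0(\gamma)=\gamma'$, then cut along $\gamma'$ to reduce the rank and apply the inductive hypothesis to $b_0(\Tx)$ and $\Tx'$ on the cut surface. No localization at vortices is needed.
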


\begin{proof}
We proceed by induction on the rank \eqref{eq:n} of $\surfp$ starting with the trivial case for $n=0$.
Consider the inductive step.
Take any $\gamma\in\Tx$, which corresponds to $\gamma'\in\Tx'$.
There exists $b_0\in\LL$ such that $b_0(\gamma)=\gamma'$ and $b_0(\Tx)$ still represents the same tagged triangulation of $\sov$.
Cut $\sov$ along $\gamma'$ so that the inductive assumption can be applied, completing the proof.
\end{proof}

A direct corollary is the following.
\begin{corollary}\label{lem:t to s}
Construction~\ref{con:t to s} above induces a well-defined map from $\EGx(\sov)$ to $\EG(\sov)$ (considered as sets).
\end{corollary}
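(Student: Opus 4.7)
The plan is to show that the assignment of Construction~\ref{con:t to s} descends through two successive quotients: first from $\EGx(\sop)$-representatives to $\sim$-classes of signed triangulations of $\sop$, and then from $\sim$-classes to $\approx$-classes, which by \eqref{eq:Lrm} is exactly $\EG(\sov)$. The first descent is essentially observed in Construction~\ref{con:t to s} itself: the $2^K$ signed triangulations obtained by choosing, for each of the $K$ digons of $\Tx$, which of $\gamma_1,\gamma_2$ to keep as a loop versus self-folded edge are all $\sim$-equivalent by \Cref{def:equivsov} (they share the same underlying non-self-folded arcs $\T^{\nsf}$, and differ only by signs at vortices trapped inside self-folded triangles, precisely as permitted by $\sim$). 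Composing with the quotient $\EG^{\pm}(\sop)/\sim \twoheadrightarrow \EG^{\pm}(\sop)/\approx = \EG(\sov)$ yields a well-defined map
\[
\EGx(\sop)\longrightarrow \EG(\sov).
\]

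The remaining task is to show this map factors through $\EGx(\sov) = \EGx(\sop)/\LL$. This is where \Cref{lem:differ} is essential. Given two representatives $\Tx$ and $\Tx'$ of the same tagged triangulation of $\sov$, the lemma produces $b\in \LL$ with $\Tx' = b(\Tx)$. Construction~\ref{con:t to s} is manifestly equivariant under the action of $\MCG(\sop)$: it is defined purely by the local combinatorial data of the digons cut out by $\Tx$ (endpoints $R$ and vortices $V$, the loop obtained by smoothing at $V$), which is preserved by any homeomorphism. Hence applying the construction to $\Tx'$ produces the signed triangulations $(b(\T),\sign)$, where $(\T,\sign)$ are those arising from $\Tx$.

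By definition of $\approx$ in \Cref{def:equivsov}, signed triangulations related by the action of an element of $\LL$ (combined with a $\sim$-equivalence) represent the same class in $\EG(\sov)$. Thus $(\T,\sign)$ and $(b(\T),\sign)$ have the same image in $\EG(\sov)$, giving the required well-definedness on $\EGx(\sov)$. I do not anticipate a serious obstacle; the only delicate point is keeping track of the sign function at vortices lying inside self-folded triangles when $b$ acts, but since the sign $\sign$ is carried along unchanged by the action and $\approx$ is precisely designed to absorb $\LL$-orbit differences at such vortices, this is handled automatically by invoking \Cref{def:equivsov} rather than by any additional computation.
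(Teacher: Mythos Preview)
Your proposal is correct and follows essentially the same approach as the paper, which treats the statement as an immediate consequence of \Cref{lem:differ} together with the observation (recorded just before the corollary) that Construction~\ref{con:t to s} already gives a well-defined map $\EGx(\sop)\to\EG^\pm(\sop)/\sim$. You have simply spelled out the equivariance step and the role of $\approx$ in absorbing the $\LL$-action, which the paper leaves implicit.
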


Next, we give the inverse of the map in the corollary above.

\begin{construction}\label{con:s to t}
Consider a once-decorated self-folded triangle in a signed triangulation $\Te$ of $\sop$, with loop edge $\gamma$, self-folded edge $\gamma_1$ and vortex $V$. Then there are exactly two tagged arcs in this self-folded triangle that are compatible with $\gamma_1$ (regarded as a tagged arc), cf. Figure~\ref{fig:switch4}. More precisely, they can be obtained from $\gamma\cup\gamma_1$, by smoothing out (in the self-folded triangle), either the intersection from $\gamma_1$ to $\gamma$, or the intersection from $\gamma$ to $\gamma_1$. The former smoothing gives $\gamma_2$, and the latter gives $\gamma_2'$ in Figure~\ref{fig:switch4}.
Note that the sign of $\gamma_1$ at $V$ is inherited from $V$ while $\gamma_2$ or $\gamma_2'$ has a different sign at $V$.

If we replace $\gamma$ with either $\gamma_2$ or $\gamma_2'$, we obtain a tagged triangulation, locally. Applying this replacement to all self-folded triangles (say there are $K$ of them in $\Te$) yields $2^K$ tagged triangulations of $\sop$, which are representatives of the same tagged triangulation of $\sov$.

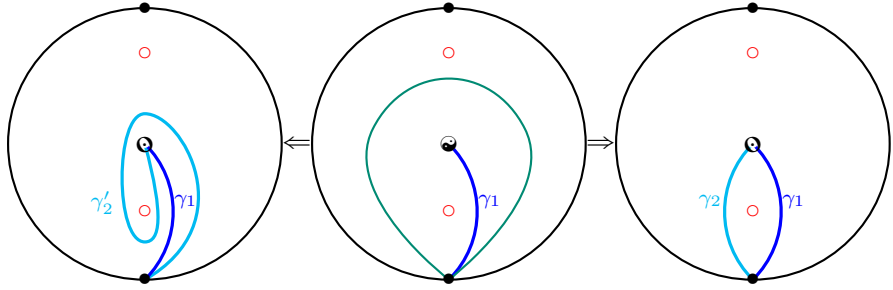
\begin{figure}[htpb]\centering
\begin{tikzpicture}[scale=1]
\begin{scope}[shift={(0,0)}]
\draw[blue,very thick](0,.8)to[bend left=45](0,-1)
    (0.55,0)node{$^{\gamma_1}$};
\draw[\hh,thick] plot [smooth,tension=1.6] coordinates
    {(0,-1) (-1,1) (1,1) (0,-1)};
\draw[thick](0,.8)\vot(0,-1)\nn(0,2.6)\nn(0,0.8)circle(1.8)(0,-.1)\ww(0,2)\ww;
\end{scope}
\draw(2,.8)node{$\Rightarrow$};
\begin{scope}[shift={(4,0)}]
\draw[\ql,very thick](0,.8)to[bend left=-45](0,-1)
    (-0.55,0)node{$^{\gamma_2}$};
\draw[blue,very thick](0,.8)to[bend left=45](0,-1)
    (0.55,0)node{$^{\gamma_1}$};
\draw[thick](0,.8)\dpole(0,-1)\nn(0,2.6)\nn (0,0.8)circle(1.8)(0,-.1)\ww(0,2)\ww;
\end{scope}
\begin{scope}[shift={(-4,0)}]
\draw[\ql,very thick](0,.8)
    .. controls +(-75:1) and +(0:.2) ..(0,-.5)
    .. controls +(180:.4) and +(180:.4) ..(0,1.2)
    .. controls +(0:.5) and +(30:1.5) ..(0,-1)
    (-0.55,0)node{$^{\gamma_2'}$};
\draw[blue,very thick](0,.8)to[bend left=45](0,-1)
    (0.55,0)node{$^{\gamma_1}$};
\draw[thick](0,-1)\nn(0,2.6)\nn (0,0.8)circle(1.8)(0,-.1)\ww(0,2)\ww;
\draw(2,.8)node{$\Leftarrow$}(0,.8)\dpole;
\end{scope}
\end{tikzpicture}
\caption{A signed triangulation corresponds to two tagged triangulations, locally}
\label{fig:switch4}
\end{figure}
\end{construction}

\begin{lemma}\label{lem:s to t}
The construction in Construction~\ref{con:s to t} induces a well-defined map from $\EG(\sov)$ to $\EGx(\sov)$, which is the inverse of the map in Lemma~\ref{lem:t to s}.
Therefore, we obtain a canonical identification
$$\mathbb{I}\colon \EG(\sov)\xrightarrow{1\leftrightarrow1}\EGx(\sov).$$
\end{lemma}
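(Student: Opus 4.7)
The plan is to verify well-definedness on the $\approx$-equivalence classes that define $\EG(\sov)$, and then to check that the two composites with the map of Corollary~\ref{lem:t to s} are the identity. Everything will happen locally at self-folded triangles and their tagged-digon counterparts (Figures~\ref{fig:switch3} and~\ref{fig:switch4}), so once the local picture is pinned down the global argument reduces to bookkeeping.

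For well-definedness, I would first handle the $\sim$-equivalence of Definition~\ref{def:equivsov}. Two $\sim$-equivalent signed triangulations can differ only at self-folded triangles where the sign at the interior vortex is flipped, which swaps the roles of loop edge and self-folded edge (cf. Figure~\ref{fig:pm}). At such a triangle, Construction~\ref{con:s to t} outputs the unordered pair $\{\gamma_2,\gamma_2'\}$ of tagged arcs compatible with the self-folded edge, and this pair is manifestly invariant under that swap. Hence both $\sim$-equivalent representatives produce the same collection of $2^K$ tagged triangulations of $\sop$, which together represent a single tagged triangulation of $\sov$. To upgrade to $\approx$-equivalence, if $\Te'=(b(\T),\sign)$ with $b\in\LL$ (possibly after a further $\sim$), then Construction~\ref{con:s to t} applied to $\Te'$ produces tagged triangulations of $\sop$ that differ from those for $\Te$ by $b$; since tagged arcs of $\sov$ are by definition $\LL$-orbits of tagged arcs of $\sop$, the two outputs represent the same tagged triangulation of $\sov$.

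For invertibility, I would check both composites locally. Starting from $\Tx\in\EGx(\sop)$, Construction~\ref{con:t to s} replaces each digon $(\gamma_1,\gamma_2)$ by a self-folded triangle with self-folded edge $\gamma_1$ and loop $\gamma$ obtained by smoothing $\gamma_1\cup\gamma_2$ at the vortex; Construction~\ref{con:s to t} then returns either $\gamma_2$ or its partner $\gamma_2'$, so we recover $\Tx$ up to the $\LL$-ambiguity of Lemma~\ref{lem:differ}. Starting from a signed triangulation $\Te$, Construction~\ref{con:s to t} replaces each self-folded triangle with a digon of tagged arcs, and Construction~\ref{con:t to s} reassembles the original loop by smoothing at the vortex. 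The main obstacle, and the step I would plan most carefully, is to check that the sign at each vortex is preserved under the roundtrip: specifically, the sign of $\gamma_1$ at $V$ inherited in Construction~\ref{con:s to t} must correspond bijectively to the discrepant taggings in the digon produced by Construction~\ref{con:t to s}, and the two choices $\gamma_2$ versus $\gamma_2'$ must line up with opposite signs at $V$. Once this correspondence is tabulated directly from Figures~\ref{fig:switch3} and~\ref{fig:switch4}, both composites are the identity and the identification $\mathbb{I}$ is established.
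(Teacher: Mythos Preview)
Your overall plan---check well-definedness locally at self-folded triangles, then check the two composites locally via Figures~\ref{fig:switch3} and~\ref{fig:switch4}---is exactly the paper's approach, and the invertibility and $\approx$-equivalence parts are fine.

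There is, however, a genuine imprecision in your well-definedness step for $\sim$. You describe two $\sim$-equivalent signed triangulations of $\sop$ as differing only by a sign flip that ``swaps the roles of loop edge and self-folded edge (cf.\ Figure~\ref{fig:pm})''. That is the undecorated $\sim$-equivalence of Definition~\ref{def:equivsmb}. In the decorated setting (Definition~\ref{def:equivsov}) the loop edge lies in $\T^{\nsf}$ and is fixed; what can change is the \emph{self-folded edge itself}, together with the sign at the enclosed vortex, according to the parity rule there. The relevant local picture is Figure~\ref{fig:switch}, not Figure~\ref{fig:pm}: adjacent $\sim$-equivalent representatives have different self-folded edges (e.g.\ $\gamma_1$ versus $\gamma_2$) and opposite signs. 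Consequently your claim that ``the unordered pair $\{\gamma_2,\gamma_2'\}$ is manifestly invariant under that swap'' does not quite apply---the two representatives do not even have the same self-folded edge to feed into Construction~\ref{con:s to t}. The correct local check (and what the paper actually does) is to apply Construction~\ref{con:s to t} to the two middle pictures of Figure~\ref{fig:switch} and observe that the resulting tagged digons share a representative (namely $\{\gamma_1,\gamma_2\}$) and hence define the same class in $\EGx(\sov)$. Once you make this replacement, your argument goes through.
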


\begin{proof}
The well-definedness follows locally from the fact that the two signed triangulations in the middle of Fig.~\ref{fig:switch} correspond to the same tagged triangulation by the construction.
It is straightforward to verify that the maps above are inverse to each other, as shown in the local pictures in \Cref{fig:switch3} and \Cref{fig:switch4}.
\end{proof}

\begin{definition}\label{def:a.l.2}
    The \emph{l.a.t. rotation} for a tagged arc in a tagged triangulation of $\sov$ is similar to the one for $\surfv$ in Definition~\ref{def:l.a.}. See Figure~\ref{fig:tr1}. The arrows of the tagged exchange graph $\EGx(\sov)$ of $\sov$ are defined by the l.a.t. rotations.
\end{definition}

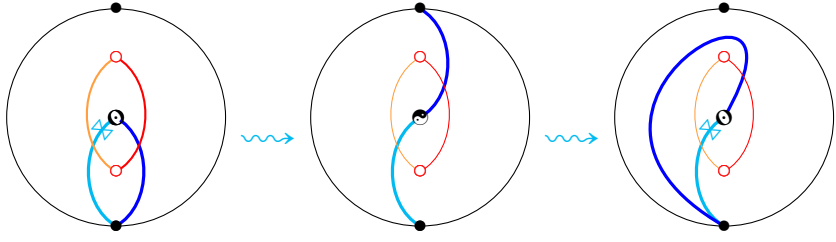
\begin{figure}[htpb]\centering
\begin{tikzpicture}[scale=.8]\clip(0-1.9,-1.1)rectangle(10+1.9,2.7);
\draw[\ql](2.5,0.4)node{\Large{$\rightsquigarrow$}}
    (7.5,0.4)node{\Large{$\rightsquigarrow$}};
\begin{scope}[shift={(0,0)}]
\draw[\ql,very thick](0,.8)to[bend left=-60](0,-1)
        (-.23,.8-.2)node[rotate=-40]{$\bowtie$};;
\draw[blue,very thick](0,.8)to[bend left=60](0,-1);
\draw(0,.8)\dpole (0,-1)\nn (0,-.1)\ww;
\draw(0,.8)\dpole(0,-1)\nn(0,2.6)\nn (0,0.8)circle(1.8)(0,-.1)\ww(0,1.8)\ww;
\draw[\qh, thick](0,-.1)\ww to[bend left=60](0,1.8)\ww;
\draw[red, thick](0,-.1)\ww to[bend left=-60](0,1.8)\ww;
\end{scope}
\begin{scope}[shift={(10,0)}]
\draw[\ql,very thick](0,.8)to[bend left=-60](0,-1)
        (-.23,.8-.2)node[rotate=-40]{$\bowtie$};;
\draw[blue,very thick](0,.8)
    .. controls +(60:3.5) and +(150:4) ..(0,-1);
\draw(0,.8)\dpole    (0,-1)\nn (0,-.1)\ww;
\draw(0,2.6)\nn (0,0.8)circle(1.8)(0,-.1)\ww(0,1.8)\ww;
\draw[\qh](0,-.1)\ww to[bend left=60](0,1.8)\ww;
\draw[red](0,-.1)\ww to[bend left=-60](0,1.8)\ww;
\end{scope}
\begin{scope}[shift={(5,0)}]
\draw[\ql,very thick](0,.8)to[bend left=-60](0,-1);
\draw[blue,very thick](0,.8)to[bend left=-60](0,2.6);
\draw(0,.8)\tov (0,-1)\nn (0,-.1)\ww;
\draw(0,-1)\nn(0,2.6)\nn (0,0.8)circle(1.8)
    (0,-.1)\ww(0,1.8)\ww;
\draw[\qh](0,-.1)\ww to[bend left=60](0,1.8)\ww;
\draw[red](0,-.1)\ww to[bend left=-60](0,1.8)\ww;
\end{scope}
\end{tikzpicture}
\caption{Tagged rotation as forward flip and the composition as a negative braid twist}
\label{fig:tr1}
\end{figure}

\begin{lemma}
    Under the identification $\mathbb{I}$ in Lemma~\ref{lem:s to t}, the forward flip in $\EG(\sov)$ corresponds to the tagged rotation in $\EGx(\sov)$. In particular, Definition~\ref{def:a.l.2} is well-defined.
\end{lemma}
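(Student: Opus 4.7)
The plan is to verify the lemma by a local case analysis, since both the forward flip in $\EG(\sov)$ and the local anticlockwise tagged rotation in $\EGx(\sov)$ are operations supported in a neighbourhood of the single arc $\gamma$ being flipped/rotated. Under the identification $\mathbb{I}$ of \Cref{lem:s to t}, this neighbourhood decomposes into one of the finitely many puzzle-piece configurations of \Cref{fig:type IV}, so it suffices to check the correspondence on each such local model and then glue.

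I would first dispose of the generic case in which $\gamma$ is not adjacent to any self-folded triangle and neither endpoint is a vortex carrying mixed tagging. Here $\mathbb{I}$ is locally the identity on the arcs involved, the forward flip is the one depicted in \Cref{fig:flip}, and the l.a.t.\ rotation on the corresponding tagged arc (\Cref{def:l.a.}) coincides with it tautologically (this is the decorated version of the correspondence from \Cref{lem:1}). Next I would treat the delicate configurations of \Cref{fig:switch3} and \Cref{fig:switch4}, in which $\gamma$ is either the loop edge of a self-folded triangle, its enclosed self-folded edge, or one of the two tagged arcs of a digon meeting a vortex $V$. By \Cref{con:t to s,con:s to t}, $\mathbb{I}$ turns a self-folded configuration into a digon whose two tagged arcs differ by their tagging at $V$, and a direct comparison with \Cref{fig:flip II} and \Cref{fig:tr1} shows that forward flipping $\gamma$ in the signed picture is realised on the tagged side by the l.a.t.\ rotation; the sign-switch $\sx_V$ built into the universal tagged rotation of \Cref{def:T-rotate} accounts precisely for the change of $\sign(V)$ that accompanies the flip on the signed side, as dictated by $\approx$-equivalence in \Cref{def:equivsov}.

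Once the local correspondence is established in every configuration, the well-definedness of \Cref{def:a.l.2} is essentially formal. Indeed, by \Cref{lem:differ} any two representatives of the same tagged triangulation of $\sov$ differ by some $b\in\LL$, and the forward flip in $\EG(\sov)$ was defined as a quotient by $\approx$ which includes the action of $\LL$; hence the induced rotation on $\EGx(\sov)$ is independent of the chosen representative. Conversely, any two $\sim$-equivalent signed triangulations are sent by $\mathbb{I}$ to a single tagged triangulation, so their a priori different forward flips get identified with the same l.a.t.\ rotation.

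The hard part is not conceptual but bookkeeping: one must simultaneously track three pieces of data during the local analysis — the combinatorial flip of the open arc, the sign assignment on vortices together with its switch when a vortex enters or leaves a self-folded triangle, and the tagging at the endpoints (with the convention that two compatible tagged arcs sharing a vortex endpoint may disagree on tagging there). I would organise the verification according to the puzzle piece containing $\gamma$ and its neighbours in $\T$, using \Cref{fig:flip II}, \Cref{fig:switch3}, \Cref{fig:switch4} and \Cref{fig:tr1} as the pictorial record for each case, and then patch these local identifications together using that $\mathbb{I}$ respects the decomposition into puzzle pieces.
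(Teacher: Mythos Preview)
Your proposal is correct and follows essentially the same approach as the paper: a local case analysis verified by comparing the relevant figures, together with the observation that $\LL$-equivalence and $\sim$-equivalence ensure well-definedness. The paper's own proof is extremely terse --- it simply instructs the reader to compare \Cref{fig:tr1} with \Cref{fig:flip}, \Cref{fig:S2.lift} and \Cref{fig:S2.lift2} --- so your write-up is a more detailed execution of the same idea (you might also cite \Cref{fig:S2.lift} and \Cref{fig:S2.lift2}, since these explicitly show the lifted flips in the digon and self-folded cases that you describe via \Cref{fig:switch3} and \Cref{fig:switch4}).
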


\begin{proof}
    This can be deduced by comparing Figure~\ref{fig:tr1} with Figures~\ref{fig:flip}, \ref{fig:S2.lift} and \ref{fig:S2.lift2}.
\end{proof}

In combination with \Cref{lem:1}, the covering \eqref{eq:forgetv} in \Cref{thm:cover}
can alternatively be written as
\begin{equation}\label{eq:forgetx}
    F_*^\P:\EGx(\sov) \xrightarrow{/\SBr(\sov)} \EGx(\smv)
\end{equation}

%=========================================================
%=========================================================
\section{Braid twist group as cluster braid group}\label{sec:BT=CT}
%=========================================================
%=========================================================

%=========================================================
\subsection{Dual triangulations of DMSv}
%=========================================================
\begin{definition}
A \emph{dual triangulation} of DMSv $\sov$ is the dual graph associated to some tagged triangulation $\Tx$, denoted by $\Tx^*$, which consists of closed arcs in $\CA(\sov)$.

The dual $\Te^*$ of a signed triangulation $\Te\in\EG(\sov)$ is defined to be the dual triangulation with respect to the tagged triangulation $\mathbb{I}(\Te)$.
It can be checked that $\Te^*$ is well-defined by checking the local pictures (e.g.
the left picture of \Cref{fig:tr1} and $\eta_1$ and $\eta_2$ in \Cref{fig:switch2}).
\end{definition}

\begin{example}[Naive dual of $\Te$]
For a self-folded edge $\gamma_0$ of $\T$ (e.g., $\gamma_1$ or $\gamma_2$ in Figure~\ref{fig:switch2}), let $\gamma$ be the loop that encloses $\gamma_0$. Then the dual arc $\gamma_0^*$ can be interpreted as `difference' of the graph dual arcs corresponding to $\gamma_0$ and $\gamma$. Namely, in Figure~\ref{fig:switch2}, we have
\begin{itemize}
  \item $\gamma_1^*=\eta,\, \gamma^*=\eta_2$
    for the left picture/triangulation and
  \item $\gamma_2^*=\eta,\, \gamma^*=\eta_1$
    for the right picture/triangulation.
\end{itemize}
\end{example}

\begin{figure}[htpb]\centering
\begin{tikzpicture}[xscale=1.2,yscale=1.2]
\begin{scope}[shift={(0,0)}]
\draw[blue](0,.8)to[bend left=45](0,-1)
    (0.52,0)node{$_{\gamma_1}$};
\draw[\hh] plot [smooth,tension=1.6] coordinates
    {(0,-1) (-1,1) (1,1) (0,-1)};
\draw[red,very thick,dashed](0,-.1).. controls +(30:1.2) and +(-30:1.2) ..(0,2);
\draw[\hhd,ultra thick](0,-.1).. controls +(60:2) and +(120:2) ..(0,-.1)
    (0,1.3)node{$_\eta$}
    (2,-1)node{$\eta=|\eta_1-\eta_2|$};
\draw[\qh,ultra thick](0,-.1).. controls +(150:1.2) and +(-150:1.2) ..(0,2)
    (-.6,1)node{$_{\eta_2}$}(0,-1);
\draw(0,.8)\vot(0,-1)\nn(0,2.6)\nn (0,0.8)circle(1.8)(0,-.1)\ww(0,2)\ww;
\draw[\hh](2,2.5)node{$\gamma=\gamma_1+\gamma_2$}(-1.1,0)node{$_\gamma$};
\end{scope}
\draw(2,.8)node{$=$};
\begin{scope}[shift={(4,0)}]
\draw[\hh](1.1,0)node{$_\gamma$};
\draw[\ql](0,.8)to[bend left=-45](0,-1)
    (-0.52,0)node{$_{\gamma_2}$};;
\draw[\hh] plot [smooth,tension=1.6] coordinates
    {(0,-1) (-1,1) (1,1) (0,-1)};
\draw[red,ultra thick](0,-.1).. controls +(30:1.2) and +(-30:1.2) ..(0,2)
    (.6,1)node{$_{\eta_1}$}(0,-1);
\draw[\hhd,ultra thick](0,-.1).. controls +(60:2) and +(120:2) ..(0,-.1)
    (0,1.3)node{$_\eta$};
\draw[\qh,very thick,dashed](0,-.1).. controls +(150:1.2) and +(-150:1.2) ..(0,2);
\draw(0,.8)\tov(0,-1)\nn(0,2.6)\nn (0,0.8)circle(1.8)(0,-.1)\ww(0,2)\ww;
\end{scope}
\end{tikzpicture}
\caption{Dual triangulations (which are equivalent)}
\label{fig:switch2}
\end{figure}
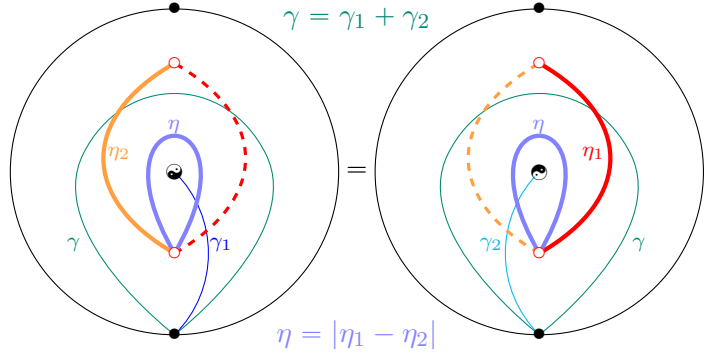

The following is a straightforward observation.

\begin{lemma}\label{lem:dualtri}
Let $\T=\{\gamma_i\}$ be a triangulation of $\smv$ with dual $\T^*=\{\eta_i\}$.
For any $i\ne j$:
\begin{itemize}
\item $\eta_i,\eta_j$ are disjoint if and only if $\gamma_i$ and $\gamma_j$ are not adjacent in any triangle of $\T$;
\item $\eta_i,\eta_j$ form a once-vortexed digon
        if and only if the triangles containing them are of the form in one of the cases in \Cref{fig:tr1};
\item $\eta_i,\eta_j$ are disjoint except sharing a common endpoint if and only if $\gamma_i$ and $\gamma_j$ are adjacent in exactly one non-self-folded triangle of $\T$.
\end{itemize}
In particular,
\begin{itemize}
  \item the first two cases occur if and only if there is no arrow between vertices $i$ and $j$;
  \item the last case occurs if and only if there is exactly one arrow between vertices $i$ and $j$,
\end{itemize}
in the corresponding quiver $Q_{\T}$.
\end{lemma}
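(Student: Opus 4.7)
The plan is a local case-by-case verification organized by the classification of pairs $(\gamma_i,\gamma_j)$ in \Cref{lem:no ar}. The construction of $\T^*$ from $\T$ is local: each $\eta_i$ is supported in the union of the (at most two) triangles of $\T$ adjacent to $\gamma_i$, with self-folded edges handled via the tagged reformulation $\mathbb{I}(\T^\sign)$ and the naive-dual prescription of the example preceding the lemma. Consequently, each of the three equivalences reduces to inspecting a small local picture around $\gamma_i\cup\gamma_j$.

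\Cref{lem:no ar} lists five mutually exclusive local configurations for $(\gamma_i,\gamma_j)$: (a) no common triangle; (b) the two arcs bound a digon with a vortex inside; (c) the two arcs form a self-folded triangle; (d) they sit in a unique non-self-folded triangle; or (e) they are the internal edges of an annulus formed by two triangles. Case (a) is immediate: the triangles supporting $\eta_i$ and those supporting $\eta_j$ are disjoint, and since distinct triangles carry distinct decorations, the duals are disjoint. For case (d), the common triangle $T$ contains a single decoration serving as a shared endpoint of $\eta_i$ and $\eta_j$, while outside $T$ the duals live in disjoint regions, producing the ``disjoint except sharing a common endpoint'' configuration. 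For case (b), after applying $\mathbb{I}$, the local picture is one of those in \Cref{fig:tr1}, from which one reads off that $\eta_i$ and $\eta_j$ both connect the two decorations of the digon and jointly bound a once-vortexed digon. Case (c) produces the same local picture through the naive-dual prescription in \Cref{fig:switch2}. Case (e) is not claimed by the lemma and need not be addressed.

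The ``in particular'' statement then follows by pairing with \Cref{lem:no ar}: cases (a), (b), (c) exhaust the no-arrow cases and fall under the first two bullets (disjoint, or once-vortexed digon), while case (d) is the unique one-arrow case and falls under the third bullet (shared endpoint). The only mildly delicate point is case (c), where the dual of a self-folded edge is only defined via the tagged/naive-dual convention; one must verify that the local combinatorics really produces a once-vortexed digon rather than, say, crossing or coincident arcs. This is exactly the content of \Cref{fig:switch2}, and once it is in hand the remaining cases are immediate inspection.
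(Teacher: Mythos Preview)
Your local case analysis via \Cref{lem:no ar} is exactly the approach the paper has in mind; the paper itself offers no proof, recording the lemma only as ``a straightforward observation''. Your expansion is correct in spirit, but there is one small logical slip. You dismiss case (e) (the annulus) as ``not claimed by the lemma and need not be addressed'', yet the \emph{only if} directions of all three main bullets require it: to conclude, for instance, that disjoint duals force case (a), you must exclude (e) as well as (b), (c), (d). This is immediate---in the annulus configuration both $\eta_i$ and $\eta_j$ connect the same two decorations and hence share both endpoints, so none of the three listed configurations occurs---but it should be said. With that one sentence added, the argument is complete and matches the paper's intended verification.
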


\begin{definition}\label{def:BT2}
Let $\Tx$ be a tagged triangulation of $\sov$. The \emph{braid twist group} $\BT(\Tx^*)$ of the triangulation $\Tx$ is the subgroup of $\MCG(\sov)$ generated by the braid twists $\Bt{\eta}$ for the closed arcs $\eta$ in $\Tx^*\subset\CA(\sov)$.
\end{definition}

The following is a generalization of \cite[Lem.~2.2]{QZ2}, cf. also \cite[Lem.~3.14]{QQ}.

\begin{lemma}\label{lem:cut}
    Let $\Tx$ be a tagged triangulation of $\sov$ and $\Tx^*$ its dual. Let $\eta$ be a closed arc in $\CA(\sov)\setminus\Tx^*$. Then there are two arcs $\alpha,\beta\in\CA(\sov)$ such that
    \begin{enumerate}
        \item[(1)] $\alpha$ and $\beta$ do not intersect in $\surf^\circ-\Tri$;
        \item[(2)] $\eta=B_{\alpha}(\beta)$ or $B_{\alpha}^{-1}(\beta)$;
        \item[(3)] $\Int(\gamma,\alpha)<\Int(\gamma,\eta)$ and $\Int(\gamma,\beta)<\Int(\gamma,\eta)$ for any $\gamma\in\Tx$.
    \end{enumerate}
\end{lemma}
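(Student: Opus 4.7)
The proof proceeds by induction on the total geometric intersection number
\[
    N(\eta) = \sum_{\gamma\in\Tx}\Int(\gamma,\eta).
\]
Since $\eta\notin\Tx^*$, there exists some $\gamma_0\in\Tx$ with $\Int(\gamma_0,\eta)\ge 1$, so the base of the induction is the construction of $\alpha$ and $\beta$ explicitly in one of finitely many local configurations; the inductive step is the reduction of $N(\eta)$ by cutting along a single intersection point.

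\textbf{Step 1 (Cutting $\eta$).} Fix an endpoint $Z\in\Tri$ of $\eta$ and travel along $\eta$ until reaching the first intersection point $p$ with an arc $\gamma_0\in\Tx$. Denote the initial segment from $Z$ to $p$ by $\eta_1$, and the remaining segment by $\eta_2$. Let $T_1,T_2$ be the two (possibly coinciding or self-folded) triangles of $\sop$ adjacent to $\gamma_0$, and let $\eta_{\gamma_0}\in\Tx^*$ denote the dual closed arc of $\gamma_0$; the endpoints of $\eta_{\gamma_0}$ are the decorations inside $T_1$ and $T_2$. By pushing the intersection point $p$ slightly off $\gamma_0$ in the two possible directions along $\eta_{\gamma_0}$, the segment $\eta_1$ extends to a closed arc terminating at one of these two decorations, and similarly $\eta_2$ extends to a closed arc terminating at the other. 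Call these extensions $\alpha$ and $\beta$ (the choice of which direction to push is made so that the smoothing is a braid twist, i.e.\ so that the concatenation resolving the crossing reproduces $\eta$; this determines the sign in $B_\alpha^{\pm 1}$).

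\textbf{Step 2 (Properties of $\alpha,\beta$).} By construction, $\alpha$ and $\beta$ intersect only at their common decoration endpoint inside one of $T_1,T_2$, giving~(1). The local picture at $p$ is precisely the inverse of a half braid twist, so $\eta$ is the image of $\beta$ under $B_\alpha$ or $B_\alpha^{-1}$; this gives~(2). For~(3), for any $\gamma\in\Tx$ with $\gamma\ne\gamma_0$ the crossings of $\gamma$ with $\alpha\cup\beta$ are exactly the crossings of $\gamma$ with $\eta\setminus\{p\}$, hence $\Int(\gamma,\alpha)+\Int(\gamma,\beta)=\Int(\gamma,\eta)$; and for $\gamma_0$ itself one crossing of $\eta$ is removed at $p$ while the two extensions contribute no new essential intersections with $\gamma_0$ (after tightening near $\gamma_0$). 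Both inequalities follow, again after tightening.

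\textbf{Main obstacle and special cases.} The delicate point is when the triangles $T_1,T_2$ involve a vortex, where $\gamma_0$ could be a self-folded edge, a loop edge enclosing a vortex, or one of the two tagged arcs of a once-vortexed digon as in the left picture of \Cref{fig:tr1}. In these configurations one must use the equivalence $\sim$ of \Cref{def:equivsov} and the identification $\mathbb{I}$ of \Cref{lem:s to t}: different representatives of $\Tx$ produce different but $\LL$-equivalent closed arcs, and \Cref{lem:differ} guarantees that the resulting $\alpha,\beta\in\CA(\sov)$ are well defined. In the once-vortexed digon case, the naive decomposition from \Cref{fig:switch2} must be interpreted using the rule $\gamma_0^*=|\eta_1-\eta_2|$ for the dual of a self-folded edge, so that $\alpha$ and $\beta$ really come from the closed arcs $\eta,\eta_1,\eta_2$ depicted there. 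In each such case one checks directly that the braid twist identity from~(2) still holds (using the fact that $B_s^2=\mathrm{L}_\delta$ from \Cref{lem:non-trivial} when the cutting happens adjacent to a vortex), and that~(3) remains valid representative by representative. Once the local analysis at $p$ is exhausted, the inductive step completes the proof.
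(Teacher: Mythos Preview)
Your deduction of (3) in Step~2 does not go through. From
$\Int(\gamma,\alpha)+\Int(\gamma,\beta)=\Int(\gamma,\eta)$
for $\gamma\ne\gamma_0$ you can only conclude $\Int(\gamma,\alpha)\le\Int(\gamma,\eta)$ and $\Int(\gamma,\beta)\le\Int(\gamma,\eta)$, with equality whenever one summand vanishes; the strict inequalities do \emph{not} follow. And this failure is not hypothetical: if $\eta$ oscillates between two regions $T_Z,T'$ that share \emph{two} edges $\gamma_0,\gamma_1$ (for instance the two triangles of an annulus), then cutting at the first crossing $p\in\gamma_0$ produces $\alpha$ homotopic to $\gamma_0^*$, so $\Int(\gamma_1,\alpha)=0$ and hence $\Int(\gamma_1,\beta)=\Int(\gamma_1,\eta)$. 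Condition~(3) fails for $\beta$ at $\gamma_1$, and no amount of tightening fixes this since $\beta$ is already in minimal position.

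The paper's proof is organised precisely around this obstruction. It first invokes the reduction from \cite[Lem.~2.2]{QZ2}: unless $\eta$ is confined to exactly two regions of $\Tx$ sharing exactly two edges, a well-chosen cut (not necessarily the first one) already gives (1)--(3). One is then left with three local configurations --- the two regions form an annulus, they form a once-vortexed digon, or exactly one of them is a digon --- matching the Hex., Sq-2 and Sq-3 cases of \Cref{def:gpd.b}. In each of these, $\alpha$ and $\beta$ must be chosen more cleverly than by cutting at the first intersection; the annulus case is treated in \cite{QZ2} and the two new vortex cases are handled analogously. Your ``Main obstacle'' paragraph anticipates the digon/vortex subtleties but does not address the annulus case, and in any event the cut you propose is exactly the one that fails in all three configurations.

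A small structural remark: the lemma itself is not proved by induction on $N(\eta)$; it \emph{is} the inductive step used afterwards to deduce $\BT(\sov)=\BT(\Tx^*)$. Your opening paragraph conflates the two.
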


\begin{proof}
    Assume that $\eta$ is in a minimal position with $\Tx$. According to the proof of \cite[Lem.~2.2]{QZ2}, we may assume that $\eta$ intersects exactly two regions of $\Tx$ which share exactly two edges. Then there are three cases: (1) the two regions from an annulus, (2) the two regions from a digon, (3) exactly one of them is a digon, corresponding to Cases~6, 4.2$^\circ$ and 4.3$^\circ$ in \Cref{def:gpd.b}. Case (1) is proved in \cite[Lem.~2.2]{QZ2}, while the other two cases are similar.
\end{proof}

A direct consequence of \Cref{lem:cut} is the following generalization of \cite[Prop.~2.3]{QZ2}.

\begin{lemma}
For any tagged triangulation $\Tx$ of $\sov$, we have $\BT(\sov) = \BT(\Tx^*)$ in $\MCG(\sov)$.
\end{lemma}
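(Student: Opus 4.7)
The inclusion $\BT(\Tx^*)\subseteq\BT(\sov)$ is immediate from \Cref{def:BT} and \Cref{def:BT2}, since every dual arc in $\Tx^*$ is a closed arc in $\CA(\sov)$. So the content is in the reverse inclusion: for every $\eta\in\CA(\sov)$, the braid twist $B_\eta$ lies in $\BT(\Tx^*)$. The plan is to argue this by induction on the total geometric intersection number
\[
    I(\eta)\;:=\;\sum_{\gamma\in\Tx}\Int(\gamma,\eta),
\]
computed on a minimal position representative of $\eta$ (which exists after fixing a representative modulo $\LL$; the number is independent of the choice since elements of $\LL$ act trivially on intersection numbers with arcs in $\Tx$).

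For the base case, I would observe that if $\eta\notin\Tx^*$ then $I(\eta)\geq 1$ and the hypothesis of \Cref{lem:cut} applies, so the only way the induction terminates is at arcs $\eta\in\Tx^*$, for which $B_\eta\in\BT(\Tx^*)$ by definition. (If the minimization ever produces an arc that is trivial or boundary parallel, it contributes nothing to $\BT(\sov)$, and can be discarded.)

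For the inductive step, take any $\eta\in\CA(\sov)\setminus\Tx^*$ in minimal position with $\Tx$. By \Cref{lem:cut}, there exist closed arcs $\alpha,\beta\in\CA(\sov)$ with $\eta=B_\alpha^{\pm 1}(\beta)$ and both $I(\alpha)<I(\eta)$ and $I(\beta)<I(\eta)$. Applying the conjugation formula \eqref{eq:Psi} in $\MCG(\sov)$ gives
\[
    B_\eta \;=\; B_{B_\alpha^{\pm 1}(\beta)} \;=\; B_\alpha^{\pm 1}\kg B_\beta\kg B_\alpha^{\mp 1}.
\]
By the inductive hypothesis, both $B_\alpha$ and $B_\beta$ lie in $\BT(\Tx^*)$; hence so does $B_\eta$. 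Since $\BT(\sov)$ is generated by $\{B_\eta\mid\eta\in\CA(\sov)\}$, this completes the equality $\BT(\sov)=\BT(\Tx^*)$.

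The main obstacle I anticipate is ensuring that the cut procedure behaves correctly in the vortex quotient: when $\eta$ enters a once-vortexed digon or a self-folded region (the new cases~4.2$^\circ$ and 4.3$^\circ$ alluded to in the proof of \Cref{lem:cut}), one must check that the candidate arcs $\alpha,\beta$ produced by smoothing intersections are genuine elements of $\CA(\sov)$ (possibly after applying an element of $\LL$) and that the strict inequality on intersection numbers with every arc of $\Tx$ persists. This was handled in the unpunctured case in \cite[Lem.~2.2]{QZ2} by an explicit local smoothing argument; I would follow the same template here, treating the digon and the self-folded triangle cases in turn and appealing to the tagged/signed dictionary (\Cref{lem:s to t}) to guarantee well-definedness modulo $\LL$. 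Once \Cref{lem:cut} is secured in the vortex setting, the induction above closes the argument.
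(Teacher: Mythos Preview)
Your proposal is correct and follows essentially the same approach as the paper: the paper simply declares the lemma to be a direct consequence of \Cref{lem:cut}, generalizing \cite[Prop.~2.3]{QZ2}, and your argument is precisely an unpacking of that induction on intersection number. Your closing concern about the new digon and self-folded cases is already absorbed into the statement and proof of \Cref{lem:cut} in the paper, so once that lemma is granted no further work is needed here.
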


\begin{lemma}\label{lem:neg bt}
Let $\Tx\in\EGx(\sov)$ and tagged arc $\gamma\in\Tx$ with dual closed arc $\eta\in\Tx^*$.
Then the composition of forward flips of $\Tx$ with respect to $\gamma$ and $\gamma^\sharp$
is the negative braid twist $\bt{\eta}(\Tx)$ of $\Tx$ in $\EGx(\sov)$.
\end{lemma}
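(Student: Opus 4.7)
The plan is to treat this as a purely local statement. Both sides only modify arcs of $\Tx$ whose supports meet the neighbourhood $N(\gamma)$: the two successive forward flips $\mu_{\gamma^\sharp}\circ\mu_\gamma$ by definition replace $\gamma$ then $\gamma^\sharp$ and leave arcs outside $N(\gamma)$ fixed; similarly, by Lemma~\ref{lem:cut} and the construction of $\Tx^*$ from $\Tx$, the dual $\eta$ lies in (the interior of) $N(\gamma)$, so $\bt{\eta}$ is supported in an arbitrarily thin tubular neighbourhood of $\eta$ and acts trivially on arcs of $\Tx$ not meeting $N(\gamma)$. Therefore it suffices to identify the actions of the two operations on the finitely many arcs of $\Tx$ that enter $N(\gamma)$.

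Next I would enumerate the combinatorial types of $N(\gamma)$ using Lemma~\ref{lem:dualtri} together with the puzzle-piece classification of Figure~\ref{fig:type IV}. Up to symmetry the relevant cases are: (i) $N(\gamma)$ is a genuine quadrilateral bounded by four distinct (tagged) arcs and no puncture in its interior; (ii) $N(\gamma)$ is a once-vortexed digon paired with a neighbouring triangle, so that $\gamma$ is one of the two tagged edges of the digon (this is the situation drawn in Figure~\ref{fig:tr1}, and is what comes from a type II puzzle piece together with its mate); (iii) $N(\gamma)$ contains a self-folded triangle attached to another triangle, so $\gamma$ is either the loop edge or the self-folded edge (type III/IV). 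Under the canonical identification $\mathbb{I}$ of Lemma~\ref{lem:s to t}, cases (ii) and (iii) are tagged- and signed-incarnations of the same underlying configuration.

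In case (i) this is the classical fact that two consecutive forward flips on the diagonal of a quadrilateral realise the 180-degree rotation of the quadrilateral, which by Definition~\ref{def:arcs} equals $\bt{\eta}$ applied to each boundary arc (the braid half-twist along the diagonal $\eta$ of the dual). In case (ii) one reads the identity directly off Figure~\ref{fig:tr1}: starting from the left-hand tagged triangulation $\Tx$, applying $\mu_\gamma$ produces the middle picture (the l.a.t.\ rotation skips the vortex inside the digon, per Definition~\ref{def:l.a.}, which switches the tagging at that vortex as drawn), and applying $\mu_{\gamma^\sharp}$ then produces the right-hand picture, which visibly agrees with $\bt{\eta}(\Tx)$ after comparison with the negative-half-twist picture in Figure~\ref{fig:T}. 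Case (iii) reduces to case (ii) via $\mathbb{I}$ and Construction~\ref{con:s to t}; alternatively one can verify it directly using the signed-flip pictures in Figures~\ref{fig:S2.lift}/\ref{fig:S2.lift2}.

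The main technical obstacle I anticipate is tracking the tagging/sign data through the two successive flips in case (ii): when the interior of the digon contains a vortex $V$, the sign of $V$ (equivalently, the tagging of the two digon arcs at $V$) toggles under each l.a.t.\ rotation by Definition~\ref{def:l.a.}, so one must check that after two toggles the net tagging of every arc in $N(\gamma)$ matches what $\bt{\eta}$ does globally — in particular that $\bt{\eta}$ interchanges the two tagged arcs of the once-vortexed digon with the correct taggings. This is most cleanly done by invoking Lemma~\ref{lem:differ} to choose a convenient representative in $\sop$, applying $\Bt{\eta}^{-1}$ explicitly on that representative using the definition from Figure~\ref{fig:T}, and then passing back to $\sov$ via the identification \eqref{eq:forgetx}.
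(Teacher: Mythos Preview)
Your approach is exactly the paper's: reduce to the local neighbourhood $N(\gamma)$ and verify case by case. The paper, however, organises the cases by whether $\gamma$ or $\gamma^\sharp$ is an edge of a once-vortexed digon: when neither is, the computation is the classical unpunctured one; the remaining two possibilities are the two rows of Figure~\ref{fig:tr2}. Your enumeration covers the classical case (your (i)) and the case where $\gamma$ itself is a digon edge (your (ii), corresponding to the second row of Figure~\ref{fig:tr2}), but omits the case where $\gamma$ is not a digon edge while $\gamma^\sharp$ is (the first row). Concretely this occurs when $\gamma$ has an endpoint at a vortex $V$ at which exactly two arcs of $\Tx$ meet, both carrying the \emph{same} tagging: then $\gamma$ is not a digon edge, yet the two sides of the quadrilateral $N(\gamma)$ adjacent to $V$ coincide (both equal the other arc at $V$), so this also falls outside your (i) as stated. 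You need this configuration both as a starting case in its own right and because the first forward flip in your case (ii) lands you precisely here for the second flip.

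Two smaller points: your appeal to Lemma~\ref{lem:cut} is misplaced (that lemma concerns closed arcs \emph{not} in $\Tx^*$, whereas here $\eta\in\Tx^*$ lies in $N(\gamma)$ directly by construction of the dual); and your case (iii) does not arise in $\EGx(\sov)$, since self-folded triangles live only in the signed picture and are replaced by digons under $\mathbb{I}$, so (iii) is already subsumed by (ii).
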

\begin{proof}
There are several cases.
\begin{itemize}
    \item If neither $\gamma$ nor $\gamma^\sharp$ is an edge of a digon,
    this reduces locally to the unpunctured case, which is well-known (see \cite[Fig.~]{QQ} or \cite[Fig.~3.14]{KQ2}).
    \item If $\gamma'$ is an edge of a digon, see the first row of Fig.~\ref{fig:tr2}.
    \item If $\gamma$ is an edge of a digon, see the second row of Fig.~\ref{fig:tr2}.\qedhere
\end{itemize}
\end{proof}

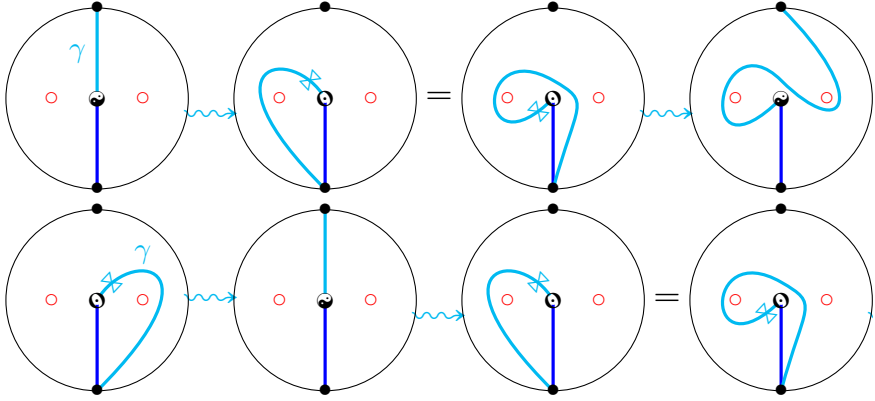
\begin{figure}[htpb]\centering
\begin{tikzpicture}[xscale=1.2,yscale=1.2]\clip(-1,-1.1) rectangle (8.5,1.1);
\draw[\ql](1.25,-.2)node{\Large{$\rightsquigarrow$}}
    (6.25,-.2)node{\Large{$\rightsquigarrow$}};
\draw[](0,0)circle(1);
\draw[\ql,very thick](0,1)tonode[left]{$\gamma$}(0,0);
\draw[blue,very thick](0,0)to(0,-1);
\draw(0,1)\nn(0,-1)\nn;
\draw(0,0)\vot;
\draw[red](.5,0)\ww(-.5,0)\ww;

\begin{scope}[shift={(2.5,0)}]
\draw[\ql](-.16,.2)node[rotate=43]{$\bowtie$};
\draw[blue,very thick](0,0)to(0,-1);
\draw[\ql,very thick](0,0)
    .. controls +(120:.7) and +(135:2) ..(0,-1);
\draw(1.25,0)node{\Large{$=$}};
\draw(0,0)circle(1)(0,1)\nn(0,-1)\nn;
\draw[red](.5,0)\ww(-.5,0)\ww;
\draw(0,0)\dpole;
\end{scope}
\begin{scope}[shift={(5,0)}]
\draw[blue,very thick](0,0)to(0,-1);
\draw[\ql](-.16,-.14)node[rotate=-43]{$\bowtie$};
\draw[\ql,very thick](0,-1)
    .. controls +(75:1) and +(-30:.5) ..(0,0.2)
    .. controls +(150:1) and +(-135:1.2) ..(0,0);
\draw(0,0)circle(1)(0,1)\nn(0,-1)\nn;
\draw[red](.5,0)\ww(-.5,0)\ww;
\draw(0,0)\dpole;
\end{scope}

\begin{scope}[shift={(7.5,0)}]
\draw[blue,very thick](0,0)to(0,-1);
\draw[\ql,very thick](0,1)
    .. controls +(-45:1.4) and +(-45:1) ..(0,0.2)
    .. controls +(135:1) and +(-135:1.4) ..(0,0);
\draw(0,0)circle(1)(0,1)\nn(0,-1)\nn;
\draw[red](.5,0)\ww(-.5,0)\ww;
\draw(0,0)\vot;
\end{scope}
\end{tikzpicture}
\begin{tikzpicture}[xscale=1.2,yscale=1.2]\clip(-1,-1.1) rectangle (8.5,1.1);
\begin{scope}[shift={(0,0)}]
\draw[\ql](.16,.2)node[rotate=-43]{$\bowtie$};
\draw[blue,very thick](0,0)to(0,-1);
\draw[\ql,very thick](0,0)
    .. controls +(60:.7) and +(45:2) ..(0,-1);
\draw[\ql] (.5,.5)node{$\gamma$};
\draw[\ql](1.25,0)node{\Large{$\rightsquigarrow$}};
\draw(0,0)circle(1)(0,1)\nn(0,-1)\nn;
\draw[red](.5,0)\ww(-.5,0)\ww;
\draw(0,0)\dpole;
\end{scope}
\begin{scope}[shift={(2.5,0)}]
\draw[\ql](1.25,-.2)node{\Large{$\rightsquigarrow$}}
    (6.25,-.2)node{\Large{$\rightsquigarrow$}};
\draw[](0,0)circle(1);
\draw[\ql,very thick](0,1)to(0,0);
\draw[blue,very thick](0,0)to(0,-1);
\draw(0,1)\nn(0,-1)\nn;
\draw(0,0)\vot;
\draw[red](.5,0)\ww(-.5,0)\ww;
\end{scope}
\begin{scope}[shift={(5,0)}]
\draw[\ql](-.16,.2)node[rotate=43]{$\bowtie$};
\draw[blue,very thick](0,0)to(0,-1);
\draw[\ql,very thick](0,0)
    .. controls +(120:.7) and +(135:2) ..(0,-1);
\draw(1.25,0)node{\Large{$=$}};
\draw(0,0)circle(1)(0,1)\nn(0,-1)\nn;
\draw[red](.5,0)\ww(-.5,0)\ww;
\draw(0,0)\dpole;
\end{scope}
\begin{scope}[shift={(7.5,0)}]
\draw[blue,very thick](0,0)to(0,-1);
\draw[\ql](-.16,-.14)node[rotate=-43]{$\bowtie$};
\draw[\ql,very thick](0,-1) .. controls +(75:1) and +(-30:.5) ..(0,0.2) .. controls +(150:1) and +(-135:1.2) ..(0,0);
\draw(0,0)circle(1)(0,1)\nn(0,-1)\nn;
\draw[red](.5,0)\ww(-.5,0)\ww;
\draw(0,0)\dpole;
\end{scope}
\end{tikzpicture}
\caption{Composition of forward flips as a negative braid twist, up to $\LL$}
\label{fig:tr2}
\end{figure}

%=========================================================
\subsection{The surjective map}\label{subsec:pre dual}\
%=========================================================

On one hand, we provide a finite presentation of $\BT(\sov)$ with respect to a specific dual triangulation.

Let $\T_0 = {\gamma_i}$ be the tagged triangulation of $\sop$ shown in \Cref{fig:tri}
with dual $\T_0^* = {\eta_i}$.
Note that the red bullet there means possibly a collection of decorations (corresponding to multiple marked points on a boundary component, cf. right picture in \cite[Fig.~13]{QZ2}).
Observe that $\RT^*$ coincides with the set of generators of $\BT(\sov)$ described in \Cref{prop:alt pre}, except that the generator
$$
    x_{2g+b-1+\sum_{i=2}^b(m_i-1)} \quad \text{($=a_0$ shown in \Cref{fig:tri}) }
$$
is replaced by $a'=a_0^{a_1\kg\cdots\kg a_p}$.
The relations in \Cref{prop:alt pre} can then be directly translated into relations for $\BT(\T_0^*)$, which retain the same form.
Therefore, this yields a finite presentation of $\BT(\T_0^*)$.

\begin{figure}[htpb]
    \centering
    \includegraphics[width=11cm]{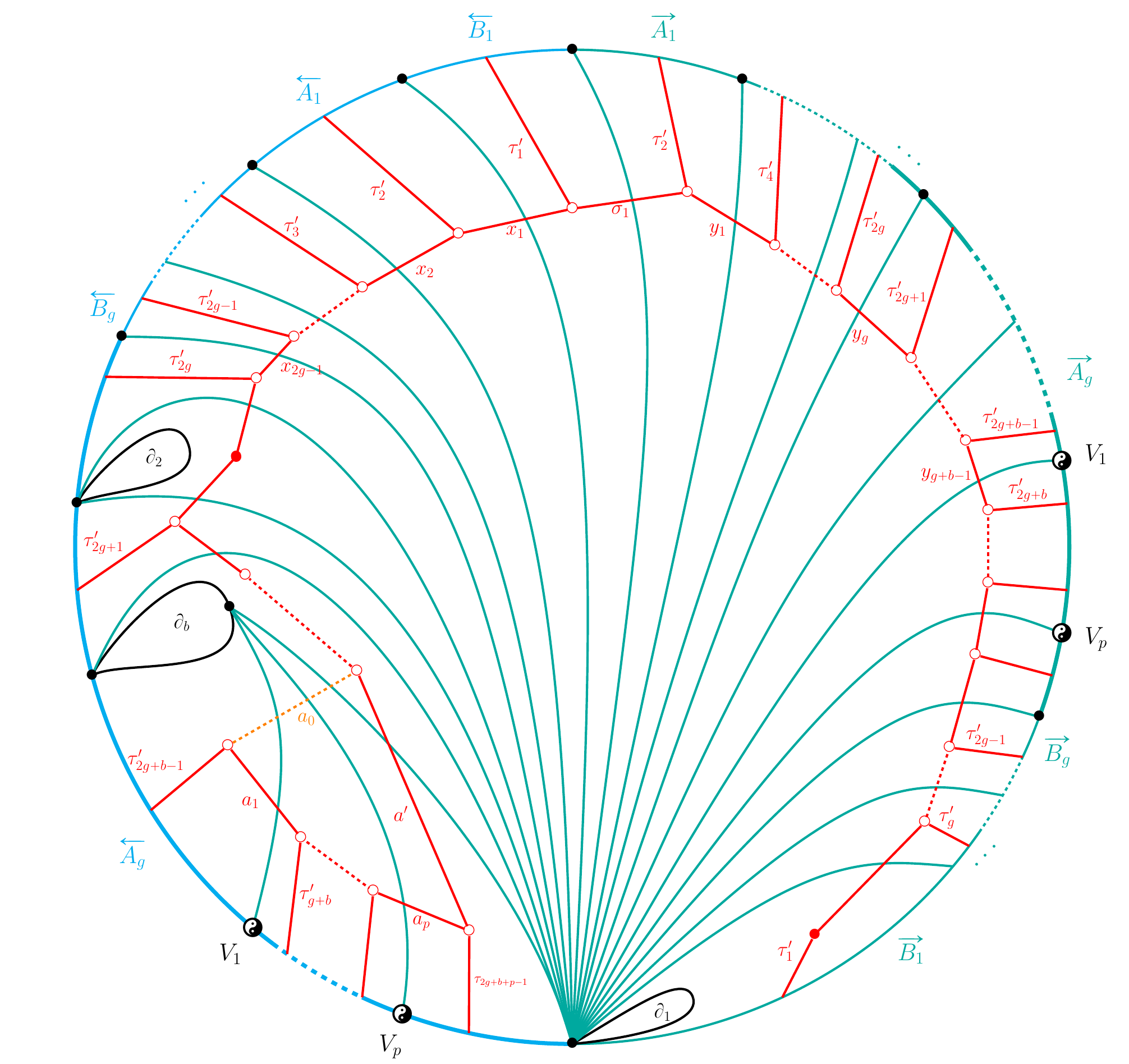}
    \caption{A triangulation of $\sop$, where $a_j=x_{2g+b-1+\sum_{i=2}^b(m_i-1)+j-1}$.}
    \label{fig:tri}
\end{figure}

On the other hand,
we consider the local twists/standard generators $\{t_i=t_{\gamma_i}\}$ of the cluster braid group $\CBr(\RT_0)$ for $\RT_0=F_*^\P(\T_0)$,
introduced in \Cref{sec:ceg MSv}.

\begin{lemma}\label{prop:cbg}
    The local twists $t_i$'s satisfy the relations in \Cref{prop:alt pre}, i.e., for any $\eta_i,\eta_j,\eta_k,\eta_l\in \T^*$,
    \begin{itemize}
        \item if $\eta_i,\eta_j$ are disjoint, then the commutation relation $\Co(t_i,t_j)$ holds,
        \item if $\eta_i,\eta_j$ are disjoint except sharing a common endpoint, then the braid relation $\Br(t_i,t_j)$ holds,
        \item if $\eta_i,\eta_j,\eta_k$ are disjoint except for sharing a common endpoint, and they are in clockwise order at that point, then the triangle relation $\on{Tr}(t_i,t_j,t_k)$ holds,
        \item if $\eta_i,\eta_j,\eta_k,\eta_l$ form a rectangle in the anti-clockwise order with exactly one puncture in the interior, then the rectangle relation $\on{Rec}(t_i,t_j,t_k,t_l)$ holds.
    \end{itemize}
\end{lemma}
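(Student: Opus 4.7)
The plan is to verify each of the four relations by cases, exploiting the correspondence between the cluster-groupoid description of the local twist $t_i$ (as the 2-cycle at $\T_0$ coming from the forward flip of $\gamma_i$ followed by its successor) and the mapping-class-group description, under which $t_i$ corresponds via Lemma~\ref{lem:neg bt} to the negative braid twist $\bt{\eta_i}$. In particular, each relation among the $t_i$'s is witnessed by a closed loop in $\EG(\sov)$ whose reduction via the SPH relations of Definition~\ref{def:gpds} produces the desired word identity.

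The commutation and braid cases are essentially automatic. By Lemma~\ref{lem:dualtri}, disjointness of $\eta_i,\eta_j$ translates into one of the three no-arrow configurations Sq-1, Sq-2, Sq-3 of Definition~\ref{def:gpds}, while sharing a single common endpoint translates into the one-arrow Pen configuration. Lemma~\ref{lem:cobr} then yields $\Co(t_i,t_j)$ and $\Br(t_i,t_j)$ respectively, and the pairwise braid relations needed for the two cyclic cases are obtained as a byproduct.

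For the triangle relation, the three dual arcs $\eta_i,\eta_j,\eta_k$ sharing a common endpoint in clockwise order force $\gamma_i,\gamma_j,\gamma_k$ to be the edges of a common non-self-folded triangle $\Delta$ of $\T_0$, with the shared endpoint of the $\eta$'s being the decoration inside $\Delta$. The triangle $\Delta$ contributes a 3-cycle $i\to j\to k\to i$ in $Q_{\T_0}$ and a 3-cycle term to the potential $W_{\T_0}$. To extract $\on{Tr}(t_i,t_j,t_k)$, I will execute the three consecutive forward flips inside $\Delta$, apply the Pen relation of Definition~\ref{def:gpds} at each successive pair, and rewrite the resulting cyclic word using the pairwise braid relations to obtain the three equal expressions $a\kg b\kg c\kg a=b\kg c\kg a\kg b=c\kg a\kg b\kg c$.

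The rectangle relation is the only genuinely new case. Here the four dual arcs enclose a vortex $V$, so $\gamma_i,\gamma_j,\gamma_k,\gamma_l$ are the four tagged arcs of $\T_0$ participating in the 4-cycle at $V$ in $Q_{\T_0}$ coming from the vortex term of the potential. I will construct a closed loop in $\EG(\sov)$ by forward-flipping the four arcs cyclically around $V$. Crucially, this sequence does \emph{not} close in the unquotiented graph $\EG^\pm(\sop)$; it closes in $\EG(\sov)$ precisely because of the $\approx$-equivalence of Definition~\ref{def:equivsov}, which is the topological incarnation of $\Bt{s}^4=1$. Translating the closed loop into a product of 2-cycles via Lemma~\ref{lem:neg bt}, and rewriting it with the Hex and Pen relations of Definition~\ref{def:gpds}, yields $\on{Rec}(t_i,t_j,t_k,t_l)$.

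The main obstacle is precisely this rectangle case: the cyclic loop around $V$ passes through signed triangulations in \emph{different} $\approx$-equivalence classes of $\EG^\pm(\sop)$, so one must carefully track the sign switches at $V$ and the compensating flips of the self-folded edges enclosing $V$. Once this bookkeeping is done so that each forward flip is matched with the correct representative of its $\approx$-class, the identification of the loop with a cyclic word of local twists in the order dictated by the anticlockwise rectangle is forced, and the rank-4 cyclic identity $\on{Rec}$ follows.
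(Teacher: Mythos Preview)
Your treatment of the commutation and braid cases is correct and matches the paper's: both follow immediately from Lemma~\ref{lem:cobr} via the dictionary of Lemma~\ref{lem:dualtri}.

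For the triangle and rectangle cases, however, your proposed mechanism is both vaguer and more complicated than needed, and in the rectangle case it misses the actual reason the relation holds. The paper's argument is uniform and much shorter: in each case one performs one or two flips to reach a triangulation where a \emph{commutation} relation holds, and then conjugates back using the change-of-generators formula \cite[Prop.~2.8]{KQ2}. Concretely, for the triangle configuration one flips $\gamma_i$; in the mutated quiver the vertices $j$ and $k$ become non-adjacent (the 3-cycle at $i,j,k$ produces a cancelling 2-cycle between $j$ and $k$), so $\Co(t'_j,t'_k)$ holds by Lemma~\ref{lem:cobr}, and conjugating gives $\Co(t_j,t_k^{t_i})$, which together with the braid relations is exactly $\on{Tr}(t_i,t_j,t_k)$. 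Your plan of ``three consecutive forward flips inside $\Delta$'' is ill-posed (after the first flip $\Delta$ no longer exists) and does not by itself produce a relation among the local twists.

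The rectangle case is where your proposal has a genuine gap. The crucial point, which your approach does not identify, is that after flipping the two \emph{opposite} sides $\gamma_i$ and $\gamma_k$, the remaining arcs $\gamma_j,\gamma_l$ bound a once-vortexed digon. In that configuration there is \emph{no arrow} between $j$ and $l$ in $Q_{\T_0''}$ (this is the Sq-2/Sq-3 case of Lemma~\ref{lem:no ar}, specific to the vortex setting), so Lemma~\ref{lem:cobr} gives $\Co(t''_j,t''_l)$. Conjugating back yields $\Co(t_j^{t_k},t_l^{t_i})$, which with the pairwise braid relations is $\on{Rec}(t_i,t_j,t_k,t_l)$. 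Your plan to ``forward-flip the four arcs cyclically around $V$'' and invoke Hex relations does not work: there is no annulus here, so Hex is irrelevant, and a cyclic sequence of single flips is not a product of local twists, so Lemma~\ref{lem:neg bt} does not translate it into a word in the $t_i$'s without further argument. The sign-tracking and $\approx$-bookkeeping you anticipate are unnecessary for this lemma; the whole proof takes place in $\eg(\smv)$ using only Lemma~\ref{lem:cobr} and conjugation.
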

\begin{proof}
The first two cases are \Cref{lem:cobr}.

In the third case, we flip $\gamma_i$ to get a new triangulation $\T'_0$,
see the first row of the left picture of \Cref{fig:flip}.
Since $\eta'_j$ and $\eta'_k$ are disjoint, by applying \Cref{lem:cobr} to $\T'_0$,
the commutation relation $\Co(t'_j,t'_k)$ holds.
Taking conjugation, by \cite[Prop.~2.8]{KQ2}, the braid relation $\Co({t_j},{t_k}^{t_i})$ holds, which implies the triangle relation $\on{Tr}(t_i,t_j,t_k)$.

In the fourth case, we flip $\T_0$ at $\gamma_i$ and $\gamma_k$ successively to get a new triangulation $\T_0''$. Then $\eta''_j$ and $\eta''_l$ form a digon with one vortex in the interior. By applying \Cref{lem:cobr} to $\T_0''$, the commutation relation $\Co(t''_j,t''_l)$ holds. Taking conjugation again, the braid relation $\Co({t_j}^{t_k},{t_l}^{t_i})$ holds, which implies the rectangle relation $\on{Rec}(t_i,t_j,t_k,t_l)$.
\end{proof}

\begin{corollary}\label{cor:surj}
There is a canonical surjective map
\begin{gather}\label{eq:can}
    c_{\Tx}\colon \BT(\Tx^*) \to \CBr( F_*^\P(\Tx^*) ),
\end{gather}
which sends the standard generators to the standard ones.
\end{corollary}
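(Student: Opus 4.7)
The plan is to construct $c_{\Tx}$ by invoking the universal property of group presentations, and to verify surjectivity from the definition of the cluster braid group. The construction has a clean base case at the reference triangulation $\T_0$, which can then be transported to an arbitrary $\Tx$ via the exchange graph.

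First I would focus on the triangulation $\T_0$ of Figure~\ref{fig:tri}. The discussion preceding the corollary shows that, up to replacing the generator $a_0$ by its conjugate $a' = a_0^{a_1 \kg \cdots \kg a_p}$, the dual closed arcs in $\T_0^*$ provide exactly the generating set for $\BT(\sov) = \BT(\T_0^*)$ described in \Cref{prop:alt pre}, with relations of four geometric types (commutation, braid, triangle, rectangle). \Cref{prop:cbg} asserts that the local twists $t_{\gamma_i} \in \CBr(F_*^\P(\T_0))$ satisfy precisely these same four families of relations whenever the dual arcs $\eta_i \in \T_0^*$ are in the required configurations. By von Dyck's theorem, the assignment $\Bt{\eta_i} \mapsto t_{\gamma_i}$ extends uniquely to a group homomorphism
\[
    c_{\T_0}\colon \BT(\T_0^*) \longrightarrow \CBr(F_*^\P(\T_0)).
\]
Surjectivity is immediate, since by \Cref{cor:FST} we have $\CBr(F_*^\P(\T_0)) = \pi_1(\eg(\smv), F_*^\P(\T_0))$, which is generated by the local twists.

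For an arbitrary tagged triangulation $\Tx$, I would transport the construction along the exchange graph. Since $\eg(\smv)$ is connected, any path from $F_*^\P(\T_0)$ to $F_*^\P(\Tx)$ induces an isomorphism $\CBr(F_*^\P(\Tx)) \cong \CBr(F_*^\P(\T_0))$ via change of base point, while on the braid twist side we already have $\BT(\Tx^*) = \BT(\sov) = \BT(\T_0^*)$ by the lemma following \Cref{lem:cut}. Defining $c_{\Tx}$ as the composite then immediately sends the standard braid twist generators of $\BT(\Tx^*)$ to the standard local twists of $\CBr(F_*^\P(\Tx))$, and surjectivity is preserved.

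The main obstacle is the compatibility that makes $c_{\Tx}$ genuinely canonical: namely, showing that the two transport procedures (flipping dual closed arcs on $\sov$ versus mutating local twists in $\CBr$) agree on the standard generators under a single edge of $\eg(\smv)$. On the braid twist side, this is governed by \Cref{lem:neg bt}, which identifies the composition of forward flips at $\gamma$ and $\gamma^\sharp$ with the negative braid twist $\bt{\eta}$. On the cluster side, the analogous mutation formula for local twists is \cite[Prop.~2.8]{KQ2}. Both formulas describe the same conjugation rule on standard generators, and in the signed setting the only new case to check is the self-folded-triangle flip, which is handled locally in \Cref{fig:tr2}. Once this compatibility is verified, $c_{\Tx}$ is well-defined independently of the choice of path, giving the canonical surjective map claimed.
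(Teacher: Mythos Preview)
Your proposal is correct and follows essentially the same approach as the paper's own proof: establish the base case at $\T_0$ via \Cref{prop:cbg} and the presentation in \Cref{prop:alt pre}, then propagate to arbitrary $\Tx$ by checking that the conjugation formula for braid twists under a flip (\Cref{lem:neg bt}) matches the mutation formula for local twists from \cite[Prop.~2.8]{KQ2}. The paper's argument is terser but identical in substance.
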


\begin{proof}
For $\Tx=\T_0$, the claim follows from \Cref{prop:cbg}.
Then we only need to show that the claim still holds after a (forward) flip.
This follows directly from the conjugating formula for change of generators,
i.e. \cite[Prop.~2.8]{KQ2} for cluster braid groups that have been used above,
and the corresponding formula for braid twist group by direct checking.
\end{proof}

%=========================================================
\subsection{The Galois covering}
%=========================================================

\begin{figure}[htpb]\centering
    \includegraphics[width=11cm]{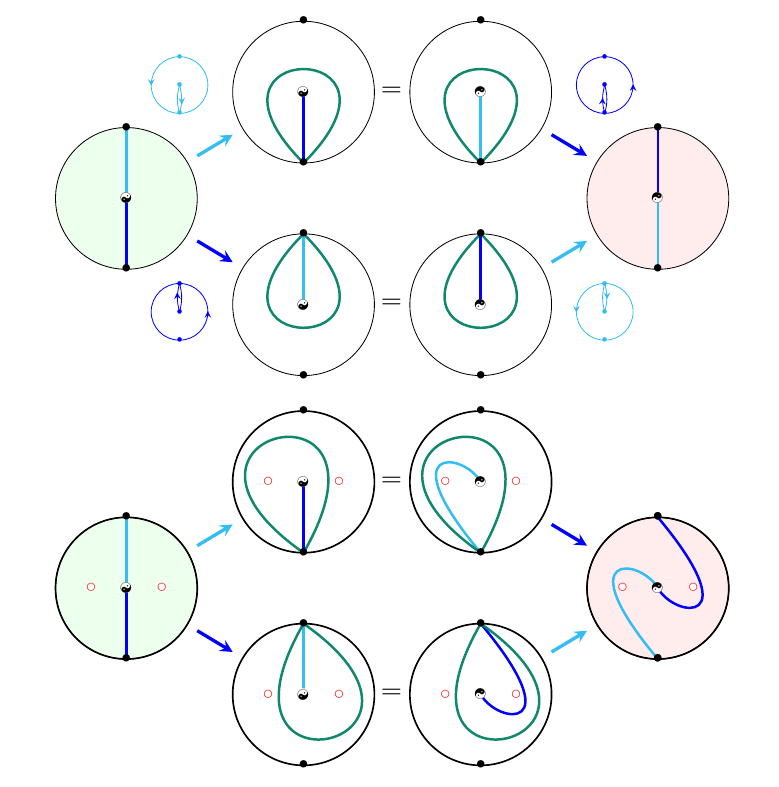}
\caption{Type $4.2^\circ$ square relations and its lift}
\label{fig:S2.lift}
\end{figure}

\begin{figure}[htpb]\centering
    \includegraphics[width=11cm]{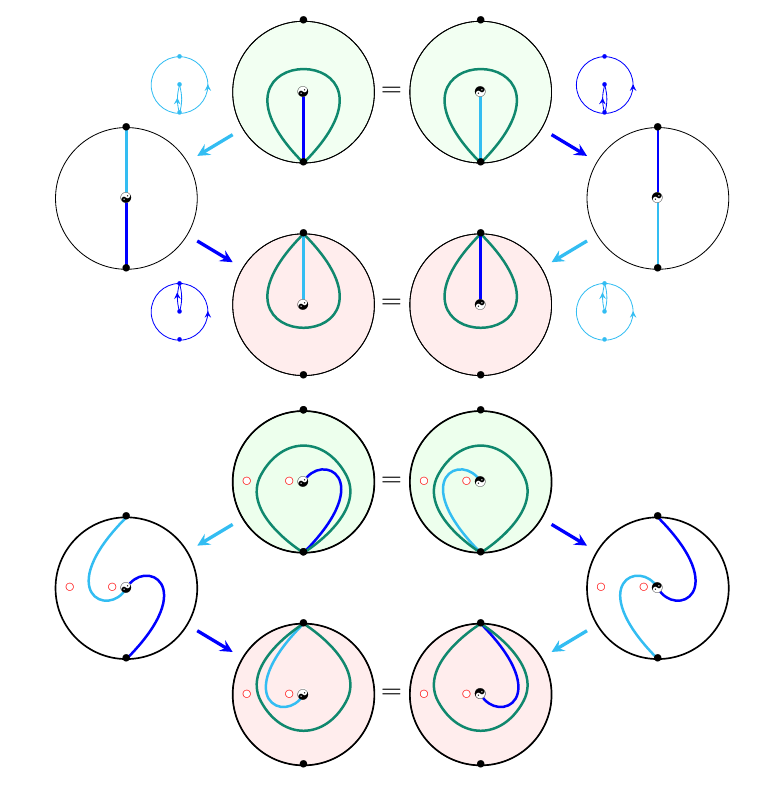}
\caption{Type $4.3^\circ$ square relations and its lift}
\label{fig:S2.lift2}
\end{figure}

\begin{definition}\label{def:gpd.b}
The \emph{exchange groupoid} $\eg(\sov)$
is the quotient of the path groupoid of $\EG(\sov)$ by the following
relations, starting at any triangulation $\T$ in $\EG(\sov)$:
\begin{itemize}
\item[Sq-1.] If two open arcs are not adjacent
  in any triangle of $\T$, then
  the forward flips with respect to them form a square in $\EG(\sov)$, as in \cite[Fig.~12]{KQ2},
  and we impose the commuting square relation.
\item[Sq-2.] If two open arcs are adjacent
  in two triangles of $\T$ that form a digon, then
  the forward flips with respect to them form a square in $\EG(\sov)$,
  as in the lower pictures of Figure~\ref{fig:S2.lift},
  and we impose the commuting square relation.
  Note that in the figure, the green/red disks are the source/sink of the square.
\item[Sq-3.] If two open arcs are adjacent
  in two triangles of $\T$ that form a self-folded triangle, then
  the forward flips with respect to them form a square in $\EG(\sov)$,
  as in the lower pictures of Figure~\ref{fig:S2.lift2},
  and we impose the commuting square relation.
  Note that in the figure, the green/red disks are the source/sink of the square.

\item[Pen.] If two open arcs are adjacent
  in only one triangle of $\T$, then they induce an oriented pentagon in $\EG(\sov)$, as in \cite[Fig.~13]{KQ2},
  and we impose the corresponding commuting pentagon relation;
\item[Hex.] If two open arcs are adjacent
  in two triangles of $\T$ that form an annulus,
  then they induce an oriented hexagon in $\EG(\sov)$, as in \cite[Fig.~14]{KQ2},
  and we impose the corresponding commuting hexagon relation.
\end{itemize}
\end{definition}

By checking that $F_*^\P$ in \eqref{eq:forgetv} preserves relations,
we obtain an upgraded groupoid version.

\begin{lemma}\label{lem:Kr}
The $\SBr(\sov)$-covering \eqref{eq:forgetv} upgrades to a $\SBr(\sov)$-covering functor $F_*^\P\colon\eg(\sov)\to\eg(\surfv)$.
\end{lemma}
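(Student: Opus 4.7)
The plan is to verify two things: first, that the graph-level $\SBr(\sov)$-covering $F_*^\P$ from \eqref{eq:forgetv} descends to a well-defined functor of groupoids $\eg(\sov)\to\eg(\surfv)$; and second, that this functor inherits the covering property at the level of 2-cells (relations).

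For the first point, we need to check that each of the five types of defining relations of $\eg(\sov)$ in Definition~\ref{def:gpd.b} (Sq-1, Sq-2, Sq-3, Pen., Hex.) maps to a corresponding defining relation of $\eg(\surfv)$ in Definition~\ref{def:gpds}. This is essentially a book-keeping task: each relation is triggered by a local configuration of two open arcs inside a single triangulation, and the forgetful map $F^\sun$ preserves this local data (the types of triangles containing the two arcs, the adjacency pattern, and the associated puzzle piece). For Sq-1, Pen., and Hex., the configurations involve only non-self-folded triangles, so the projection of the square/pentagon/hexagon in $\EG(\sov)$ to $\EG(\surfv)$ is immediate. For Sq-2 and Sq-3, one reads off from the lower versus upper pictures in Figures~\ref{fig:S2.lift} and~\ref{fig:S2.lift2} that the decorated squares project precisely to the undecorated squares of the same type.

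For the second point, since $F_*^\P$ is a $\SBr(\sov)$-covering of graphs by Theorem~\ref{thm:cover}, we only need to show that each defining relation in $\eg(\surfv)$ lifts, uniquely based at any chosen preimage of its basepoint, to a defining relation in $\eg(\sov)$ of the same type. Given a downstairs relation determined by a local two-arc configuration and a lift of the basepoint, the two arcs lift uniquely to open arcs on the decorated surface (there is a unique way to insert decorations consistent with the lifted basepoint triangulation), and hence the square/pentagon/hexagon lifts uniquely. This gives the covering property at the 2-cell level.

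The main obstacle is the careful treatment of the Sq-2 and Sq-3 cases, where the equivalence relation $\approx$ on signed triangulations of $\sov$ (see Definition~\ref{def:equivsov}), introduced to absorb the $\LL$-action along self-folded configurations, must be reconciled with the fact that the corresponding square in $\EG(\surfv)$ closes only after passing to $\EG^\pm(\smp)/\sim$. However, this is exactly what the quotient \eqref{eq:Lrm} was designed to ensure: the two vertical edges of the downstairs square correspond under $\sim$ to the two composite paths in $\EG^\pm(\sop)$ that define the upstairs square, so the upstairs square closes precisely when the downstairs one does. Once this local reconciliation is granted, the rest is routine verification.
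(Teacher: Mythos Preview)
Your proposal is correct and follows the same approach as the paper, which merely says that the lemma follows ``by checking that $F_*^\P$ preserves relations.'' You have simply expanded this one-line argument into the explicit type-by-type verification that the five local relation shapes correspond under $F_*^\P$, with the appropriate care for the Sq-2/Sq-3 cases via Figures~\ref{fig:S2.lift} and~\ref{fig:S2.lift2}; your additional remark that relations lift uniquely (needed for the covering property, not just well-definedness) is a point the paper leaves implicit but which is indeed part of the same local check.
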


Fix an initial triangulation $\Te$ in $\EG(\sov)$.
Let $\EGT(\sov)$ and $\egt(\sov)$ be the corresponding connected components in
$\EG(\sov)$ and $\eg(\sov)$, respectively.

\begin{theorem}\label{thm:BT}
The restricted covering $F_*^\P \colon \egt(\sov)\to \eg(\surfv)$
is a Galois $\BT(\sov)$-covering.
\end{theorem}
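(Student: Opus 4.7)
The strategy is to identify the deck transformation group of the restricted covering, via monodromy, with $\BT(\sov)$. By \Cref{lem:Kr}, $F_*^\P$ is a $\SBr(\sov)$-covering of groupoids, so the natural $\SBr(\sov)$-action on $\EG(\sov)$ restricts on $\egt(\sov)$ to an action of the subgroup
\begin{equation*}
    K\;:=\;\{g\in\SBr(\sov)\mid g\cdot\EGT(\sov)=\EGT(\sov)\}\leq\SBr(\sov)
\end{equation*}
by deck transformations. The theorem reduces to showing $K=\BT(\sov)$ and that $K$ acts freely and transitively on fibers of $p:=F_*^\P|_{\egt(\sov)}$.

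I first verify that $p\colon\egt(\sov)\to\eg(\surfv)$ is itself a covering functor. Surjectivity on objects follows by lifting any flip-path from $\RTe$ to $\RTx\in\EG(\surfv)$ (which is connected) starting at $\Te\in\EGT(\sov)$: connectedness keeps the lift inside $\EGT(\sov)$. The local covering property descends from \Cref{lem:Kr}. Any deck transformation of $p$ extends uniquely to a deck transformation of the full cover, hence to an element of $\SBr(\sov)$ preserving $\EGT(\sov)$, so $\on{Deck}(p)=K$. The fiber $p^{-1}(\RTe)$ equals $\SBr(\sov)\cdot\Te\cap\EGT(\sov)=K\cdot\Te$, a $K$-torsor by freeness of the $\SBr(\sov)$-action. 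Hence $K$ acts freely and transitively on each fiber of $p$, and $p$ is Galois with deck group $K$.

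The Galois structure supplies a surjective monodromy homomorphism $\mu\colon\pi_1(\eg(\surfv),\RTe)\twoheadrightarrow K$. By \Cref{cor:FST}, $\pi_1(\eg(\surfv),\RTe)=\CBr(\RTe)$ is generated by the local twists $\{t_{\gamma_i}\mid\gamma_i\in\Te\}$. Each $t_{\gamma_i}$ is represented by the 2-cycle in $\eg(\surfv)$ consisting of the forward flip at $\gamma_i$ followed by the forward flip at $\gamma_i^\sharp$. Lifting this 2-cycle in $\eg(\sov)$ starting at $\Te$, both steps lift uniquely to forward flips by the covering property, and \Cref{lem:neg bt} identifies the endpoint as $B_{\eta_i}^{-1}(\Te)$, where $\eta_i=\gamma_i^*\in\Te^*$. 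Therefore $\mu(t_{\gamma_i})=B_{\eta_i}^{-1}$, and surjectivity of $\mu$ gives
\begin{equation*}
    K\;=\;\bigl\langle B_{\eta_i}^{-1}\mid\gamma_i\in\Te\bigr\rangle\;=\;\BT(\Te^*)\;=\;\BT(\sov),
\end{equation*}
where the last equality uses the lemma immediately preceding \Cref{lem:neg bt} stating $\BT(\sov)=\BT(\Tx^*)$ for any tagged triangulation $\Tx$ of $\sov$.

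\textbf{Main obstacle.} The critical step is the monodromy computation $\mu(t_{\gamma_i})=B_{\eta_i}^{-1}$. This must be checked uniformly across the puzzle-piece configurations of \Cref{fig:type IV}, in particular for digons and self-folded triangles whose tagged rotations in $\eg(\surfv)$ secretly involve the switching $\sx_V$ at the enclosed vortex (compare \Cref{fig:S2.lift,fig:S2.lift2}). This case analysis is already packaged inside \Cref{lem:neg bt} (cf.\ \Cref{fig:tr2}), so the remaining task is only to confirm that each 2-cycle loop in $\eg(\surfv)$ does lift in $\eg(\sov)$ to the two-flip path asserted, which is automatic from the covering property.
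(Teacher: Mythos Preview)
Your argument is correct and follows essentially the same route as the paper: both proofs use \Cref{lem:Kr} to obtain the covering, invoke \Cref{cor:FST} to generate $\pi_1(\eg(\surfv),\RTe)$ by local twists, and apply \Cref{lem:neg bt} to identify the lift of each local twist as a negative braid twist, concluding that the deck group equals $\BT(\Te^*)=\BT(\sov)$. Your presentation is more explicit about the monodromy formalism and the identification $\on{Deck}(p)=K$, whereas the paper argues more tersely by first noting that $\BT(\sov)$ preserves fibers and then showing transitivity on fibers directly via the same lift-of-local-twists computation.
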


\begin{proof}
$\BT(\sov)$ preserves the fibers of the restricted covering by Lemma~\ref{lem:Kr}.
Thus, we only need to show that if $F_*^\P(\T_1,\sign_1)=F_*^\P(\T_2,\sign_2)$ for $(\T_i,\sign_i)$ in $\egt(\sov)$,
then they are related by the action of $\BT(\sov)$.

Since $\egt(\sov)$ is connected, there is a path $\widehat{p}:(\T_1,\sign_1)\to (\T_2,\sign_2)$. Let $p=F_*^\P(\widehat{p})\in\pi_1(\eg(\surfv),\RTe)$.
Thus, by Corollary~\ref{cor:FST},
$p$ is a product of local twists at $F_\ast(\T_1,\sign_1)=F_\ast(\T_2,\sign_2)$.
Lifting these local twists to $\egt(\sov)$ yields braid twists, which map $(\T_1, \sign_1)$ to $(\T_2, \sign_2)$. This completes the proof.
\end{proof}

%=========================================================
\subsection{Universal covering}
%=========================================================

\begin{theorem}\label{thm:BT=CT}
Let $\Tx$ be a tagged triangulation of $\smv$ and $\RTx=F_*(\Tx)$. Then $\egt(\sov)$ is simply connected and hence $F_*\colon \egt(\sov)\to \eg(\surfv)$ in Theorem~\ref{thm:BT} is the universal cover with Galois group
\begin{gather}\label{eq:BT=CT}
    \CBr(\RTx)=\pi_1(\eg(\smv),\RTx)\cong\BT(\sov).
\end{gather}
\end{theorem}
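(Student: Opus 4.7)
The plan is to reduce the theorem to showing that $\egt(\sov)$ is simply connected; combined with Theorem~\ref{thm:BT}, this will automatically give the identification~\eqref{eq:BT=CT}.

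First, I will invoke Theorem~\ref{thm:BT}: $F_*^\P\colon \egt(\sov) \to \eg(\surfv)$ is a Galois $\BT(\sov)$-covering. Standard covering-space theory then yields a short exact sequence
\begin{equation*}
    1 \to F_*\pi_1(\egt(\sov), \Te) \to \pi_1(\eg(\surfv), \RTx) \xrightarrow{\partial} \BT(\sov) \to 1,
\end{equation*}
where $\partial$ sends a loop to the unique deck transformation carrying $\Te$ to the endpoint of the lifted path. By Corollary~\ref{cor:FST}, $\pi_1(\eg(\surfv), \RTx) = \CBr(\RTx)$, so $\partial$ is a surjection $\CBr(\RTx) \twoheadrightarrow \BT(\sov)$.

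Second, going in the opposite direction, Corollary~\ref{cor:surj} provides a surjection $c_\Tx\colon \BT(\sov) = \BT(\Tx^*) \twoheadrightarrow \CBr(\RTx)$ sending the standard braid-twist generator $B_\eta$ (for $\eta\in\Tx^*$) to the local twist $t_\gamma$, where $\gamma\in\Tx$ is dual to $\eta$. The key claim is that the composition $\partial \circ c_\Tx\colon \BT(\sov)\to\BT(\sov)$ is the identity on the standard generators. Once this is established, both $c_\Tx$ and $\partial$ must be isomorphisms, which forces $F_*\pi_1(\egt(\sov), \Te) = \ker\partial = 1$. Since $F_*$ is injective on fundamental groups (as $F_*^\P$ is a covering), this gives $\pi_1(\egt(\sov), \Te) = 1$, and then $\CBr(\RTx) \xrightarrow{\partial}\BT(\sov)$ becomes an isomorphism, completing the argument.

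The main work therefore lies in verifying the key claim. I will compute $\partial \circ c_\Tx(B_\eta) = \partial(t_\gamma)$ by lifting the 2-cycle loop $t_\gamma$: it consists of the forward flip at $\gamma$ followed by the forward flip at the newly produced tagged arc $\gamma^\sharp$. Lifting these two flips successively in $\egt(\sov)$ starting from $\Te$, Lemma~\ref{lem:neg bt} identifies the composed lifted path as the negative braid twist $\bt{\eta}$ applied to $\Te$. Therefore the monodromy $\partial(t_\gamma)$ equals $B_\eta$ (after matching orientation conventions between the directed 2-cycle defining $t_\gamma$ and the sign in Lemma~\ref{lem:neg bt}), as required.

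The hardest point will be this convention-matching in the final step: ensuring that the orientation of the lifted 2-cycle in $\eg(\surfv)$ lines up with Lemma~\ref{lem:neg bt} so that $\partial(t_\gamma) = B_\eta$ rather than $B_\eta^{-1}$. In the cases where $\gamma$ or $\gamma^\sharp$ is an edge of a digon (i.e.\ involves type Sq-2 or Sq-3 squares from Definition~\ref{def:gpd.b}), one must verify the computation separately using the local pictures in Figures~\ref{fig:S2.lift}, \ref{fig:S2.lift2} and \ref{fig:tr2}; everything else reduces to the unpunctured case already treated in \cite{KQ2}.
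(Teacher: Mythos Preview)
Your proposal is correct and follows essentially the same approach as the paper: set up the short exact sequence from the Galois covering of Theorem~\ref{thm:BT}, identify $\pi_1(\eg(\surfv),\RTx)=\CBr(\RTx)$ via Corollary~\ref{cor:FST}, and then use the surjection $c_\Tx$ from Corollary~\ref{cor:surj} together with the computation of Lemma~\ref{lem:neg bt} to conclude that the monodromy map is an isomorphism. The only point to be careful about is exactly the one you flag: Lemma~\ref{lem:neg bt} gives $\partial(t_\gamma)=B_\eta^{-1}$ rather than $B_\eta$, so $\partial\circ c_\Tx$ sends $B_{\eta_i}\mapsto B_{\eta_i}^{-1}$, which is \emph{not} the identity; the clean fix is to observe that the relations in Proposition~\ref{prop:alt pre} are invariant under replacing all generators by their inverses, so there is also a surjection $c'_\Tx\colon B_{\eta_i}\mapsto t_i^{-1}$, and then $\partial\circ c'_\Tx=\id_{\BT(\sov)}$ on generators, forcing $c'_\Tx$ (hence $\partial$) to be an isomorphism.
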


\begin{proof}
Let $\Tx=\{\gamma_i\}$ with dual $\Tx^*=\{\eta_i\}$.

By the covering map $F_*^\P$ in \Cref{thm:BT},
there is a short exact sequence of groups
$$
    1\to\pi_1(\egt(\sov),\Tx)\to\pi_1(\eg(\smv),\RTx)\xrightarrow{i_{\Tx}} \BT(\sov)\to 1.
$$
By \Cref{cor:FST}, \cite[Prop.~2.9]{KQ2} and isomorphism \eqref{eq:iso}, we have
$$
    \CBr(\smv)=\pi_1(\eg(\smv),\RTx).
$$
So $\pi_1(\eg(\smv),\RTx)$ is generated by the local twists.
So the map $i_{\Tx}$ is determined by sending the standard generators $t_{F(\gamma_i)}$ to the standard ones $B_{\eta_i}^{-1}$, cf. \Cref{lem:neg bt}.
By \Cref{cor:surj}, $i_{\T}$ is the inverse of $\mathrm{can}$ in \eqref{eq:can}.
Therefore,  $\pi_1(\eg(\smv),\RTx)\cong\BT(\sov)$ and $\pi_1(\egt(\sov),\Tx)=1$.
\end{proof}

%=========================================================
%=========================================================
\section{Application to moduli spaces of quadratic differentials}\label{part:D}
%=========================================================
%=========================================================

%=========================================================
\subsection{Preliminaries}\
%=========================================================
%=========================================================
\paragraph{\textbf{Quadratic differentials}}\

We recall standard notions concerning quadratic differentials, see \cite{BS} and \cite{KQ2,Q24} for more details. Let $\rs$ be a compact Riemann surface and $\omega_\rs$ be its holomorphic cotangent bundle. A \emph{meromorphic quadratic differential} $\phi$ on $\rs$ is a meromorphic section of the line bundle $\omega_{\rs}^{2}$. In terms of a local coordinate $z$ on $\rs$, such a $\phi$ can be written as $\phi(z)=g(z)\, \dd z^2$, where $g(z)$ is a meromorphic function. The zeros and simple poles of $\phi$ are called \emph{finite singularities}, while poles of order at least two are called \emph{infinite singularities}.

\begin{definition}
A \emph{GMN differential} $\phi$ on $\rs$ is a meromorphic quadratic differential such that
\begin{itemize}
\item all zeros of $\phi$ are simple,
\item $\phi$ has at least one higher order (at least two) pole,
\item $\phi$ has at least one finite critical point.
\end{itemize}
Denote by $\Zer(\phi)$ the set of zeros of $\phi$,
$\Pol_j(\phi)$ the set of poles of $\phi$ with order $j$
and $\Pol(\phi)=\coprod_j\Pol_j(\phi)$.
\end{definition}

A GMN differential $\phi$ on $\rs$ determines the $\phi$-metric on $\surp$, which is defined locally
by pulling back the Euclidean metric on $\CC$ using a distinguished coordinate $\omega$,
cf.  \cite[Fig.~16]{Q24}.
Thus, there are geodesics on $\surp$, each of which has a constant phase with respect to $\omega$.
A \emph{trajectory} of a GMN differential $\phi$ on $\surp$
is a maximal horizontal geodesic $\gamma\colon(0,1)\to\surp$,
with respect to the $\phi$ metric.
We will usually consider horizontal trajectories unless otherwise specified.

%=========================================================
\paragraph{\textbf{WKB triangulations}}\

We will mainly consider the following types of trajectories of a GMN differential $\phi$:
\begin{itemize}
\item \emph{saddle trajectories}, whose both ends are finite singularities;
\item \emph{separating trajectories} that connect a finite singularity and an infinite singularity;
\item \emph{generic trajectories} whose both ends are infinite singularities.
\end{itemize}
We say a GMN differential is \emph{saddle-free} if there are no saddle trajectories.
In such a case, by removing all separating trajectories (which are finitely many) from $\surp$, the remaining open surface splits as a disjoint union of
\begin{itemize}
\item a \emph{half-plane (infinite height horizontal strip)}, i.e., isomorphic to
$\{z\in \CC \mid \Imgy(z)>0\}$ equipped with the differential $\dd z^{2}$.
It is swept out by generic trajectories which connect a fixed pole to itself.
\item or a \emph{(finite height) horizontal strip}, i.e. isomorphic to
$\{z\in \CC\mid a<\Imgy(z)<b\}$ equipped with the differential $\dd z^{2}$ for some $a<b \in \RR$. It is swept out by generic trajectories connecting two (not necessarily distinct) poles.
\end{itemize}
We call this union the \emph{horizontal strip decomposition} of $\rs$ with respect to $\phi$.
In each horizontal strip, the trajectories are isotopic to each other
and there is a unique geodesic, the \emph{saddle connection}, connecting the two zeros on its boundary,
which is a trajectory with a certain fixed phase (other than zero).

A GMN-differential is called \emph{complete} if $\Pol_1(\phi)=\emptyset$.
\begin{definition}
When $\phi$ is saddle-free, there is a \emph{WKB triangulation} $\RT_\phi$ on $\surf$ induced from $\phi$,
where the arcs are (isotopy classes of inherited) generic trajectories.
Moreover, any triangle in $\RT_\phi$ either contains exactly one zero or is a self-folded triangulation with a simple pole as interior vertex.
If $\phi$ is in addition complete, then $\RT_\phi$ becomes a decorated triangulation $\T_\phi$ of $\surfo$,
with dual graph consisting of saddle trajectories.
\end{definition}

%=========================================================
\paragraph{\textbf{Real blow-up}}\

\begin{construction}\label{cons:DMSV}
Let $\phi$ be a GMN differential on a Riemann surface $\rs$.
The \emph{real (oriented) blow-up} of $(\rs,\phi)$ is a DMSp $\rs^\phi_{\Zer(\phi)}$ constructed as follows:
\begin{itemize}
  \item $\rs^\phi$ is obtained from the underlying differentiable surface by replacing each pole $p\in\Pol(\phi)$
  with order $|\on{ord}_\phi(p)|\ge3$ by a boundary $\partial_p$,
  where the points on the boundary correspond to the real tangent directions at $p$.
  \item We mark the points on $\partial_p$ that correspond to the distinguished tangent directions. Thus, there are $|\on{ord}_\phi(p)|-2$ marked points on $\partial_p$.
  \item The simple and double poles become punctures.
  \item The (simple) zeros become decorations.
\end{itemize}
The generic trajectories become open arcs on $\rs^\phi_{\Zer(\phi)}$ and the saddle trajectories become closed arcs on $\rs^\phi_{\Zer(\phi)}$.
Note that there is extra information on the set of punctures $\Pol_{1,2}(\phi)$: it naturally divides into two subsets $\Pol_1(\phi)\cup\Pol_2(\phi)$.

If one forgets the zeros, one gets a MSp $\rs^\phi$.
\begin{figure}[htpb]\centering
\begin{tikzpicture}[yscale=.15,xscale=.08]
\foreach \j in {0,...,10}
    {\draw[Emerald!23, very thick](-20,\j)to(30,\j);}
\foreach \j in {0,10}
    {\draw[\sepa,ultra thick](-20,\j)to(30,\j);}
\draw[red,thick](0,0)\ww to(7,10)\ww;
\draw[blue!50,thick](3.5,0)to[bend left=-30]node[right]{$^{angle}$}(2,3);
\begin{scope}[shift={(0,13)}]
\foreach \j in {0,...,10}
    {\draw[Emerald!23, very thick](-20,\j)to(30,\j);}
\foreach \j in {0}
    {\draw[\sepa,ultra thick](-20,\j)to(30,\j);}
\draw[red, very thick](0,0)\ww;
\end{scope}
\end{tikzpicture}\qquad
\begin{tikzpicture}[scale=.5]
\foreach \k in {1,2,0}
{    \path (120*\k-30:4.5) coordinate (v2)
          (120*\k+120-30:4.5) coordinate (v1)
          (120*\k+60-30:2.25) coordinate (v3)
          (0,0) coordinate (v4);
  \foreach \j in {.25,.4,.55,.7,.85,1}
    {
      \path (v4)--(v3) coordinate[pos=\j] (m0);
      \draw[Emerald,thin] plot [smooth,tension=.5] coordinates {(v1)(m0)(v2)};
    }
  \draw[Emerald, ultra thick](v1)to(v2);
  \draw[thick,\sepa](v2)\nn to(v4)to(v1)\nn;
}
\draw(0,0)\ww;
\begin{scope}[rotate=30,shift={(0:4.5)}]
\foreach \k in {1,2,0}
{    \path (120*\k:4.5) coordinate (v2)
          (120*\k+120:4.5) coordinate (v1)
          (120*\k+60:2.25) coordinate (v3)
          (0,0) coordinate (v4);
  \foreach \j in {.25,.4,.55,.7,.85,1}
    {
      \path (v4)--(v3) coordinate[pos=\j] (m0);
      \draw[Emerald,thin] plot [smooth,tension=.5] coordinates {(v1)(m0)(v2)};
    }
  \draw[Emerald, ultra thick](v1)to(v2);
  \draw[thick,\sepa](v2)\nn to(v4)to(v1)\nn;
}
\end{scope}
\draw[red, very thick](0,0)\ww to(30:4.5)\ww;
\end{tikzpicture}

\begin{tikzpicture}[scale=2.0]
\begin{scope}[shift={(0,0)}]
\foreach \j in {.45,.35,.25,.15}
{\draw[Emerald,thick](0,0)circle(\j);}
\draw[red,very thick] plot [smooth,tension=1] coordinates
    {(0,-.7) (180:.55) (90:.55) (0:.55)  (0,-.7)};
%    .. controls +(45:3) and +(135:3) .. (0,-.6)\nn;
\draw[ultra thick,\sepa](0,-.7)\ww to(0,-1);
\draw[ultra thick,Emerald] plot [smooth,tension=1] coordinates
    {(0,-1) (180:.8) (90:.8) (0:.8)  (0,-1)};
\draw[Emerald,thick] plot [smooth,tension=1] coordinates
    {(0,-1) (-.15,-.75) (175:.7) (90:.7) (5:.7) (.15,-.75) (0,-1)};
\draw(0,0)\dpole (0,-1)\nn;\draw(0,1)node{};
\end{scope}

\begin{scope}[shift={(2,0)}]
\draw[ultra thick, Emerald] plot [smooth,tension=1] coordinates
    {(0,-1) (180:.8) (90:.8) (0:.8)  (0,-1)};
\foreach \j in {.5,.6,.7,.38}
{\draw[Emerald,thick] plot [smooth,tension=.9] coordinates
    {(0,-1) (180:\j) (90:\j) (0:\j)  (0,-1)};}
\draw[Emerald,thick] (0,-1)
    .. controls +(63:1.9) and +(117:1.9) .. (0,-1)
    .. controls +(73:1.6) and +(107:1.6) .. (0,-1);
\draw[thick,Emerald](0,0)\dpole to (0,-1)\nn;
\draw[thick](0,0)\dpole (0,-1)\nn;
\draw(0,0)\spole;
\end{scope}

\begin{scope}[shift={(4,0)}]
\draw[ultra thick, Emerald] plot [smooth,tension=1] coordinates
    {(0,-1) (180:.8) (90:.8) (0:.8)  (0,-1)};
\foreach \j in {.6,.7}
{\draw[Emerald,thick] plot [smooth,tension=1] coordinates
    {(0,-1) (180:\j) (90:\j) (0:\j)  (0,-1)};}
\foreach \j in {.5}
{\draw[ultra thick,\sepa] plot [smooth,tension=1] coordinates
    {(0,-1) (180:\j) (90:\j) (0:\j)  (0,-1)};}
\draw[black,very thick](0,.5)\ww to(0,.01) coordinate (xx);
\draw[Emerald,thick] (0,-1)
    .. controls +(57:1.6) and +(75:.5) .. (xx);
\draw[Emerald,thick] (0,-1)
    .. controls +(63:1.4) and +(55:.4) .. (xx);
\draw[Emerald,thick] (0,-1)
    .. controls +(70:1.3) and +(30:.27) .. (xx);
\draw[Emerald,thick] (0,-1)
    .. controls +(75:1.2) and +(-30:.15) .. (xx);
\draw[Emerald,thick] (0,-1)
    .. controls +(80:1) and +(-60:.15) .. (xx);
\draw[Emerald,thick] (0,-1)
    .. controls +(85:.7) and +(-75:.2) .. (xx);

\draw[Emerald,thick] (0,-1)
    .. controls +(180-57:1.6) and +(180-75:.5) .. (xx);
\draw[Emerald,thick] (0,-1)
    .. controls +(180-63:1.4) and +(180-55:.4) .. (xx);
\draw[Emerald,thick] (0,-1)
    .. controls +(180-70:1.3) and +(180-30:.27) .. (xx);
\draw[Emerald,thick] (0,-1)
    .. controls +(180-75:1.2) and +(180--30:.15) .. (xx);
\draw[Emerald,thick] (0,-1)
    .. controls +(180-80:1) and +(180--60:.15) .. (xx);
\draw[Emerald,thick] (0,-1)
    .. controls +(180-85:.7) and +(180--75:.2) .. (xx);
\draw[ultra thick, Emerald](xx)\dpole to (0,-1)\nn;
\draw[ultra thick](xx)\dpole (0,-1)\nn;
\end{scope}
\end{tikzpicture}
\caption{Horizontal strips and WKB triangles}
\label{fig:self-folded}
\end{figure}
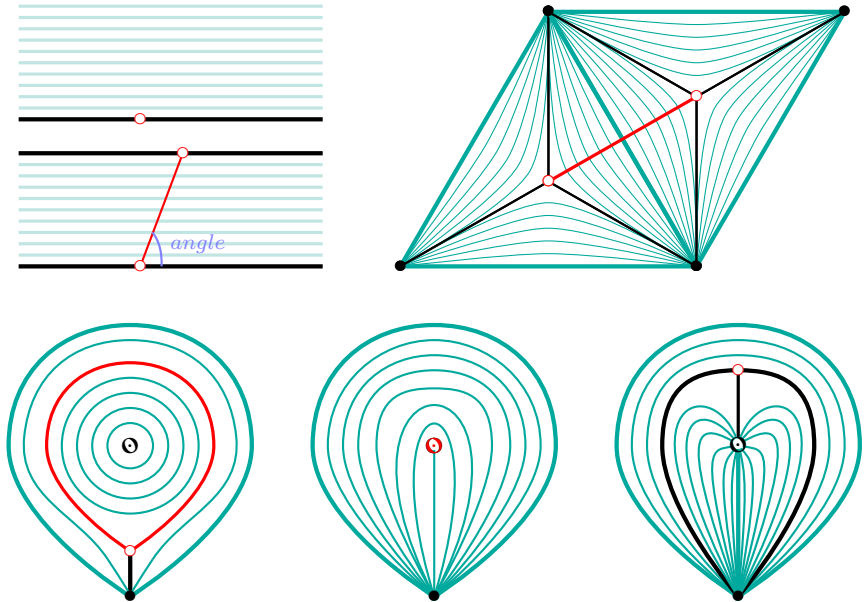
\end{construction}

%=========================================================
\paragraph{\textbf{Moduli spaces with multi-strata}}\

A sign GMN differential $\phi^\sign$ is a GMN differential $\phi$ (on some Riemann surface) with a choice of sign at each double pole (of its residue), denoted by $\sign$.

\begin{definition}\label{def:Quad}
A \emph{$\surfv$-framed signed quadratic differential $\Psi^\sign=(\rs,\phi^\sign,\psi)$}
consists of a Riemann surface $\rs$ with a sign GMN differential $\phi^\sign$,
equipped with a diffeomorphism $\psi\colon\surfp \to\rs^\phi$,
preserving the marked points and punctures in the sense that
\begin{itemize}
  \item $\psi(\sun)=\Pol_{1,2}(\phi)$;
  \item $\psi$ sends marked points to marked points.
\end{itemize}

Two such data $(\rs_1,\phi_1^{\sign_1},\psi_1),(\rs_2,\phi_2^{\sign_2},\psi_2)$ are equivalent,
if there exists a biholomorphism $f\colon\rs_1\to\rs_2$ such that
\begin{itemize}
  \item $f^*(\phi_2^{\sign_2})=\phi_1^{\sign_1}$,
  \item $\psi_2^{-1}\circ f_*\circ\psi_1\in\Homeo_0(\surfp)$,
  preserving the marked points and punctures,
  where $f_*\colon\rs_1^{\phi_1}\to\rs_2^{\phi_2}$ is the induced diffeomorphism.
\end{itemize}
Denote by $\SQuad$ the moduli space of $\surfv$-framed signed quadratic differentials.
\end{definition}

Recall that the mapping class group $\MCG(\surfv)$ of $\surfv$ is defined to be
\[
    \MCG(\surfv)=\MCG(\surfp)\ltimes\ZZP,
\]
which naturally acts on signed triangulations and $\surfv$-framed signed quadratic differentials, cf. \cite{BQ,BS,Q24}.
The subgroup $\ZZP$ changes the signs at the corresponding puncture/double poles.
The moduli space appearing in \cite{BS} is the unframed version, i.e.
\[
    \Quad(\surfv)=\SQuad/\MCG(\surfv).
\]
The action is not free, but free on the complete part consisting of those differentials without simple poles.

%=========================================================
\subsection{Teichm\"{u}ller framed version}\
%=========================================================

Now we want to define the $\sov$-framed version of moduli space,
which is not trivial in this case since there are multi-strata that $\MCG(\sop)$ does not
act on $\Quad(\surfv)$ directly.

\begin{definition}
Let $\sov$ be a DMSv.
\begin{itemize}
\item
A \emph{collision pairing} $\us=(s_1,\ldots,s_d)$ on $\sop$ is a collection of $d(\le p)$ collision paths $s_i$ (connecting decorations $Z_i$ with punctures $p_i$ respectively),
such that
\begin{itemize}
  \item $s_i$ are simple and compatible (do not intersect).
  \item each puncture in $\uk_1$ appears as an endpoint of $s_i$  exactly once.
\end{itemize}
\item
The \emph{\emph{contracted surface}} $\sovs$ of $\sop$ w.r.t. $\us$ is a DMSp obtained from $\sop$ by contracting each path $s_i$ into an order $-1$ puncture,
where the other punctures have order $-2$.
So the number of decorations reduces from $\aleph$ to $\aleph-d$.
\item
A map $\cont\colon\sov\to \Sigma$
is called a \emph{collision contraction} with respect to $\us$,
if $\Sigma$ is homeomorphic to $\sovs$ as a weighted DMS.
such that
\begin{itemize}
  \item $\cont$ is a homeomorphism when restricted to $\sov\setminus\bigsqcup_{i=1}^{|\us|} s_i$.
  \item $\cont(s_i)$ equals an order $-1$ puncture.
\end{itemize}
\end{itemize}
\end{definition}

\begin{definition}\label{def:FQuad O}
An \emph{$\sov$-framed quadratic differential $(\rs,\phi^\sign,\psi)$}
consists of a sign GMN differential $\phi^\sign$ on a Riemann surface $\rs$,
equipped with a framing
\begin{gather}\label{eq:framing}
  \cont\colon\sov\to\rs^\phi_{\Zer(\phi)},
\end{gather}
which is a collision contraction with respect to some collision path $\us$, such that
\begin{itemize}
  \item $\cont(s_i)$ equals to a simple pole;
  \item $\cont(p)$ equals to a double pole for puncture $p$ that is not an endpoint of any $s_i$;
  \item $\psi$ sends decorations to decorations.
\end{itemize}

Two such differentials, denoted by $(\rs_j,\phi_j^{\sign_j},\widetilde{\psi}_{\us_j} )$ are equivalent, if there exists a biholomorphism $f\colon\rs_1\to\rs_2$ and a homeomorphism $g\in\LL$, such that
\begin{itemize}
  \item $f^*(\phi_2^{\sign_2})=\phi_1^{\sign_1}$,
\item there is a commutative diagram
\begin{gather}\label{eq:l}
\begin{tikzpicture}[xscale=2.5,yscale=1]
\draw (0,2) node (u0) {$\sov$}
      (1,2) node (u1) {$(\rs_1)^{\phi_1}_{\Zer(\phi_1)}$}
      (0,0) node (d0) {$\sov$}
      (1,0) node (d1) {$(\rs_2)^{\phi_2}_{\Zer(\phi_2)}$.};
\draw [-stealth]%font=\scriptsize
 (u0) edge node [left] {$g$} (d0)
 (u1) edge node [right] {$f_*$} (d1)
 (u0) edge node [above] {$\widetilde{\psi}_{\us_1}$} (u1)
 (d0) edge node [below] {$\widetilde{\psi}_{\us_2}$} (d1)
;\end{tikzpicture}
\end{gather}
\end{itemize}
We denote by $\FQuad{}{\sov}$ the moduli space of $\sov$-framed quadratic differentials.
\end{definition}

So the point is that now $\MCG(\sov)$ naturally acts on $\FQuad{}{\sov}$ by left composing with the framing. In this way, we construct a $\MCG(\sov)$-covering of $\Quad(\surfv)$,
i.e. we obtain the following.
\begin{lemma}
There is a commutative diagram between moduli spaces:
\begin{equation}\label{eq:quad0}
\begin{tikzpicture}[xscale=1.6,yscale=0.8,baseline=(bb.base)]
\path (0,1) node (bb) {}; % baseline
\draw (0,2) node (s0) {$\FQuad{}{\sov}$}
 (0,0) node (s1) {$\SQuad$}
 (2.5,1) node (s2) {$\Quad(\surfv)$.};
\draw [->, font=\scriptsize]
 (s0) edge node [left] {$\SBr(\sov)$} (s1)
 (s0) edge node [above] {$\MCG(\sov)$} (s2)
 (s1) edge node [below] {$\MCG(\surfv)$} (s2);
\end{tikzpicture}
\end{equation}
\end{lemma}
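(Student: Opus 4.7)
The plan is to produce three natural forgetful/quotient maps and check commutativity, then verify that each map is a Galois covering with the claimed group of deck transformations.

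First, I would construct the maps explicitly. Given an $\sov$-framed differential $(\rs,\phi^\sign,\widetilde{\psi}_{\us})$, let $\pibt\colon \sov \to \surfv$ be the canonical map that forgets decorations but remembers simple versus double poles: concretely, $\pibt$ collapses each collision path $s_i\in\us$ to the corresponding puncture and then identifies each surviving puncture (decorated or not) with its image. Because $\widetilde{\psi}_{\us}$ is a homeomorphism away from $\bigsqcup s_i$ and contracts each $s_i$ to a simple pole, it descends to a homeomorphism $\psi\colon\surfv \to \rs^\phi$ preserving marked points and punctures. This produces $\pisbr\colon \FQuad{}{\sov}\to\SQuad$ by $(\rs,\phi^\sign,\widetilde{\psi}_{\us})\mapsto(\rs,\phi^\sign,\psi)$. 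The map $\SQuad\to\Quad(\surfv)$ is the quotient by the $\MCG(\surfv)$-action from Definition~\ref{def:Quad}. The diagonal map $\FQuad{}{\sov}\to\Quad(\surfv)$ is the composition. Commutativity of \eqref{eq:quad0} is immediate from the fact that the $\MCG(\surfv)$-action on $\SQuad$ and the $\MCG(\sov)$-action on $\FQuad{}{\sov}$ are both by left composition with the framing, and $F^\vortex$ intertwines them via the forgetful homomorphism $\MCG(\sov)\to\MCG(\surfv)$.

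Next I would identify the deck transformation groups, handling the three arrows in turn. For $\SQuad\to\Quad(\surfv)$ this is tautological from the definition of $\Quad(\surfv)$ as $\SQuad/\MCG(\surfv)$, using Bridgeland--Smith's verification that the $\MCG(\surfv)$-action is free on the subspace where $\Pol_1(\phi)=\emptyset$ and properly discontinuous in general. For $\FQuad{}{\sov}\to\SQuad$, the fibre over $(\rs,\phi^\sign,\psi)$ consists of equivalence classes of liftings of $\psi$ to a collision contraction $\widetilde{\psi}_{\us}\colon\sov\to\rs^\phi_{\Zer(\phi)}$; two such liftings are related precisely by precomposition with an element $g\in\MCG(\sov)$ fixing the image in $\SQuad$, which by the construction of $\pibt$ forces $g$ to act trivially on punctures/marked points, hence $g\in\ker \FM=\SBr(\sov)$ by \eqref{eq:forget2}. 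Conversely, every $g\in\SBr(\sov)$ produces a new framing, and distinct group elements yield inequivalent framings by the quotient relation in \eqref{eq:l} (which only quotients by $\LL$, absorbed in the passage $\MCG(\sop)\to\MCG(\sov)$). For the diagonal arrow, combining the two verifies it is an $\MCG(\sov)$-covering, where $\MCG(\sov)=\MCG(\surfv)\ltimes\SBr(\sov)$ via the surface braid group exact sequence.

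The main technical obstacle is making precise the fibre of $\pisbr$ in terms of collision contractions. The definition in \eqref{eq:l} quotients by $g\in\LL$, and one must check that exactly this quotient is compatible with the identifications $B_s^4=1$ that define $\MCG(\sov)$; equivalently, that two collision contractions $\widetilde{\psi}_{\us_1}$ and $\widetilde{\psi}_{\us_2}$ for possibly different collision pairings $\us_1,\us_2$ give equivalent $\sov$-framed differentials iff they differ by an element of $\SBr(\sov)$. The $\LL$ ambiguity accounts precisely for the non-uniqueness when isotoping a collision path $s$ via $B_s^4$ around its puncture. A small local analysis in a neighbourhood of each collision path, together with the observation that the space of collision pairings on a fixed underlying $(\rs,\phi^\sign,\psi)$ is naturally a torsor for the image of $\SBr(\sov)\to\SBr(\surfv)$, should complete the argument. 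Once this is in place, properness of the action (inherited from $\Quad(\surfv)$) and connectedness of the strata make the covering assertions routine.
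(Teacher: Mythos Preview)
The paper does not give a proof of this lemma at all; it is recorded as an immediate consequence of the sentence preceding it, namely that $\MCG(\sov)$ acts on $\FQuad{}{\sov}$ by left composition with the framing, combined with the definitional identity $\Quad(\surfv)=\SQuad/\MCG(\surfv)$. Your argument spells out precisely these points---construct the forgetful map on framings, identify the fibres via the kernel $\SBr(\sov)=\ker\FM$, and compose---so it is the intended reasoning made explicit.

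One small overreach: you assert $\MCG(\sov)=\MCG(\surfv)\ltimes\SBr(\sov)$ as a semidirect product. The paper neither proves nor needs a splitting; only the short exact sequence $1\to\SBr(\sov)\to\MCG(\sov)\to\MCG(\surfv)\to1$ is used, and that is enough to read off the deck groups in the triangle. Dropping the splitting claim costs you nothing.
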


%=========================================================
\subsection{Application to the topology of moduli spaces}\label{sec:skel}\
%=========================================================

Consider the top two parts of the stratification of $\SQuad$,
analogous to the stratification of $\Quad(\surfv)$ from \cite[\S~5]{BS}
(cf. \cite[\S~4.2]{KQ2} and \cite[\S~4.2]{BMQS}):
\begin{align*}
 F_0(\surfv) &=\{ [\rs,\phi^\sign,\psi] \in \SQuad \mid \text{$\phi$ is complete and has no saddle trajectories} \},\\
 F_2(\surfv) &=\{ [\rs,\phi^\sign,\psi] \in \SQuad \mid \text{$\phi$ is complete and has exactly one saddle trajectory} \}.
\end{align*}
Then $B_0(\surfv) :=F_0(\surfv)$ is open and dense, and $F_2(\surfv)$ has codimension 1.
Furthermore,  $B_2(\surfv) := F_0(\surfv)\cup F_2(\surfv)$ is also open and dense with complement of codimension 2.

Let $\cub^\circ(\RT)$ be the subspace in $\SQuad$ consisting of
those saddle-free $[\psi]$ whose WKB triangulation is $\RT$.
Then
\begin{gather}\label{eq:Squad*}
    B_0(\surfv)=\bigcup_{\RT\in\EG(\surfv)} \cub^\circ(\RT).
\end{gather}
By the argument of \cite[Prop~4.9]{BS},
$\cub^\circ(\RT)\isom\UHP^{\RT}$ for
\[
  \UHP=\{z\in\CC\mid \Imgy(z)>0\}\subset\CC
\]
is the (strict) upper half plane,
and the coordinates are given by the periods of saddle connections in horizontal strips.
Thus the $\cub^\circ(\RT)$ are precisely the connected components of $B_0(\surfv)$.

The boundary of $\cub^\circ(\RT)$ meets $F_2(\surfv)$ in $2\numarc$ connected components,
which we denote $\partial^\sharp_\gamma\cub(\RT)$ and $\partial^\flat_\gamma\cub(\RT)$,
where the coordinate goes to the negative or positive real axis, respectively.
Then we have the following.

By \cite[Prop~5.8]{BS}, $\EG(\surfv)$ is a skeleton for $\SQuad$ in the following sense.

\begin{lemma}\label{lem:iota_surf}
There is a canonical embedding
$\skel_\surf\colon\EG(\surfv)\to\SQuad$ whose image is dual to $B_2(\surfv)$.
More precisely,
the embedding is unique up to homotopy, satisfying
\begin{itemize}
\item for each triangulation $\RT_\sign\in\EG(\surfv)$, the point $\skel_\surf(\RT_\sign)$ is in $\cub(\RT_\sign)$,
\item for each flip $x\colon\RT_\sign\to\tilt{(\RT_\sign)}{\sharp}{\gamma}$, the path $\skel_\surf(x)$ is in
$\cub(\RT_\sign) \;\cup\; \partial^\sharp_\gamma\cub(\RT_\sign) \;\cup\; \cub(\tilt{\RT_\sign}{\sharp}{\gamma})$,
connecting $\skel_\surf(\RT_\sign)$ to $\skel_\surf(\tilt{\RT_\sign}{\sharp}{\gamma})$ and intersecting
$\partial^\sharp_\gamma\cub(\RT_\sign)$ at exactly one point,
\end{itemize}
Moreover, $\skel_{\surf}$ induces a surjective map
\[\pi_1\EG(\surfv)\cong\pi_1B_2(\surfv)\to\pi_1\SQuad.\]
\end{lemma}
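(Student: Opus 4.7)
The plan is to adapt the skeleton construction of \cite[Prop.~5.8]{BS} (cf.\ also the framed variants in \cite[\S~4.2]{KQ2} and \cite[\S~4.2]{BMQS}) to the $\surfv$-framed signed moduli space $\SQuad$. For each signed triangulation $\RT_\sign\in\EG(\surfv)$, pick a base point $\skel_\surf(\RT_\sign)$ inside the chamber $\cub^\circ(\RT_\sign)\cong\UHP^\RT$, which is contractible and nonempty. For each forward flip $x\colon\RT_\sign\to\tilt{(\RT_\sign)}{\sharp}{\gamma}$, choose a smooth path $\skel_\surf(x)$ from $\skel_\surf(\RT_\sign)$ to $\skel_\surf(\tilt{(\RT_\sign)}{\sharp}{\gamma})$ lying inside $\cub^\circ(\RT_\sign)\cup\partial^\sharp_\gamma\cub(\RT_\sign)\cup\cub^\circ(\tilt{(\RT_\sign)}{\sharp}{\gamma})$ and crossing $\partial^\sharp_\gamma\cub(\RT_\sign)$ transversally at a single point. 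Such a path exists by the standard local picture of a WKB flip: decreasing the phase of the period of a saddle connection through zero produces exactly one saddle trajectory and triggers one flip of the WKB triangulation (see \cite[\S~4.3]{BS}).

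Uniqueness up to homotopy is automatic, because both $\cub^\circ(\RT_\sign)$ and each wall $\partial^\sharp_\gamma\cub(\RT_\sign)$ are contractible, so any two admissible base points and connecting paths are homotopic rel.\ endpoints. The duality statement is then immediate from the decomposition~\eqref{eq:Squad*}: the $0$-cells $\skel_\surf(\RT_\sign)$ pick out exactly one point in each connected component of $F_0(\surfv)$, while the $1$-cells $\skel_\surf(x)$ cross each connected component of $F_2(\surfv)$ once transversally.

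For the induced map on fundamental groups, I would use a nerve-theorem argument to produce a homotopy equivalence $\EG(\surfv)\simeq B_2(\surfv)$ realized by $\skel_\surf$. Cover $B_2(\surfv)$ by the chambers $V_{\RT_\sign}=\cub^\circ(\RT_\sign)$ together with contractible tubular neighborhoods $U_x$ of each flip edge $\skel_\surf(x)$ inside $\cub^\circ(\RT_\sign)\cup\partial^\sharp_\gamma\cub(\RT_\sign)\cup\cub^\circ(\tilt{(\RT_\sign)}{\sharp}{\gamma})$. All pairwise intersections of these opens are contractible or empty, and triple intersections are empty, so the nerve of the cover is precisely the graph $\EG(\surfv)$; the nerve theorem then gives $\EG(\surfv)\simeq B_2(\surfv)$. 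For the surjection $\pi_1B_2(\surfv)\to\pi_1\SQuad$, observe that $\SQuad\setminus B_2(\surfv)$ has real codimension at least $2$ in the complex manifold $\SQuad$, so any loop in $\SQuad$ can be homotoped off it by transversality. The main technical obstacle I expect is the correct treatment of the $\ZZ_2$-signs at vortices inside self-folded triangles (cases Sq-2 and Sq-3 of \Cref{def:gpd.b}), where the equivalence relation of Definition~\ref{def:equivsov} must be matched precisely with the stratification of $\SQuad$; the local pictures in Figures~\ref{fig:S2.lift} and~\ref{fig:S2.lift2} indicate that this indeed works, but verifying it cleanly is the only piece beyond a direct transcription of \cite[Prop.~5.8]{BS}.
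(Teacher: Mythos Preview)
Your proposal is correct and follows the same approach as the paper, which in fact gives no proof at all beyond citing \cite[Prop.~5.8]{BS} (with the framed variants in \cite{KQ2,BMQS}); your sketch is a faithful elaboration of that citation. The only imprecision is in the nerve-theorem step: with the cover $\{V_{\RT_\sign}\}\cup\{U_x\}$ as you describe it, two edge-neighborhoods $U_x,U_{x'}$ sharing an endpoint vertex will typically intersect inside the common chamber, so triple intersections are not literally empty and the nerve is a barycentric-type refinement rather than $\EG(\surfv)$ itself---but this is cosmetic, and the cleaner phrasing (a deformation retraction of $B_2(\surfv)$ onto the image of $\skel_\surf$, chamber by chamber) gives the same conclusion.
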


As we have (various) coverings of both $\EG(\sov)$ and $\SQuad$,
the lemma has a covering version.
Namely, fix an initial triangulation $\Te\in\EG(\sov)$ and a connected component $\FQuad{\Te}{\sov}$
that contains differentials $(\rs,\phi^\sign,\psi)$ such that $\psi(\Te)$ is the WKB triangulation of $\rs^\phi_{\Zer(\phi)}$.
Then there is a (unique up to homotopy) canonical embedding
\begin{equation}\label{eq:Sembed}
\skel_{\sov}\colon\EGT(\sov)\to\FQuad{\Te}{\sov}
\end{equation}
whose image is dual to $B_2(\sov)$
and which induces a surjective map
\[\skel_*\colon \pi_1\EGT(\sov)\to\pi_1\FQuad{\Te}{\sov}.\]

As \cite[Thm.~4.16 and Cor.~4.17]{KQ2} or \cite[Thm.~1.1]{Q24},
we have the following application (with the same proof).

%A corollary of which is that there is a natural bijection between
%connected components of $\EG(\sov)$ and of $\FQuad{}{\sov}$,
%which is parameterized by $\Ho{1}(\surfv)=\SBr(\sov)/\BT(\sov)$.

\begin{theorem}\label{thm:pi1}
The moduli space $\FQuad{}{\sov}$ of $\sov$-framed quadratic differentials
consists of $\Ho{1}(\surfv)$ many connected components, each of which is isomorphic to $\FQuad{\emph{}\Te}{\sov}$.

Moreover, $\FQuad{\emph{}\Te}{\sov}$ is simply connected, or equivalently,
there is a natural isomorphism $\pi_1\SQuad\cong\CBr(\surfv)$.
\end{theorem}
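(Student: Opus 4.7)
The plan is to parallel the strategy of \cite[Thm.~4.16 and Cor.~4.17]{KQ2} and \cite[Thm.~1.1]{Q24}, upgrading each step from the unpunctured setting to the vortex setting by means of the skeleton $\skel_{\sov}$ of \eqref{eq:Sembed} and the identification of the cluster braid group with $\BT(\sov)$ established in \Cref{thm:BT=CT}. The two assertions in \eqref{eq:thmC} will be proved separately, though they share the same technical backbone.

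For the first statement, I would first show that the component $\FQuad{\Te}{\sov}$ is path-connected: the top stratum $F_0(\surfv)$ decomposes as in \eqref{eq:Squad*}, each cell $\cub^\circ(\RT)$ is isomorphic to $\UHP^{\RT}$ via period coordinates, and adjacent cells are connected by flips recorded in $\EG(\sov)$, so path-connectedness of $\FQuad{\Te}{\sov}$ follows from that of $\EGT(\sov)$. Next, I would enumerate the components using the $\SBr(\sov)$-covering of diagram \eqref{eq:quad0}: two framings $\widetilde{\psi}_\us$ and $b\cdot\widetilde{\psi}_\us$ lie in the same component precisely when $b$ acts as a composition of flips, i.e.\ when $b\in\BT(\sov)$. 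Combined with \Cref{thm:BT+}, this yields
\[
    \pi_0\FQuad{}{\sov} \;\cong\; \SBr(\sov)/\BT(\sov) \;\cong\; \Ho{1}(\surfv),
\]
with all components isomorphic to $\FQuad{\Te}{\sov}$ via the residual action.

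For the second statement, the skeleton embedding \eqref{eq:Sembed} together with the covering version of \Cref{lem:iota_surf} induces a surjection
\[
    \skel_*\colon\pi_1\EGT(\sov)\twoheadrightarrow\pi_1\FQuad{\Te}{\sov}.
\]
It then suffices to show that $\skel_*$ kills each of the SPH relations of \Cref{def:gpd.b}, so that it factors through $\pi_1(\egt(\sov),\Te)$, which is trivial by \Cref{thm:BT=CT}. Each relation should be interpreted as a loop around a codimension-two stratum of higher degeneration: the pentagon and hexagon cases correspond to two saddle connections colliding inside a single cell of the strip decomposition; the square cases correspond to independent pairs of collisions whose local deformation space is a product. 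The unpunctured cases Sq-1, Pen and Hex are identical to those in \cite{KQ2} and can be quoted verbatim.

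The main obstacle will be the two new square relations Sq-2 and Sq-3 (Figures~\ref{fig:S2.lift} and \ref{fig:S2.lift2}), which arise only in the vortex setting and are governed algebraically by \Cref{pp:non-trivial}. I plan to treat them by an explicit local analysis at a double pole: using the real blow-up \Cref{cons:DMSV} to reduce to a neighbourhood of a once-vortexed digon in moduli, one should verify that the two independent smoothings of the digon's sides give commuting one-parameter deformations, so that the fundamental square bounds a bidisc in $\FQuad{\Te}{\sov}$ and is therefore nullhomotopic. Once both parts of \eqref{eq:thmC} are established, the equivalent reformulation $\pi_1\SQuad\cong\CBr(\surfv)$ follows by combining the Galois $\BT(\sov)$-covering of \Cref{thm:BT} with the isomorphism $\BT(\sov)\cong\CBr(\RTx)$ of \Cref{thm:BT=CT}.
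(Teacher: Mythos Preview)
Your proposal is correct and follows essentially the same route as the paper; indeed, the paper's own ``proof'' consists of a single sentence referring to \cite[Thm.~4.16 and Cor.~4.17]{KQ2} and \cite[Thm.~1.1]{Q24}, so your outline is in fact more detailed than what the paper provides. The logical skeleton you describe---surjectivity of $\skel_*$ from \Cref{lem:iota_surf}, killing the SPH relations to factor through $\pi_1(\egt(\sov),\Te)$, and invoking \Cref{thm:BT=CT} for triviality of the latter, together with \Cref{thm:BT+} for the component count---is precisely the intended argument.

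Two small remarks. First, in your final sentence you cite \Cref{thm:BT} (the Galois covering of exchange \emph{groupoids}) for the reformulation $\pi_1\SQuad\cong\CBr(\surfv)$; what you actually need there is the $\SBr(\sov)$-covering of \emph{moduli spaces} in diagram~\eqref{eq:quad0}, restricted to the component $\FQuad{\Te}{\sov}$, whose deck group is $\BT(\sov)$ by the $\pi_0$ computation you just did. Second, your characterisation of Sq-2 and Sq-3 as ``arising only in the vortex setting'' is slightly imprecise: these configurations (digons and self-folded triangles around a puncture) already appear in \cite{Q24}; what is genuinely new here is that they become \emph{squares} rather than longer cycles, owing to the $\ZZ_2$-quotient in \eqref{eq:Lrm}. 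Your plan to treat them by a local bidisc analysis near the double-pole stratum is the right one and matches how the analogous checks are carried out in the references.
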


\appendix
\section{Proof of \texorpdfstring{\Cref{lem:quo}}{Lemma}}\label{app:A}

By \Cref{lem:coh}, we will repeatedly use the fact that $\COR$ and their conjugations by $N$ are in $\CON$,
for $2g+b\leq r\leq 2g+b+p-1$, in the proofs.

\begin{lemma}
    For any $2g+b\leq r\leq 2g+b+p-1$, we have
    \begin{equation}\label{eq:llin2}
        \TAU:= \iv{\tau_{r-1}}^{\varepsilon_r}\kg{\tau_r}^{\varepsilon_{r-1}}\in\CON.
    \end{equation}
\end{lemma}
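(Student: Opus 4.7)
The plan is to simplify $\TAU$ to an element $U$ that manifestly lies in $\CON$ via a short telescoping calculation in $\BT(\dsp)/\CON=\BT(\dsv)$.

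First, I would apply the conjugation formulas \eqref{eq:ss03}--\eqref{eq:ss06}. Since $r\geq 2g+b>2g$, both $r-1$ and $r$ lie outside $\dgen$, which identifies the relevant cases and yields
\[
  \tau_{r-1}^{\varepsilon_r} = \tau_{r-1}^{\iv{\sigma_2}\iv{\tau_r}\iv{\sigma_1}\iv{\sigma_2}}, \qquad \tau_r^{\varepsilon_{r-1}} = \tau_r^{\sigma_2\tau_{r-1}\iv{\sigma_1}\iv{\sigma_2}}.
\]
Since $\CON$ is normal in $\SBr(\dsp)$ by \Cref{lem:coh}, stripping the common outer conjugation by $\iv{\sigma_1}\iv{\sigma_2}\in\SBr(\dsp)$ reduces the problem to showing that
\[
  U := \iv{\tau_{r-1}}^{\iv{\sigma_2}\iv{\tau_r}} \cdot \tau_r^{\sigma_2\tau_{r-1}}
\]
belongs to $\CON$.

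Next, I would derive the auxiliary braid relation $\Br(\tau_t,\sigma_2)$ for each $t\in\{r-1,r\}$. Writing $y=\iv{\sigma_3}\sigma_2\sigma_3$ and using $\Co(\tau_t,\sigma_3)$ from \eqref{nr:03} to commute $\sigma_3$ past $\tau_t$ on both sides of $\Br(\tau_t,y)$ from \eqref{nr:05}, one obtains $\tau_t\sigma_2\tau_t=\sigma_2\tau_t\sigma_2$.

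Finally, working modulo $\CON$ so that $\tau_r$ and $\tau_{r-1}$ commute (by the very definition of $\CON$), I would apply in succession $\Br(\tau_r,\sigma_2)$ in the form $\iv{\sigma_2}\tau_r\sigma_2=\tau_r\sigma_2\iv{\tau_r}$, then $\Co(\tau_r,\tau_{r-1})$, then $\Br(\tau_{r-1},\sigma_2)$ in the form $\iv{\tau_{r-1}}\sigma_2\tau_{r-1}=\sigma_2\tau_{r-1}\iv{\sigma_2}$, to rewrite
\[
  \tau_r^{\sigma_2\tau_{r-1}} \equiv \tau_r\sigma_2\tau_{r-1}\iv{\sigma_2}\iv{\tau_r} \pmod{\CON}.
\]
Since direct expansion gives $\iv{\tau_{r-1}}^{\iv{\sigma_2}\iv{\tau_r}}=\tau_r\sigma_2\iv{\tau_{r-1}}\iv{\sigma_2}\iv{\tau_r}$, the two factors of $U$ telescope through the cancellations $\iv{\tau_r}\tau_r=\iv{\sigma_2}\sigma_2=\iv{\tau_{r-1}}\tau_{r-1}=1$, yielding $U\equiv 1\pmod{\CON}$.

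There is no substantial obstacle in this plan; the calculation is short once the derived braid relations $\Br(\tau_t,\sigma_2)$ are in hand. The key conceptual point is that the new relation $\Co(\tau_r,\tau_{r-1})$ available in $\BT(\dsv)$ interacts precisely with these derived braid relations to produce the telescoping cancellation, reflecting geometrically the fact that $\tau_r$ and $\tau_{r-1}$ bound a digon enclosing a single vortex.
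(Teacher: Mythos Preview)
Your proposal is correct and follows essentially the same route as the paper: apply the conjugation formula \eqref{eq:ss06}, strip the common outer conjugation by $\iv{\sigma_1}\iv{\sigma_2}$, and then use the derived braid relations $\Br(\sigma_2,\tau_{r-1})$ and $\Br(\sigma_2,\tau_r)$ to reduce everything to the commutator $[\tau_r,\tau_{r-1}]$. The only cosmetic difference is that the paper uses the braid relations in the form $\tau_t^{\pm\sigma_2}=\sigma_2^{\mp\tau_t}$ to explicitly exhibit $\TAU$ as a conjugate of $\iv{\COR^{\tau_r}}\cdot\COR^{\tau_r\sigma_2}$, whereas you work directly in the quotient $\BT(\dsp)/\CON$ and telescope; your explicit derivation of $\Br(\tau_t,\sigma_2)$ from \eqref{nr:03} and \eqref{nr:05} is a welcome addition, since the paper simply invokes it.
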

\begin{proof}
This follows as below:
\[\begin{array}{rl}
\TAU\xlongequal{\eqref{eq:ss06}}&\iv{\tau_{r-1}}^{\iv{\sigma_2}\kg\iv{\tau_r}\kg\iv{\sigma_1}\kg\iv{\sigma_2}}{\tau_r}^{\sigma_2\kg\tau_{r-1}\kg\iv{\sigma_1}\kg\iv{\sigma_2}}
    \xlongequal[\Br(\sigma_2,\tau_r)]{\Br(\sigma_2,\tau_{r-1})}\iv{\sigma_2}^{{\tau_{r-1}}\kg\iv{\tau_r}\kg\iv{\sigma_1}\kg\iv{\sigma_2}}{\sigma_2}^{\iv{\tau_r}\kg\tau_{r-1}\kg\iv{\sigma_1}\kg\iv{\sigma_2}}\\
=&\left(
\iv{\sigma_2}^ { {\tau_{r-1}} \iv{\tau_r} \iv{\tau_{r-1}}  \tau_{r} } \sigma_2
\right)^{\iv{\tau_r}\kg\tau_{r-1}\kg\iv{\sigma_1}\kg\iv{\sigma_2}}
=\left(
   \iv{\COR^{\tau_{r}}}  \COR^{\tau_{r} \sigma_2}
\right)^{\iv{\tau_r}\kg\tau_{r-1}\kg\iv{\sigma_1}\kg\iv{\sigma_2}}\;\in\CON.
\end{array} \qedhere\]
\end{proof}

    By the following simple relations of commutators
    \[
    [a,bc]=[a,b][a,c]^{\iv{b}},\quad [b,a]=\iv{[a,b]} \quad \text{and}\quad [\iv{a},b]=[b,a]^a,
    \]
    together with \Cref{lem:coh}, we only need to show \eqref{eq:co} for $\sbt$ in a generating set. since $r\geq 2g+b>2g$, by \eqref{eq:deltas2}, we have $\delta_r=\varepsilon_r\kg\iv{\varepsilon_{r-1}}$.

    For the case $\sbt\in N $, we have
    \[
        [\sbt,\delta_r^2]
    =\sbt\kg\delta_r^2\kg\iv{\sbt}\kg\iv{\delta_r}^2
    =\left({\sbt}^{\delta_r}\kg\iv{\sbt}^{\iv{\delta_r}}\right)^{\iv{\delta_r}}
    =\left({\sbt}^{\varepsilon_r\kg\iv{\varepsilon_{r-1}}}\kg\iv{\sbt}^{\varepsilon_{r-1}\kg\iv{\varepsilon_{r}}}\right)^{\iv{\delta_r}}.
    \]
    So by \Cref{lem:coh}, we only need to show ${\sbt}^{\varepsilon_r\kg\iv{\varepsilon_{r-1}}}\kg\iv{\sbt}^{\varepsilon_{r-1}\kg\iv{\varepsilon_{r}}}\in\CON$. This is equivalent to show the equivalence class of ${\sbt}^{\varepsilon_r\kg\iv{\varepsilon_{r-1}}}\kg\iv{\sbt}^{\varepsilon_{r-1}\kg\iv{\varepsilon_{r}}}$ in the quotient group $ N /\CON$ is the identity. We divide the proof into the following subcases.

    \paragraph{\textbf{In the case $\sbt=\sigma_i$, $2\leq i\leq \aleph$}}, by $\Co(\sigma_i,\delta_r)$, we have $[\sigma_i,\delta^2_r]=1$.

    \paragraph{\textbf{In the case $\sbt=\sigma_1$,}} we have
    \[
    \begin{array}{l}
    {\sigma_1}^{\varepsilon_r\kg\iv{\varepsilon_{r-1}}}\kg\iv{\sigma_1}^{\varepsilon_{r-1}\kg\iv{\varepsilon_{r}}}
    \xlongequal{\eqref{eq:ss05}}{\tau_r}^{\iv{\sigma_1}\kg\iv{\varepsilon_{r-1}}}\kg\iv{\tau_{r-1}}^{\iv{\sigma_1}\kg\iv{\varepsilon_{r}}}
    \xlongequal[\eqref{eq:ss03}]{\eqref{eq:ss02}}{\tau_r}^{\iv{x}\kg\iv{\tau_{r-1}}\kg\sigma_2}\kg\iv{\tau_{r-1}}^{{x}\kg{\tau_{r}}\kg\sigma_2}
    =\left({\tau_r}^{\iv{x}\kg\iv{\tau_{r-1}}}\kg\iv{\tau_{r-1}}^{{x}\kg{\tau_{r}}}\right)^{\sigma_2}\\
    \xlongequal[\Br(x,\tau_{r-1})]{\Br(x,\tau_r)}\left({x}^{{\tau_r}\kg\iv{\tau_{r-1}}}\kg\iv{x}^{\iv{\tau_{r-1}}\kg{\tau_{r}}}\right)^{\sigma_2}.
    \end{array}
    \]
    %whose equivalence class in the quotient group  $ N /\Cop$ is the identity.

    \paragraph{\textbf{In the case $\sbt=\tau_s$, for $s\geq r$,}} we have
    \[\begin{array}{l}
    {\tau_s}^{\varepsilon_r\kg\iv{\varepsilon_{r-1}}}\kg\iv{\tau_s}^{\varepsilon_{r-1}\kg\iv{\varepsilon_{r}}}
    \xlongequal{\eqref{eq:ss06}}{\tau_s}^{\sigma_2\kg\tau_r\kg\iv{\sigma_1}\kg\iv{\sigma_2}\kg\iv{\varepsilon_{r-1}}}\kg\iv{\tau_s}^{\sigma_2\kg\tau_{r-1}\kg\iv{\sigma_1}\kg\iv{\sigma_2}\kg\iv{\varepsilon_{r}}}\\
    \xlongequal[\eqref{eq:ss03}]{\eqref{eq:ss02}}{\tau_s}^{\iv{x}\kg\iv{\tau_{r-1}}\kg\sigma_2\kg\tau_{r-1}\kg\sigma_2\kg{\tau_r}^{\iv{x}\kg\iv{\tau_{r-1}}\kg\sigma_2\kg\tau_{r-1}}\kg\iv{\tau_{r-1}}\kg\iv{\sigma_2}}\kg\iv{\tau_s}^{\iv{x}\kg\iv{\tau_{r}}\kg\sigma_2\kg\tau_{r}\kg\sigma_2\kg{\tau_{r-1}}^{{x}\kg{\tau_{r}}\kg\sigma_2\kg\tau_{r}}\kg\iv{\tau_{r}}\kg\iv{\sigma_2}}\\
    \xlongequal[\Br(\sigma_2,\tau_r)]{\Br(\sigma_2,\tau_{r-1})}{\tau_s}^{\iv{x}\kg\tau_{r-1}\kg{x}\tau_r\kg\iv{x}\iv{\tau_{r-1}}}\kg\iv{\tau_s}^{\iv{x}\kg\iv{\tau_r}\kg\iv{x}\kg\tau_{r-1}\kg{x}\kg\tau_r}
    \xlongequal[\Br(x,\tau_{r-1})]{\Br(x,\tau_r)}{\tau_s}^{\iv{x}\kg\tau_{r-1}\kg\iv{\tau_r}\kg{x}\tau_r\iv{\tau_{r-1}}}\kg\iv{\tau_s}^{\iv{x}\kg\iv{\tau_r}\kg\tau_{r-1}\kg{x}\kg\iv{\tau_{r-1}}\kg\tau_r}.
    \end{array}\]
    %whose equivalence class in the quotient group  $ N /\Cop$ is the identity.

    \paragraph{\textbf{In the case $\sbt=\tau_s$, for $s\leq r-1$ and $s\notin\dgen$,}} we have
    \[\begin{array}{l}
    {\tau_s}^{\varepsilon_r\kg\iv{\varepsilon_{r-1}}}\kg\iv{\tau_s}^{\varepsilon_{r-1}\kg\iv{\varepsilon_{r}}}
    \xlongequal{\eqref{eq:ss06}}{\tau_s}^{\iv{\sigma_2}\kg\iv{\tau_r}\kg\iv{\sigma_1}\kg\iv{\sigma_2}\kg\iv{\varepsilon_{r-1}}}\kg\iv{\tau_s}^{\iv{\sigma_2}\kg\iv{\tau_{r-1}}\kg\iv{\sigma_1}\kg\iv{\sigma_2}\kg\iv{\varepsilon_{r}}}\\
    \xlongequal[\eqref{eq:ss03}]{\eqref{eq:ss02}}{\tau_s}^{{x}\kg{\tau_{r-1}}\kg\sigma_2\kg\tau_{r-1}\kg\iv{\sigma_2}\kg\iv{\tau_r}^{\iv{x}\kg\iv{\tau_{r-1}}\kg\sigma_2\kg\tau_{r-1}}\kg\iv{\tau_{r-1}}\kg\iv{\sigma_2}}\kg\iv{\tau_s}^{{x}\kg{\tau_{r}}\kg\sigma_2\kg\tau_{r}\kg\iv{\sigma_2}\kg\iv{\tau_{r-1}}^{{x}\kg{\tau_{r}}\kg\sigma_2\kg\tau_{r}}\kg\iv{\tau_{r}}\kg\iv{\sigma_2}}\\
    \xlongequal[\Br(\sigma_2,\tau_r)]{\Br(\sigma_2,\tau_{r-1})}{\tau_s}^{{x}\kg\tau_{r-1}\kg{x}\iv{\tau_r}\kg\iv{x}\iv{\tau_{r-1}}}\kg\iv{\tau_s}^{{x}\kg\iv{\tau_r}\kg\iv{x}\kg\iv{\tau_{r-1}}\kg{x}\kg\tau_r}
    \xlongequal[\Br(x,\tau_{r-1})]{\Br(x,\tau_r)}{\tau_s}^{{x}\kg\tau_{r-1}\kg\iv{\tau_r}\kg\iv{x}\tau_r\iv{\tau_{r-1}}}\kg\iv{\tau_s}^{{x}\kg\iv{\tau_r}\kg\tau_{r-1}\kg\iv{x}\kg\iv{\tau_{r-1}}\kg\tau_r}.
    \end{array}\]
    %whose equivalence class in the quotient group  $ N /\Cop$ is the identity, and hence which belongs to $\Cop$.

    \paragraph{\textbf{In the case $\sbt=\tau_s$, for $s\leq r-1$ and $s\in\dgen$,}} we have
    \[\begin{array}{l}
    {\tau_s}^{\varepsilon_r\kg\iv{\varepsilon_{r-1}}}\kg\iv{\tau_s}^{\varepsilon_{r-1}\kg\iv{\varepsilon_{r}}}
    \xlongequal{\eqref{eq:ss06}}{\tau_s}^{{\sigma_2}\kg\iv{\tau_r}\kg\iv{\sigma_1}\kg\iv{\sigma_2}\kg\iv{\varepsilon_{r-1}}}\kg\iv{\tau_s}^{{\sigma_2}\kg\iv{\tau_{r-1}}\kg\iv{\sigma_1}\kg\iv{\sigma_2}\kg\iv{\varepsilon_{r}}}\\
    \xlongequal[\eqref{eq:ss03}]{\eqref{eq:ss02}}{\tau_s}^{{x}\kg\iv{\tau_{r-1}}\kg\sigma_2\kg\tau_{r-1}\kg{\sigma_2}\kg\iv{\tau_r}^{\iv{x}\kg\iv{\tau_{r-1}}\kg\sigma_2\kg\tau_{r-1}}\kg\iv{\tau_{r-1}}\kg\iv{\sigma_2}}\kg\iv{\tau_s}^{{x}\kg\iv{\tau_{r}}\kg\sigma_2\kg\tau_{r}\kg{\sigma_2}\kg\iv{\tau_{r-1}}^{{x}\kg{\tau_{r}}\kg\sigma_2\kg\tau_{r}}\kg\iv{\tau_{r}}\kg\iv{\sigma_2}}\\
    \xlongequal[\Br(\sigma_2,\tau_r)]{\Br(\sigma_2,\tau_{r-1})}{\tau_s}^{{x}\kg\tau_{r-1}\kg{x}\iv{\tau_r}\kg\iv{x}\iv{\tau_{r-1}}}\kg\iv{\tau_s}^{{x}\kg\iv{\tau_r}\kg\iv{x}\kg\iv{\tau_{r-1}}\kg{x}\kg\tau_r}
    \xlongequal[\Br(x,\tau_{r-1})]{\Br(x,\tau_r)}{\tau_s}^{{x}\kg\tau_{r-1}\kg\iv{\tau_r}\kg\iv{x}\tau_r\iv{\tau_{r-1}}}\kg\iv{\tau_s}^{{x}\kg\iv{\tau_r}\kg\tau_{r-1}\kg\iv{x}\kg\iv{\tau_{r-1}}\kg\tau_r}.
    \end{array}\]
    %which is the same as the one in the above case, and hence belongs to $\Cop$.

    In each subcase, the equivalence class of ${\sbt}^{\varepsilon_r\kg\iv{\varepsilon_{r-1}}}\kg\iv{\sbt}^{\varepsilon_{r-1}\kg\iv{\varepsilon_{r}}}$ in the quotient group  $ N /\Cop$  is the identity. So we have shown \eqref{eq:co} for $\sbt\in N$.

    Now, we only need to continue to show \eqref{eq:co} for $\sbt=\varepsilon_s$, for $1\leq s\leq 2g+b+p-1$. By $\delta_r=\varepsilon_r\kg\iv{\varepsilon_{r-1}}$, one can check
    \[
        [\varepsilon_s,\delta_r^2]
        =[\varepsilon_s,\varepsilon_r][\varepsilon_{r-1},\varepsilon_s]^{\iv{\delta_r}}[\varepsilon_s,\varepsilon_r]^{\iv{\delta_r}}[\varepsilon_{r-1},\varepsilon_s]^{\iv{\delta_r}^2}.
    \]
    Note that the last factor $[\varepsilon_{r-1},\varepsilon_s]^{\iv{\delta_r}^2}=[\delta_r^2,[\varepsilon_{r-1},\varepsilon_s]][\varepsilon_{r-1},\varepsilon_s]$. Since $[\varepsilon_{r-1},\varepsilon_s]\in N $ by \Cref{lem:tech},
    we know that $[\delta_r^2,[\varepsilon_{r-1},\varepsilon_s]]\in\CON$. Hence by \Cref{lem:coh} we only need to show
    $[\varepsilon_s,\varepsilon_r][\varepsilon_{r-1},\varepsilon_s]^{\iv{\delta_r}}[\varepsilon_s,\varepsilon_r]^{\iv{\delta_r}}[\varepsilon_{r-1},\varepsilon_s]\in\CON$. Since
    \[\begin{array}{rl}
    &[\varepsilon_s,\varepsilon_r][\varepsilon_{r-1},\varepsilon_s]^{\iv{\delta_r}}[\varepsilon_s,\varepsilon_r]^{\iv{\delta_r}}[\varepsilon_{r-1},\varepsilon_s]\\=&\left([\varepsilon_s,\varepsilon_r]^{\varepsilon_r}[\varepsilon_{r-1},\varepsilon_s]^{\varepsilon_{r-1}}[\varepsilon_s,\varepsilon_r]^{\varepsilon_{r-1}}[\varepsilon_{r-1},\varepsilon_s]^{\varepsilon_r}\right)^{\iv{\varepsilon_r}}\\
    =&\left(\left([\varepsilon_{r-1},\varepsilon_s][\varepsilon_s,\varepsilon_r]\right)^{\varepsilon_r}\left([\varepsilon_{r-1},\varepsilon_s][\varepsilon_s,\varepsilon_r]\right)^{\varepsilon_{r-1}}\right)^{[\varepsilon_{r-1},\varepsilon_s]^{\varepsilon_r}\kg\iv{\varepsilon_r}},
    \end{array}\]
    by \Cref{lem:coh}, it suffices to show
    \[\left([\varepsilon_{r-1},\varepsilon_s][\varepsilon_s,\varepsilon_r]\right)^{\varepsilon_{r}}\left([\varepsilon_{r-1},\varepsilon_s][\varepsilon_s,\varepsilon_r]\right)^{\varepsilon_{r-1}}\in \CON.\]
    We divide the proof into the following subcases.

    \paragraph{\textbf{In the case $s\geq r$,}} we have
    \[
    \begin{array}{l}
    [\varepsilon_{r-1},\varepsilon_s][\varepsilon_s,\varepsilon_r]
    =[\varepsilon_{r-1},\varepsilon_s]\iv{[\varepsilon_r,\varepsilon_s]}
    \xlongequal{\eqref{eq:in-BT}}(\iv{\tau_{r-1}}\kg{\sigma_1}\kg\iv{\tau_{r-1}}^{x\kg\tau_s\sigma_2}\tau_{s})\kg
    \iv{(\iv{\tau_{r}}\kg{\sigma_1}\kg\iv{\tau_{r}}^{x\kg\tau_s\sigma_2}\tau_{s})}\\
    =\iv{\tau_{r-1}}\kg(\iv{\tau_{r-1}}\kg{\tau_r})^{x\kg\tau_s\kg\sigma_2\kg\iv{\sigma_1}}\kg{\tau_{r}}=\iv{\tau_{r-1}}\kg(\iv{\tau_{r-1}}\kg{\tau_r})^{{\tau_s}^{\iv{x}}\kg\sigma_2}\kg{\tau_{r}}.
    \end{array}
    \]
    So we have
    \[
    \begin{array}{l}
        \left([\varepsilon_{r-1},\varepsilon_s][\varepsilon_s,\varepsilon_r]\right)^{\varepsilon_{r}}
        =\left(\iv{\tau_{r-1}}\kg(\iv{\tau_{r-1}}\kg{\tau_r})^{{\tau_s}^{\iv{x}}\kg\sigma_2}\kg{\tau_{r}}\right)^{\varepsilon_{r}}
        \xlongequal[\Co({\tau_s}^{\iv{x}},\varepsilon_r)]{\eqref{eq:ss05}}
        \iv{\tau_{r-1}}^{\varepsilon_r}\left(\iv{\tau_{r-1}}^{\varepsilon_r}\kg\sigma_1\right)^{{\tau_s}^{\iv{x}}\kg\sigma_2}\kg{\sigma_1}.
    \end{array}
    \]
    Similarly, we have
    \[\left([\varepsilon_{r-1},\varepsilon_s][\varepsilon_s,\varepsilon_r]\right)^{\varepsilon_{r-1}}=\iv{\sigma_1}\left(\iv{\sigma_1}\kg{\tau_r}^{\varepsilon_{r-1}}\right)^{{\tau_s}^{\iv{x}}\kg\sigma_2}\kg{\tau_r}^{\varepsilon_{r-1}}.\]
    Hence, we have
    \[\begin{array}{l}
    \left([\varepsilon_{r-1},\varepsilon_s][\varepsilon_s,\varepsilon_r]\right)^{\varepsilon_{r}}\left([\varepsilon_{r-1},\varepsilon_s][\varepsilon_s,\varepsilon_r]\right)^{\varepsilon_{r-1}}
    =\iv{\tau_{r-1}}^{\varepsilon_r}\left(\TAU\right)^{{\tau_s}^{\iv{x}}\kg\sigma_2}\kg{\tau_r}^{\varepsilon_{r-1}},
    \end{array}\]
    which, by \eqref{eq:llin2}, is in $\CON$.

    \paragraph{\textbf{In the case $s\leq r-1$ and $s\notin\dgen$,}} we have
    \[
    \begin{array}{l}
        [\varepsilon_{r-1},\varepsilon_s][\varepsilon_s,\varepsilon_r]
        =\iv{[\varepsilon_{s},\varepsilon_{r-1}]}[\varepsilon_s,\varepsilon_r]
        \xlongequal{\eqref{eq:in-BT}}\iv{(\iv{\tau_{s}}\kg{\sigma_1}\kg\iv{\tau_{s}}^{x\kg\tau_{r-1}\sigma_2}\tau_{r-1})}\kg{(\iv{\tau_{s}}\kg{\sigma_1}\kg\iv{\tau_{s}}^{x\kg\tau_r\sigma_2}\tau_{r})}\\
        =\iv{\tau_{r-1}}\kg{\tau_{s}}^{x\kg\tau_{r-1}\kg\sigma_2}\kg\iv{\tau_s}^{x\kg\tau_r\kg\sigma_2}\kg{\tau_{r}}.
    \end{array}
    \]
    So we have
    \[
    \begin{array}{rl}
    &\left([\varepsilon_{r-1},\varepsilon_s][\varepsilon_s,\varepsilon_r]\right)^{\varepsilon_{r}}=\left(\iv{\tau_{r-1}}\kg{\tau_{s}}^{x\kg\tau_{r-1}\kg\sigma_2}\kg\iv{\tau_s}^{x\kg\tau_r\kg\sigma_2}\kg{\tau_{r}}\right)^{\varepsilon_{r}}\\
    \xlongequal[\Co({\tau_s}^{x},\varepsilon_r)]{\eqref{eq:ss05},\eqref{eq:ss06}}&{\iv{\tau_{r-1}}}^{\varepsilon_r}{\left({\tau_{s}}^{x}\right)}^{{\tau_{r-1}}^{\varepsilon_r}\kg\sigma_2}\kg\left(\iv{\tau_{s}}^{x}\right)^{\sigma_1\kg\sigma_2}\kg\sigma_1.
    \end{array}
    \]
    Here, $\Co({\tau_s}^{x},\varepsilon_r)$ can be easily check.
    Similarly, we have
    \[
    \left([\varepsilon_{r-1},\varepsilon_s][\varepsilon_s,\varepsilon_r]\right)^{\varepsilon_{r-1}}=\iv{\sigma_1}{\left({\tau_{s}}^{x}\right)}^{{\sigma_1}\kg\sigma_2}\kg\left(\iv{\tau_{s}}^{x}\right)^{{\tau_r}^{\varepsilon_{r-1}}\kg\sigma_2}\kg{\tau_r}^{\varepsilon_{r-1}}.
    \]
    Hence we have
    \[\begin{array}{l}
    \left([\varepsilon_{r-1},\varepsilon_s][\varepsilon_s,\varepsilon_r]\right)^{\varepsilon_{r}}\left([\varepsilon_{r-1},\varepsilon_s][\varepsilon_s,\varepsilon_r]\right)^{\varepsilon_{r-1}}
    ={\iv{\tau_{r-1}}}^{\varepsilon_r}{\left({\tau_{s}}^{x}\right)}^{{\tau_{r-1}}^{\varepsilon_r}\kg\sigma_2}\kg\left(\iv{\tau_{s}}^{x}\right)^{{\tau_r}^{\varepsilon_{r-1}}\kg\sigma_2}\kg{\tau_r}^{\varepsilon_{r-1}},
    \end{array}\]
    which, by \eqref{eq:llin2}, is in $\CON$.

    \paragraph{\textbf{In the case $s\leq r$ and $s\in\dgen$,}} we have
    \[
    \begin{array}{l}
    [\varepsilon_{r-1},\varepsilon_s][\varepsilon_s,\varepsilon_r]
    =\iv{[\varepsilon_{s},\varepsilon_{r-1}]}[\varepsilon_s,\varepsilon_r]\\
    \xlongequal{\eqref{eq:in-BT}}\iv{(\iv{\sigma_2}\kg\iv{\tau_s}\kg{\tau_{r-1}}^{\iv{x}\kg{\tau_s}\kg{\sigma_2}}\kg{\sigma_1}\kg{\tau_{r-1}}\kg{\sigma_2})}\kg
    (\iv{\sigma_2}\kg\iv{\tau_s}\kg{\tau_r}^{\iv{x}\kg{\tau_s}\kg{\sigma_2}}\kg{\sigma_1}\kg{\tau_r}\kg{\sigma_2})\\
    =\iv{\sigma_2}\kg\iv{\tau_{r-1}}\kg\iv{\tau_{r-1}}^{\iv{x}\kg{\tau_s}\kg{\sigma_2}\kg{\sigma_1}}\kg{\tau_r}^{\iv{x}\kg{\tau_s}\kg{\sigma_2}\kg{\sigma_1}}\kg{\tau_r}\kg{\sigma_2}
    =\iv{\sigma_2}\kg\iv{\tau_{r-1}}\kg\iv{\tau_{r-1}}^{{\tau_s}^{x}\kg{\sigma_2}}\kg{\tau_r}^{{\tau_s}^{x}\kg{\sigma_2}}\kg{\tau_r}\kg{\sigma_2}.
    \end{array}
    \]
    So we have
    \[
    \begin{array}{rl}
    &\left([\varepsilon_{r-1},\varepsilon_s][\varepsilon_s,\varepsilon_r]\right)^{\varepsilon_{r}}
    =\left(\iv{\sigma_2}\kg\iv{\tau_{r-1}}\kg\iv{\tau_{r-1}}^{{\tau_s}^{x}\kg{\sigma_2}}\kg{\tau_r}^{{\tau_s}^{x}\kg{\sigma_2}}\kg{\tau_r}\kg{\sigma_2}\right)^{\varepsilon_{r}}\\
    \xlongequal[\Co({\tau_s}^{x},\varepsilon_r)]{\eqref{eq:ss05},\eqref{eq:ss06}}&\iv{\sigma_2}\kg\iv{\tau_{r-1}}^{\varepsilon_r}\kg\left(\iv{\tau_{r-1}}^{\varepsilon_r}\right)^{{\tau_s}^{x}\kg{\sigma_2}}\kg{\sigma_1}^{{\tau_s}^{x}\kg{\sigma_2}}\kg{\sigma_1}\kg{\sigma_2}.
    \end{array}
    \]
    Similarly, we have
    \[
    \left([\varepsilon_{r-1},\varepsilon_s][\varepsilon_s,\varepsilon_r]\right)^{\varepsilon_{r-1}}=\iv{\sigma_2}\kg\iv{\sigma_1}\kg\iv{\sigma_1}^{{\tau_s}^{x}\kg{\sigma_2}}\kg\left({\tau_r}^{\varepsilon_{r-1}}\right)^{{\tau_s}^{x}\kg{\sigma_2}}\kg{\tau_r}^{\varepsilon_{r-1}}\kg{\sigma_2}.
    \]
    Hence we have
    \[\begin{array}{l}
    \left([\varepsilon_{r-1},\varepsilon_s][\varepsilon_s,\varepsilon_r]\right)^{\varepsilon_{r}}\left([\varepsilon_{r-1},\varepsilon_s][\varepsilon_s,\varepsilon_r]\right)^{\varepsilon_{r-1}}\\
    =\iv{\sigma_2}\kg\iv{\tau_{r-1}}^{\varepsilon_r}\kg\left(\iv{\tau_{r-1}}^{\varepsilon_r}\right)^{{\tau_s}^{x}\kg{\sigma_2}}\left({\tau_r}^{\varepsilon_{r-1}}\right)^{{\tau_s}^{x}\kg{\sigma_2}}\kg{\tau_r}^{\varepsilon_{r-1}}\kg{\sigma_2},
    \end{array}\]
    which, by \eqref{eq:llin2}, is in $\CON$.

%
%%=========================================================
%\section{Generalization to DMSx}\label{app:DMSx}
%%=========================================================
%While we are finishing this paper, Qy wrote a twin paper \cite{Q24},
%where a generalization of DMSv is introduced.
%Namely, one only turns part of punctures into vortices.
%Our main result can be easier generalized to such a situation.
%We record it for future references.
%
%%=========================================================
%\subsection{Set up}\label{sec:set up}
%%=========================================================
%
%
%
%
%%=========================================================
%\subsection{Generalized results}\label{sec:result}
%%=========================================================
%
%\begin{theorem}
%presentation of $\BT()$...
%\end{theorem}

%=========================================================

%=========================================================
\end{document}